\documentclass[final,oneside,notitlepage,10pt]{article}

\usepackage{amsthm}
\usepackage{thmtools}
\usepackage{graphicx}
\usepackage[margin=2cm,font=normalsize,labelfont=bf]{caption}
\usepackage{verbatim}
\usepackage[shortlabels]{enumitem}
\usepackage{fancyhdr}
\usepackage[]{geometry}
\usepackage{xstring}
\usepackage{needspace}
\usepackage{color}
\usepackage{amstext}
\usepackage{nameref}
\usepackage[hypertexnames=false]{hyperref}
\usepackage{mathdots}
\usepackage{soul}
\usepackage{chngcntr}
\usepackage[section]{placeins}

\makeatletter
\newcounter{denseversion}
\newcounter{comments}
\newcounter{authorcounter}
\newcounter{adresscounter}

\def\title#1{\gdef\@title{#1}}
\def\@title{}
\def\subtitle#1{\gdef\@subtitle{#1}}
\def\@subtitle{}

\def\authortagsused{0}
\def\adresstag#1{\if!#1!\else$^{\;#1\;}$\fi}
\def\@authorsep#1{
  \ifnum\value{authorcounter}=#1 and \else\unskip, \fi
}

\renewcommand{\author}[2][]{
  \stepcounter{authorcounter}
  \if!#1!\else\gdef\authortagsused{1}\fi
  \ifnum\value{authorcounter}=1
    \def\@authorstringa{#2\adresstag{#1}}
    \def\@authorstringb{#2}
    \def\@authorstringc{#2\adresstag{#1}}
  \else
    \ifnum\value{authorcounter}=2
      \g@addto@macro\@authorstringa{\@authorsep{2}#2\adresstag{#1}}
      \g@addto@macro\@authorstringb{\@authorsep{2}#2}
      \g@addto@macro\@authorstringc{\\#2\adresstag{#1}}
    \else
      \g@addto@macro\@authorstringa{\@authorsep{3}#2\adresstag{#1}}
      \g@addto@macro\@authorstringb{\@authorsep{3}#2}
      \g@addto@macro\@authorstringc{\\#2\adresstag{#1}}
    \fi
  \fi}
\def\@author{\ifnum\value{denseversion}=0\@authorstringa\else\@authorstringb\fi}

\def\@adressstringa{}
\def\@adressstringb{}
\newcommand{\adress}[2][]{
  \stepcounter{adresscounter}
  \ifnum\value{adresscounter}=1
    \g@addto@macro\@adressstringa{\ifnum\authortagsused=0\def\br{\\}\else\def\br{, }\fi\adresstag{#1}#2}
    \g@addto@macro\@adressstringb{\def\br{\\}\adresstag{#1}\parbox[t]{14cm}{#2}}
  \else
    \g@addto@macro\@adressstringa{\\[\bigskipamount]\adresstag{#1}#2}
    \g@addto@macro\@adressstringb{\\[\medskipamount]\adresstag{#1}\parbox[t]{14cm}{#2}}
  \fi}

\def\preprint#1{\gdef\@preprint{#1}}
\def\@preprint{}
\def\keywords#1{\gdef\@keywords{#1}}
\def\@keywords{}
\def\msc#1{\gdef\@msc{#1}}
\def\@msc{}
\def\email#1{
   \gdef\@email{#1}
   \g@addto@macro\@authorstringc{ {\it (#1)}}}
\def\@email{}
\def\dedication#1{\gdef\@dedication{#1}}
\def\@dedication{}

\def\mybaselinestretch#1{
  \gdef\@mybaselinestretch{#1}
  \renewcommand{\baselinestretch}{\@mybaselinestretch}}

\def\myparskip#1{
  \gdef\@myparskip{#1}
  \setlength{\parskip}{\@myparskip}}

\mybaselinestretch{1.2}
\myparskip{1.5ex}

\binoppenalty=\maxdimen
\relpenalty=\maxdimen

\normalfont
\normalsize
\newlength{\@listleftmargin}
\def\setenumstandard{
  \setlist{leftmargin=\@listleftmargin,itemsep=0pt,topsep=0pt,partopsep=0pt,parsep=\@myparskip}
  \setlist[enumerate]{align=left,labelsep=*,leftmargin=\@listleftmargin,itemsep=0pt,topsep=0pt,partopsep=0pt,parsep=\@myparskip}
}

\def\denseversion{
  \setcounter{denseversion}{1}
  \newgeometry{left=3cm,right=3cm,top=3cm}
  \mybaselinestretch{1.1}
  \myparskip{0.8ex}
  \normalfont
  \def\possiblelinebreak{}
  \fancyfoot[C]{\itshape{--$\,\,$\thepage$\,\,$--}}}

\def\possiblelinebreak{\\}

\renewcommand{\emph}[1]{\def\reserved@a{it}\ifx\f@shape\reserved@a\ul{#1}\else\textit{#1}\fi}

\def\setcrefnames{}
\newcommand{\mytableofcontents}{
   \ifnum\value{denseversion}=0
     \tableofcontents
     \setcrefnames 
   \else
     \renewcommand{\baselinestretch}{1.1}
     \setlength{\parskip}{0ex}
     \normalfont
     \begingroup
     \def\addvspace##1{\vskip0.4em}
     \tableofcontents
     \setcrefnames 
     \endgroup
     \renewcommand{\baselinestretch}{\@mybaselinestretch}
     \setlength{\parskip}{\@myparskip}
     \normalfont
   \fi}

\newlength{\zeilenlaenge}
\def\putindent#1{
  \settowidth{\zeilenlaenge}{#1}
  \ifnum\zeilenlaenge>\textwidth
    #1
  \else
    \noindent #1
  \fi
}

\hypersetup{
    linktocpage = true,
    colorlinks,
    linkcolor=[RGB]{0,0,120},
    citecolor=[RGB]{0,0,120},
    urlcolor=[RGB]{0,0,120}
}

\def\pdfdaten{
  \hypersetup{
    pdftitle = {\@title},
    pdfauthor = {\@author},
    pdfkeywords = {\@keywords},    
    bookmarksopen = true,
    bookmarksopenlevel = 1
  }}  

\def\showkeywords{\begin{flushleft}\footnotesize\textbf{Keywords}: \@keywords\end{flushleft}}
\def\showmsc{\begin{flushleft}\footnotesize\textbf{MSC 2010}: \@msc\end{flushleft}}

\def\tocsection#1{\section*{#1}\addcontentsline{toc}{section}{#1}}

\def\mytitle{}
\def\zmptitle{
  \begin{tabular}{cc}
    \begin{minipage}[c]{0.4\textwidth}
      \begin{flushleft}
        \includegraphics[width=110pt]{../../tex/zmp}
      \end{flushleft}  
    \end{minipage}&
    \begin{minipage}[c]{0.55\textwidth}
      \begin{flushright}
      {\small\sf\@preprint}
      \end{flushright}
    \end{minipage}
  \end{tabular}
  \vskip 2cm}

\def\maketitle{
  \pdfdaten
  \noindent
  \mytitle
  \begin{center}
    \LARGE\@title\\
    \if!\@subtitle!\else\smallskip\LARGE\@subtitle\\\fi
    \bigskip
    \if!\@author!\else\bigskip\large\@author\\\fi
    \ifnum\value{denseversion}=0
      \if!\@adressstringa!\else\bigskip\normalsize\@adressstringa\\\fi
      \if!\@email!\else\ifnum\value{authorcounter}=1\bigskip\normalsize\textit{\@email}\\\else\fi\fi
    \else
    \fi
    \if!\@dedication!\else\bigskip\normalsize{\@dedication}\\\fi
  \end{center}
  \ifnum\value{denseversion}=0\vskip 1.5cm\else\vskip0.5cm\fi}

\def\kobib#1{
  \begin{raggedright}
  \ifnum\value{denseversion}=0\else\small\fi
  \Oldbibliography{#1/kobib}
  \bibliographystyle{#1/kobib}
  \end{raggedright}
  \ifnum\value{denseversion}=0\else
      \noindent
      \if!\@authorstringc!\else
        \ifnum\authortagsused=0\ifnum\value{authorcounter}>1\normalsize\@authorstringc\\[\medskipamount]\else\fi\else\normalsize\@authorstringc\\[\medskipamount]\fi
      \fi
      \if!\@adressstringb!\else\normalsize\@adressstringb\\{}\fi
      \ifnum\authortagsused=0
        \ifnum\value{authorcounter}=1
          \if!\@email!\else\linebreak\normalsize\textit{\@email}\\{}\fi
        \else
        \fi
      \else
      \fi
  \fi}

\let\Oldbibliography\bibliography
\def\bibliography#1{
  \begin{raggedright}
  \ifnum\value{denseversion}=0\else\small\fi
  \Oldbibliography{#1}
  \end{raggedright}
  \ifnum\value{denseversion}=0\else
      \medskip
      \noindent
      \if!\@authorstringc!\else
        \ifnum\authortagsused=0\ifnum\value{authorcounter}>1\normalsize\@authorstringc\\[\medskipamount]\else\fi\else\normalsize\@authorstringc\\[\medskipamount]\fi
      \fi
      \if!\@adressstringb!\else\normalsize\@adressstringb\\{}\fi
      \ifnum\authortagsused=0
        \ifnum\value{authorcounter}=1
          \if!\@email!\else\linebreak\normalsize\textit{\@email}\\{}\fi
        \else
        \fi
      \else
      \fi
  \fi
}

\newenvironment{commentfigure}{}
\newenvironment{sidewayscommentfigure}{\begin{minipage}}{\end{minipage}}
\newenvironment{displaycomment}{\begin{list}{}{\rightmargin=1cm\leftmargin=1cm}\item\sf\begin{small}}{\end{small}\end{list}}

\def\tocmark#1{}

\def\draftstamp#1{
  \def\tocmark##1{
    \ifnum\c@secnumdepth=0\section{##1}\fi
    \ifnum\c@secnumdepth=1\subsection{##1}\fi
    \ifnum\c@secnumdepth=2\subsubsection{##1}\fi
    \ifnum\c@secnumdepth=3\subsubsection{##1}\fi
  }
  \ifnum\value{comments}=0
    \gdef\@draft{DRAFT - Edited on \today\ by #1 - Comments are not displayed}
  \else
    \gdef\@draft{DRAFT - Edited on \today\ by #1 - Comments are displayed}
  \fi
  \fancyhead[C]{\footnotesize\tt\textcolor{red}{\@draft}}}

\def\skript{
  \renewenvironment{displaycomment}{\begin{color}{blue}}{\end{color}}
  \ifnum\value{comments}=0
    \renewenvironment{skript-exercise}{\comment}{\endcomment}
    \renewenvironment{skript-example}{\comment}{\endcomment}
    \renewenvironment{skript-remark}{\comment}{\endcomment}
  \else
    \renewenvironment{skript-exercise}{\begin{color}{blue}\begin{exercise*}}{\end{exercise*}\end{color}}
    \renewenvironment{skript-example}{\begin{color}{blue}\begin{example*}}{\end{example*}\end{color}}
    \renewenvironment{skript-remark}{\begin{color}{blue}\begin{remark*}}{\end{remark*}\end{color}}
  \fi
  \parindent=0mm	
}

\newcounter{sectioning}
\def\tsection#1{\ifnum\value{sectioning}=0\section{#1}\fi}
\def\lsection#1{
  \ifnum\value{sectioning}=1
    \clearpage
    \def\thesection{\lektionname~\arabic{section}:}
    \section{#1}
    \def\thesection{\arabic{section}}
  \fi}
\def\tsubsection#1{\ifnum\value{sectioning}=0\subsection{#1}\fi}
\def\lsubsection#1{\ifnum\value{sectioning}=1\subsection{#1}\fi}
\def\tsubsubsection#1{\ifnum\value{sectioning}=0\subsubsection{#1}\fi}
\def\lsubsubsection#1{\ifnum\value{sectioning}=1\subsubsection{#1}\fi}

\mybaselinestretch{}
\setenumstandard

\makeatother

\input{kobib}

\usepackage{amssymb}
\usepackage{amsmath}
\usepackage{amstext}
\usepackage{mathrsfs}
\usepackage{euscript}
\usepackage{color}
\usepackage[all,cmtip,color,arrow]{xy}
\usepackage{xstring}
\usepackage{slashed}
\usepackage{tensor}
\usepackage{tikz}

\def\mathscr#1{\EuScript{#1}}

\entrymodifiers={+!!<0pt,\fontdimen22\textfont2>}

\def\Z {\mathbb{Z}}

\def\id{\mathrm{id}}

\def\hc#1{\mathrm{h}_{#1}}
\def\h {\mathrm{H}}

\def\subset{\subseteq}

\def\sep{\;|\;}

\def\maps{\colon}

\def\Ad{\mathrm{Ad}}
\renewcommand{\varepsilon}{\epsilon}

\newcommand{\incl}{\hookrightarrow}
\renewcommand{\iff}{\quad\Leftrightarrow\quad}

\makeatletter
\renewenvironment{proof}[1][\nameProof]
  {\par\pushQED{\qed}%
   \normalfont \topsep6\p@\@plus6\p@\relax
   \trivlist
   \item[\hskip\labelsep
         \itshape
         #1\@addpunct{.}]
  \leavevmode}
  {\popQED\endtrivlist\@endpefalse}
\makeatother

\def\notebox#1#2{\begin{minipage}[b]{#1}\sloppy\renewcommand{\baselinestretch}{0.8}\footnotesize \begin{center}#2\end{center}\end{minipage}}

\def\mquad{\hspace{-2em}}
\def\mqquad{\hspace{-4em}}

\newcommand{\arr}[1][r]{\ar@<0.7ex>[#1]\ar@<-0.7ex>[#1]}
\newcommand{\arrr}[1][r]{\ar@<1.4ex>[#1]\ar[#1]\ar@<-1.4ex>[#1]}
\newcommand{\arrrr}[1][r]{\ar@<2.1ex>[#1]\ar@<-2.1ex>[#1]\ar@<0.7ex>[#1]\ar@<-0.7ex>[#1]}

\def\erf#1{(\ref{#1})}

\def\stackref#1#2{\stackrel{\text{\ref{#1}}}{#2}}
\def\eqref#1{\stackref{#1}{=}}

\newlength{\myeqt} 
\newlength{\myeqs} 
\newlength{\myeqm} 
\newlength{\myeqn} 
\newcommand\symtext[3][\myeqn]{
  \settowidth{\myeqt}{#2}
  \settowidth{\myeqs}{$#3$}
  \addtolength{\myeqs}{\the\myeqm}
  \ifdim\myeqt>\myeqs
    \stackrel{\hspace{-#1}\notebox{#1}{\medskip #2 \\ $\downarrow$\smallskip}\hspace{-#1}}{#3}
  \else
    \stackrel{\text{#2}}{#3}
  \fi}
\newcommand\eqcref[2][\myeqn]{\symcref[#1]{#2}{=}}

\newcommand\symcref[3][\myeqn]{\symtext[#1]{\cref{#2}}{#3}}

\def\brackets#1{\IfStrEq{#1}{-}{}{(#1)}}
\def\subindex#1{\IfStrEq{#1}{-}{}{_{#1}}}

\newcommand{\alxydim}[2]{\begin{aligned}\xymatrix#1{#2}\end{aligned}}

\makeatletter
\def\bigset#1#2{\left\lbrace\;\begin{minipage}[c]{#1}\begin{center}#2\end{center}\end{minipage}\;\right\rbrace}

\makeatother

\newlength{\myl}
\newcommand\sheaf[1]{\unitlength 0.1mm
  \settowidth{\myl}{$#1$}
  \addtolength{\myl}{-0.8mm}
  \begin{picture}(0,0)(0,0)
  \put(2,0){\text{\underline{\hspace{\myl}}}}
  \end{picture}#1\hspace{-0.15mm}}

\def\ddt#1#2#3{\left.\frac{\mathrm{d}^{\IfStrEq{#1}{1}{}{#1}}}{\mathrm{d}#2}\IfStrEq{#2}{#3}{\right.}{\right|_{#3}}}

\def\Aut{\mathrm{Aut}}
\def\AUT{\mathscr{A}\mathrm{ut}}

\newcommand{\ueins}{{\mathrm{U}}(1)}

\newcommand{\spin}[1]{{\mathrm{Spin}}\brackets{#1}}
\def\Spin{\spin}

\newcommand{\str}[1]{{\mathrm{String}}\brackets{#1}}

\def\man{\mathcal{M}\!a\!n}
\def\Man{\mathscr{M}\mathrm{an}}

\def\des{\mathcal{D}\!esc}
\def\fun{\mathcal{F}un}

\def\hom{\mathcal{H}\!om}
\def\Hom{\mathscr{H}\mathrm{om}}

\def\trivtech{\mathcal{T}\hspace{-0.34em}r\hspace{-0.06em}i\hspace{-0.07em}v}
\def\triv#1{\trivtech\brackets{#1}}
\def\trivcon#1{\trivtech^{\nabla\!}\brackets{#1}}

\def\lift#1#2{#1\text{-}\mathcal{L}i\!f\!t(#2)}
\def\liftcon#1#2#3{#1\text{-}\mathcal{L}i\!f\!t^{\nabla\!}_{#3}(#2)}

\def\idmorph#1{#1_{dis}}

\def\pr{{\mathrm{pr}}}

\newlength{\widthtmp}
\def\length#1{\settowidth{\widthtmp}{#1}\the\widthtmp}

\def\lli#1{{}_#1}
\def\ttimes#1#2{\hspace{-0.15em}\tensor[_{#1}]{\times}{_{#2}}}

\def\buntech#1#2{\mathcal{B}\hspace{-0.01em}un_{\hspace{0.05em}#1}^{#2}}
\def\trivlin{\mathbf{I}}

\def\bun#1#2{\buntech{#1}{}\brackets{#2}}
\def\buncon#1#2{\buntech{#1}{\nabla}\hspace{-0.05em}\brackets{#2}}

\def\bunconflat#1#2{\buntech#1{\nabla_{\!0}}\hspace{-0.05em}\brackets{#2}}

\def\grbtech#1{\mathcal{G}\hspace{-0.06em}r\hspace{-0.06em}b_{\hspace{0.02em}{#1}}}

\def\grbcon#1#2{\grbtech{#1}^{\nabla\!}\brackets{#2}}

\def\trivgrbcon#1#2{\trivtech\grbtech{{#1}}^{\nabla\!}\brackets{#2}}

\usepackage[latin1]{inputenc}
\usepackage[english]{babel}
\usepackage[english]{cleveref}

\def\refname{References}

\def\quot#1{``#1''}

\def\quand{\quad\text{ and }\quad}
\def\quomma{\quad\text{, }\quad}
\def\quere{\quad\text{ where }\quad}
\def\quith{\quad\text{ with }\quad}

\def\nameTheorem{Theorem}
\def\nameTheorems{Theorems}
\def\nameDefinition{Definition}
\def\nameDefinitions{Definitions}
\def\nameCorollary{Corollary}
\def\nameCorollaries{Corollaries}
\def\nameProposition{Proposition}
\def\namePropositions{Propositions}
\def\nameConstruction{Construction}
\def\nameConstructions{Constructions}
\def\nameLemma{Lemma}
\def\nameLemmas{Lemmas}
\def\nameRemark{Remark}
\def\nameRemarks{Remarks}
\def\nameProblem{Problem}
\def\nameProblems{Problems}
\def\nameExample{Example}
\def\nameExamples{Examples}
\def\nameExercise{Exercise}
\def\nameExercises{Exercises}
\def\nameWarnung{Warning}
\def\nameWarnungs{Warnings}

\def\nameProof{Proof}
\def\nameEquation{Eq.}
\def\nameEquations{Eqs.}
\def\nameSection{Section}
\def\nameSections{Sections}
\def\nameAppendix{Appendix}
\def\nameAppendixs{Appendices}
\def\nameFigure{Figure}
\def\nameFigures{Figures}

\input{theorems}

\def\ff{f\!f}

\def\gen{g\hspace{-0.08em}e\hspace{-0.06em}n}
\def\adj{a\hspace{-0.08em}d\hspace{-0.06em}j}
\def\aptadj{apt\text{-}a\hspace{-0.08em}d\hspace{-0.06em}j}
\def\con#1#2{\mathcal{C}\!on_{#1}\brackets{#2}}
\def\congen#1#2{\mathcal{C}\!on^{{\gen}}_{#1}\brackets{#2}}
\def\conaa#1#2{\mathcal{C}\!on^{{\aptadj}}_{#1}\brackets{#2}}
\def\conadj#1#2{\mathcal{C}\!on^{{\adj}}_{#1}\brackets{#2}}

\def\conff#1#2{\mathcal{C}\!on^{{f\!f}}_{#1}\brackets{#2}}

\def\trivgrbconaa#1#2{\trivtech\grbtech{{#1}}^{\nabla\!}\brackets{#2}^{adpt}}
\def\trivgrbconff#1#2{\trivtech\grbtech{{#1}}^{\nabla\!}\brackets{#2}^{f\!f}}
\def\grbconff#1#2{\grbtech{#1}^{\nabla\!}\brackets{#2}^{f\!f}}
\def\grbconadj#1#2{\grbtech{#1}^{\nabla\!}\brackets{#2}^{adj}}

\def\grbconaa#1#2{\grbtech{#1}^{\nabla\!}\brackets{#2}^{a\hspace{-0.08em}d\hspace{-0.05em}p\hspace{-0.06em}t}}
\def\inv{\mathrm{inv}}
\def\der{\mathfrak{der}}
\def\refine{r\hspace{-0.08em}e\hspace{-0.15em}f}
\def\dis{d\hspace{-0.05em}i\hspace{-0.08em}s}

\def\bibun#1#2{\mathcal{B}i\bun#1#2}
\def\bibuncon#1#2{\mathcal{B}i\buncon#1#2}

\def\pflat{\pi_0\text{-flat}}
\def\pcurved#1{\pi_0\text{-}#1}

\def\checknerve#1{C(#1)}
\def\prestack#1{\overline{#1}}

\def\1mor#1{1\text{-}\mathrm{Mor}(#1)}
\def\2mor#1{2\text{-}\mathrm{Mor}(#1)}

\crefname{equation}{\unskip}{\unskip}

\def\ADJ{\mathcal{A}dj}
\def\BiGrpd{\mathcal{B}i\mathcal{G}\hspace{-0.05em}r\hspace{-0.05em}pd}

\usepackage{cancel}
\usepackage{tikz-cd}

\title{Adjusted connections on non-abelian bundle gerbes}
\author{Konrad Waldorf}

\email{konrad.waldorf@uni-greifswald.de}

\adress{Universität Greifswald\\
Institut für Mathematik und Informatik\\
Walther-Rathenau-Str. 47\\
D-17487 Greifswald}

\keywords{}
\msc{}

\denseversion
\allowdisplaybreaks
\usepackage{amstext}
\usepackage{amssymb}
\usepackage{amsmath}
\usepackage{color}

\usepackage[normalem]{ulem}
\usepackage{todonotes}

\begin{document}


\maketitle 

\begin{abstract}
Higher gauge theory for non-abelian structure 2-groups faces significant challenges when extending beyond the fake-flat sector, which suffers from limited applicability in physical models. A promising resolution involves equipping 2-groups with additional structure, known as adjustments. 
We present a comprehensive theory of adjusted connections on non-abelian bundle gerbes, classified by Saemann's adjusted version of non-abelian differential cohomology. This theory enables, in particular, a new coordinate-independent formulation of Tellez-Dominguez' lifting theorem, establishing a correspondence between adjusted connections on non-abelian bundle gerbes and connections on abelian bundle 2-gerbes.
\end{abstract}

\setcounter{tocdepth}{2}
\mytableofcontents


\setsecnumdepth{1}

\section{Introduction}

Bundle gerbes serve as the most common differential-geometric realization of bundles for categorical Lie groups, also known as Lie 2-groups. Their primary advantages are twofold: (1)  the structural simplicity, relying solely on basic differential geometry, and (2) the accessibility of connections through ordinary connections and differential forms. Among bundle gerbes, those associated with the Lie 2-group $B\ueins$ (abelian bundle gerbes) are the most prevalent. These were first developed by Murray, Carey, and collaborators \cite{murray,carey5,murray2,carey2} and were instantly adopted by Gaw\c edzki for applications in conformal field theory \cite{gawedzki5,gawedzki1,gawedzki2}, where connections on abelian bundle gerbes model the B-field in string theory.

Generalizing this framework to Lie 2-groups beyond $BA$ (where $A$ is an abelian Lie group) proved challenging. Aschieri, Cantini, and Jur\v co pioneered a generalization to a non-abelian Lie group $H$ in \cite{aschieri}. While they also investigated connections, they immediately encountered difficulties in defining them consistently beyond the \quot{fake-flat} regime.

A key conceptual hurdle was to recognize that the adequate generalization of abelian Lie groups is not merely non-abelian Lie groups (which led to the term \quot{non-abelian bundle gerbes}), but rather Lie 2-groups. Non-abelian Lie groups $H$ embed into this setting through their automorphism 2-group $\AUT(H)$. In this article, we model general Lie 2-groups through their Lie crossed modules $\Gamma=(H \stackrel{t}{\to} G)$.

The above-mentioned fake-flatness is a condition for categorical connections, discovered by Breen and Messing \cite{breen1}. Locally, such a $\Gamma\!$-connection consists of a pair $(A,B)$ comprising a 1-form $A\in \Omega^1(U,\mathfrak{g})$ and a 2-form $B\in \Omega^2(U,\mathfrak{h})$. Their fake-curvature is defined as
\begin{equation*}
\mathrm{fcurv}(A,B) := \mathrm{d}A + \tfrac{1}{2}[A\wedge A]- t_{*}(B) \in \Omega^2(U,\mathfrak{g})\text{.}
\end{equation*}  
Here, $t_{*}: \mathfrak{h} \to \mathfrak{g}$ denotes the Lie algebra crossed module associated with $\Gamma$. Fake-flatness means the vanishing of the fake curvature. Imposing this condition simplifies the theory of connections on non-abelian gerbes. For instance, in our joint work with Schreiber \cite{schreiber2,schreiber6,schreiber5}, we demonstrated that fake-flat connections admit a well-defined parallel transport along surfaces, and furthermore, that fake-flatness is necessary for this feature. However, in many physical applications, fake-flatness is an undesirable constraint; for example, in string theory, it would restrict the application of bundle gerbes for the string 2-group $\Gamma=\str d$ to sigma models on \emph{flat} target spaces.

{Naively abandoning fake-flatness, surprisingly, jeopardizes the existence of 2-morphisms in the bicategory of $\Gamma\!$-connections (see \cref{rem:twcurvunder2}). These 2-morphisms are essential for the gluing conditions on triple intersections.}
Fortunately, Saemann et al. resolved this issue by modifying a single detail in the definition of the 1-morphisms (the gauge transformations) \cite{Kim2020,Rist2022}. Specifically, a gauge transformation is a pair $(g,\varphi)$ consisting of a function $g:U \to G$ and a 1-form $\varphi \in \Omega^1(U,\mathfrak{h})$. One can then define a curvature notion, $\mathrm{curv}(g,\varphi) \in \Omega^2(U,\mathfrak{h})$. In the fake-flat case, one requires $\mathrm{curv}(g,\varphi)=0$; however, insisting on this condition for non fake-flat $\Gamma\!$-connections leads to the before-mentioned constraints on 2-morphisms.  
The crucial insight was to relate the fake-curvature to the curvature of these gauge transformations. This coupling necessitates a new additional structure on Lie 2-groups, termed an \emph{adjustment}. An adjustment is a map
\begin{equation}
\label{adj-intro}
\kappa: G \times \mathfrak{g} \to \mathfrak{h}
\end{equation}
that is linear in $\mathfrak{g}$ and satisfies several further conditions. The corrected condition on gauge transformations reads
\begin{equation}
\label{adj-intro-2}
\mathrm{curv}(g,\varphi) = \kappa(g,\mathrm{fcurv}(A,B))\text{.}
\end{equation}
Saemann et al. utilized this local formalism to establish an adjusted version of non-abelian differential cohomology, denoted $\hat\h^1(M,\Gamma)^{\adj}$. It represents the most general possible version of non-abelian higher gauge theory. 

In the case of fake-flatness, \cref{adj-intro-2} reduces to $\mathrm{curv}(g,\varphi) = 0$, which becomes independent of the adjustment.
Moreover, it follows immediately from the setup in \cref{adj-intro} that abelian Lie 2-groups $BA$ (where $\mathfrak{g}=0$) possess a unique adjustment $\kappa=0$; similarly, for a categorically discrete Lie 2-group $F_{dis}$ associated with an ordinary Lie group $F$ (which embeds standard gauge theory into higher gauge theory), we have $\mathfrak{h}=0$, again forcing $\kappa=0$. This clarifies why adjustments do not appear in either fake-flat or abelian higher gauge theory, or in ordinary gauge theory.
A comprehensive discussion of the existence and classification of adjustments on Lie 2-groups was recently presented in our joint work with Ludewig \cite{Ludewig2025}.

With the local formalism established via adjustments, the primary aim of this paper is to develop a corresponding formalism for adjusted connections on bundle gerbes. Building on the work of Nikolaus and Schweigert, bundle gerbes arise as the sheafification of the local situation, specifically, of the presheaf $U \mapsto \conadj{\Gamma\!,\kappa} U$ of adjusted $\Gamma\!$-connections. Thus, the first goal of this paper is to provide a detailed procedural elaboration of this sheafification. In \cref{Lie-2-groups-and-connections}, we review and set up the local formalism of adjusted Lie 2-group connections outlined above. In \cref{section-2-group-bundles}, we review principal 2-group bundles, introduce adjusted connections on them, and prove that this construction provides a key ingredient for the sheafification. In \cref{sec:bundlegerbes}, we perform the sheafification and describe the resulting structure -- the bicategory of non-abelian bundle gerbes with adjusted connections -- in detail:
\begin{equation*}
\grbcon {\Gamma\!,\kappa} M := \big(\overline{\conadj{\Gamma\!,\kappa} -}\big )^{+}(M)
\end{equation*}
We also compare our results for $\Gamma=\AUT(H)$ with the work of Aschieri et al. in \cref{prop:equivbibun,ascj-2,rem:aschieri}. We find full coincidence up to conventional differences, with the notable addition of adjustments on our side. The following theorem summarizes the consequences of our setup, see \cref{th:equivcongrb,classification}.   

\begin{maintheorem}
\label{main:one}
Let $\Gamma$ be a central Lie 2-group equipped with an adjustment, let $M$ be a smooth manifold, and let $\grbcon{\Gamma\!,\kappa} M$ be the bicategory of $\Gamma\!$-bundle gerbes with adjusted connection. The following statements hold:
\begin{enumerate}[(i)]

\item 
\label{main:one:1}
The assignment $M \mapsto \grbcon{\Gamma\!,\kappa} M$ defines a sheaf of bicategories on the site of smooth manifolds. 

\item
\label{main:one:2}
Up to isomorphism, $\Gamma\!$-bundle gerbes with adjusted connection are classified by the adjusted non-abelian differential cohomology of Saemann et al.:
\begin{equation*}
\hc 0 (\grbcon{\Gamma\!,\kappa} M)\cong\hat\h^1(M,(\Gamma,\kappa))^{\adj}\text{.}
\end{equation*}

\end{enumerate}
\end{maintheorem}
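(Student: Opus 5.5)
The plan is to derive both statements from the description of $\grbcon{\Gamma\!,\kappa} M$ as the sheafification $\big(\overline{\conadj{\Gamma\!,\kappa} -}\big)^{+}(M)$ of the presheaf of adjusted $\Gamma\!$-connections, using the Nikolaus--Schweigert plus construction. The two parts then rest on, respectively, the general stackification theory and an explicit unpacking of descent data over covers.

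For \eqref{main:one:1}, the plus construction applied to a presheaf of bicategories that is a \emph{prestack} yields a sheaf (2-stack) of bicategories. The first step is therefore to verify that $U \mapsto \conadj{\Gamma\!,\kappa} U$ is a presheaf of bicategories. Concretely:
\begin{enumerate}[(a)]
\item pullbacks of adjusted connections, of adjusted gauge transformations satisfying $\mathrm{curv}(g,\varphi)=\kappa(g,\mathrm{fcurv}(A,B))$, and of 2-morphisms along smooth maps are again of the same type;
\item these pullbacks are strictly functorial.
\end{enumerate}
Both points should be immediate from the naturality of $\mathrm{fcurv}$, $\mathrm{curv}$ and $\kappa$.

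The second step is to pass to the prestack $\overline{\conadj{\Gamma\!,\kappa} -}$. Here I would invoke the result of \cref{section-2-group-bundles}: trivial principal $\Gamma\!$-bundles with adjusted connection on $U$ form a bicategory equivalent to $\conadj{\Gamma\!,\kappa} U$. Since adjusted connections on principal 2-bundles glue, their Hom-categories form sheaves. Hence replacing the local presheaf by the full sub-presheaf of trivial adjusted principal 2-bundles produces a presheaf whose Hom-presheaves are stacks, which is the prestack condition. Applying the Nikolaus--Schweigert theorem to this prestack gives \eqref{main:one:1}.

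For \eqref{main:one:2}, I would compute $\hc 0$ of the plus construction. An object over $M$ consists of a surjective submersion $Y\to M$, an adjusted connection $(A,B)$ on $Y$, a 1-morphism over $Y^{[2]}$ (a $\Gamma\!$-bibundle with adjusted connection, or locally an adjusted gauge transformation $(g,\varphi)$), and a 2-morphism over $Y^{[3]}$ satisfying the pentagon-type cocycle over $Y^{[4]}$. Refining to a good open cover $\{U_i\}$ makes all bundles on intersections trivializable. The data then become exactly Saemann's adjusted cocycles: $(A_i,B_i)$, $(g_{ij},\varphi_{ij})$ with $\mathrm{curv}(g_{ij},\varphi_{ij})=\kappa(g_{ij},\mathrm{fcurv}(A_j,B_j))$, and $a_{ijk}\maps U_{ijk}\to H$ with the usual compatibilities. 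Isomorphisms of bundle gerbes translate likewise into adjusted coboundaries. Since refinement of covers induces isomorphisms in the plus construction, one obtains the required bijection onto $\hat\h^1(M,(\Gamma,\kappa))^{\adj}$.

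The main obstacle is the compatibility of adjusted gauge transformations with composition and 2-morphisms. One must check that the adjustment condition is stable under horizontal composition, which uses the cocycle-type axiom of $\kappa$ for products $g_1g_2$, and that the 2-morphisms behave consistently. This is needed both for the local bicategory to be well defined and for the cocycle equations to match Saemann's exactly, conventions included. I expect the centrality hypothesis on $\Gamma$ to enter here: for central $\Gamma$ the required trivializations exist locally, and the Hom-presheaves are sheaves, making the prestack step valid. I would address this first by a direct check with the adjustment axioms.
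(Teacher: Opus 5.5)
The overall architecture matches the paper's. Part (i) comes from the Nikolaus--Schweigert theorem that the plus construction of a pre-2-stack is a 2-stack. Part (ii) comes from unpacking the plus construction over refined covers on which all bundles become trivial; that refinement argument is a hands-on version of \cref{sheafification-shortcut} and is fine.

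The gap is in the step that is supposed to produce the pre-2-stack. You propose to replace $\conadj{\Gamma\!,\kappa}-$ by the full sub-presheaf of \emph{trivial} principal $\Gamma\!$-bundles with adjusted connection. You then argue that its Hom-presheaves are stacks because adjusted connections on principal 2-bundles glue. But, as you note yourself, that sub-presheaf is equivalent to $\conadj{\Gamma\!,\kappa}-$. A full sub-presheaf of a stack is a stack only if it is closed under gluing, and trivial bundles are not.

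Already for $\Gamma=B\ueins$ the failure is visible. The Hom-groupoid from $(0,B)$ to $(0,B')$ in $\conadj{\Gamma\!,\kappa}{U}$ consists of 1-forms $\varphi$ with $\mathrm{d}\varphi=B-B'$, i.e.\ connections on the \emph{trivial} bundle. Over $S^2$, with $B-B'$ the curvature of a non-trivial $\ueins$-bundle, this groupoid is empty. Yet local objects exist and their descent data glue. So your argument, as written, would prove a false statement.

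What is missing are the two facts the paper actually uses.

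First, separatedness of $\conadj{\Gamma\!,\kappa}-$ (\cref{rem:gammaconn:prepre}). Its 2-morphisms are $H$-valued functions, which form a sheaf, and pullback along surjective submersions is injective. Given this, the Hom-set closure $\prestack{\conadj{\Gamma\!,\kappa}-}$ is a pre-2-stack by general theory (\cref{lem:pre2stackification}), since its Hom-presheaves are by construction the 1-categorical stackifications of prestacks. No geometric gluing argument is needed at this stage.

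Second, to connect this with bundle gerbes you must identify $\prestack{\conadj{\Gamma\!,\kappa}-}$ with $\trivgrbcon{\Gamma\!,\kappa}-$. The 1-morphisms of the latter are \emph{all} principal $\Gamma\!$-bundles with $B$-$B'$-adjusted $A$-$A'$-connection, not only trivial ones (\cref{lem:twistedconnequiv}). The substantive point is essential surjectivity of the reconstruction functor. One extracts local data along the bundle projection $P\to X$ itself, with $g=\phi$, $\varphi=\omega$ and $a(ph,p)=h$. The identity $\mathrm{curv}(g,\varphi)=\mathrm{curv}_{B,B'}(\omega)$ then shows that adjustedness of $\omega$ is exactly adjustedness of this gauge transformation. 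Only with this identification does your observation that adjusted 2-bundles glue apply to the right presheaf.

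Two smaller points. Your guess about centrality is off: local trivializability and the gluing of bundles do not use it. In this paper centrality is just the standing hypothesis under which adjustments are defined. Likewise, the closure of adjusted gauge transformations under composition and inversion, which you call the main obstacle, is a routine prerequisite for $\conadj{\Gamma\!,\kappa}{U}$ to be a bigroupoid, not the crux of the argument.
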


The second objective of this paper is to consider the homotopy Lie groups $A=\pi_1(\Gamma)$ and $F=\pi_0(\Gamma)$ of a central and smoothly separable Lie 2-group, and to investigate the relation between $\Gamma\!$-bundle gerbes, abelian $A$-bundle gerbes, and principal $F$-bundles, in a setting with connections.
This investigation reveals the necessity to consider the subcategory 
\begin{equation*}
\grbconaa{\Gamma,u,\kappa} M \subset \grbcon{\Gamma\!,\kappa} M
\end{equation*}
consisting of $\Gamma\!$-bundle gerbes with \emph{adapted} adjusted connections, a concept discovered by T\'ellez-Dom\'inguez \cite{Tellez2023}. Another additional structure for Lie 2-groups, a so-called \emph{splitting} $u$, becomes important. A splitting singles out a subset of \emph{adapted} adjustments and, consequently, a subset of \emph{adapted} and adjusted $\Gamma\!$-connections. A corresponding version of \Cref{main:one} holds for adapted and adjusted connections: (i) $\Gamma\!$-bundle gerbes with adapted and adjusted connections form a sheaf of bicategories, and (ii) they are classified by an adapted and adjusted version of differential cohomology. 
Moreover, we establish the following result, which summarizes \cref{gerbe-morphisms-under-i}. 

\begin{maintheorem}
\label{main:one:3}
If $\Gamma$ is a smoothly separable central Lie 2-group, with homotopy Lie groups $A=\pi_1(\Gamma)$ and $F=\pi_0(\Gamma)$, there exists an exact sequence
\begin{equation*}
\grbcon A M \stackrel{i_{*}}\to \grbconaa{\Gamma,u,\kappa} M \stackrel{p_{*}}\to \buncon FM_{\dis}
\end{equation*}
of bicategories and functors, expressing the inclusion of abelian bundle gerbes with connection and the projection to an ordinary principal bundle with connection, respectively. Exactness here means that for a $\Gamma\!$-bundle gerbe $\mathcal{G}$ with adapted and adjusted connection, the following are equivalent: 
\begin{enumerate}[(1)]

\item 
Its structure 2-group can be reduced to $A$; i.e., there exists an $A$-bundle gerbe $\mathcal{G}^{ab}$ with connection and an isomorphism $\mathcal{G}\cong i_{*}(\mathcal{G}^{ab}).$ 

\item
$p_{*}(\mathcal{G})$ admits a flat section, i.e., one with vanishing covariant derivative. 

\end{enumerate}
\end{maintheorem}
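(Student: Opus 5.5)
We will argue locally and then use descent, since both sides of the equivalence are local on $M$ by the sheaf property of \cref{main:one}~\eqref{main:one:1} (in its adapted version) and of $\buncon FM_{\dis}$. Fix a surjective submersion $\pi\maps Y \to M$ over which $\mathcal{G}$ is given by a cocycle: an adapted adjusted $\Gamma$-connection $(A,B)$ on $Y$, a gauge transformation $(g,\varphi)$ on $Y^{[2]}$ satisfying \cref{adj-intro-2}, and a 2-morphism $h$ on $Y^{[3]}$. The principal $F$-bundle $p_{*}(\mathcal{G})$ with connection is then represented by the cocycle $(p_*A, p(g))$. This uses $F=G/t(H)$, that $p\circ t$ is trivial, and that $p_*\mathrm{fcurv}(A,B)=\mathrm{curv}(p_*A)$. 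The statement thus reduces to comparing two descent problems.

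For (1)$\Rightarrow$(2), we note first that $p\circ i$ is the trivial functor. Hence $p_*i_*(\mathcal{G}^{ab})$ is the trivial $F$-bundle with trivial connection, which has the constant flat section $1$. An isomorphism $\mathcal{G}\cong i_*(\mathcal{G}^{ab})$ induces an isomorphism of $F$-bundles with connection, and this transports the section to a flat section of $p_*(\mathcal{G})$.

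For (2)$\Rightarrow$(1), we take a flat section $s$ and, after refining $Y$, represent it by a map $s\maps Y\to F$ with $s=p(g)\cdot s'$ on $Y^{[2]}$ and $p_*A = -s^{*}\bar\theta$ (up to conventions). Smooth separability says that $G\to F$ is a principal $t(H)$-bundle, so we may lift $s$ locally to $\tilde s\maps Y\to G$ after a further refinement. We then apply the gauge transformation $(\tilde s^{-1},0)$, corrected by a suitable 1-form $\psi$ chosen so that \cref{adj-intro-2} holds. This makes $p_*A'=0$, so $A'$ takes values in $t_*(\mathfrak{h})$. Since $\ker(t_*)=\mathfrak{a}$, where $\mathfrak{a}$ is the Lie algebra of $A$, we can write $A'=t_*(\alpha)$ and perform a second gauge transformation $(1,\alpha)$ to achieve $A''=0$. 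The transition functions $g''$ then lie in $\ker p = t(H)$. By lifting them through $t$ and adjusting by 2-morphisms, we reach a cocycle with $g=1$. For this cocycle the gluing $h$ takes values in $\ker t = A$, and since $A$ is central these data form an abelian $A$-cocycle $(B'',\varphi'',h'')$. We still have to check that the remaining conditions reduce to the abelian ones. With $A''=0$ we get $\mathrm{fcurv}=-t_*B''=0$, because $t_*B''$ is forced to be the curvature of the trivial connection; hence $B''$ is $\mathfrak{a}$-valued. With $g=1$, the adjustment term $\kappa(1,\cdot)$ vanishes, which is one of the adjustment axioms. Gluing these local identifications by descent then produces $\mathcal{G}^{ab}$ together with an isomorphism $\mathcal{G}\cong i_*(\mathcal{G}^{ab})$.

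We expect the main obstacle to be the gauge-fixing step: showing that the gauge transformations used to kill $p_*A$ and then $A$ can be chosen to be \emph{adapted} and adjusted, i.e.\ to respect \cref{adj-intro-2} while remaining compatible with the splitting $u$. This is precisely where adaptedness should enter. Using the splitting $u\maps\mathfrak{g}\to\mathfrak{h}$, we would set $\psi:=u(\tilde s^{*}\theta)$-type corrections and check that the adapted condition on $\kappa$, namely that $\kappa(g,X)$ is compatible with $u$, forces the required curvature identity. The remaining verifications are routine cocycle manipulations.
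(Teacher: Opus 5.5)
Your skeleton matches the paper's proof of \cref{gerbe-morphisms-under-i}: lift the section through $p\maps G\to F$ (the paper uses $Y':=Y\times_F G$), and gauge $A$ away using the splitting (the paper does this in one step with $\varphi:=-u(\rho^{*}A+g^{*}\bar\theta)$). Then observe that the curving becomes $\mathfrak{a}$-valued, and reduce the remaining data to $A$. However, the step that carries the actual content is justified incorrectly. From $A''=0$ alone you cannot conclude $t_{*}B''=0$. The curvature of $p_{*}A''=0$ is $p_{*}\mathrm{fcurv}(0,B'')=-p_{*}t_{*}B''$, which vanishes identically and says nothing about $t_{*}B''$. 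Indeed, without adaptedness the implication (2)$\Rightarrow$(1) is false. The gerbe $\mathcal{I}_{0,B}$ with $t_{*}B\neq 0$ has $p_{*}(\mathcal{I}_{0,B})$ trivial with a flat section. Yet it is not isomorphic to any $i_{*}(\mathcal{G}^{ab})$, since these are fake-flat and fake-flatness is invariant under 1-morphisms (\cref{fake-curvature-under-isomorphisms}).

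The missing idea is that a flat section makes $p_{*}(\mathcal{G})$ flat, so $F_A=0$, and adaptedness then gives $\mathrm{fcurv}(A,B)=q_u(F_A)=0$. So the gerbe is fake-flat from the outset (\cref{flatness-of-underlying-bundle}). This is where adaptedness enters. It is exactly what makes the paper's identity $t_{*}\big(\mathrm{d}\varphi+\tfrac12[\varphi\wedge\varphi]+\alpha_{*}(\rho^{*}A\wedge\varphi)+\rho^{*}B\big)=-\rho^{*}\mathrm{fcurv}(A,B)$ vanish, producing $b\in\Omega^2(Y',\mathfrak{a})$. In your stepwise version you could alternatively use that adjusted gauge transformations preserve $u(\mathrm{fcurv})$ when $\kappa$ is adapted. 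Then $u(t_{*}B'')=0$, hence $t_{*}B''=t_{*}ut_{*}B''=0$.

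Conversely, the obstacle you single out is not one. Adjustedness of $(g,\psi)\maps(A,B)\to(A',B')$ constrains only the target 2-form, which is free. You simply set $B':=\kappa(g,\mathrm{fcurv}(A,B))+(\alpha_g)_{*}(B)-\mathrm{d}\psi-\tfrac12[\psi\wedge\psi]-\alpha_{*}(A'\wedge\psi)$, and no choice of $\psi$ is needed. Once fake-flatness is known, all these gauge transformations are flat and $\kappa$ drops out entirely.

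Lifting the transition functions $g''\maps Y^{[2]}\to t(H)$ through $t$ also needs care. It requires the pulled-back principal $A$-bundle $H\to H/A\cong t(H)$ to be trivial over $Y^{[2]}$. This fails for general surjective submersions, so you must pass to a good open cover. The paper avoids the issue by keeping the $\Gamma\!$-bundle $P'=(\pr_2^{*}\trivlin_g^{\varphi})^{*}\otimes(\rho^{[2]})^{*}P\otimes\pr_1^{*}\trivlin_g^{\varphi}$. It notes $p_{*}P'=1$ and reduces $P'$ to its anchor-one subbundle via \cref{bundle-morphisms-under-i}.

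Finally, despite your first sentence, condition (2) is not local on $M$. What you actually carry out is a global cocycle manipulation, which is fine.
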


We also generalize the statement of \cref{main:one:3} to include arbitrary (not necessarily flat) sections (\cref{gerbe-morphisms-under-i-non-flat}), at the cost of varying the functor $i_{*}$ depending on the covariant derivative of the section.

The third objective of this paper is to establish the lifting theory for non-abelian bundle gerbes with adjusted connections. This part synthesizes our joint work with Nikolaus \cite{Nikolausa}, which treats non-abelian bundle gerbes without connections, and the work of T\'ellez-Dom\'inguez \cite{Tellez2023}, who addressed connections  and already achieved our goal at the cocycle level. The goal of lifting theory is to determine when a given principal $F$-bundle with connection arises as $p_{*}(\mathcal{G})$ for some non-abelian bundle gerbe with connection. In \cite{Nikolausa}, we proved that -- without connections -- the obstruction is represented by an abelian bundle 2-gerbe, the Chern-Simons 2-gerbe $\mathbb{CS}_P(\mathcal{G}_{\Gamma})$ constructed from the given bundle $P$ and the multiplicative $A$-bundle gerbe $\mathcal{G}_\Gamma$ associated to the Lie 2-group $\Gamma$. In \cref{multiplicative-bundle-gerbe} we review the construction of $\mathcal{G}_{\Gamma}$, in particular, T\'ellez-Dom\'inguez' work on how an adjustment $\kappa$ induces a connection on $\mathcal{G}_{\Gamma}$. The following result is proved as \cref{lifting-theorem}.

\begin{maintheorem}
\label{main:two}
Let $\Gamma$ be a smoothly separable, central crossed module equipped with a splitting and an adapted adjustment. Let $P$ be a principal $F$-bundle with connection. Then, there is an equivalence of bicategories: 
\begin{equation*}
\bigset{14em}{Trivializations of the Chern-Simons 2-gerbe $\mathbb{CS}_P(\mathcal{G}_{\Gamma})$ with compatible connections}
\cong 
\bigset{14em}{$\Gamma\!$-bundle gerbes $\mathcal{G}$ with adjusted and adapted connections such that $p_{*}(\mathcal{G})\cong P$}
\end{equation*}
\end{maintheorem}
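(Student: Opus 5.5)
I will prove \cref{main:two} by adding connections to the lifting theorem of \cite{Nikolausa}, along the following route. First I replace the right-hand side by a bicategory of \emph{pairs} $(\mathcal{G},\psi)$, where $\mathcal{G}$ is a $\Gamma\!$-bundle gerbe with adapted adjusted connection and $\psi\maps p_{*}(\mathcal{G})\to P$ is a connection-preserving isomorphism. On the left-hand side I work with trivializations of $\mathbb{CS}_P(\mathcal{G}_\Gamma)$ relative to the surjective submersion $\pi\maps P\to M$. Concretely, such a trivialization consists of
\begin{itemize}
\item an $A$-bundle gerbe $\mathcal{S}$ with connection over $P$;
\item a connection-preserving isomorphism $\mathcal{A}\maps \pi_2^{*}\mathcal{S}\cong \pi_1^{*}\mathcal{S}\otimes g^{*}\mathcal{G}_\Gamma$ over $P^{[2]}$, where $g\maps P^{[2]}\to F$ is the difference map and $\mathcal{G}_\Gamma$ carries the connection induced by $\kappa$ (T\'ellez-Dom\'inguez);
\item a 2-isomorphism over $P^{[3]}$ that involves the multiplicative structure of $\mathcal{G}_\Gamma$, subject to the usual coherence over $P^{[4]}$.
\end{itemize}
The 2-gerbe connection on $\mathbb{CS}_P$ also involves the Chern--Simons-type 3-form built from the connection on $P$. \emph{Compatible} means that the curving data of $\mathcal{S}$ and $\mathcal{A}$ match this 3-form and the multiplicative connection. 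I fix these conventions first.

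Next I construct a functor from the left side to the right side. Since $\Gamma$ is smoothly separable and central, $\Gamma\!$-bundle gerbes are the sheafification of $\conaa{\Gamma,u,\kappa}{-}$. Using \cref{main:one}(i), i.e.\ the sheaf property, I glue a $\Gamma\!$-bundle gerbe over $M$ from descent data along $\pi$. Over $P$, the pulled-back bundle $\pi^{*}P$ is canonically trivial, so I take $i_{*}(\mathcal{S})$, corrected by the canonical section of $\pi^{*}P$. Because that section is not flat, I must use the twisted version of $i_{*}$ from \cref{gerbe-morphisms-under-i-non-flat}, which depends on the covariant derivative of the section, i.e.\ the pulled-back connection form. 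The key identification is that the lift of $g^{*}\mathcal{G}_\Gamma$ along $i_{*}$ is the $\Gamma\!$-gerbe morphism between the two trivial $F$-reductions over $P^{[2]}$. In other words, $\mathcal{G}_\Gamma$ with its $\kappa$-connection is exactly the ``change of section'' gerbe. This lets me translate $\mathcal{A}$ and the 2-isomorphism over $P^{[3]}$ into descent data, and $p_{*}$ of the result is $P$ by construction.

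For the inverse direction, given $(\mathcal{G},\psi)$, the pullback $\pi^{*}p_{*}(\mathcal{G})$ has a tautological section. Applying \cref{gerbe-morphisms-under-i-non-flat} again yields $\mathcal{S}$ with $\pi^{*}\mathcal{G}\cong i^{\omega}_{*}(\mathcal{S})$. Comparing the two pullbacks to $P^{[2]}$ produces $\mathcal{A}$, and the higher coherence follows from the fact that the reduction is essentially unique. Finally, I check that the two functors are mutually inverse up to equivalence. This is formal once the local statements hold, since both sides are sheaves of bicategories and every construction is natural in $M$; I would reduce to the case where $P$ is trivial by descent.

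I expect the main obstacle to be the connection-level identification in the second step. One has to show that the adjustment-induced connection on $\mathcal{G}_\Gamma$, together with the Chern--Simons 3-form, matches exactly the failure of $i_{*}$ to commute with changing the section when the connection 1-form is non-flat. This requires the adaptedness of $\kappa$ to $u$ and the adjusted gauge condition \cref{adj-intro-2}. My first attempt will be a local cocycle computation in the style of T\'ellez-Dom\'inguez on a trivializing cover, comparing the $\kappa(g,\mathrm{fcurv})$ terms with the curving of $\mathcal{G}_\Gamma$.
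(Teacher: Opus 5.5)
Your outer architecture matches the paper's. You glue along $\rho\maps P\to M$ using the sheaf property, you take $\widetilde{\mathcal S}=i_*^{(q(\eta),0)}(\mathcal S)$ because the tautological section has covariant derivative $\eta$, and you invert with \cref{gerbe-morphisms-under-i-non-flat}. The gap is the step you call the key identification, which carries essentially all of the content. You only assert it, in a form that is not yet a precise statement, and you propose to check it by local differential-form computations.

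The precise input the paper uses is a canonical connection-preserving trivialization $\mathcal T_\Gamma\maps i_*(\mathcal G_\Gamma)\to\mathcal I_{q(\theta),B}$ over $F$, with $B=\tfrac12\kappa_*(q(\theta)\wedge q(\theta))$ and $p_*(\mathcal T_\Gamma)=\inv$ (\cref{trivialization-of-g-gamma}). Even granting this, you cannot directly ``translate $\mathcal A$''. $\Gamma$-bundle gerbes have no tensor product, so $i_*(\mathcal H\otimes \pr_2^*\mathcal S)$ cannot be split. The paper therefore uses the trivialization trick of \cref{versuchs-iso} to get $i_*(\mathcal H\otimes\pr_2^*\mathcal S)\cong i_*^{(\vartheta,\omega)}(\pr_2^*\mathcal S)$, with $\vartheta=\delta^*q(\theta)$. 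It then applies the adjusted shift of 1-morphisms (\cref{shift-of-1-morphisms}) by $\nu=q(\Ad_\delta^{-1}(\pr_1^*\eta))$, because $\pr_1^*\widetilde{\mathcal S}$ and $\pr_2^*\widetilde{\mathcal S}$ are adjusted extensions along different 1-forms. That shift is exactly where the 2-form $\omega=b_\kappa(\pr_1^*\eta\wedge\delta^*\bar\theta)$ cancels. Your description of $\mathcal A$ omits the factor $\mathcal I_\omega$ in $\mathcal H=\delta^*\mathcal G_\Gamma\otimes\mathcal I_\omega$, which is forced by $\pr_2^*C-\pr_1^*C=\delta^*H_\Gamma+\mathrm{d}\omega$.

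More seriously, your plan gives no mechanism for translating the 2-isomorphism over $P^{[3]}$ or for obtaining the cocycle condition over $P^{[4]}$. For this, $\mathcal T_\Gamma$ must be compatible with the multiplicative structure. One needs a 2-morphism $\sigma_\Gamma$ relating $\mathcal T_\Gamma$, $\mathcal M_\Gamma$ and $\rho_\Gamma$, which is itself coherent with $\alpha_\Gamma$ (\cref{multiplicativity-of-trivialization}). It fills one cell of the diagram that produces $\widetilde\sigma$ from $i_*(\sigma)$, and its coherence combines with that of $\sigma$ to give the cocycle condition.

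The same ingredient is needed in your inverse direction. ``Essential uniqueness of the reduction'' yields $\sigma$ only after the reductions over $P^{[3]}$ have been related through $\mathcal M'$, and that step again uses $\sigma_\Gamma$. A cocycle computation on a trivializing cover of $M$ can verify the form identities, but it does not construct these global 1- and 2-morphisms or their coherence. The paper instead performs the computations once, universally over $F$, $F^2$ and $F^3$, and pulls them back along $\delta$, $\delta_2$ and $\delta_3$.

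Finally, the paper does not build an inverse and reduce to trivial $P$. It maps into the homotopy pullback $\mathcal X$ of descent data over $(P,\eta)$ and checks essential surjectivity and full faithfulness there (\cref{lift-to-X,X-equivalence}).
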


The proof relies on descent theory, specifically on the fact that bundle gerbes with connection form a sheaf, as established in \cref{main:one}. Its most direct consequence is the identification of two so far distinct approaches to geometric string structures, obtained by applying \cref{main:two} to the string 2-group $\Gamma= \str d$ and to the spin-oriented frame bundle $P:=\spin M$ of a $d$-dimensional spin manifold $M$, equipped with the Levi-Civita connection:

\begin{maincorollary}
If $M$ is a spin manifold of dimension $d$, then geometric string structures in the sense of \cite{waldorf8} coincide with $\str d$-bundle gerbes $\mathcal{G}$ with adjusted and adapted connections such that $p_{*}(\mathcal{G})\cong \Spin M$. 
\end{maincorollary}

A remarkable aspect of \cref{main:two} is that the left-hand side involves only the standard abelian theory. Since \emph{every} $\Gamma\!$-bundle gerbe $\mathcal{G}$ is a lift of its own underlying principal $F$-bundle $p_{*}(\mathcal{G})$, this result leads to a complete reformulation of non-abelian gauge theory in terms of abelian gauge theory, albeit shifted one level higher.

\begin{maincorollary}
The bicategory $\grbcon\Gamma M$ is equivalent to a bicategory of pairs $(P,\mathbb{T})$, where $P$ is a principal $F$-bundle with connection over $M$, and $\mathbb{T}$ is a trivialization of the Chern-Simons 2-gerbe $\mathbb{CS}_P(\Gamma)$ with compatible connection.
\end{maincorollary}

For the precise formulation of the bicategory of pairs $(P,\mathbb{T})$ and the construction of the equivalence via \quot{unstraightening}, we refer to \cref{equivalence-with-abelian}. Finally, a further important consequence of \cref{main:two} and related results is the following.

\begin{maintheorem}
\label{main-existence}
Let $\Gamma$ be a smoothly separable, central crossed module equipped with a splitting and an adapted adjustment. Then, every $\Gamma\!$-bundle gerbe admits an adjusted and adapted connection. 
\end{maintheorem}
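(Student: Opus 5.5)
The proof reduces existence of an adjusted and adapted connection on a given $\Gamma\!$-bundle gerbe $\mathcal{G}$ (without connection) to existence results in the abelian theory, via \cref{main:two} and its connection-free counterpart from \cite{Nikolausa}.

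First, consider the underlying principal $F$-bundle $P:=p_{*}(\mathcal{G})$ and equip it with an arbitrary connection, using a partition of unity. By the connection-free lifting theorem of \cite{Nikolausa}, $\mathcal{G}$ corresponds to a trivialization $\mathbb{T}$ of the Chern-Simons 2-gerbe $\mathbb{CS}_P(\mathcal{G}_\Gamma)$, where connections are forgotten. Since $\kappa$ is adapted, T\'ellez-Dom\'inguez' construction reviewed in \cref{multiplicative-bundle-gerbe} makes $\mathcal{G}_\Gamma$ multiplicative with connection. Together with the connection on $P$, this gives $\mathbb{CS}_P(\mathcal{G}_\Gamma)$ a connection.

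The key step is to show that the trivialization $\mathbb{T}$ admits a compatible connection. A trivialization of an abelian bundle 2-gerbe over a surjective submersion $Y\to M$ consists of an $A$-bundle gerbe $\mathcal{S}$ over $Y$, an isomorphism over $Y^{[2]}$, and a 2-isomorphism over $Y^{[3]}$. I would proceed in three stages.
\begin{enumerate}[(a)]
\item Put an arbitrary connection on $\mathcal{S}$.
\item Make the isomorphism connection-preserving. Its failure is measured by a 2-form on $Y^{[2]}$. Compatibility with the 2-isomorphism on $Y^{[3]}$ makes this 2-form a \v Cech cocycle, so exactness of the \v Cech--de Rham complex for the surjective submersion expresses it as a coboundary of a 2-form on $Y$. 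Shifting the curving of $\mathcal{S}$ by that 2-form repairs the isomorphism.
\item Equip the isomorphism with a connection, again using partitions of unity. The remaining discrepancy at the 2-isomorphism is a closed 1-form on $Y^{[3]}$ satisfying a cocycle condition, and it is removed the same way by adjusting the connection on $Y^{[2]}$.
\end{enumerate}
This is the standard argument that abelian bundle 2-gerbe trivializations admit connections. I expect it to be the main technical obstacle, because one must check that the compatibility conditions the paper requires (the exact meaning of \quot{compatible connection} on the left of \cref{main:two}) are exactly such affine conditions solvable by exactness of the \v Cech--de Rham sequence. If the compatibility also fixes curvature in terms of the Chern-Simons form, that condition should be absorbed by the curving shift, since the 3-curvature of the 2-gerbe is given.

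Having obtained $\mathbb{T}$ with compatible connection, \cref{main:two} yields a $\Gamma\!$-bundle gerbe $\mathcal{G}'$ with adjusted and adapted connection whose underlying trivialization is $\mathbb{T}$. The equivalence of \cref{main:two} is compatible with forgetting connections, as it is built from the connection-free equivalence by descent along the sheaf property of \cref{main:one}. Hence the gerbe underlying $\mathcal{G}'$ is isomorphic to $\mathcal{G}$. Finally, I would transport the connection along this isomorphism. This needs a short lemma: connections on bundle gerbes can be transported along isomorphisms of the underlying bundle gerbes. For that lemma, I would pass to a common refinement and use that adjusted connections pull back along refinements and gauge transformations, as established in \cref{section-2-group-bundles}.
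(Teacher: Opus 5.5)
Your overall route is the paper's. You forget to a connection-free lift of $P=p_{*}(\mathcal{G})$, pass to a trivialization $\mathbb{T}$ of $\mathbb{CS}_P(\mathcal{G}_\Gamma)$ via \cite{Nikolausa}, and give $\mathbb{T}$ a compatible connection. You then return via \cref{lifting-theorem} to some $\mathcal{G}'\cong\mathcal{G}$ and transport the connection back. Two sub-steps need repair, and the second is a genuine gap.

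\textbf{Compatible connection on $\mathbb{T}$.} The paper simply cites \cite[Prop. 3.3.1]{waldorf8}, but your sketch has stages (b) and (c) in the wrong order. You cannot measure the defect of $\mathcal{A}$ by a 2-form $\rho$ on $Y^{[2]}$ before $\mathcal{A}$ carries a connection. Once it does, $\sigma$ still does not make $\rho$ a $\delta$-cocycle. It only gives $\delta\rho=\mathrm{d}\epsilon$, where $\epsilon\in\Omega^1(Y^{[3]})$ is the defect of $\sigma$, and $\epsilon$ is not closed at that stage. The coherence over $Y^{[4]}$ gives $\delta\epsilon=0$. So you must first write $\epsilon=\delta\gamma$ and shift the connection on $\mathcal{A}$ by $\gamma$. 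Then $\delta(\rho-\mathrm{d}\gamma)=0$, and $\rho-\mathrm{d}\gamma=\delta\beta$ is absorbed by shifting the curving of $\mathcal{S}$ by $\beta$. No curvature condition involving $C$ is imposed: compatibility only means that $\mathcal{A}$ and $\sigma$ are connection-preserving.

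\textbf{The transport lemma (the real gap).} Passing to a common refinement and pulling back only puts a connection on a refinement $\mathcal{G}^{\rho}$ of $\mathcal{G}$ along some $\rho:Z\to Y$. That gerbe is merely isomorphic to $\mathcal{G}$, and up to isomorphism you already had $\mathcal{G}'$, so this proves nothing new. Pulling back along refinements goes the wrong way; what is needed is \emph{descent} from $Z$ to the given surjective submersion of $\mathcal{G}$. This is \cref{connection-transport-along-1-morphism}. For $\mathcal{P}=(Z,\zeta,Q,\nu):\mathcal{G}_1\to\mathcal{G}_2$ it proceeds as follows.
\begin{enumerate}[(1)]
\item Choose a $\Gamma\!$-connection $(A',B')$ on $Z$ and an adjusted connection on $Q$ (\cref{existence-of-adjusted-connection}).
\item Define a connection on the pullback of $P_2$ to $Z^{[2]}$ by demanding that $\nu$ be connection-preserving.
\item Show that this 1-form descends to $P_2$ over $Y_2^{[2]}$. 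This uses the coherence diagram for $\nu$, connection-preservation of $\mu_1$, and the neutral elements $t_y$ of \cref{re:cantriv}.
\item Show that $(A',B')$ descends along $Z\to Y_2$, using the anchor equations and adjustedness.
\end{enumerate}
Finally, you must also check that the transported connection is adapted. This follows from \cref{fake-curvature-under-isomorphisms}, since $u(\mathrm{fcurv})$ is preserved along 1-morphisms when $\kappa$ is adapted to $u$.
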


The proof strategy involves switching via \cref{main:two} to principal bundles and trivializations of 2-gerbes, where the existence of connections is already established. We refer to \cref{existence-of-connections} for details.
To our best knowledge, \cref{main-existence} is the first general existence result for connections in any formulation of non-abelian higher gauge theory.

\paragraph{Acknowledgements. }
I would like to thank Matthias Ludewig,  Roberto T\'ellez-Dom\'inguez, and Christian Saemann for many helpful discussions.
This work was supported by the German Research Foundation (DFG) under grant no. Code WA 3300/5-1.

\setsecnumdepth{2}

\section{Lie 2-groups and connections}

\label{Lie-2-groups-and-connections}

In this section, we revisit several foundational concepts of higher gauge theory required for this article. This comprises mostly familiar material, bundled and summarized, with minor extensions and reformulations.

\subsection{Lie 2-groups and Lie 2-algebras}

\label{Lie-2-groups}

For the purposes of this article, a \emph{Lie 2-group} is a crossed module $\Gamma=(H \stackrel{t}\to G \stackrel{\alpha}\to \mathrm{Aut}(H))$ of (possibly Fr\'echet) Lie groups, where $G$ and $H$ are Lie groups, $t$ is a Lie group homomorphism, and $\alpha:G \times H \to H$ is a smooth action of $G$ on $H$ by Lie group homomorphisms, such that
\begin{equation*}
t(\alpha(g,h)) = gt(h)g^{-1}
\quand
\alpha(t(h),h') = hh'h^{-1}
\end{equation*}
hold for all $g\in G$ and $h,h'\in H$.
We denote by $\alpha_g \in \mathrm{Aut}(H)$ the action of a fixed $g\in G$ on $H$. For $h \in H$, we consider the map $\tilde \alpha_h: G \to H$ defined by $\tilde \alpha_h(g) := h^{-1}\alpha(g,h)$, which is \emph{not} a Lie group homomorphism but satisfies $\tilde \alpha_h(1)=1$.
We refer to \cite[\S A]{Waldorf2016} for a useful collection of formulas for calculations in a crossed module.

For an abelian Lie group $A$, we denote by $BA$ the Lie 2-group $(A \to \ast \to \Aut(A))$, and for a Lie group $F$, we denote by $F_{\dis}$ the Lie 2-group $(\ast \to F \to \ast)$. For another Lie group $H$, we denote by $\AUT(H)$ the Lie 2-group $(H \stackrel c\to \Aut(H) \stackrel\id\to \Aut(H))$, where $c$ denotes the assignment of inner automorphisms.  

Given a Lie 2-group $\Gamma$, we define the \emph{homotopy groups} $F := \pi_0\Gamma := G/t(H)$ and $A := \pi_1\Gamma := \mathrm{ker}(t) \subset H$, together with the projection $p: G \to F$ and the inclusion $i:A \to H$. We note that $A$ is abelian and central in $H$, and that the action $\alpha$ induces an action $\alpha_0$ of $F$ on $A$ such that $\alpha_0(p(g),a) =\alpha(g,i(a))$ holds for all $a\in A$ and $g\in G$. The Lie 2-group $\Gamma$ is called \emph{central} if the action $\alpha_0$ is trivial. 
If $\Gamma$ is central, then $\tilde\alpha_a(g)=1$ for all $a\in A$; from this, one can verify that the map $\tilde\alpha_{h}: G \to H$ depends only on $g=t(h)$. 

Moreover, a Lie 2-group $\Gamma$ is called \emph{smoothly separable} if the groups $F$ and $H/A$ admit Lie group structures such that $p:G \to F$ and $H \to H/A$ have smooth local sections, and the map $t: H/A \to t(H)$ is a diffeomorphism \cite[Def. II.1, Def. III.1]{Neeb2005}. In this case, the sequence
\begin{equation*}
1\to A \to H \to G \to F \to 1
\end{equation*}
is an exact sequence of Lie groups. In this article, most Lie 2-groups are assumed to be smoothly separable and central; however, e.g., $\AUT(H)$ is in general neither, depending on $H$.

Again for the purposes of this article, a \emph{Lie 2-group homomorphism} $f: \Gamma \to \Gamma'$ between Lie 2-groups consists of Lie group homomorphisms $f_1: H \to H'$ and $f_0: G \to G'$ that are strictly compatible with all structure, i.e., $t' \circ f_1= f_0 \circ t$ and $\alpha'(f_0(g),f_1(h))=f_1(\alpha(g,h))$ hold for all $g\in G$ and $h\in H$.
If $\Gamma$ is a smoothly separable Lie 2-group, it comes equipped with two Lie 2-group homomorphisms $i: BA \to \Gamma$ and $p: \Gamma \to F_{\dis}$, and 
\begin{equation*}
BA \stackrel i\to \Gamma \stackrel p\to F_{\dis}
\end{equation*} 
is a Lie 2-group extension, as defined, e.g., in \cite{pries2}.
This 2-group extension is central in the sense of \cite{pries2} if and only if $\Gamma$ is central in the sense defined above. 

The Lie 2-algebra $\mathfrak{G}$ of a Lie 2-group $\Gamma$ is the associated crossed module $\mathfrak{G} =(\mathfrak{h} \stackrel{t_{*}}\to \mathfrak{g}\stackrel{\alpha_{*}}\to \der(\mathfrak{h}))$ of Lie algebras, in which $t_{*}$ is the induced Lie algebra homomorphism, and $\alpha_{*}$ denotes the induced Lie algebra action of $\mathfrak{g}$ on $\mathfrak{h}$ by derivations.  For further information about Lie 2-groups and their Lie 2-algebras, we refer to the literature, e.g., \cite{baez5}.
We refer again to \cite[\S A]{Waldorf2016} for formulas showing the interaction between the groups and the Lie algebras of a Lie 2-group.

We define $\mathfrak{a} := \mathrm{ker}(t_{*})\subset \mathfrak{h}$ and $\mathfrak{f} := \mathfrak{g}/t_{*}(\mathfrak{h})$, obtaining an exact sequence of Lie algebras
\begin{equation*}
0 \to \mathfrak{a} \stackrel {i_{*}}\to \mathfrak{h} \stackrel {t_{*}}\to \mathfrak{g} \stackrel{p_{*}}\to \mathfrak{f} \to 0\text{.}
\end{equation*}
If $\Gamma$ is smoothly separable, $\mathfrak{a}$ and $\mathfrak{f}$ are the Lie algebras of $A$ and $F$, respectively.
By a \emph{splitting} of $\mathfrak{G}$, we mean a linear map $u: \mathfrak{g} \to \mathfrak{h}$ such that
\begin{equation}
\label{identities-for-splittings-3}
t_{*}\circ u \circ t_{*}= t_{*}
\quand
u\circ t_{*}\circ u= u\text{.}
\end{equation}
Any splitting $u$ determines a complement $\mathfrak{g}' := \mathrm{ker}(u)\subset \mathfrak{g}$ of the sub-Lie algebra $\mathrm{Im}(t_{*})=\mathrm{Ker}(p_{*})$ and a complement $\mathfrak{h}' := \mathrm{im}(u)\subset \mathfrak{h}$ of the sub-Lie algebra $\mathrm{Im}(i_{*})=\mathrm{Ker}(t_{*})$. Conversely, the choice of such complements determines a splitting $u$. In particular, splittings always exist when $\Gamma$ is smoothly separable.  

A \emph{section} of $\mathfrak{G}$ is a linear map $q: \mathfrak{f} \to \mathfrak{g}$ such that $p_{*}\circ q=\id_{\mathfrak{f}}$, and a \emph{retract} of $\mathfrak{G}$ is a linear map $j: \mathfrak{h} \to \mathfrak{a}$ such that $j \circ i_{*}=\id_{\mathfrak{a}}$. 
Any splitting $u$ uniquely determines a section $q_u$ such that $u \circ q_u=0$ and a retract $j_u$ such that $j_u \circ u=0$. We note the following decomposition of $\mathfrak{g}$ and $\mathfrak{h}$ into sub-Lie algebras and their complements determined by $u$:  
\begin{align*}
i_{*} \circ j _u+ u \circ t_{*} &= \id_{\mathfrak{h}}
\\
t_{*}\circ u + q _u\circ p_{*} &= \id_{\mathfrak{g}}\text{.}
\end{align*}
Conversely, any pair $(q,j)$ of a section and a retract determines a splitting $u$ such that $j=j_u$ and $q = q_u$.

For a Lie 2-group homomorphism $f: \Gamma \to \Gamma'$, we say that it \emph{preserves splittings} $u$ and $u'$ of $\Gamma$ and $\Gamma'$, respectively, if 
\begin{equation}
\label{condition-for-splittings}
(f_1)_{*} \circ u =u' \circ (f_0)_{*}
\end{equation}
holds.

\setsecnumdepth{2}

\subsection{Adjustments}

The theory of adjustments was introduced by Sati, Schreiber, and Stasheff \cite{sati1}, and has undergone several improvements by Fiorenza, Schreiber, and Stasheff \cite{Fiorenza}, Saemann et al. \cite{Saemann2020,Kim2020,Rist2022,Kim2022}, and T\'ellez-Dom\'inguez \cite{Tellez2023}. A classification of adjustments for central Lie 2-groups was established in \cite{Ludewig2025}.

\begin{definition}
\label{adjustment}
Let $\Gamma$ be a central Lie 2-group with Lie 2-algebra $\mathfrak{G}$. 
An \emph{adjustment} for $\Gamma$ is a map
\begin{equation*}
\kappa: G \times \mathfrak{g} \to \mathfrak{h}
\end{equation*}
that is linear and continuous in $\mathfrak{g}$, smooth in $G$, and satisfies the following conditions:
\begin{align}
\label{condition-for-adjustments-1}
\kappa(g_1g_2, X) &= \kappa(g_1, \Ad_{g_2}(X)) + \kappa(g_2, X) 
\\
\label{condition-for-adjustments-2}
\kappa(t(h), X) &= (\tilde{\alpha}_{h^{-1}})_* X
\\
\label{condition-for-adjustments-3}
\kappa(g, t_{*}Y) &= (\alpha_g)_{*}(Y) - Y
\end{align}
for all $g, g_1, g_2\in G$, $h\in H$, $X\in \mathfrak{g}$, and $Y\in \mathfrak{h}$.
We say that $\kappa$ is \emph{adapted} to a splitting $u$ of $\mathfrak{G}$ if it additionally satisfies
\begin{equation}
\label{adapted-to-u}
t_* \kappa(g, X) = t_{*}u(\Ad_g(X) - X) 
\end{equation}
for all $g\in G$ and $X\in \mathfrak{g}$.
\end{definition}

Two straightforward consequences of the axioms are:
\begin{align*}
\kappa(g^{-1},X) &= -\kappa(g,\mathrm{Ad}_{g}^{-1}(X))
\\
\kappa(1,X) &= 0\text{.}
\end{align*}

We note that the adjustments in \cite{Rist2022,Tellez2023} are related to ours by inversion in the first argument, and that \cite{Rist2022} only requires one (weaker) axiom instead of our three above.  

\begin{remark}
\label{groupoid-of-adjustments}
We consider the groupoid $\ADJ(\Gamma)$ whose objects are pairs $(u, \kappa)$ where $u:\mathfrak{g} \to \mathfrak{h}$ is a splitting and $\kappa$ is an adjustment that is adapted to $u$. The morphisms $(u, \kappa) \to (u', \kappa')$ are pairs $(\phi, \psi)$ of linear maps $\phi: \mathfrak{f}\to \mathfrak{h}$ and $\psi: \mathfrak{g} \to \mathfrak{a}$ such that 
\begin{equation*}
\label{transformation-of-adjustments}
u'-u = \phi p_{*} + i_{*} \psi \qquad \text{and} \qquad \kappa'(g, X) - \kappa(g, X) =\phi p_{*}( \mathrm{Ad}_g(X)-X).
\end{equation*}
By \cite[Prop. 5.2]{Ludewig2025} this groupoid is equivalent to similar groupoids that consider, instead of splittings $u$, only the section $q_u$ or the retract $j_u$ induced by $u$.
\end{remark}

If $\kappa$ is an adjustment, we define $\kappa_{*}: \mathfrak{g} \times \mathfrak{g} \to \mathfrak{h}$ to be the bilinear form obtained by differentiating $\kappa$ in the first argument. The following result appears in \cite{Tellez2023} and \cite[Lemma 2.4]{Ludewig2025}.

\begin{lemma}
\label{rule-for-adapted-kappa}
For any adjustment $\kappa$ on $\Gamma$, the bilinear form $\kappa_{*}$ satisfies the following rules:
\begin{align}
\label{properties-of-kappa-stern:1}
\kappa_{*}(X,t_{*}(Y))&=\alpha_{*}(X,Y)
\\
\label{properties-of-kappa-stern:2}
\kappa_{*}(t_{*}(Y),X) &=-\alpha_{*}(X,Y)
\\
\label{properties-of-kappa-stern:4}
\kappa_{*}(\Ad_g(X_1),\Ad_g(X_2)) &= \kappa_{*}(X_1,X_2) + \kappa(g,[X_1,X_2])
\\
\label{properties-of-kappa-stern:5}
\kappa_{*}([X_1,X_2],X_3) &= \kappa_{*}(X_1,[X_2,X_3])+\kappa_{*}(X_2,[X_3,X_1])
\end{align}
Moreover, if $\kappa$ is adapted to a splitting $u$, then we have 
\begin{equation*}
t_{*}\kappa_{*}(X_1,X_2) = t_{*}u([X_1,X_2])\text{.}
\end{equation*}
\end{lemma}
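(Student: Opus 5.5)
Everything follows by differentiating the three axioms of \cref{adjustment} (and \cref{adapted-to-u}) at the identity. By definition $\kappa_{*}(X_1,X_2)=\ddt{}{s}{0}\kappa(\exp(sX_1),X_2)$, and $\kappa(1,\cdot)=0$.

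For \cref{properties-of-kappa-stern:1}, put $g=\exp(sX)$ in \cref{condition-for-adjustments-3}. Differentiating $(\alpha_{\exp(sX)})_{*}(Y)-Y$ at $s=0$ gives $\alpha_{*}(X,Y)$.

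For \cref{properties-of-kappa-stern:2}, put $h=\exp(sY)$ in \cref{condition-for-adjustments-2}, so $t(h)=\exp(s\,t_{*}Y)$. The left side then differentiates to $\kappa_{*}(t_{*}Y,X)$. On the right, $\tilde\alpha_{h^{-1}}(g)=h\,\alpha(g,h^{-1})$. Its differential at $g=1$ in direction $X$ is $\mathrm{Ad}_h\,(\alpha_{*}(X,\cdot))$ evaluated at the curve $h^{-1}$, i.e. $-\alpha_{*}(X,sY)+O(s^2)$, so the $s$-derivative is $-\alpha_{*}(X,Y)$. The routine point is to write $\alpha(g,h^{-1})$ near $g=1$ via the standard formulas of \cite[\S A]{Waldorf2016} and track the sign.

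For \cref{properties-of-kappa-stern:4}, apply the cocycle identity \cref{condition-for-adjustments-1} to the conjugate $g\exp(sX_1)g^{-1}=\exp(s\Ad_gX_1)$. This gives
\begin{equation*}
\kappa(g\exp(sX_1)g^{-1},\Ad_gX_2)=\kappa(g,\Ad_{\exp(sX_1)}X_2)+\kappa(\exp(sX_1),X_2)+\kappa(g^{-1},\Ad_gX_2).
\end{equation*}
Differentiating in $s$ at $0$ gives $\kappa_{*}(\Ad_gX_1,\Ad_gX_2)=\kappa(g,[X_1,X_2])+\kappa_{*}(X_1,X_2)$, because the last term is constant in $s$ and $\kappa$ is linear and continuous in its second slot.

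For \cref{properties-of-kappa-stern:5}, differentiate \cref{properties-of-kappa-stern:4} once more, with $g=\exp(rX_3)$, at $r=0$. This yields $\kappa_{*}([X_3,X_1],X_2)+\kappa_{*}(X_1,[X_3,X_2])=\kappa_{*}(X_3,[X_1,X_2])$. Renaming variables and using antisymmetry of the bracket should produce exactly the stated identity. The one genuinely delicate point is the second-order expansion: $\kappa$ is only smooth in $G$, so I will justify differentiating $\kappa(\exp(rX_3),\cdot)$ term by term from smoothness and continuous linearity, which is fine in the Fréchet setting.

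Finally, for the adapted case, apply $t_{*}$ to \cref{adapted-to-u} with $g=\exp(sX_1)$ and $X=X_2$, and differentiate. The left side gives $t_{*}\kappa_{*}(X_1,X_2)$. On the right, $\ddt{}{s}{0}\Ad_{\exp(sX_1)}X_2=[X_1,X_2]$, and $t_{*}u$ is linear, so the right side gives $t_{*}u([X_1,X_2])$.

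I expect the only real obstacle to be the sign bookkeeping in \cref{properties-of-kappa-stern:2}, i.e. differentiating $\tilde\alpha_{h^{-1}}$. As a cross-check, I would compare it with \cref{properties-of-kappa-stern:1} via the infinitesimal Peiffer identity $\alpha_{*}(t_{*}Y,Y')=[Y,Y']$, which must make the two formulas consistent on $\kappa_{*}(t_{*}Y,t_{*}Y')$.
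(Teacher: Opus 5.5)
Your proposal is correct. All five identities do follow by differentiating the adjustment axioms (and the adaptedness condition) at the identity, and the fourth follows by differentiating the third at $g=\exp(rX_3)$; your renaming then gives exactly the stated form. The paper gives no proof of its own and instead cites \cite{Tellez2023} and \cite[Lemma 2.4]{Ludewig2025}, where the lemma is obtained by essentially this differentiation argument. One technical point: in the possibly Fréchet setting, replace $\exp(sX)$ by an arbitrary smooth curve through $1$ with velocity $X$, since an exponential map need not exist. Nothing essential in your computations changes under that substitution.
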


If $\eta: \mathfrak{g} \times \mathfrak{g} \to \mathfrak{h}$ is a bilinear form, we denote by $\eta^{sym}$ and $\eta^{skew}$ its symmetrization and skew-symmetrization, respectively, i.e., 
\begin{equation*}
\eta^{sym/skew}(X_1,X_2):= \tfrac{1}{2}(\eta(X_1,X_2)\pm\eta(X_2,X_1))
\end{equation*}
The following is easy to deduce from  \cref{properties-of-kappa-stern:1,properties-of-kappa-stern:2}, see \cite[Eq. (3.17)]{Tellez2023} or \cite[Lemma 4.2]{Ludewig2025}.

\begin{lemma}
Suppose $\kappa$ is an adapted adjustment on $\Gamma$.
Then, there is a unique $\mathrm{Ad}$-invariant symmetric bilinear form $b_{\kappa}: \mathfrak{f} \times \mathfrak{f} \to \mathfrak{a}$ such that 
\begin{equation*}
b_{\kappa}(p_{*}X_1,p_{*}X_2) =\kappa_{*}^{sym}(X_1,X_2)
\end{equation*}
for all $X_1,X_2\in \mathfrak{g}$. 
\end{lemma}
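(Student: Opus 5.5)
To prove the lemma, I will show that $\kappa_{*}^{sym}$ descends along $p_{*}\times p_{*}$, that it takes values in $\mathfrak{a}$, and that the induced form on $\mathfrak{f}$ is $\mathrm{Ad}$-invariant. Symmetry of the descended form is automatic, and uniqueness is immediate because $p_{*}:\mathfrak{g}\to\mathfrak{f}$ is surjective.

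\emph{Descent.} Since $\mathrm{Ker}(p_{*})=\mathrm{Im}(t_{*})$ and $\kappa_{*}^{sym}$ is bilinear, it suffices to show that $\kappa_{*}^{sym}(t_{*}Y,X)=0$ for all $Y\in\mathfrak{h}$ and $X\in\mathfrak{g}$. Adding \cref{properties-of-kappa-stern:1,properties-of-kappa-stern:2} from \cref{rule-for-adapted-kappa} gives
\begin{equation*}
\kappa_{*}(X,t_{*}Y)+\kappa_{*}(t_{*}Y,X)=\alpha_{*}(X,Y)-\alpha_{*}(X,Y)=0\text{,}
\end{equation*}
so $b_\kappa(p_{*}X_1,p_{*}X_2):=\kappa_{*}^{sym}(X_1,X_2)$ is well defined and symmetric.

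\emph{Values in $\mathfrak{a}$.} This is where adaptedness enters. By the last statement of \cref{rule-for-adapted-kappa}, we have $t_{*}\kappa_{*}(X_1,X_2)=t_{*}u([X_1,X_2])$. The right-hand side is skew-symmetric in $(X_1,X_2)$, so symmetrizing gives $t_{*}\kappa_{*}^{sym}(X_1,X_2)=0$. Hence $\kappa_{*}^{sym}$ takes values in $\mathrm{ker}(t_{*})=\mathfrak{a}$.

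\emph{$\mathrm{Ad}$-invariance.} The adjoint action of $F$ on $\mathfrak{f}$ is induced from that of $G$ on $\mathfrak{g}$ via $p$, and $F$ acts trivially on $\mathfrak{a}$ by centrality. So I need $\kappa_{*}^{sym}(\mathrm{Ad}_gX_1,\mathrm{Ad}_gX_2)=\kappa_{*}^{sym}(X_1,X_2)$. Symmetrizing \cref{properties-of-kappa-stern:4}, the correction term $\kappa(g,[X_1,X_2])$ is skew in $(X_1,X_2)$ and therefore cancels, which gives the required invariance. Infinitesimal invariance then follows by differentiation, or alternatively from \cref{properties-of-kappa-stern:5}.

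The only subtle point is the values-in-$\mathfrak{a}$ step: it uses the adaptedness identity of \cref{rule-for-adapted-kappa} together with the observation that the bracket is skew-symmetric. Everything else is direct symmetrization of the identities in \cref{rule-for-adapted-kappa}.
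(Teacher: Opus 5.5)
Your proof is correct and takes the route the paper indicates. The paper only remarks that the statement is easy to deduce from \cref{properties-of-kappa-stern:1,properties-of-kappa-stern:2}, citing T\'ellez-Dom\'inguez and Ludewig--Waldorf; that is exactly your descent step, and your other two steps symmetrize the remaining identities of \cref{rule-for-adapted-kappa}. You are also right that the $\mathfrak{a}$-valuedness genuinely needs the adaptedness identity $t_{*}\kappa_{*}=t_{*}u([\cdot,\cdot])$ (for non-adapted adjustments $\kappa_{*}^{sym}$ need not be $\mathfrak{a}$-valued), and that $\mathrm{Ad}$-invariance follows from symmetrizing \cref{properties-of-kappa-stern:4}.
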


The symmetric invariant bilinear form $b_{\kappa}$ associated to an adapted adjustment is invariant under the morphisms in the groupoid $\ADJ(\Gamma)$, and it plays an important role in the classification of adjustments \cite{Ludewig2025}.  

Care must be exercised when applying adjustments to the values of differential forms, as one might be accustomed to doing this only for symmetric or skew-symmetric forms.
We note the following results, obtained by straightforward calculations.

\begin{lemma}
\label{kappa-with-differential-forms}
Let $X$ be a smooth manifold, and let $\varphi\in \Omega^k(X,\mathfrak{g})$ and $\psi\in \Omega^l(X,\mathfrak{g})$ be differential forms, and let $g:X \to G$ be a smooth map. The following hold: 
\begin{enumerate}[(a)]

\item 
$\mathrm{d}\kappa(g,\varphi) 
= \kappa(g,\mathrm{d}\varphi )+\kappa_{*}(g^{*}\bar\theta \wedge \Ad_{g}(\varphi))$

\item
$\kappa_{*}(\varphi \wedge \varphi) = \begin{cases}\kappa_{*}^{skew}(\varphi\wedge \varphi) & k\text{ odd}  \\
b_{\kappa}(p_{*}\varphi\wedge p_{*}\varphi) & k\text{ even} \\
\end{cases}$

\item
$\kappa_{*}(\varphi \wedge [\varphi \wedge \varphi])= \begin{cases}\tfrac{2}{3} b_{\kappa}(p_{*}\varphi \wedge [p_{*}\varphi \wedge p_{*}\varphi]) & k\text{ odd}  \\
0 & k\text{ even} \\
\end{cases}$

\end{enumerate}
\end{lemma}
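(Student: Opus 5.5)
The plan is to reduce everything to pointwise bilinear algebra. Choose a basis $(X_i)$ of $\mathfrak{g}$ (or argue locally with finitely many terms; for Fréchet $\mathfrak{g}$ one works with decomposable forms and extends by linearity and continuity). Write $\varphi=\sum_i \omega_i\otimes X_i$ with real-valued $k$-forms $\omega_i$. Since $\kappa$ is linear in the second argument, $\kappa(g,\varphi)=\sum_i \kappa(g,X_i)\,\omega_i$. Each statement then becomes a statement about the scalar forms $\omega_i$ and the bilinear or trilinear expressions in the $X_i$.

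\emph{Part (a).} By the Leibniz rule,
\[
\mathrm{d}\kappa(g,\varphi)=\sum_i \kappa(g,X_i)\,\mathrm{d}\omega_i+\sum_i \mathrm{d}\big(\kappa(g,X_i)\big)\wedge\omega_i\text{.}
\]
The first sum is $\kappa(g,\mathrm{d}\varphi)$. For the second, I compute the derivative of $g\mapsto\kappa(g,X)$ using \cref{condition-for-adjustments-1} with left multiplication:
\[
\kappa(\exp(sY)g,X)=\kappa(\exp(sY),\Ad_g X)+\kappa(g,X)\text{.}
\]
Differentiating at $s=0$ gives $\kappa_{*}(Y,\Ad_gX)$. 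A tangent vector $v$ at $x$ satisfies $\mathrm{d}g(v)=(g^{*}\bar\theta)(v)\,g(x)$, where $\bar\theta$ is the right-invariant Maurer--Cartan form. Hence
\[
\mathrm{d}\big(\kappa(g,X)\big)=\kappa_{*}(g^{*}\bar\theta,\Ad_g X)\text{,}
\]
and summing yields $\kappa_{*}(g^{*}\bar\theta\wedge\Ad_g(\varphi))$. The only care needed is the order of the wedge factors: the $1$-form comes first, which matches the stated formula.

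\emph{Part (b).} We have $\kappa_{*}(\varphi\wedge\varphi)=\sum_{i,j}\omega_i\wedge\omega_j\,\kappa_{*}(X_i,X_j)$, and $\omega_i\wedge\omega_j=(-1)^{k}\omega_j\wedge\omega_i$. For $k$ odd, symmetrizing the double sum kills $\kappa_{*}^{sym}$ and leaves $\kappa_{*}^{skew}$. For $k$ even, only $\kappa_{*}^{sym}$ survives, and by the preceding lemma this equals $b_\kappa(p_{*}\varphi\wedge p_{*}\varphi)$.

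\emph{Part (c).} This is the main step. Write
\[
\kappa_{*}(\varphi\wedge[\varphi\wedge\varphi])=\sum\omega_i\wedge\omega_j\wedge\omega_l\,T(X_i,X_j,X_l),\qquad T(X_1,X_2,X_3):=\kappa_{*}(X_1,[X_2,X_3])\text{.}
\]
For $k$ even the $\omega$'s commute, so only the total symmetrization of $T$ contributes. That symmetrization vanishes because $T$ is skew in its last two arguments, which gives the second case.

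For $k$ odd the $\omega$'s anticommute, so only the total antisymmetrization of $T$ contributes. Because $T$ is already skew in its last two slots, this antisymmetrization is a multiple of the cyclic sum
\[
C:=\textstyle\sum_{\mathrm{cyc}}\kappa_{*}(X_1,[X_2,X_3])\text{.}
\]
Set $D:=\sum_{\mathrm{cyc}}\kappa_{*}([X_1,X_2],X_3)$. Summing \cref{properties-of-kappa-stern:5} over cyclic permutations gives $D=2C$. Splitting $\kappa_{*}=\kappa_{*}^{sym}+\kappa_{*}^{skew}$, the cyclic sum of the symmetric part is
\[
\textstyle\sum_{\mathrm{cyc}}\kappa^{sym}_{*}(X_1,[X_2,X_3])=\tfrac12(C+D)=\tfrac32 C\text{.}
\]
Therefore $C=\tfrac23\sum_{\mathrm{cyc}}b_\kappa(p_{*}X_1,[p_{*}X_2,p_{*}X_3])$. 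Reassembling the forms then gives $\tfrac23\, b_\kappa(p_{*}\varphi\wedge[p_{*}\varphi\wedge p_{*}\varphi])$, since the same combinatorial factor appears on both sides.

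I expect the main obstacle to be this bookkeeping in (c): checking that the combinatorial factors from antisymmetrization are identical on both sides, so that only the $\tfrac23$ remains, and that the use of \cref{properties-of-kappa-stern:5} is consistent with the ordering conventions for $[\cdot\wedge\cdot]$.
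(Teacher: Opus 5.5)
Your proposal is correct, and it is the direct computation the paper has in mind when it states this lemma as ``obtained by straightforward calculations'' without giving a proof. Part (a) follows from differentiating the cocycle identity along left translations, part (b) from the graded symmetry of the coefficient forms, and part (c) from reducing to the cyclic sum $C$ and using the cyclically summed identity for $\kappa_{*}$, which gives $D=2C$ and hence $C=\tfrac{2}{3}\sum_{\mathrm{cyc}}\kappa_{*}^{sym}(X_1,[X_2,X_3])$.

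One point to tidy is the Fr\'echet case. There, rather than a basis of $\mathfrak{g}$, expand in local coordinates on the finite-dimensional manifold $X$ with $\mathfrak{g}$-valued coefficient functions. For (a) this uses linearity and continuity of $\kappa$ in its second argument; for (b) and (c) you can simply argue pointwise on tangent vectors.
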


Finally, we recall how adjustments are related under Lie 2-group homomorphisms. The following definition was derived in \cite[Remark 4.10]{Ludewig2025} from the more general theory of weak equivalences.  

\begin{definition}
\label{preserve-adjustment}
Let $\Gamma$ and $\Gamma'$ be Lie 2-groups equipped with adjustments $\kappa$ and $\kappa'$, respectively. A Lie 2-group homomorphism $f: \Gamma \to \Gamma'$ is called \emph{adjustment-preserving}, if
\begin{equation}
(f_1)_{*}(\kappa(g,X))=\kappa'(f_0(g),(f_0)_{*}(X))
\end{equation}
holds for all $g\in G$ and $X\in \mathfrak{g}$. 

\end{definition}

\begin{example}
\label{morphisms-of-central-extension-are-adjustment-preserving}
In the case of the Lie 2-group homomorphisms
\begin{equation*}
BA \stackrel i\to \Gamma \stackrel p\to F_{\dis}
\end{equation*} 
associated to a smoothly separable, central Lie 2-group $\Gamma$, where $BA$ and $F_{\dis}$ are both equipped with the zero adjustments $\kappa=0$, $i$ and $p$ are adjustment-preserving for an arbitrary adjustment $\kappa$ on $\Gamma$. 
\end{example}

\subsection{Lie  2-group connections}

In this section we review the local formalism of $\Gamma\!$-connections  \cite{breen1,aschieri,schreiber2,baez2,Demessie,sati1,Gastel2020}. This formalism is classical in the fake-flat regime; beyond that, we systematically employ adjustments. Detailed verifications of the subsequent statements are left to the reader.

Let $X$ be a smooth manifold, let $\Gamma=(H \stackrel t\to  G \stackrel \alpha\to \mathrm{Aut}(H))$ be a Lie 2-group, and let $\mathfrak{G}=(\mathfrak{h} \stackrel{t_{*}}\to \mathfrak{g} \stackrel {\alpha_{*}}\to \mathrm{Der}(\mathfrak{h}))$ be the corresponding Lie 2-algebra.

\begin{definition}
\label{def:gammacon}
A  \emph{$\Gamma\!$-connection on $X$ }is a pair $(A,B)$ consisting of a 1-form $A\in\Omega^1(X,\mathfrak{g})$
and a 2-form $B \in \Omega^2(X,\mathfrak{h})$. The 2-form   
\begin{equation*}
\mathrm{fcurv}(A,B) := \mathrm{d}A +\tfrac{1}{2} [A
\wedge A] - t_{*} ( B)\in\Omega^2(X,\mathfrak{g})
\end{equation*}
is called the \emph{fake-curvature}, and the 3-form
\begin{equation*}
\mathrm{curv}(A,B) := \mathrm{d}B + \alpha_{*}( A\wedge B)\in\Omega^3
(X,\mathfrak{h})
\end{equation*}
is called the \emph{curvature}. A connection $(A,B)$ is called \emph{fake-flat}, if $\mathrm{fcurv}(A,B)=0$, and it is called \emph{flat}, if it is fake-flat and $\mathrm{curv}(A,B)=0$.
\end{definition}

\begin{remark}
\label{prop:gammaconcurv}
Curvature and fake curvature satisfy the  identities
\begin{align*}
\mathrm{d}(\mathrm{fcurv}(A,B)) &= [\mathrm{d}A\wedge A]  - t_{*}(\mathrm{d}B) 
\\
\mathrm{d} (\mathrm{curv}(A,B)) &= \alpha_{*}(\mathrm{fcurv}(A,B)\wedge B)-\alpha_{*}(A\wedge \mathrm{curv}(A,B))   \text{.}
\end{align*}
In case of fake-flatness, we have
$t_{*}(\mathrm{curv}(A,B))=0$, so that $\mathrm{curv}(A,B)$ is  $\mathfrak{a}$-valued.
If $\Gamma$ is additionally central, we have $\mathrm{d}(\mathrm{curv}(A,B))=0$. 
It is useful to define  
\begin{equation*}
F_A:=p_{*}(\mathrm{d}A+\tfrac{1}{2}[A\wedge A])=p_{*}(\mathrm{fcurv}(A,B)) \in \Omega^2(M,\mathfrak{f})\text{;}
\end{equation*}
then, the first identity implies $\mathrm{d}F_A = [F_A \wedge p_{*}A]$, a Bianchi identity for $p_{*}A$.
\end{remark}

\begin{remark}
\label{adjusted-curvature}
For $\kappa$ an adjustment on $\Gamma$, the 3-form
\begin{equation*}
\mathrm{curv}_{\kappa}(A,B) := \mathrm{curv}(A,B)+\kappa_{*}(A\wedge \mathrm{fcurv}(A,B))
\end{equation*}
is called the \emph{adjusted curvature}.
One can easily verify that
\begin{equation*}
\mathrm{curv}_{\kappa}(A,B)  =\mathrm{d}B+\kappa_{*}(A\wedge \mathrm{d}A )+\tfrac{1}{2}\kappa_{*}(A\wedge  [A \wedge A])\text{,}
\end{equation*}
revealing a term resembling a Chern-Simons form. This will be relevant later; see, e.g. \cref{extension-and-adjusted-shift-lemma,lifting-theorem}, where it turns out that the unusual prefactor $\tfrac{1}{2}$ combines with the factor $\tfrac{2}{3}$ from \cref{kappa-with-differential-forms} to the usual  prefactor of $\tfrac{1}{3}$ one expects in Chern-Simons theory. 
We have
\begin{align*}
\mathrm{d} (\mathrm{curv}_{\kappa}(A,B)) &=b_{\kappa}(F_A \wedge F_A)   \text{.}
\end{align*} 
In the case of fake-flatness, curvature and adjusted curvature agree.
\end{remark}

\begin{remark}
Suppose $u: \mathfrak{g} \to \mathfrak{h}$ is a splitting to which $\kappa$ is adapted. We say that a $\Gamma\!$-connection $(A,B)$ is  \emph{adapted to $u$} if $u(\mathrm{fcurv}(A,B))=0$. This is equivalent to 
\begin{equation*}
\mathrm{fcurv}(A,B)=q_u(F_A)\text{,}
\end{equation*}
where $q_u: \mathfrak{f} \to \mathfrak{g}$ is the section corresponding to $u$.  
In \cite{Rist2022} adapted $\Gamma\!$-connections are called \emph{$u$-flat}. One can check that  $t_{*}(\mathrm{curv}_{\kappa}(A,B))=0$, so that the adjusted curvature of an adapted connection is  $\mathfrak{a}$-valued,   $\mathrm{curv}_{\kappa}(A,B)\in \Omega^3(X,\mathfrak{a})$.
\end{remark}

\begin{definition}
\label{curvature-of-a-gauge-transformation}
Let $(A,B)$ and $(A',B')$ be $\Gamma\!$-connections on $X$.
A \emph{gauge transformation}
\begin{equation*}
(g,\varphi): (A,B) \to (A',B')
\end{equation*}
consists of a smooth map $g:X \to G$ and a 1-form $\varphi\in \Omega^1(X,\mathfrak{h})$
such that
\begin{equation*}
A' + t_{*} (\varphi) = \Ad_g(A) - g^{*}\bar\theta\text{,}
\label{gt1}
\end{equation*}
where $\bar\theta$ denotes the right-invariant Maurer-Cartan form.
The 2-form
\begin{equation*}
\mathrm{curv}(g,\varphi) :=  \mathrm{d}\varphi + \tfrac{1}{2}[\varphi \wedge \varphi]+ \alpha_{*}(A' \wedge \varphi)+B'- (\alpha_g)_{*} (B)\in \Omega^2(X,\mathfrak{h})
\end{equation*}
is called the curvature of $(g,\varphi)$. The gauge transformation is called \emph{flat} if $\mathrm{curv}(g,\varphi)=0$.
\end{definition}

\begin{remark}
\begin{enumerate}[(a)]

\item 
The identity gauge transformation is given by $g=1$ and $\varphi=0$.

\item
The composition of gauge transformations
\begin{equation*}
\alxydim{@C=1.5cm}{(A,B) \ar[r]^-{(g_1,\varphi_1)} & (A',B') \ar[r]^-{g_2,\varphi_2} & (A'',B'')}
\end{equation*}
is given by the map $g_2g_1:X \to G$ and the 1-form $\varphi_2+(\alpha_{g_{2}})_{*} (\varphi_1)$. Composition is strictly associative.

\item
The curvature of gauge transformations satisfies
\begin{align}
\mathrm{curv}(\id_{A,B}) &=0
\\
\label{eq:twcurvinv}
\mathrm{curv}((g,\varphi)^{-1}) &=-(\alpha_{g^{-1}})_{*}(\mathrm{curv}(g,\varphi))
\\
\label{gt2}
\mathrm{curv}((g_2,\varphi_2)\circ (g_1,\varphi_1))&= \mathrm{curv}(g_2,\varphi_2)+(\alpha_{g_2})_{*}(\mathrm{curv}(g_1,\varphi_1))\text{.}
\end{align}
As a consequence, flatness of gauge transformations is preserved under composition and inversion.

\item
\clabel{rem:gt:twisted}
Gauge transformations   establish relations between curvature and fake curvature of their source and target $\Gamma\!$-connections:
\begin{align}
\label{eq:ttwcurv}
\mathrm{fcurv}(A',B') &= \Ad_{g}(\mathrm{fcurv}(A,B))-t_{*}(\mathrm{curv}(g,\varphi))  
\\
\mathrm{curv}(A',B')&=(\alpha_g)_{*}(\mathrm{curv}(A,B))+\alpha_{*}(A'\wedge \mathrm{curv}(g,\varphi))\nonumber
\\&\qquad-\alpha_{*}(\mathrm{fcurv}(A',B')\wedge\varphi)-[\mathrm{curv}(g,\varphi)\wedge\varphi]+\mathrm{d}(\mathrm{curv}(g,\varphi))    
\text{.} 
\label{eq:transcurv}
\end{align}
In particular, flatness and fake-flatness of $\Gamma\!$-connections are preserved under flat gauge transformations; and in the fake-flat case, we have $\mathrm{curv}(A',B') = (\alpha_g)_{*}(\mathrm{curv}(A,B))$.  
Moreover, \cref{eq:ttwcurv} implies, unconditionally,
\begin{equation}
\label{transformation-of-F}
\mathrm{Ad}_{p \circ g}(F_A) = F_{A'}\text{.}
\end{equation}

\item
Fake-flat $\Gamma\!$-connections are locally gauge equivalent to $BA$-connections \cite{Gastel2020}, a fact that typically does not meet the requirements of physical theories \cite{Saemann2020}. 

\item
There is a  Poincar\'e Lemma stating that flat $\Gamma\!$-connections are locally gauge equivalent to the trivial connection \cite[Theorem 2.7]{Demessie}. 

\end{enumerate}
\end{remark}

\begin{definition}
\label{def8}
Suppose $\Gamma$ is equipped with an adjustment $\kappa$. Then, a gauge transformation
$(g,\varphi): (A,B) \to (A',B')$
is called \emph{adjusted} if 
\begin{equation*}
\mathrm{curv}(g,\varphi) = \kappa(g,\mathrm{fcurv}(A,B))\text{.}
\end{equation*}
\end{definition}

\begin{remark}
\begin{enumerate}[(a)]

\item 
\label{remark-adj-gt-1}
The condition in \cref{def8} is equivalent to
\begin{equation*}
\mathrm{d}\varphi + \tfrac{1}{2}[\varphi \wedge \varphi] + \alpha_{*}(A' \wedge \varphi)+B'-B=\kappa(g,\mathrm{d}A + \tfrac{1}{2}[A \wedge A])\text{.}
\end{equation*}
Due to \cref{eq:ttwcurv}, it is also equivalent to the following condition involving the fake-curvature of the target $\Gamma\!$-connection:
\begin{equation*}
\mathrm{curv}(g,\varphi)=-(\alpha_{g})_{*}(\kappa(g^{-1},\mathrm{fcurv}(A',B')))
\end{equation*}
\item 
If $(A,B)$ is fake-flat, a gauge transformation is adjusted if and only if it is flat.

\item
Identity gauge transformations are adjusted. 
Moreover, the composition of adjusted gauge transformations is adjusted, and the inverse of an adjusted gauge transformation is adjusted. 

\item
\label{adjusted-curvature-relation}
For the adjusted curvatures of $\Gamma\!$-connections related by an adjusted gauge transformation, \cref{eq:transcurv} simplifies to
\begin{equation*}
\mathrm{curv}_{\kappa}(A',B')
=\mathrm{curv}_{\kappa}(A,B)\text{.}
\end{equation*}
Note that, if the $\Gamma\!$-connections are adapted with respect to a splitting $u$, then this is an equality of $\mathfrak{a}$-valued 3-forms. 

\item
\label{adaptedness-fake}
If $\kappa$ is adapted to a splitting $u$, then \cref{eq:ttwcurv} implies, for an adjusted gauge transformation,
\begin{equation*}
u(\mathrm{fcurv}(A',B')) = u(\mathrm{fcurv}(A,B))\text{.}
\end{equation*}
Thus, if one of the $\Gamma\!$-connections is adapted to $u$, then the other is as well. 

\end{enumerate}
\end{remark}

\begin{definition}
\label{gauge-2-transformations}
Let $(g_1,\varphi_1),(g_2,\varphi_2):(A,B) \to (A',B')$ be gauge transformations.
A \emph{gauge 2-transformation} 
\begin{equation*}
h:(g_1,\varphi_1) \Rightarrow (g_2,\varphi_2)
\end{equation*}
is a smooth map $h:X \to H$ such that
\begin{equation*}
g_2 = (t \circ h) \cdot g_1
\quad\text{ and }\quad
\Ad_h^{-1}(\varphi_2) +(\tilde\alpha_{h})_{*}(A') =  \varphi_1 -h^{*}\theta\text{.}
\end{equation*}
\end{definition}

\begin{remark}
The vertical composition 
\begin{equation*}
\alxydim{}{(g,\varphi) \ar@{=>}[r]^-{h_1} & (g',\varphi') \ar@{=>}[r]^{h_2} & (g'',\varphi'')}
\end{equation*}
is given by $h_2h_1$. The horizontal composition is
\begin{equation*}
\alxydim{}{(A,B) \ar@/^2pc/[r]^{(g_1,\varphi_1)}="1" \ar@/_2pc/[r]_{(g_1',\varphi_1')}="2" \ar@{=>}"1";"2"|{h_1} & (A',B') \ar@/^2pc/[r]^{(g_2,\varphi_2)}="1" \ar@/_2pc/[r]_{(g_2',\varphi_2')}="2" \ar@{=>}"1";"2"|{h_2} & (A'',B'')}
=
\alxydim{@C=2.3cm}{(A,B) \ar@/^2pc/[r]^{(g_2g_1, (\alpha_{g_2})_{*}(\varphi_1) + \varphi_2)}="1" \ar@/_2pc/[r]_{(g_2'g_1', (\alpha_{g_2'})_{*}(\varphi'_1) +\varphi'_2)}="2" \ar@{=>}"1";"2"|{h_2\alpha(g_2,h_1)} & (A'',B'')\text{,}}
\end{equation*}
and the identity gauge 2-transformation is given by $h=1$.
\end{remark}

\begin{remark}
\label{rem:twcurvunder2}
The equality
\begin{equation}
\label{eq:twcurvunder2}
\mathrm{curv}(g_1,\varphi_1) =\Ad_h^{-1}(\mathrm{curv}(g_2,\varphi_2))+(\tilde\alpha_{h})_{*}(\mathrm{fcurv}(A',B'))\text{.} 
\end{equation}
holds for a gauge 2-transformation $h$ between gauge transformations  $(g_1,\varphi_1)$ and $(g_2,\varphi_2)$. Thus, a gauge 2-transformation $h$ between two \emph{flat} gauge transformations only exists if $(\tilde\alpha_{h})_{*}(\mathrm{fcurv}(A',B'))=0$. This means, when $\Gamma$ is central, that $h$ is forced to be $A$-valued at all points where the fake-curvature does not vanish.  Since gauge 2-transformations are the gluing structure for bundle gerbes on 3-fold intersections, this leads to a separation into a fake-flat and an abelian theory, which is undesirable. Thus, \cref{eq:twcurvunder2} motivates the use of adjusted rather than flat gauge transformations beyond the fake-flat regime.      
\end{remark}

We consider the following three bigroupoids. 
\begin{definition}
\label{gamma-connections}
\begin{enumerate}[(a)]

\item
Fake-flat $\Gamma\!$-connections,  flat gauge transformations, and gauge 2-transformations  form  the bigroupoid of  \emph{fake-flat $\Gamma\!$-connections}, denoted by $\conff\Gamma X$. This groupoid does not require adjustments, and even exists if $\Gamma$ does not allow any adjustments. 

\item
If $\Gamma$ is equipped with an adjustment $\kappa$, then, 
$\Gamma\!$-connections, adjusted gauge transformations, and gauge 2-transformations form the bigroupoid of  \emph{adjusted $\Gamma\!$-connections}, denoted by $\conadj{\Gamma\!,\kappa} X$. 

\item
If $\Gamma$ is equipped with an adjustment $\kappa$, and $\kappa$ is adapted to a splitting $u$, then, adapted 
$\Gamma\!$-connections, adjusted gauge transformations, and gauge 2-transformations form the bigroupoid of  \emph{adapted and adjusted $\Gamma\!$-connections}, denoted by $\conaa{\Gamma\!,u,\kappa} X$.  

\end{enumerate}
\end{definition}

\begin{remark}
We will sometimes also consider the bigroupoid of all $\Gamma\!$-connections,  all gauge transformations, and all gauge 2-transformations, which we call the bigroupoid of \emph{generalized $\Gamma\!$-connections}, and denote it by $\congen \Gamma X$. This bigroupoid is only relevant for organizational reasons -- due to the lacking constraint on the curvature of gauge transformations, it has no relevance in applications. 
\end{remark}

The data of the bigroupoid $\conff{\Gamma}{X}$ has been anticipated by Breen and Messing \cite{breen1}, and have appeared in numerous other references. 
In joint work with Schreiber \cite{schreiber5}  we have provided a method for computing it, by imposing that  a $\Gamma\!$-connection has a well-defined $\Gamma\!$-valued integral along paths and homotopies between paths: 

\begin{theorem}{\normalfont{\cite[Theorem 2.21]{schreiber5}}}
\label{th:smoothfunctorsanddifferentialforms}
There is an isomorphism of bigroupoids
\begin{equation*}
\conff\Gamma X \cong \fun^{\infty}(\mathcal{P}_2(X),\Gamma)\text{.}
\end{equation*}
\end{theorem}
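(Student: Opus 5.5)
The plan is to build the isomorphism $\conff\Gamma X \cong \fun^{\infty}(\mathcal{P}_2(X),\Gamma)$ level by level, in the direction from differential forms to functors. On objects, a fake-flat $\Gamma\!$-connection $(A,B)$ should give a smooth 2-functor $F_{A,B}$. To a thin-homotopy class of paths $\gamma$ we assign the parallel transport $g_\gamma \in G$ of $A$, that is, the solution of the ODE $\dot g = -A(\dot\gamma)g$. To a bigon $\Sigma:\gamma\Rightarrow\gamma'$ we assign $h_\Sigma\in H$, the solution of a surface-ordered ODE. Concretely, on $[0,1]^2$ we consider the transport along the horizontal lines of the $\mathfrak{h}$-valued 1-form $\int_0^s (\alpha_{g_{\cdot}})_*(B)$. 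The target condition $t(h_\Sigma)g_\gamma = g_{\gamma'}$ should then follow from comparing $\tfrac{\partial}{\partial s}$ of $g$ along the family of paths with $t_*B$. This comparison uses exactly $\mathrm{d}A+\tfrac12[A\wedge A]=t_*B$.

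The first real step is functoriality. Vertical and horizontal composition of bigons reduce to multiplicativity of the ODE solutions. The crucial point is \emph{thin homotopy invariance}: $h_\Sigma$ may depend only on the thin homotopy class of $\Sigma$. I would derive a variation formula, $\delta h_\Sigma = \int_\Sigma (\text{terms involving }\mathrm{curv}(A,B)\text{ and }\mathrm{fcurv}(A,B))$. Rank-two homotopies kill the 3-form part. Fake-flatness kills the fake-curvature part, and this is precisely where the hypothesis is used. I expect this step to be the main obstacle: there is an $\alpha_*$-twisted non-abelian Stokes formula to control, and the smoothness of $F_{A,B}$ on the diffeological path spaces must be checked.

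Next come the 1- and 2-morphisms. A flat gauge transformation $(g,\varphi)$ should give a smooth pseudonatural transformation. Its component at a point $x$ is $g(x)$. Its component at a path $\gamma$ is the element $h\in H$ obtained by transporting $\varphi$ along $\gamma$, twisted by $g$. Naturality with respect to bigons holds precisely because $\mathrm{curv}(g,\varphi)=0$, by the same variation argument applied to the cylinder $\gamma\times[0,1]$. A gauge 2-transformation $h$ gives a modification with components $h(x)$. The defining relation in \cref{gauge-2-transformations} is exactly the infinitesimal form of the modification axiom. Compatibility with the compositions in the subsequent remark is then a direct check.

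For the inverse, we differentiate. Given a smooth functor $F$, we restrict it to straight short paths and to small squares. Smoothness allows us to define $A$ as minus the derivative of $F$ on paths, and $B$ as the mixed second derivative of $F$ on squares. The target-matching condition, differentiated, forces fake-flatness. Similarly, differentiating transformations and modifications recovers $(g,\varphi)$ with $\mathrm{curv}(g,\varphi)=0$, and $h$ satisfying \cref{gauge-2-transformations}. Finally, we show that the two constructions are mutually inverse. Differentiating the holonomy recovers the forms directly. Conversely, a smooth functor is determined by its derivatives, by the uniqueness of ODE solutions applied to subdivided paths and surfaces. This shows that the construction gives a strict isomorphism of bigroupoids, not merely an equivalence.
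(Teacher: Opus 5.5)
The paper gives no argument of its own for this statement: it quotes it from Schreiber--Waldorf \cite[Theorem 2.21]{schreiber5}, and your outline follows essentially the route taken there. In that route, holonomies of a fake-flat pair $(A,B)$ are defined by path- and surface-ordered differential equations, and thin-homotopy invariance of the surface holonomy comes from a variation formula in which fake-flatness and the rank-$\leq 2$ condition remove all contributions; flat gauge transformations and gauge 2-transformations are treated analogously, and the inverse is obtained by differentiating a smooth 2-functor on short paths and small squares. As written it is a sketch rather than a proof, because the two laborious steps are only announced: the $\alpha_{*}$-twisted variation formula for $h_\Sigma$, and the check that the mixed second derivative of $F$ on squares defines an honest 2-form independent of choices. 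The strategy itself is sound.
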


A generalization of this result to adjusted $\Gamma\!$-connections requires a generalized notion of parallel transport. This is by now an open question; we hope, that the lifting theory and the result of \cref{lifting-theorem} will contribute to a solution.

\begin{remark}
\label{inclusion-of-bigroupoids-of-connections}
We have the following  inclusions of bigroupoids:
\begin{equation*}
\conff \Gamma X \subset \conaa{\Gamma\!,u,\kappa} X\subset \conadj{\Gamma\!,\kappa} X\subset \congen\Gamma X\text{,} 
\end{equation*}
of which the first two are full.
They induce, on the level of isomorphism classes, denoted by $\hc 0$,  a sequence of  maps 
\begin{equation*}
\hc 0\conff \Gamma X \incl \hc 0\conaa{\Gamma\!,u,\kappa} X\incl \hc 0\conadj{\Gamma\!,\kappa} X \twoheadrightarrow \hc 0 \congen\Gamma X\text{,} 
\end{equation*}
of which the first two are injective, and the last is surjective. 
\end{remark}

\begin{remark}
\label{covariance-of-connections}
$\Gamma\!$-connections are covariant in the Lie 2-group $\Gamma$, with respect to Lie 2-group homomorphism $f:\Gamma \to \Gamma'$ as considered in  \cref{Lie-2-groups}:
\begin{enumerate}[(a)]

\item 
If $(A,B)$ is a  $\Gamma\!$-connection, then $f_{*}(A,B):=((f_0)_{*}(A),(f_1)_{*}(B))$ is a  $\Gamma'$-connection. Fake-flatness and flatness are preserved. In the presence of splittings satisfying \cref{condition-for-splittings}, adaptedness is also preserved.  

\item
If $(g,\varphi): (A,B) \to (A',B')$ is a gauge transformation, then $f_{*}(g,\varphi):=(f_0 \circ g,(f_1)_{*}(\varphi))$
is again a gauge transformation between the corresponding $\Gamma'$-connections. If $(g,\varphi)$ is flat, then $f_{*}(g,\varphi)$ is flat, too. If $(g,\varphi)$ is adjusted and $f$ is adjustment-preserving in the sense of \cref{preserve-adjustment}, then $f_{*}(g,\varphi)$ is adjusted, too.  

\item
If $h:(g_1,\varphi_1) \Rightarrow (g_2,\varphi_2)$ is a gauge 2-transformation, then $f_{*}(h):=f_1\circ h$ is a gauge 2-transformation.    

\end{enumerate} 
Summarizing, there is a 2-functor
\begin{equation*}
f_{*}: \congen\Gamma X \to \congen{\Gamma'} X\text{,}
\end{equation*}
which restricts  to each of the sub-bigroupoids in \cref{inclusion-of-bigroupoids-of-connections}.
\end{remark}

\begin{remark}
\label{shift-by-2-form}
For $\lambda\in \Omega^2(X,\mathfrak{h})$, we define a \quot{shift-by-2-form} 2-functor
\begin{equation*}
(..)^{\lambda}:\congen\Gamma X \to \congen\Gamma X
\end{equation*}
by sending $(A,B)$ to $(A,B+\lambda)$ on the level of objects, and being the identity on the level of 1- and 2-morphisms. It restricts to $\conadj{\Gamma\!,\kappa} X$, and it restricts  to $\conff \Gamma X$ and $\conaa{\Gamma\!,u,\kappa} X$ under the condition that $t_{*}\lambda=0$. 
\end{remark}

\begin{remark}
\label{covariance-in-the-adjustment}
Adapted and adjusted $\Gamma\!$-connections are compatible with the groupoid of adjustments described in \cref{groupoid-of-adjustments}. Given a morphism $(\phi,\psi):(u,\kappa) \to (u',\kappa')$ in that groupoid and  a $\Gamma\!$-connection $(A,B)$, 
 we define a new $\Gamma\!$-connection $(A',B')$ with $A':= A$ and $B'=B+\phi(F_A)$. 
If $(A,B)$ is adapted to $u$, then $(A',B')$ is adapted to $u'$. 
If $(g,\varphi):(A_1,B_1) \to (A_2,B_2)$ is a gauge transformation with curvature $\mathrm{curv}(g,\varphi)$, then, the same $(g,\varphi)$ is a gauge transformation $(A_1',B_1') \to (A_2',B_2')$ with curvature
\begin{equation*}
\mathrm{curv}'(g,\varphi) = \mathrm{curv}(g,\varphi) +\phi(F_{A_2})-(\alpha_g)_{*}(\phi(F_{A_1}))\text{.} 
\end{equation*} 
The relation between the adjustments $\kappa$ and $\kappa'$ in the groupoid of adjustments  implies then that 
\begin{equation*}
\kappa'(g,\mathrm{fcurv}(A'_1,B'_1))=\kappa(g,\mathrm{fcurv}(A_1,B_1))-(\alpha_g)_{*}(\phi(F_{A_1}))+\phi(F_{A_2})\text{,}
\end{equation*}
and this shows that, if $(g,\varphi)$ is adjusted as a gauge transformation $(A_1,B_1) \to (A_2,B_2)$, then it is also adjusted as a gauge transformation $(A_1',B_1') \to (A_2',B_2')$.
Above considerations show that $(A,B) \mapsto (A,B + \phi(F_A))$ establishes a strict 2-functor
\begin{equation*}
\phi_{*}:\congen\Gamma X \to \congen\Gamma X
\end{equation*}
that restricts to all subcategories in \cref{inclusion-of-bigroupoids-of-connections},
with the restriction to $\conff\Gamma X$ being the identity functor. The assignment $(\phi,\psi)\mapsto \phi_{*}$ is compatible with composition in the groupoid of adjustments, and hence yields a functor
\begin{equation*}
\ADJ(\Gamma) \to \BiGrpd: (u,\kappa) \mapsto \conaa{\Gamma\!,u,\kappa}X\text{,}
\end{equation*} 
where $\BiGrpd$ denotes the category of bigroupoids with strict 2-functors. Thus, equivalent adjustments induce equivalent bigroupoids of adapted and adjusted $\Gamma\!$-connections. 
\end{remark}

\setsecnumdepth{2}
\subsection{Non-abelian differential cohomology}

\label{sec:pre2stackificationofconGamma}

$\Gamma\!$-connections are contravariant in $X$; the contravariance is established by the pullback of functions and differential forms. Equivalently, these assignments define presheaves\begin{equation}
\label{presehaves-of-gamma-connections}
X \mapsto \conff \Gamma X
\quomma
X \mapsto \conaa {\Gamma\!,u,\kappa} X
\quomma
X \mapsto \conadj{\Gamma\!,\kappa} X
\quomma
X \mapsto \congen\Gamma X
\end{equation}
of bigroupoids over the category of smooth manifolds. We sheafify these presheaves using the plus construction of Nikolaus-Schweigert, recalled in \cref{sec:plus}. We begin with the following preliminary result.
\begin{lemma}
\label{rem:gammaconn:prepre}
The presheaves $\conff \Gamma-$, $\conaa{\Gamma\!,u,\kappa}-$, $\conadj{\Gamma\!,\kappa}-$, and $\congen\Gamma-$ are separated in the sense defined in  \cref{sec:plus}.
\end{lemma}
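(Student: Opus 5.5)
Separatedness in the sense of Nikolaus--Schweigert (\cref{sec:plus}) requires that, for every surjective submersion (or open cover) $\pi: Y \to X$, the pullback 2-functor $\pi^{*}$ from the bigroupoid over $X$ to the descent bigroupoid built from $Y$ is fully faithful on Hom-groupoids. That is, for two objects $(A,B),(A',B')$ over $X$, the functor
\begin{equation*}
\mathrm{Hom}((A,B),(A',B')) \to \des_\pi\big(\mathrm{Hom}(\pi^{*}(A,B),\pi^{*}(A',B'))\big)
\end{equation*}
should be an equivalence of groupoids. Equivalently, the presheaves of Hom-groupoids should be stacks. The plan is to verify this first for $\congen\Gamma-$. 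I will then observe that the three other presheaves are obtained from it by imposing conditions that are local, i.e. that can be checked after pullback along $\pi$.

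\textbf{Faithfulness on 2-morphisms.} Gauge 2-transformations are smooth maps $h:X\to H$, and the conditions in \cref{gauge-2-transformations} are pointwise equations between functions and forms. Since $\pi$ is surjective and $\pi^{*}$ is injective on functions and on differential forms, two gauge 2-transformations agreeing after pullback agree on $X$.

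\textbf{Fullness on 2-morphisms.} A gauge 2-transformation $\tilde h:Y\to H$ between pulled-back gauge transformations is automatically compatible with the descent isomorphisms. These isomorphisms come from the canonical identification $\pr_1^{*}\pi^{*}=\pr_2^{*}\pi^{*}$, so compatibility reads $\pr_1^{*}\tilde h=\pr_2^{*}\tilde h$ on $Y^{[2]}$. Hence $\tilde h$ descends to a smooth map $h:X\to H$, because smooth maps form a sheaf for surjective submersions. The defining equations of \cref{gauge-2-transformations} hold for $h$ because they hold after pullback and $\pi^{*}$ is injective on forms.

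\textbf{Essential surjectivity on 1-morphisms.} A descent object consists of a gauge transformation $(\tilde g,\tilde\varphi):\pi^{*}(A,B)\to\pi^{*}(A',B')$ over $Y$ together with a gauge 2-transformation $\tilde h:\pr_1^{*}(\tilde g,\tilde\varphi)\Rightarrow \pr_2^{*}(\tilde g,\tilde\varphi)$ over $Y^{[2]}$ satisfying the cocycle condition on $Y^{[3]}$. The idea is to use $\tilde h$ to glue. Since $\tilde g_2=t(\tilde h)\tilde g_1$ and $\tilde\varphi_1=\Ad_{\tilde h}^{-1}(\tilde\varphi_2)+(\tilde\alpha_{\tilde h})_{*}(A')+\tilde h^{*}\theta$, the cocycle condition says that $\tilde h$ is a Čech 1-cocycle with values in $H$. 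This is the main obstacle, since such a cocycle need not be a coboundary for a general surjective submersion.

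I would deal with it by reducing to a refinement. For separatedness it suffices to treat open covers, or surjective submersions admitting local sections, and on a good refinement I would modify the representative. An easier route is to note that in Nikolaus--Schweigert's framework separatedness needs only full faithfulness of $\pi^{*}$ into Hom-\emph{sets} of the descent category, i.e. that the Hom-presheaves are sheaves of groupoids up to equivalence. The trivial-$\tilde h$ case then covers the image of $\pi^{*}$, and the nontrivial cocycles are exactly what the plus construction is designed to add. I would check the precise definition in \cref{sec:plus}, expecting it to require only that $\pi^{*}$ be fully faithful on Hom-groupoids, in which case the previous two steps suffice.

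\textbf{Restriction to the subpresheaves.} Flatness and adjustedness of a gauge transformation, i.e. $\mathrm{curv}(g,\varphi)=0$ or $\mathrm{curv}(g,\varphi)=\kappa(g,\mathrm{fcurv}(A,B))$, are equations of differential forms that commute with pullback. The same holds for fake-flatness and adaptedness of objects. Since $\pi^{*}$ is injective on forms, these conditions hold on $X$ if and only if they hold on $Y$, so the fully faithful statements restrict to $\conff\Gamma-$, $\conaa{\Gamma\!,u,\kappa}-$ and $\conadj{\Gamma\!,\kappa}-$.
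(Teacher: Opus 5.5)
Your faithfulness and fullness steps are exactly the paper's proof. A morphism $r_\pi(g_1,\varphi_1)\to r_\pi(g_2,\varphi_2)$ in the descent groupoid is a gauge 2-transformation $a:Y\to H$ with $\pr_1^{*}a=\pr_2^{*}a$. It descends because smooth maps form a sheaf, and injectivity of $\pi^{*}$ on functions and forms shows both that the descended map is a gauge 2-transformation and that $r_\pi(A,B)$ is faithful. In \cref{sec:plus}, \emph{separated} means precisely that every $r_\pi(A,B)$ is full and faithful, so these two steps are the whole proof. Your last step, on locality of flatness, adjustedness and adaptedness, is not needed: all four presheaves have the same 2-morphisms, and the 1-morphisms involved already live on $X$.

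Your opening reformulation, however, is wrong and should be deleted rather than hedged. Asking $r_\pi(A,B)$ to be an equivalence, i.e.\ asking the Hom-presheaves to be stacks, is the \emph{pre-2-stack} condition (\cref{pre-2-stack}), and it fails here. A descent object $(\tilde g,\tilde\varphi,\tilde h)$ is the local data of a principal $\Gamma$-bundle with connection (\cref{sec:twistedconnectionsanddeligne}). It is isomorphic to an object in the image of $r_\pi$ only if $\tilde h$ is a coboundary, i.e.\ only if the principal $H$-bundle glued from $\tilde h$ is trivial. For $\Gamma=B\ueins$ these are principal $\ueins$-bundles with connection, which need not be trivial. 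Refining the cover does not change this, so your ``good refinement'' idea cannot work. This failure is exactly why the Hom-set closure $\prestack{\mathcal{F}}$ is introduced before the plus construction.

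Also, the compatibility $\pr_1^{*}\tilde h=\pr_2^{*}\tilde h$ is not automatic for an arbitrary gauge 2-transformation over $Y$. It is the defining condition of a morphism in the descent groupoid, and that condition is all your fullness argument actually uses.
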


\begin{proof}
The proof is the same for all four presheaves, since it only concerns the 2-morphisms which are the same in all four cases. Therefore, we focus on $\congen\Gamma-$,  and consider a smooth manifold $X$, two objects $(A,B)$ and $(A',B')$ in $\congen\Gamma X$, and a surjective submersion $\pi:Y \to X$. We have to show that the assignment $r_{\pi}$ of descent data with respect to $\pi$ to gauge transformations,
\begin{equation}
\label{functor-to-show-full-and-faithful} 
r_{\pi}: \hom_{\congen\Gamma X}((A,B),(A',B')) \to \hom_{\des_{\congen\Gamma-}(\pi)}(r_{\pi}(A,B),r_{\pi}(A',B'))\text{,}
\end{equation} 
is fully faithful, see  \cref{sec:plus}. Consider two gauge transformations $(g_1,\varphi_1)$ and $(g_1,\varphi_1)$ between $(A,B)$ and $(A',B')$. A morphism on the right hand side between $r_{\pi}(g_1,\varphi_1)$ and $r_{\pi}(g_2,\varphi_2)$ is a gauge 2-transformation $a: \pi^{*}(g_1 ,\varphi_1) \Rightarrow \pi^{*}(g_2,\varphi_2)$ in $\congen\Gamma{Y}$ such that $\pr_1^{*}a = \pr_2^{*}a$ in $\congen\Gamma{Y^{[2]}}$. Since $a:Y \to H$ is a smooth map, and smooth maps form a sheaf,  the latter condition entails that there exists $\tilde a: X \to H$ such that $\pi^{*}\tilde a=a$. Moreover, since $\pi^{*}$ is injective on functions and differential forms, $\tilde a$ is a gauge 2-transformation $(g_1,\varphi_1) \Rightarrow (g_2,\varphi_2)$. This shows that the functor \cref{functor-to-show-full-and-faithful} is full; faithfulness follows again from the injectivity of $\pi^{*}$.
\end{proof}

In order to perform the sheafification, we will here use the shortcut described in \cref{sheafification-shortcut}, by which we save the application of the Hom-set-closure. Hence, we will consider the sheaves of bicategories
\begin{equation*}
(\conff\Gamma-)^{+}\quomma(\conaa{\Gamma\!,u,\kappa}-)^{+}\quomma(\conadj{\Gamma\!,\kappa}-)^{+}\quomma(\congen\Gamma-)^{+}\text{.}
\end{equation*}
In order to obtain the usual picture of \quot{local data}, we will perform the plus construction with respect to open covers $(U_i)_{i\in I}$ of  $X$.   We detail this construction for $\congen\Gamma-$ and then describe the additional constraints for the sub-presheaves.  

\subsubsection{Sheafification I: Objects}

An object $(A_i,B_i,\varphi_{ij},g_{ij},a_{ijk})$ in $(\congen\Gamma-)^{+}(X)$ consists of an open cover $(U_i)_{i\in I}$ of  $X$, and
\begin{enumerate}[(a)]

\item
on every open set $U_i$, a 1-form $A_i \in \Omega^1(U_i,\mathfrak{g})$ and a 2-form $B_i \in \Omega^2(U_i,\mathfrak{h})$,

\item
on every two-fold intersection $U_i \cap U_j$, a smooth map $g_{ij}:U_i \cap U_j \to G$ and a 1-form $\varphi_{ij} \in \Omega^1(U_i \cap U_j,\mathfrak{h})$, and

\item
on every three-fold intersection $U_i \cap U_j \cap U_k$, a smooth map $
a_{ijk}:U_i \cap U_j \cap U_k \to H$. 

\end{enumerate}  
The cocycle conditions are the following:
\begin{enumerate}

\item
Over every two-fold intersection $U_i \cap U_j$:
\begin{align}
A_j + t_{*} (\varphi_{ij}) &= \Ad_{g_{ij}}(A_i) - g_{ij}^{*}\bar\theta 
\label{eq:transconnco}
\end{align} 

\item
Over every three-fold intersection $U_i \cap U_j \cap U_k$:
\begin{align}
\label{eq:transtrans}
g_{ik} &= (t \circ a_{ijk}) \cdot g_{jk}g_{ij}
\\
\label{eq:transconn2co}
\Ad_{a_{ijk}}^{-1}(\varphi_{ik}) +(\tilde \alpha_{a_{ijk}})_{*}(A_k) &=  \varphi_{jk}+(\alpha_{g_{jk}})_{*} (\varphi_{ij}) -a_{ijk}^{*}\theta
\end{align}

\item
Over every four-fold intersection $U_i \cap U_j \cap U_k \cap U_l$:
\begin{equation}
\label{31c}
a_{ikl} \alpha(g_{kl},a_{ijk})=a_{ijl}a_{jkl}
\end{equation}
\end{enumerate} 
The relevant curvature forms are defined as follows: 
\begin{align*}
\tilde F_i& := \mathrm{d}A_i +\tfrac{1}{2} [A_i
\wedge A_i] - t_{*} ( B_i) \in \Omega^2(U_i,\mathfrak{g})
\\
H_i&:=\mathrm{d}B_i + \alpha_{*}( A_i\wedge B_i)\in \Omega^3(U_i,\mathfrak{h})
\\
k_{ij} &:= \mathrm{d}\varphi_{ij} + \tfrac{1}{2}[\varphi_{ij} \wedge \varphi_{ij}]+ \alpha_{*}(A_j \wedge \varphi_{ij})+B_j- (\alpha_{g_{ij}})_{*} (B_i)\in \Omega^2(U_i \cap U_j,\mathfrak{h})
\end{align*}
\cref{prop:gammaconcurv} implies
\begin{align*}
\mathrm{d} H_i &= \alpha_{*}(\tilde F_i\wedge B_i)-\alpha_{*}(A_i\wedge H_i)   \text{.}
\end{align*}
\cref{eq:ttwcurv,eq:transcurv} imply on double intersections:
\begin{align}
\tilde F_j &= \Ad_{g_{ij}}(\tilde F_i)-t_{*}(k_{ij})  
\\
H_j&=(\alpha_{g_{ij}})_{*}(H_i)+\alpha_{*}(A_j\wedge k_{ij})-\alpha_{*}(\tilde F_j\wedge\varphi_{ij})-[k_{ij}\wedge\varphi_{ij}]+\mathrm{d}k_{ij} 
\end{align}
\cref{eq:twcurvunder2,gt2} implies on triple intersections
\begin{equation*}
k_{jk}+(\alpha_{g_{jk}})_{*}(k_{ij}) =\Ad_{a_{ijk}}^{-1}(k_{ik})+(\tilde\alpha_{a_{ijk}})_{*}(\tilde F_j)\text{.}
\end{equation*}
The following additional conditions characterize the three sub-bigroupoids:
\begin{itemize}
\item 
In $(\conadj{\Gamma\!,\kappa}-)^{+}(X)$, the additional condition
\begin{align*}
\label{cocycles-obj-adj}
k_{ij}= \kappa(g_{ij},\tilde F_i)
\end{align*}
is imposed, and the adjusted curvature $H_i^{\kappa}:=H_i + \kappa_{*}(A_i\wedge \tilde F_i) \in \Omega^3(U_i,\mathfrak{h})$ is considered. \cref{adjusted-curvature} now implies that
\begin{equation*}
\mathrm{d}H^{\kappa}_i =b_{\kappa}(F_i \wedge F_i)   \text{,}
\end{equation*}
with $F_i:= p_{*}(\mathrm{d}A_i+\tfrac{1}{2}[A_i \wedge A_i])=p_{*}(\tilde F_i) \in \Omega^2(U_i,\mathfrak{f})$,
and \cref{adjusted-curvature-relation} implies $H^{\kappa}_i = H^{\kappa}_j$, so that there exists a globally defined curvature form $H^{\kappa}\in \Omega^3(X,\mathfrak{h})$ such that $H^{\kappa}|_{U_i}=H^{\kappa}_i$. \Cref{transformation-of-F} implies
\begin{equation*}
\mathrm{Ad}_{p(g_{ij})}(F_i) = F_{j}\text{.}
\end{equation*}

\item
In $(\conaa{\Gamma\!,u,\kappa}-)^{+}(X)$, the further additional condition $u(\tilde F_i)=0$ is imposed, which implies that $\tilde F_i = q(F_i)$ and ensures that $H^{\kappa}$ is $\mathfrak{a}$-valued. 

\item
In $(\conff\Gamma-)^{+}$, we impose 
$\tilde F_i=0$,
implying $k_{ij}=0$ and $F_i=0$, as well as $\mathrm{d}H_i=\alpha_{*}(A_i \wedge H_i)$. 

\end{itemize}

\subsubsection{Sheafification II: 1-Morphisms}

For the 1-morphisms, we have to work on the common refinement of the open covers of the source and target objects. We will tacitly assume that we have already passed to this common refinement. 
Then, a 1-morphism $(\phi_i,h_i,e_{ij}):(A_i,B_i,\varphi_{ij},g_{ij},a_{ijk}) \to (A_i',B_i',\varphi_{ij}',g_{ij}',a_{ijk}')$ in $(\congen\Gamma-)^{+}(X)$ consists of:
\begin{enumerate}
\item[(a)]
on every open set $U_i$, a smooth map $h_{i}:U_{i} \to G$ and a 1-form $\phi_{i}\in\Omega^1(U_i,\mathfrak{h})$, and

\item[(b)]
on every two-fold intersection $U_i \cap U_j$, a smooth map $e_{ij}:U_i \cap U_j \to H$.

\end{enumerate}
The following conditions have to be satisfied:
\begin{enumerate}
\item 
Over every open set $U_i$:
\begin{align}
\label{32a}
A'_i+ t_{*}(\phi_i) &= \Ad_{h_i}(A_i)  - h_i^{*}\bar\theta
\end{align}

\item
Over every two-fold intersection $U_i \cap U_j$:
\begin{align}
\label{32c}
g_{ij}'h_i &= t(e_{ij})h_jg_{ij}
\\
\label{32d}
\Ad_{e_{ij}}^{-1}(\varphi_{ij}'+ (\alpha_{g_{ij}'})_{*}(\phi_i))   &= \phi_j + (\alpha_{h_j})_{*}(\varphi_{ij})- (\tilde\alpha_{e_{ij}})_{*}(A'_{j}) 
- e_{ij}^{*}\theta
\end{align}

\item
Over every three-fold intersection $U_i \cap U_j \cap U_k$:
\begin{equation}
\label{32f}
a_{ijk}'\alpha(g'_{jk},e_{ij})e_{jk} = e_{ik}\alpha(h_k,a_{ijk})
\end{equation}

\end{enumerate}
We define the curvature form
\begin{equation*}
c_{i} := \mathrm{d}\phi_{i} + \tfrac{1}{2}[\phi_{i} \wedge \phi_{i}]+ \alpha_{*}(A'_i \wedge \phi_{i})+B'_i- (\alpha_{h_{i}})_{*} (B_i)\in \Omega^2(U_i,\mathfrak{h})\text{.} 
\end{equation*}
\cref{eq:ttwcurv,eq:transcurv} imply on double intersections
\begin{align}
\tilde F_i' &= \Ad_{h_{ij}}(\tilde F_i)-t_{*}(c_{i})  
\\
\label{curvature-of-cocycles-under-morphism}
H_i'&=(\alpha_{h_{i}})_{*}(H_i)+\alpha_{*}(A'_i\wedge c_{i})-\alpha_{*}(\tilde F'_i\wedge\phi_{i})-[c_{i}\wedge\phi_{i}]+\mathrm{d}c_{i}
\text{.} 
\end{align}
\cref{eq:twcurvunder2,gt2} imply on double intersections
\begin{equation*}
c_j+(\alpha_{h_{j}})_{*}(k_{ij}) =\Ad_{e_{ij}}^{-1}(k_{ij}'+(\alpha_{g_{ij}'})_{*}(c_i))+(\tilde\alpha_{e_{ij}})_{*}(\tilde F'_j)\text{.}
\end{equation*}
The following conditions apply to the three sub-bigroupoids we consider:
\begin{itemize}
\item 
In $(\conadj{\Gamma\!,\kappa}-)^{+}$, the additional condition
\begin{align}
\label{eq:equivtranscurvadj}
c_i=\kappa(h_i,\tilde F_i) &
\end{align}
is imposed, which implies via \cref{adjusted-curvature-relation} that \cref{curvature-of-cocycles-under-morphism} is replaced by $H_i^{\kappa}=(H_i')^{\kappa}$. Taking into account that these forms are actually globally defined, we have $H^{\kappa}=(H')^{\kappa}$. 

\item
There are no further conditions in $(\conaa{\Gamma\!,u,\kappa}-)^{+}$.  

\item
There are no further conditions in  $(\conff{\Gamma}-)^{+}$. We remark that the vanishing of the fake curvature implies via \cref{eq:equivtranscurvadj} that $c_i=0$. 

\end{itemize}

\subsubsection{Sheafification III: 2-Morphisms}

Again, we work here on the common refinement of the source and target objects. 
A 2-morphism in $(\congen\Gamma-)^{+}(X)$,
\begin{equation*}
\alxydim{}{(A_i,B_i,\varphi_{ij},g_{ij},a_{ijk}) \ar@/_2pc/[r]_{(\phi_i',h_i',e_{ij}')}="2"  \ar@/^2pc/[r]^{(\phi_i,h_i,e_{ij})}="1" \ar@{=>}"1";"2"|*+{m_{i}} & (A_i',B_i',\varphi_{ij}',g_{ij}',a_{ijk}'),}
\end{equation*}
has on each open set $U_i$ a smooth map $m_i : U_i \to H$ such that:
\begin{enumerate}

\item 
Over every open set $U_i$:
\begin{align}
h_i' &= (t \circ m_i) \cdot h_i
\\
\Ad_{m_i}^{-1}(\phi_i') +(\tilde\alpha_{m_i})_{*}(A'_i) &=  \phi_i -m_i^{*}\theta\text{.}
\end{align}

\item
Over every 2-fold intersection $U_i \cap U_j$:
\begin{equation}
e_{ij}'m_j=\alpha(g_{ij}',m_i)e_{ij}\text{.}
\end{equation}

\end{enumerate}
On the level of curvatures, \cref{eq:twcurvunder2} implies $c_i=\mathrm{Ad}^{-1}_{m_i}(c_i') + (\tilde\alpha_{m_i})_{*}(\tilde F'_i)$.
There are no further conditions for any of our sub-bigroupoids.

\begin{remark}
The formulas derived above from the principle of sheafification coincide with the corresponding formulas in \cite[\S 2.2]{Waldorf2016}. They are equivalent to the setting of \cite{Rist2022} and \cite{Tellez2023}, under a transformation involving a shuffling of indices.  
\end{remark}

\begin{remark}
We obtain a sequence of 2-functors
\begin{equation*}
(\conff\Gamma-)^{+}(X)\to (\conaa{\Gamma\!,u,\kappa}-)^{+}(X)\to (\conadj{\Gamma\!,\kappa}-)^{+}(X)\to (\congen\Gamma-)^+(X)\text{;} 
\end{equation*}
where the first two are fully faithful and the third is essentially surjective.
On the level of isomorphism classes of objects, we obtain the non-abelian differential cohomology groups
\begin{align*}
\hat \h^1(X,\Gamma)^{\ff} &:= \hc 0(\conff\Gamma-)^{+}(X) 
\\
\hat \h^1(X,(\Gamma\!,u,\kappa))^{\aptadj} &:= \hc 0(\conaa{\Gamma\!,u,\kappa}-)^{+}(X) 
\\
\hat \h^1(X,(\Gamma\!,\kappa))^{\adj} &:= \hc 0(\conadj{\Gamma\!,\kappa}-)^{+}(X) 
\\
\hat \h^1(X,\Gamma)^{\gen} &:= \hc 0(\congen\Gamma-)^{+}(X) 
\end{align*}
inducing maps
\begin{equation*}
\hat \h^1(X,\Gamma)^{\ff} \incl \hat \h^1(X,(\Gamma\!,u,\kappa))^{\aptadj} \incl \hat \h^1(X,(\Gamma\!,\kappa))^{\adj} \to \hat\h^1(X,\Gamma)^{\gen}\text{,} 
\end{equation*}
such that the first two are injective and the third is surjective.
\end{remark}

\begin{remark}
Since sheafification is functorial, the shift by 2-form (\cref{shift-by-2-form}) and the covariance in the adjustments (\cref{covariance-in-the-adjustment}) induce corresponding functors between the sheaves, and corresponding bijections in cohomology.
\begin{equation*}
\phi_{*}: \hat\h^1(X,(\Gamma\!,u,\kappa))^{\aptadj}  \to \hat\h^1(X,(\Gamma\!,u',\kappa'))^{\aptadj} \text{.}
\end{equation*}
In that sense, the adapted and adjusted non-abelian differential cohomology only depends on the isomorphism class of the adjustment. 
\end{remark}

\setsecnumdepth{2}
 
\section{Connections on 2-group bundles} 

\label{section-2-group-bundles}

Any theory of bundle gerbes is based on some monoidal category of bundles. A major obstacle in establishing the theory of \quot{non-abelian} bundle gerbes was that principal $H$-bundles, for $H$ a non-abelian Lie group, do not form a monoidal category. However, principal $H$-\emph{bi}-bundles form a monoidal category. The first version of non-abelian bundle gerbes set up by Aschieri-Cantini-Jur\v co used these \cite{aschieri}, see  \cref{prop:equivbibun}. 

Here, we use a generalization, called \quot{2-group bundles} or \emph{principal $\Gamma\!$-bundles}, for $\Gamma$ a Lie 2-group. The monoidal category of principal $\Gamma\!$-bundles is standard by now, and recalled in \cref{2-group-bundles}.
The theory of connections on principal $\Gamma\!$-bundles is  obscure at first glance, in particular, the treatment of their curvature and the involvement of adjustments. We describe this theory  in full detail in \cref{curvature-on-2-group-bundles,tensor-product-of-2-group-bundles,covariance-of-Gamma-bundles}. Finally, we show in \cref{sec:twistedconnectionsanddeligne} how (various versions of) principal $\Gamma\!$-bundles are related to the (various versions of)   presheaves of   $\Gamma\!$-connections  defined in \cref{sec:pre2stackificationofconGamma}. 

\subsection{2-group bundles}

\label{2-group-bundles}

Let $\Gamma=(H \stackrel{t}\to G \stackrel{\alpha}\to \mathrm{Aut}(H))$ be a  Lie 2-group, with $F:=\pi_0(\Gamma)$ and projection $p: G \to F$.
Based on \cite[Section 2]{Nikolaus}, we describe the monoidal category $\bun \Gamma X$ of principal $\Gamma\!$-bundles over a smooth manifold $X$. 
\begin{definition}
A \emph{principal $\Gamma\!$-bundle over $X$} is a principal $H$-bundle $P$ over $X$, together with a smooth map $\phi:P \to G$ satisfying $\phi(ph)=t(h)^{-1}\phi(p)$ for all $p\in P$ and $h\in H$. A morphism of principal $\Gamma\!$-bundles $\varphi: (P,\phi) \to (P',\phi')$ is a morphism $\varphi:P \to P'$ of principal $H$-bundles such that $\phi'\circ\varphi=\phi$. 
\end{definition}

The map $\phi$ is called the \emph{anchor} of $P$, and omitted from the notation when no explicit mention is necessary.
Principal $\Gamma\!$-bundles over $X$ form a category $\bun\Gamma X$. Moreover, the assignment $X \mapsto \bun\Gamma X$ forms a stack $\bun\Gamma-$ on the site of smooth manifolds. 

\begin{remark}
\begin{enumerate}[(i)]

\item
\label{anchor-push}
The map $p \circ \phi:P\to F$ descends to a  map $p_{*}P: X \to F$, which is smooth if $\Gamma$ is smoothly separable. The notation will be explained later (see \cref{covariance-of-Gamma-bundles}).

\item 
\label{trivial-2-group-bundle}
A principal $\Gamma\!$-bundle $(P,\phi)$ is called \emph{trivial}, if $P$ is  the trivial principal $H$-bundle, i.e. $P=X\times H$. In this case, $\phi$ can be identified with a smooth map $g:X \to G$, via the correspondence $\phi(x,h):=t(h)^{-1}g(x)$.
We write $\trivlin_{g} := (X \times H,\phi)$ and call this the \emph{trivial principal $\Gamma\!$-bundle with anchor $g$}. We have a bijection 
\begin{equation*}
\hom_{\bun\Gamma X}(\trivlin_{g_1},\trivlin_{g_2})  \cong \{h\in C^{\infty}(X,H) \sep (t \circ h) \cdot g_1=g_2 \}\text{,}
\end{equation*}
under which a smooth map $h$ corresponds to the bundle morphism $\varphi(x,h') := (x,h(x)h')$,  
and composition corresponds to the point-wise multiplication. 

\end{enumerate}
\end{remark}

The tensor product $P_1 \otimes P_2$ of two principal $\Gamma\!$-bundles  $(P_1,\phi_1)$ and $(P_2,\phi_2)$ over $X$ has the total space $(P_1 \times_X P_2)/\sim$ with $(p_1h,p_2)\sim(p_1,p_2\alpha(\phi_1(p_1)^{-1},h))$.
The principal $H$-action on $P_1 \otimes P_2$ is given by $(p_1,p_2)\cdot h = (p_1h,p_2)$, and the anchor  sends $(p_1,p_2)$ to $\phi_1(p_1)\phi_2(p_2)$. The tensor product  is associative with associators induced from the canonical diffeomorphisms $(P_1 \times_X P_2) \times_X P_3 \cong P_1\times_X(P_2 \times_X P_3)$.  Moreover, $\trivlin_1$ is  a unit object  with respect to the tensor product, expressed by bundle isomorphisms $P \otimes \trivlin_1\cong P$ and $\trivlin_1\otimes P \cong P$ given by 
\begin{equation}
\label{eq:unitors}
(p,(x,h)) \mapsto p\alpha(\phi(p),h)
\quand
((x,h),p) \mapsto ph\text{,}
\end{equation}
respectively.
Thus, $\bun\Gamma X$ forms a monoidal category, and $\bun\Gamma-$ is a monoidal stack. 
In general, $\bun\Gamma X$ is neither braided nor symmetric monoidal; however, one can show that if $\Gamma$ (viewed as a monoidal category) is braided/symmetric, then  $\bun\Gamma X$ inherits these properties.  

\begin{remark}
\begin{enumerate}[(i)]
\item 
\label{tensor-product-of-trivial-bundles}
The tensor product of trivial principal $\Gamma\!$-bundles corresponds canonically to the product of  maps. Indeed,  there is a canonical isomorphism $\gamma_{g_1,g_2}:\trivlin_{g_1} \otimes \trivlin_{g_2}\to\trivlin_{g_1g_2}$,  given by
\begin{equation*}
((x,h_1),(x,h_2)) \mapsto (x,\alpha(g_1(x),h_2)h_1)\text{.}
\end{equation*} 
These canonical morphisms are associative, in the sense that the diagram
\begin{equation*}
\alxydim{@C=4em}{\trivlin_{g_1} \otimes \trivlin_{g_2} \otimes \trivlin_{g_3} \ar[r]^-{\id \otimes \gamma_{g_2,g_3}} \ar[d]_{\gamma_{g_1,g_2} \otimes \id}  & \trivlin_{g_1} \otimes \trivlin_{g_2g_3} \ar[d]^{\gamma_{g_1,g_2g_3}} \\ \trivlin_{g_1g_2} \otimes \trivlin_{g_3} \ar[r]_{\gamma_{g_1g_2,g_3}} & \trivlin_{g_1g_2g_3}}
\end{equation*}
is commutative.

\item
\label{tensor-product-with-trivial-bundle}
The tensor product of a trivial principal $\Gamma\!$-bundle with anchor $g:X \to G$ and an arbitrary principal $\Gamma\!$-bundle $(P,\phi)$ can be identified canonically with a \emph{shifted bundle}. We denote by ${}^{g\!}P$  the  principal $H$-bundle with the same total space as $P$, but principal action $p \cdot_g h := p\alpha(g(x)^{-1},h)$ and with anchor ${}^{g\!}\phi(p) := g(x)\phi(p)$. Moreover, we denote by $P^{g}$ the same principal $H$-bundle as $P$ but with anchor $\phi^{g}(p) := \phi(p)g(x)$. Then, there are canonical isomorphisms 
\begin{equation*}
\lambda_g: \trivlin_g \otimes P \to {}^{g\!}P
\quand
\rho_g: P \otimes \trivlin_g \to P^{g}\text{,} 
\end{equation*}
given by $\lambda((x,h),p) := p\cdot_g h$ and $\rho_g(p,(x,h)) :=p \alpha(\phi(p),h)$.  
In the case $P=\trivlin_{g'}$, one obtains ${}^{g\!}(\trivlin_{g'}) = \trivlin_{gg'}$ and $(\trivlin_{g'})^{g} = \trivlin_{g'g}$; moreover, $\lambda_g=\gamma_{g,g'}=\rho_{g'}$ as isomorphisms $\trivlin_g \otimes \trivlin_{g'} \cong \trivlin_{gg'}$.

\item 
\label{left-and-right-shift-isomorphic}
If the Lie 2-group $\Gamma$ is central, and $g:X \to G$ co-restricts to $t(H) \subset G$, then the left and the right shift gives isomorphic bundles,
via the isomorphism\begin{equation*}
\delta_g: P^{g} \to {}^{g\!}P
\quomma \delta_g(p):= p \cdot  \tilde\alpha_{t^{-1}(g(x))}(\phi(p))^{-1}\text{;} 
\end{equation*}
recall from  \cref{Lie-2-groups} that $\tilde\alpha_{t^{-1}(g)}$ is well-defined for $g\in t(H)$. 
In particular, trivial principal $\Gamma\!$-bundles with anchors in $t(H)$ are central in the sense that there is a canonical braiding isomorphism
\begin{equation*}
\alxydim{}{P \otimes \trivlin_g \ar[r]^-{\rho_g} & P^{g}\ar[r]^-{\delta_g}& {}^{g\!}P \ar[r]^-{\lambda_{g}^{-1}} & \trivlin_g \otimes P\text{,}}
\end{equation*}
for any principal $\Gamma\!$-bundle $P$.

\end{enumerate}
\end{remark}

\begin{lemma}
\label{lem:invertibility}
Every principal $\Gamma\!$-bundle is invertible with respect to the tensor product. \end{lemma}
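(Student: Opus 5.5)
Every principal $\Gamma\!$-bundle is locally trivial, and $\bun\Gamma-$ is a monoidal stack. I would therefore first prove the statement for trivial bundles, where it reduces to group inversion via the canonical isomorphisms $\gamma_{g_1,g_2}$. I would then glue the local inverses, using the stack property. Alternatively, and I think more cleanly, I would write down a global inverse directly and check it with local trivializations.

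Concretely, given $(P,\phi)$, let $P^{\vee}$ have the same total space as $P$, the right $H$-action
\[
p\ast h := p\,\alpha(\phi(p),h^{-1}),
\]
and anchor $\phi^{\vee}(p):=\phi(p)^{-1}$. I would check three things.
\begin{itemize}
\item The action is a right action. This uses $\phi(p\,\alpha(\phi(p),h^{-1}))=\phi(p)t(h)\phi(p)^{-1}\phi(p)=\phi(p)t(h)$ together with the crossed module identities.
\item It is free and transitive on fibres, since it is obtained from the original action by a fibrewise bijective reparametrisation of $H$.
\item The anchor is equivariant: $\phi^{\vee}(p\ast h)=t(h)^{-1}\phi(p)^{-1}$, as required.
\end{itemize}
Local triviality follows because over a trivialization $P|_U\cong\trivlin_g$ one gets $P^{\vee}|_U\cong\trivlin_{g^{-1}}$ via $(x,h)\mapsto(x,\alpha(g(x)^{-1},h)^{-1})$ or a similar explicit formula.

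Next I would construct the evaluation isomorphism $P\otimes P^{\vee}\to\trivlin_1$. The natural candidate sends a class $(p_1,p_2)$ with $p_2=p_1h$ (in $P$'s original action) to $(x,\text{something built from }h\text{ and }\phi(p_1))$. I would fix the formula by requiring compatibility with the relation $(p_1h',p_2)\sim(p_1,p_2\ast\alpha(\phi(p_1)^{-1},h'))$ and with the $H$-action and anchors. The anchor side is easy: $\phi(p_1)\phi(p_2)^{-1}=\phi(p_1)\phi(p_1)^{-1}t(h)$ lands in $t(h)$, consistent with the anchor $t(\cdot)^{-1}\cdot 1$ of $\trivlin_1$. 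Well-definedness on the quotient and $H$-equivariance are then routine crossed-module computations.

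Any morphism of principal $H$-bundles is an isomorphism, so this gives $P\otimes P^{\vee}\cong\trivlin_1$. The same argument, or the symmetric construction, gives $P^{\vee}\otimes P\cong\trivlin_1$. In a monoidal category a one-sided inverse in a groupoid-like setting suffices, since $P^{\vee\vee}\cong P$ canonically via the identity map.

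The main obstacle is getting the signs and inverses in the formula for the evaluation map right, so that it descends through the equivalence relation defining $\otimes$. If this becomes messy, I would fall back on the local-to-global argument. Locally $\trivlin_g\otimes\trivlin_{g^{-1}}\cong\trivlin_1$ via $\gamma$, and these local isomorphisms are compatible on overlaps by naturality and the associativity of the $\gamma$'s. Descent in the stack $\bun\Gamma-$ then glues them, and faithfulness of morphisms in the stack takes care of uniqueness.
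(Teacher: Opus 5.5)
Your proposal is correct and is essentially the paper's proof. You use the same dual $P^{\vee}$ (same total space, action $p\ast h=p\,\alpha(\phi(p),h^{-1})$, anchor $\phi^{-1}$) and an evaluation isomorphism $P\otimes P^{\vee}\to\trivlin_1$, which the paper takes to be simply $(p_1,p_2)\mapsto(x,h^{-1})$ for $p_2=p_1h$, with no dependence on $\phi(p_1)$.

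The one difference is the second side. You get it from the identity $P^{\vee\vee}=P$, whereas the paper writes down a coevaluation $\trivlin_1\to P^{\vee}\otimes P$ and checks the zigzag identities; for mere invertibility your shortcut suffices.
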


\begin{proof}
If $P$ is a principal $\Gamma\!$-bundle, we define the dual bundle $P^{*}$ to have  the same total space $P$ and projection, but the inverted anchor $\phi^{*}(p):=\phi(p)^{-1}$ and the new  $H$-action
$p\ast h:=p\alpha(\phi(p),h^{-1})$. 
A bundle isomorphism $\varepsilon :P \otimes P^{*} \to \trivlin_1$ is given by $\varepsilon(p,p^{*}) := (x,h^{-1})$,
where $x$ is the common base point of $p$ and $p^{*}$, and $h\in H$ is the unique element such that $p^{*}=ph$; similarly, one sets up an isomorphism $\eta: \trivlin_1 \to P^{*} \otimes P$, and one can prove that $\varepsilon$ and $\eta$ satisfy the usual zigzag identities.  
\end{proof}

\begin{example}
\label{examples-groupoid-bundles}
\begin{enumerate}[i)]

\item 
Let  $A$ be any abelian Lie group.
\begin{itemize}
\item 
$\Gamma=BA$ gives  the monoidal  stack of ordinary principal $A$-bundles.
We will write $\bun A-$ instead of $\bun{BA}-$.
\item
 $\Gamma = BA \times (\Z_2)_{\dis}$. Then $\bun\Gamma X$ is the  category of $\Z_2$-graded principal $A$-bundles, with objects pairs $(P,\alpha)$ consisting of an ordinary principal $A$-bundle $P$ over $X$ and $\alpha:X \to \Z_2$ a continuous map. Then tensor product is
\begin{equation*}
(P_1,\alpha_1) \otimes (P_2,\alpha_2) = (P_1 \otimes P_2,\alpha_1\alpha_2)\text{.}
\end{equation*}

\item
$\Gamma= \AUT(A)$. Again, $\bun\Gamma X$ is the  category of $\Z_2$-graded principal $A$-bundles, but now with the tensor product
\begin{equation*}
(P_1,\alpha_1) \otimes (P_2,\alpha_2) = (P_1 \otimes P_2^{\alpha_1},\alpha_1\alpha_2)\text{,}
\end{equation*}
where $P^{-1}$ is the dual bundle.

\end{itemize}

\item
\label{Fdis-bundles}
For any Lie group $F$, there is a canonical equivalence of categories 
\begin{equation*}
\bun {F_{\dis}}X \cong C^{\infty}(X,F)_{\dis}
\end{equation*}
given by sending a principal $F_{\dis}$-bundle $P$, which is, by definition  a principal $\{\ast\}$-bundle, to the smooth map $ p_{*}P:X \to F$. 

\end{enumerate}
\end{example}

\begin{remark}
\label{prop:equivbibun}
We compare principal $\Gamma\!$-bundles, for $\Gamma=\AUT(H)$, with  \emph{principal $H$-bibundles}  \cite{aschieri}.   These form a category $\bibun HX$, which is monoidal under the usual tensor product obtained by taking the quotient  of $P_1 \times_X P_2$ by equivalence relation $(p_1h,p_2)\sim (p_1,hp_2)$.
Given a principal $\AUT(H)$-bundle $P$ with anchor map $\phi: P \to \mathrm{Aut}(H)$, then $P$ becomes a principal $H$-bibundle $\lli{\phi}P$ under the given right principal action and the left action $hp := p\phi(p)(h)$.
Clearly, every morphism of principal $\AUT(H)$-bundles is equivariant with respect to both $H$-actions, and thus we have constructed a functor 
\begin{equation*}
\bun{\AUT(H)}X \to \bibun HX
\end{equation*}
We claim that this functor
 establishes an equivalence of categories, and is 
 \quot{anti-monoidal}, i.e., it is monoidal for the opposite monoidal structure on one of the two categories.  
Indeed, our functor is clearly full and faithful. For essential surjectivity, consider a principal $H$-bibundle $P$ and define $\phi:P \to \mathrm{Aut}(H)$ such that $hp = p\phi(p)(h)$ holds for all $p\in P$ and $h\in H$. Then, $P\cong \lli\phi P$ as principal $H$-bibundles. It remains to show that our functor is anti-monoidal. Consider two principal $\AUT(H)$-bundles $P$ and $P'$ over $X$ with anchor maps $\phi$ and $\phi'$, respectively. 
An  isomorphism
\begin{equation*}
\psi: \lli{{\phi\phi'}}(P\otimes P') \to \lli{{\phi'}}P'\otimes \lli\phi P
\end{equation*}
 of principal $H$-bibundles can be defined by by $(p,p')\mapsto (p',p)$. 
It is  easy to see that this is indeed an anti-monoidal structure.
\end{remark}

\setsecnumdepth{2}

\subsection{Connections}

In the following, we define connections on 2-group bundles. Again, we let $\Gamma$ be a Lie 2-group, and we consider a  principal $\Gamma\!$-bundle $P$  over a smooth manifold  $X$, with anchor $\phi:P \to G$. Furthermore, let $A,A'\in \Omega^1(X,\mathfrak{g})$ be two 1-forms. As it will become clearer later, they will play the role of a \quot{source} and a \quot{target}.  
\begin{definition}
\label{connection-on-Gamma-bundle}
An \emph{$A$-$A'$-connection} on $P$ is a  1-form $\omega\in \Omega^1(P,\mathfrak{h})$ such that the following two conditions hold:
An \emph{$A$-$A'$-connection} on $P$ is a 1-form $\omega\in \Omega^1(P,\mathfrak{h})$ satisfying the following two conditions:
\begin{enumerate}[1.)]

\item 
$\rho^{*}\omega = \Ad^{-1}_h(\pr_P^{*}\omega) +(\tilde\alpha_{h})_{*}(A') + h^{*}\theta$

\item
$t_{*}(\omega) =  \Ad_{\phi}(A) - A' - \phi^{*}\bar\theta$

\end{enumerate} 
Here, $\rho:P\times H \to P$ denotes the principal $H$-action on $P$, and $h:P \times H \to H$ denotes the projection onto the second factor. 
A bundle morphism $\varphi: P_1 \to P_2$ is called \emph{connection-preserving} for $A$-$A'$-connections $\omega_1$ and $\omega_2$ if $\varphi^{*}\omega_2=\omega_1$. 
\end{definition}

\begin{remark}
\begin{enumerate}[(i)]

\item
\label{connection-on-trivial-bundle} 
An $A$-$A'$-connection on a trivial $\Gamma\!$-bundle $\trivlin_g$ can be specified by a 1-form $\alpha\in \Omega^1(X,\mathfrak{h})$ satisfying $t_{*}(\alpha) =  \Ad_{g}(A) - A' - g^{*}\bar\theta$. The connection is then given by
\begin{equation}
\label{eq:connoftrivbun}
\omega_{\alpha} := \Ad_h^{-1}(x^{*}\alpha)+(\tilde\alpha_h)_{*}(x^{*}A')+h^{*}\theta\text{,}
\end{equation} 
where $x$ and $h$ denote the projections to the factors of $\trivlin_g=X\times H$.
We denote this $\Gamma\!$-bundle with $A$-$A'$-connection by $\trivlin_{g}^{\alpha}$, although this notation implicitly hides the dependence on the 1-form $A'$ in \cref{eq:connoftrivbun}. 
If a bundle morphism $\trivlin_{g_1}^{\alpha_1}\to\trivlin_{g_2}^{\alpha_2}$ is identified with a smooth map $h:X \to H$ as in \cref{trivial-2-group-bundle}, then it is connection-preserving if and only if $h$ satisfies $\alpha_1=\Ad^{-1}_{h}(\alpha_2)+(\tilde\alpha_{h})_{*}(A')+h^{*}\theta$.

\item
\label{adjusted-shift-of-connection}
Suppose $\Gamma$ is central and equipped with an adjustment $\kappa$ adapted to a splitting $u$. Given a 1-form $\eta \in \Omega^1(X,\mathfrak{g})$, we may shift an $A$-$A'$-connection $\omega$ on a principal $\Gamma\!$-bundle $P$ to 
\begin{equation*}
\omega^{\eta} := \omega + \kappa(\phi, \eta)\text{,}
\end{equation*}
which becomes an $\tilde A$-$\tilde A'$-connection for
$\tilde A := A + \eta$ and $\tilde A' := A'+\tilde q_u(p_{*}P,\eta)$, 
where $p_{*}P:X \to F$ is the smooth map defined in \cref{anchor-push}, and, for a section $q$ of $\mathfrak{G}$, we set
\begin{equation*}
\tilde q: F \times \mathfrak{g} \to \mathfrak{g}
\quad\text{,}\qquad
\tilde q(f,X) := X+q(\mathrm{Ad}_f(p_{*}(X))-p_{*}(X))\text{.}
\end{equation*}
We refer to $\omega^{\eta}$ as the \emph{adjusted shift of $\omega$ by $\eta$}. 
The adjusted shift of connections is compatible with connection-preserving isomorphisms: if $\varphi:P\to P'$ is a morphism between $\Gamma\!$-bundles that is connection-preserving for connections $\omega$ and $\omega'$, respectively, then it remains connection-preserving for the adjusted shifts.
If $\mu\in \Omega^1(X,\mathfrak{g})$ is another 1-form, then we have
\begin{equation*}
(\omega^{\eta})^{\mu}=\omega^{\eta+\mu}\text{.}
\end{equation*}
If $\trivlin_g^{\alpha}$ is a trivial principal $\Gamma\!$-bundle with $A$-$A'$-connection determined by $\alpha\in \Omega^1(X,\mathfrak{h})$, then we have
\begin{equation*}
(\trivlin_g^{\alpha})^{\eta} = \trivlin_g^{\alpha+\kappa(g,\eta)}
\end{equation*}
as bundles with $(A+\eta)$-$(A'+\tilde q(p \circ g,\eta))$-connection. 
We remark that  adjusted shifts of connections are particularly simple when $\eta = q(\beta)$ for a 1-form $\beta\in\Omega^1(X,\mathfrak{f})$, because then $\tilde q(f,q(Z))=q(\Ad_{f}(Z))$. 

\end{enumerate}
\end{remark}

The following result concerns the existence of connections on principal $\Gamma\!$-bundles.

\begin{lemma}
\label{existence-connection-A}
Suppose that $\Gamma$ is central and allows an adapted adjustment.
If $P$ is a principal $\Gamma\!$-bundle and $A\in \Omega^1(X,\mathfrak{g})$, then there exist $A'\in \Omega^1(X,\mathfrak{g})$ and an $A$-$A'$-connection $\omega$ on $P$.
\end{lemma}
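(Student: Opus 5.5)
The plan is to construct $\omega$ locally on trivializations and to glue with a partition of unity. The gluing step forces a specific global choice of $A'$, and the adjustment will produce it.

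\emph{Step 1: local existence.} Choose an open cover $(U_i)$ of $X$ over which $P$ is trivial, $P|_{U_i}\cong \trivlin_{g_i}$ with $g_i:U_i\to G$. On $\trivlin_{g_i}$ we seek, by \cref{connection-on-trivial-bundle}, a 1-form $\alpha_i\in\Omega^1(U_i,\mathfrak{h})$ with
\begin{equation*}
t_{*}(\alpha_i)=\Ad_{g_i}(A)-A'-g_i^{*}\bar\theta.
\end{equation*}
This requires the right-hand side to lie in $\mathrm{im}(t_{*})$, i.e. $p_{*}(A')=p_{*}(\Ad_{g_i}(A)-g_i^{*}\bar\theta)$. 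Since $p\circ g_i=p_{*}P|_{U_i}=:f$ is globally defined, the right-hand side equals $\Ad_f(p_{*}A)-f^{*}\bar\theta_F$, which is independent of $i$. Hence a global $A'$ exists: take a section $q=q_u$ and set
\begin{equation*}
A':=q\big(\Ad_f(p_{*}A)-f^{*}\bar\theta_F\big).
\end{equation*}
Then $\alpha_i:=u(\Ad_{g_i}(A)-A'-g_i^{*}\bar\theta)$ works by $t_{*}u+q p_{*}=\id$. Smoothness of $f$ uses smooth separability, via \cref{anchor-push}. If only centrality is assumed, I would instead use the adjusted shift idea: compare with $\tilde q$.

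\emph{Step 2: gluing.} The condition in \cref{connection-on-Gamma-bundle} is affine in $\omega$ for fixed $A,A'$. Condition 1.) is affine with linear part $\omega\mapsto\Ad_h^{-1}\omega$, and condition 2.) is affine with linear part $t_{*}$. So for a partition of unity $(\psi_i)$ subordinate to the cover, $\omega:=\sum_i \pi^{*}\psi_i\,\omega_{\alpha_i}$ (transported to $P|_{U_i}$ via the trivializations) satisfies both conditions, because $\sum\psi_i=1$.

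\emph{Main obstacle.} The main obstacle is justifying the global $A'$ when $\Gamma$ is not smoothly separable, so that $F$ has no Lie structure. There I would try to use the adjusted shift of \cref{adjusted-shift-of-connection} instead. Start from $A=0$, where $A'$ is built from the Maurer–Cartan terms of the $g_i$. Glue local choices of $A'$ whose differences $g_j$-transform into $\mathrm{im}(t_{*})$; this uses adaptedness $t_{*}\kappa=t_{*}u(\Ad-1)$ and centrality. Then pass to general $A$ via $\omega^{A}$, which yields an $A$-$\tilde A'$-connection. The existence of an adapted adjustment is exactly what makes this shift available.
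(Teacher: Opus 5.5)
Your argument is correct and takes a genuinely different route from the paper. However, the ``main obstacle'' you identify is not real, and the vague fallback via adjusted shifts should be dropped in favour of a one-line check.

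\textbf{The obstacle dissolves.} On an overlap $U_i\cap U_j$ the two trivializations differ by a smooth $k:U_i\cap U_j\to H$ with $g_j=(t\circ k)\,g_i$. Hence
\begin{equation*}
\Ad_{g_j}(A)-g_j^{*}\bar\theta=\Ad_{t(k)}\big(\Ad_{g_i}(A)-g_i^{*}\bar\theta\big)-t_{*}(k^{*}\bar\theta)\text{.}
\end{equation*}
For every $Y\in\mathfrak{g}$ one has $\Ad_{t(k)}(Y)-Y\in t_{*}(\mathfrak{h})$; this follows by differentiating $t(k)e^{sY}t(k)^{-1}e^{-sY}=t(k\,\alpha(e^{sY},k^{-1}))$ at $s=0$. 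So the $\mathfrak{f}$-valued forms $p_{*}(\Ad_{g_i}(A)-g_i^{*}\bar\theta)$ agree on overlaps and glue to some $\beta\in\Omega^1(X,\mathfrak{f})$. Equivalently, $p_{*}(\Ad_{\phi}(A)-\phi^{*}\bar\theta)$ is basic on $P$. Since $\mathfrak{f}=\mathfrak{g}/t_{*}(\mathfrak{h})$ is only used as a vector space, neither a Lie structure on $F$ nor smooth separability is needed. With $A':=q_u(\beta)$, your Step 1 and the partition-of-unity Step 2 go through verbatim, and this covers the lemma as stated, which does not assume smooth separability.

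\textbf{Comparison with the paper.} The paper takes an ordinary connection $\omega$ on the underlying principal $H$-bundle. This is automatically a $\tilde A$-$0$-connection for $\tilde A:=t_{*}((\alpha_{\phi^{-1}})_{*}(\omega))+\phi^{*}\theta$, which descends to $X$. The paper then moves the source from $\tilde A$ to the prescribed $A$ via the adjusted shift of \cref{adjusted-shift-of-connection} along $A-\tilde A$, which yields $A'=\tilde q_u(p_{*}P,A-\tilde A)$. Given the adjusted-shift machinery this is very short and reuses the paper's toolkit. But it is the only reason the hypotheses of centrality and an adapted adjustment enter.

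Your route instead fixes $A'$ by hand. You observe, correctly, that condition 2.)\ forces $p_{*}A'=p_{*}(\Ad_{\phi}(A)-\phi^{*}\bar\theta)$. You then use only the splitting, through $t_{*}u+q_up_{*}=\id_{\mathfrak{g}}$, together with the affine nature of both defining conditions. It needs neither the adjustment nor centrality, so it proves the lemma for any Lie 2-group with a splitting. Both constructions produce the same component $p_{*}A'$, as they must.
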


\begin{proof}
Since $P$ has an underlying principal $H$-bundle, it admits a connection $\omega$. As an ordinary connection, it satisfies the first condition in \cref{connection-on-Gamma-bundle} for $A':= 0$. We consider $\tilde A:= t_{*}((\alpha_{\phi^{-1}})_{*}(\omega))+\phi^{*}\theta\in \Omega^1(P,\mathfrak{g})$. One can easily verify that this form descends to $X$, and then $\omega$ is a $\tilde A$-$0$-connection. 
Now we perform an adjusted shift by $\eta := A-\tilde A$. By \cref{adjusted-shift-of-connection}, this becomes an $A$-$A'$-connection for $A' := 0 +\tilde q_u(p_{*}P,A-\tilde A)$. 
\end{proof}

At this point, we may define a category $\buncon\Gamma X_{A,A'}$ consisting of principal $\Gamma\!$-bundles with $A$-$A'$-connections and all connection-preserving bundle morphisms. 

\begin{example}
\label{groupoid-bundles-with-connection}
\begin{enumerate}[(i)]
\item
For $\Gamma=BA$, where necessarily $A=A'=0$, $\buncon AX:=\buncon {BA} X_{0,0}$ is precisely the category of ordinary principal $A$-bundles with connection. 

\item
For $\Gamma=F_{\dis}$ we have, for any $A,A'\in\Omega^1(X,\mathfrak{f})$,
an isomorphism of categories\begin{equation*}
\buncon{F_{\dis}}X_{A,A'} \cong \{\phi\in C^{\infty}(X,F)\sep \Ad_{\phi}(A)=A'+\phi^{*}\bar \theta\}_{\dis}\text{,}
\end{equation*} 
given by $P \mapsto p_{*}P$.
\end{enumerate}
\end{example}


\label{curvature-on-2-group-bundles}

The curvature of connections on 2-group bundles is notoriously subtle and requires careful treatment. 

\begin{definition}
\label{curvature-groupoid-connection}
Let $B,B'\in\Omega^2(X,\mathfrak{h})$, and let $\omega$ be an $A$-$A'$-connection on a principal $\Gamma\!$-bundle $P$. The \emph{$B$-$B'$-curvature} of $\omega$ is defined by
\begin{equation*}
\mathrm{curv}_{B,B'}(\omega) := \mathrm{d}\omega + \tfrac{1}{2}[\omega \wedge \omega] + \alpha_{*}(A'\wedge \omega)-(\alpha_{\phi})_{*}(B)+B' \in \Omega^2(P,\mathfrak{h})\text{.} 
\end{equation*}
We say that $\omega$ is \emph{$B$-$B'$-flat} if $\mathrm{curv}_{B,B'}(\omega)=0$. 
If the Lie 2-group $\Gamma$ is equipped with an adjustment $\kappa$, we say that $\omega$ is \emph{$B$-$B'$-adjusted} if $\mathrm{curv}_{B,B'}(\omega)= \kappa(\phi,\mathrm{fcurv}(A,B))$ holds.
\end{definition}

Here, $\mathrm{fcurv}(A,B)$ denotes the fake-curvature of the pair $(A,B)$ as defined in \cref{def:gammacon}, understood to be pulled back along the bundle projection $P \to X$. 
Obviously, if $(A,B)$ is fake-flat, then $\omega$ is $B$-$B'$-adjusted if and only if $\omega$ is $B$-$B'$-flat.
Both the curvature and the conditions of flatness and adjustedness are preserved under connection-preserving bundle morphisms.

\begin{remark}
\begin{enumerate}[(a)]

\item 
A straightforward calculation reveals
\begin{equation}
\label{eq:toftwistedcurvature}
t_{*}(\mathrm{curv}_{B,B'}(\omega))= \Ad_{\phi}(\mathrm{fcurv}(A,B)) - \mathrm{fcurv}(A',B')\text{.}
\end{equation}
Thus, if $\omega$ is $B$-$B'$-flat, then $\mathrm{fcurv}(A',B')=\Ad_{\phi}(\mathrm{fcurv}(A,B))$ on $P$. 
If $\omega$ is $B$-$B'$-adjusted, then we have
\begin{equation}
\label{eq:adjcurv}
t_{*}(\kappa(\phi,\mathrm{fcurv}(A,B)))= \Ad_{\phi}(\mathrm{fcurv}(A,B)) - \mathrm{fcurv}(A',B')\text{.}
\end{equation}

\item
\label{twisted-curvature-on-trivial-bundle}
Consider a trivial principal $\Gamma\!$-bundle $\trivlin_g^{\alpha}$ with connection $\omega_{\alpha}$, determined by a smooth map $g:X \to G$ and a 1-form $\alpha\in \Omega^1(X,\mathfrak{h})$. 
Then, the $B$-$B'$-curvature of $\omega_{\alpha}$ is
\begin{equation*}
\mathrm{curv}_{B,B'}(\omega_{\alpha})= \Ad^{-1}_{\pr_H}(\pr_X^{*}\mathrm{curv}(g,\alpha))
+(\tilde\alpha_{\pr_H})_{*}(\pr_X^{*}\mathrm{fcurv}(A',B')) \in \Omega^2(X \times H,\mathfrak{h})\text{,}
\end{equation*}
where $\mathrm{curv}(g,\alpha)$ is the curvature of the gauge transformation $(g,\alpha):(A,B) \to (A',B')$ in the sense of \cref{curvature-of-a-gauge-transformation}.
This implies that $\omega_{\alpha}$ is $B$-$B'$-adjusted if and only if $(g,\alpha)$ is adjusted in the sense of \cref{def8}, and that $\omega_{\alpha}$ is $B$-$B'$-flat if and only if $(g,\alpha)$ is flat and $(A,B)$ is fake-flat. 

\item
\label{shift-of-curving}
If an $A$-$A'$-connection $\omega$ is $B$-$B'$-adjusted, then it is also $(B+\lambda)$-$(B'+\lambda)$-adjusted, for any $\lambda\in \Omega^2(X,\mathfrak{h})$. 

\item
The $B$-$B'$-curvature satisfies the following transformation rule with respect to the $H$-action $\rho$ on $P$:
\begin{equation}
\label{eq:twistedcurvtrans}
\rho^{*}\mathrm{curv}_{B,B'}(\omega)=\Ad_{\pr_H}^{-1}(\pr_{P}^{*}\mathrm{curv}_{B,B'}(\omega))
  +(\tilde\alpha_{\pr_H})_{*}(\mathrm{fcurv}(A',B'))\text{.}
\end{equation}
Thus, in the fake-flat case, the curvature transforms like the curvature of ordinary connections.

\item\label{curvature-of-adjusted-shift}
We compute the curvature change under an adjusted shift of an $A$-$A'$-connection $\omega$ by a 1-form $\eta\in\Omega^1(X,\mathfrak{g})$ as in \cref{adjusted-shift-of-connection}. We assume that $\omega$ is $B$-$B'$-adjusted for 2-forms  $B,B'\in \Omega^2(X,\mathfrak{h})$, and define
\begin{align*}
\tilde B &:= B +  \tfrac{1}{2}\kappa_{*}(\eta \wedge \eta)+\kappa_{*}( A \wedge\eta)
\\
\tilde B' & := B' +  \tfrac{1}{2}\kappa_{*}( \eta'\wedge  \eta')+\kappa_{*}( A' \wedge \eta')\text{,}
\end{align*}
where $\eta' := \tilde q(p_{*}P,\eta)$.
A direct, albeit lengthy, calculation shows  that the shifted $\tilde A$-$\tilde A'$-connection $\omega^{\eta}$ is $\tilde B$-$\tilde B'$-adjusted. 

\end{enumerate}
\end{remark}

Next we look again at the existence of adjusted connections. 

\begin{lemma}
\label{existence-of-adjusted-connection}
Suppose that $\Gamma$ is central and allows an adapted adjustment. Further, let $(A,B)$ be a $\Gamma\!$-connection and $P$ be a principal $\Gamma\!$-bundle over $X$. Then, there exists a $\Gamma\!$-connection $(A',B')$ and a $B$-$B'$-adjusted $A$-$A'$-connection on $P$.
\end{lemma}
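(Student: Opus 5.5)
The plan is to first produce some connection using \cref{existence-connection-A}, and then to solve the adjustedness condition for the target 2-form $B'$. The key observation is that $B'$ enters $\mathrm{curv}_{B,B'}(\omega)$ only additively.

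\emph{Step 1.} Apply \cref{existence-connection-A} to the given $P$ and $A$. This yields a 1-form $A'\in\Omega^1(X,\mathfrak{g})$ and an $A$-$A'$-connection $\omega$ on $P$. We keep $A'$ and $\omega$ fixed from now on.

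\emph{Step 2.} By \cref{curvature-groupoid-connection},
\begin{equation*}
\mathrm{curv}_{B,B'}(\omega)=\mathrm{curv}_{B,0}(\omega)+\pi^{*}B'\text{,}
\end{equation*}
where $\pi:P\to X$ is the projection. Hence $\omega$ is $B$-$B'$-adjusted if and only if $\pi^{*}B'=\delta$, where
\begin{equation*}
\delta:=\kappa(\phi,\mathrm{fcurv}(A,B))-\mathrm{curv}_{B,0}(\omega)\in\Omega^2(P,\mathfrak{h})\text{.}
\end{equation*}
So it suffices to show that $\delta$ is basic, i.e.\ that it descends along the surjective submersion $\pi$. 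We then define $B'$ as the descended form. Since $\delta$ is a 2-form on the total space of a principal $H$-bundle, descent follows from $\rho^{*}\delta=\pr_P^{*}\delta$ on $P\times H$: restricting this identity to the fibre directions gives horizontality, and the remaining components give invariance.

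\emph{Step 3 (main obstacle).} The core is to verify $\rho^{*}\delta=\pr_P^{*}\delta$.
\begin{itemize}
\item For the curvature term, use \cref{eq:twistedcurvtrans} with $B'=0$:
\begin{equation*}
\rho^{*}\mathrm{curv}_{B,0}(\omega)=\Ad^{-1}_{h}(\mathrm{curv}_{B,0}(\omega))+(\tilde\alpha_h)_{*}(\mathrm{fcurv}(A',0))\text{.}
\end{equation*}
\item For the adjustment term, use $\phi(ph)=t(h)^{-1}\phi(p)$ together with \cref{condition-for-adjustments-1,condition-for-adjustments-2}:
\begin{equation*}
\kappa(t(h)^{-1}\phi,F)=\kappa(t(h^{-1}),\Ad_\phi F)+\kappa(\phi,F)=(\tilde\alpha_{h})_{*}(\Ad_\phi F)+\kappa(\phi,F)\text{,}
\end{equation*}
with $F:=\mathrm{fcurv}(A,B)$. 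The precise form of the $\tilde\alpha$-term (sign, $\tilde\alpha_h$ versus $\tilde\alpha_{h^{-1}}$) has to be checked using the crossed module identities of \cite[\S A]{Waldorf2016}.
\item Next, rewrite $\Ad_\phi F$ via \cref{eq:toftwistedcurvature}:
\begin{equation*}
\Ad_\phi F=\mathrm{fcurv}(A',0)+t_{*}(\mathrm{curv}_{B,0}(\omega))\text{.}
\end{equation*}
The $\tilde\alpha_h$-terms involving $\mathrm{fcurv}(A',0)$ should then cancel. What remains is $(\tilde\alpha_h)_{*}(t_{*}\mathrm{curv}_{B,0}(\omega))$, which combines with $\Ad_h^{-1}(\mathrm{curv}_{B,0}(\omega))$ to give back $\mathrm{curv}_{B,0}(\omega)$. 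This uses the identity $\Ad_h^{-1}(Y)+(\tilde\alpha_h)_{*}(t_{*}Y)=Y$, valid for $Y\in\mathfrak{h}$ in any crossed module.
\end{itemize}
The result is $\rho^{*}\delta=\pr_P^{*}\delta$. If sign conventions cause trouble here, I would first test the identity on a trivial bundle $\trivlin_g^{\alpha}$ using \cref{twisted-curvature-on-trivial-bundle}. There the defining condition becomes the adjustedness of the gauge transformation $(g,\alpha)$, and $B'$ is forced to be
\begin{equation*}
B'=(\alpha_g)_{*}(B)-\mathrm{d}\alpha-\tfrac12[\alpha\wedge\alpha]-\alpha_{*}(A'\wedge\alpha)+\kappa(g,\mathrm{fcurv}(A,B))\text{.}
\end{equation*}
This fixes the correct signs in the general computation.

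\emph{Step 4.} Define $B'\in\Omega^2(X,\mathfrak{h})$ by $\pi^{*}B'=\delta$. Then $\mathrm{curv}_{B,B'}(\omega)=\kappa(\phi,\mathrm{fcurv}(A,B))$ holds by construction, so $(A',B')$ is a $\Gamma$-connection and $\omega$ is a $B$-$B'$-adjusted $A$-$A'$-connection on $P$.
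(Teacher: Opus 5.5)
Your proposal is correct and is essentially the paper's proof: take $A'$ and $\omega$ from \cref{existence-connection-A}, define $B'$ on $P$ as $\kappa(\phi,\mathrm{fcurv}(A,B))-\mathrm{curv}_{B,0}(\omega)$, and check $\rho^{*}B'=\pr_P^{*}B'$ so that it descends to $X$. One small sign slip: the crossed-module identity is $(\tilde\alpha_h)_{*}(t_{*}Y)=\Ad_h^{-1}(Y)-Y$, not $\Ad_h^{-1}(Y)+(\tilde\alpha_h)_{*}(t_{*}Y)=Y$. Since the terms actually appear in $\rho^{*}\delta$ as $(\tilde\alpha_h)_{*}(t_{*}Y)-\Ad_h^{-1}(Y)$, the correct identity is exactly what gives $-Y$ and closes your computation.
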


\begin{proof}
By \cref{existence-connection-A}, there exist $A'$ and an $A$-$A'$-connection $\omega$. We want to define $B'$ such that $\omega$ is $B$-$B'$-adjusted; for this purpose, we set
\begin{equation*}
 B' :=\kappa(\phi,\mathrm{fcurv}(A,B))-\mathrm{d}\omega - \tfrac{1}{2}[\omega \wedge \omega] - \alpha_{*}(A'\wedge \omega)+(\alpha_{\phi})_{*}(B)\in \Omega^2(P,\mathfrak{h})\text{.}
\end{equation*}
Then one verifies in a straightforward calculation that $\rho^{*}B' = \pr_P^{*}B'$ on $P \times X$, so that $B'$ descends to $X$.
\end{proof}

Previously, we organized principal $\Gamma\!$-bundles with $A$-$A'$-twisted connection into a category $\buncon\Gamma X_{A,A'}$.
Now, given an adjustment $\kappa$ on $\Gamma$ and two $\Gamma\!$-connections $(A,B)$ and $(A',B')$ we define a full subcategory 
\begin{equation*}
\buncon{\Gamma\!,\kappa} X_{(A,B),(A',B')} \subset \buncon\Gamma X_{A,A'}
\end{equation*}
over all principal $\Gamma\!$-bundles with $B$-$B'$-adjusted $A$-$A'$-connection. 
We have a further full subcategory
\begin{equation*}
\bunconflat\Gamma X_{(A,B),(A',B')}  \subset \buncon{\Gamma\!,\kappa} X_{(A,B),(A',B')} 
\end{equation*}
over those bundles with $B$-$B'$-flat connections, with equality when $(A,B)$ or $(A',B')$ is fake-flat. 
Moreover, we let, for $f:X \to F$ a smooth map, 
\begin{equation*}
\buncon{\Gamma,{\kappa}} X_{(A,B),(A',B')}^{f}  \subset \buncon{\Gamma\!,\kappa} X_{(A,B),(A',B')} 
\end{equation*}
be the full subcategory over those bundles $P$ with $p_{*}P=f$. 

\begin{remark}
\begin{enumerate}
\item 
\Cref{shift-of-curving} says that
\begin{equation*}
\buncon{\Gamma\!,\kappa} X_{(A,B),(A',B')}=\buncon{\Gamma\!,\kappa} X_{(A,B+\lambda),(A',B'+\lambda)}
\end{equation*}
for any $\lambda\in \Omega^2(X,\mathfrak{h})$.

\item
\cref{adjusted-shift-of-connection,curvature-of-adjusted-shift} reveal that the adjusted shift by a 1-form $\eta\in\Omega^1(X,\mathfrak{g})$ is a functor
\begin{equation*}
(..)^{\eta}:\buncon{\Gamma\!,\kappa} X_{(A,B),(A',B')} \to \buncon{\Gamma\!,\kappa} X_{(\tilde A,\tilde B),(\tilde A',\tilde B')} 
\end{equation*}
with 
\begin{align*}
\tilde A &:= A+\eta & \tilde B &:= B+ \tfrac{1}{2}\kappa_{*}(\eta \wedge \eta)+\kappa_{*}( A \wedge\eta)
\\
\tilde A' &:= A'+\eta' & \tilde B' &:= B'+  \tfrac{1}{2}\kappa_{*}(\eta'\wedge \eta')+\kappa_{*}( A' \wedge\eta')
\end{align*}
and $\eta' := \tilde q(p_{*}P,\eta)$.
\end{enumerate}
\end{remark}

\begin{example}
We continue \cref{groupoid-bundles-with-connection}.
\begin{enumerate}[(i)]

\item 
\label{BA-bundles-with-connection}
For $\Gamma=BA$, we recall that $\buncon AX=\buncon{BA}X_{0,0}$ is the category of ordinary principal $A$-bundles and connections. If $B,B'\in\Omega^2(X,\mathfrak{a})$, then 
\begin{equation*}
\buncon AX^{B-B'}:=\buncon{BA}X_{(0,B),(0,B')}
\end{equation*}
is the category of ordinary principal $A$-bundles with connections $\omega$ whose curvature satisfies $\mathrm{curv}(\omega)=\mathrm{d}\omega=B-B'$.

\item
\label{Fdis-bundles-with-connection}
For $\Gamma=F_{\dis}$, where necessarily $B=B'=0$, we have, for any $A,A'\in\Omega^1(X,\mathfrak{f})$, the same result as before,
\begin{equation*}
\buncon{F_{\dis}}X_{A,A'}=\buncon{F_{\dis}}X_{(A,0),(A',0)}\cong \{\phi\in C^{\infty}(X,F)\sep \Ad_{\phi}(A)=A'+\phi^{*}\bar \theta\}_{\dis}\text{.}
\end{equation*}

\end{enumerate}
\end{example}

\subsection{Tensor product}

\label{tensor-product-of-2-group-bundles}

We investigate the fact that the category $\buncon\Gamma X_{A,A'}$ lacks a global monoidal structure, possessing instead a partially defined one. For $A,A',A''\in \Omega^1(X,\mathfrak{g})$, this partial tensor product is a functor
\begin{equation*}
\otimes : \buncon{\Gamma}X_{A',A''} \times \buncon{\Gamma}X_{A,A'} \to \buncon{\Gamma}X_{A,A''}\text{.}
\end{equation*}
The core construction is given by the following lemma, which follows from a direct computation.
\begin{lemma}
\label{tensor-product-connection}
Let $\omega_1\in \Omega^1(P_1,\mathfrak{h})$ be an $A'$-$A''$-connection on $P_1$, and let $\omega_2\in\Omega^1(P_2,\mathfrak{h})$ be an $A$-$A'$-connection on $P_2$. Then, the 1-form $\omega\in\Omega^1(P_1 \times_X P_2,\mathfrak{h})$ defined by
\begin{equation*}
\omega :=  \pr_1^{*}\omega_1 + (\alpha_{\phi_1 \circ \pr_1})_{*}(\pr_2^{*}\omega_2) \text{,}
\end{equation*}
where $\phi_1$ is the anchor of $P_1$,
descends to an $A$-$A''$-connection on $P_1 \otimes P_2$.
\end{lemma}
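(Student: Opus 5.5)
The proof is a direct computation in three steps. First, $\omega$ descends to the quotient $P_1\otimes P_2=(P_1\times_X P_2)/\sim$. Second, the descended form satisfies condition 2.) of \cref{connection-on-Gamma-bundle}. Third, it satisfies the equivariance condition 1.) for the $H$-action $(p_1,p_2)\cdot h=(p_1h,p_2)$. Throughout, we use the standard crossed-module identities of \cite[\S A]{Waldorf2016}, in particular $t_{*}\circ(\alpha_g)_{*}=\Ad_g\circ t_{*}$, the derivative of $g\mapsto\alpha(g,h)$, and the cocycle rule $\tilde\alpha_h(g_1g_2)=\tilde\alpha_h(g_1)\,\alpha(g_1,\tilde\alpha_h(g_2))$.

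\emph{Descent.} Consider $\sigma\colon P_1\times_X P_2\times H\to P_1\times_X P_2$, $(p_1,p_2,h)\mapsto(p_1h,\,p_2\alpha(\phi_1(p_1)^{-1},h))$, together with the projection $\tau$ that forgets $h$. Since $P_1\times_X P_2\to P_1\otimes P_2$ is the quotient by this free $H$-action, it suffices to show $\sigma^{*}\omega=\tau^{*}\omega$ and $\iota_V\omega=0$ for the generating vertical fields. It is cleaner to prove the integrated statement $\sigma^{*}\omega=\tau^{*}\omega$ directly.

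\begin{itemize}
\item Condition 1.) for $\omega_1$ gives $\Ad_h^{-1}\omega_1+(\tilde\alpha_h)_{*}A'+h^{*}\theta$.
\item We have $\phi_1(p_1h)=t(h)^{-1}\phi_1(p_1)$, so the prefactor becomes $\alpha_{t(h)^{-1}\phi_1}=\Ad_h^{-1}\circ\alpha_{\phi_1}$.
\item Condition 1.) for $\omega_2$ is applied at the group element $k:=\alpha(\phi_1^{-1},h)$, which is a function of both $p_1$ and $h$. This gives $\Ad_k^{-1}\omega_2+(\tilde\alpha_k)_{*}A+k^{*}\theta$.
\end{itemize}

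Pushing forward by $\alpha_{t(h)^{-1}\phi_1}$ turns $\Ad_k^{-1}$ into $\Ad_h^{-1}\alpha_{\phi_1}$. The remaining terms are $k^{*}\theta$, which contains a $\phi_1^{*}\bar\theta$-contribution, and $(\tilde\alpha_k)_{*}A$. These must cancel against $(\tilde\alpha_h)_{*}A'+h^{*}\theta$. For the cancellation we use condition 2.) for $\omega_1$, which in the form $t_{*}\omega_1=\Ad_{\phi_1}A-A'-\phi_1^{*}\bar\theta$ converts $A'$ into $A$ and $\phi_1^{*}\bar\theta$ terms. We also use $\alpha_{*}(t_{*}Y,\cdot)=\mathrm{ad}_Y$ to rewrite the commutator term $[\Ad_h^{-1}\omega_1,\ldots]$ that arises from $\Ad_h^{-1}$ acting on $\alpha_{*}$. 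I expect this bookkeeping to be the main obstacle. The cleanest route is to differentiate in $h$ at $h=1$ (the infinitesimal condition $\iota_{V}\omega=0$ for vertical $V=(Y,-\alpha_*(\phi_1^{-1})Y)$, plus $H$-invariance), where it reduces to $Y-(\alpha_{\phi_1})_{*}(\alpha_{\phi_1^{-1}})_{*}Y=0$. We then obtain full invariance from $\mathcal{L}_V\omega=\iota_V\mathrm{d}\omega=0$, computed using the equivariance formulas.

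\emph{Condition 2.).} Apply $t_{*}$ to $\omega$. This gives $t_{*}\omega_1+\Ad_{\phi_1}t_{*}\omega_2$, which equals
\[
\Ad_{\phi_1}A'-A''-\phi_1^{*}\bar\theta+\Ad_{\phi_1}\bigl(\Ad_{\phi_2}A-A'-\phi_2^{*}\bar\theta\bigr).
\]
The $A'$-terms cancel. Moreover $\phi_1^{*}\bar\theta+\Ad_{\phi_1}\phi_2^{*}\bar\theta=(\phi_1\phi_2)^{*}\bar\theta$. Since the anchor of $P_1\otimes P_2$ is $\phi_1\phi_2$, this is exactly condition 2.).

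\emph{Condition 1.).} The action only moves $p_1$, so condition 1.) follows from condition 1.) for $\omega_1$ and $\phi_1(p_1h)=t(h)^{-1}\phi_1(p_1)$. The second summand becomes $\Ad_h^{-1}(\alpha_{\phi_1})_{*}\omega_2$, hence
\[
\rho^{*}\omega=\Ad_h^{-1}\omega+(\tilde\alpha_h)_{*}A''+h^{*}\theta,
\]
as required for an $A$-$A''$-connection.
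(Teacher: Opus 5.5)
Your plan (descent, then conditions 2.) and 1.)) is the same direct computation the paper has in mind, and your checks of conditions 2.) and 1.) are correct. The gap is the descent step. It is the only non-routine part, you do not carry it out, and the formulas you set up for it would make the computation fail as written.

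\textbf{Errors in the descent set-up.}
\begin{enumerate}[(i)]
\item The quotient identifies $(p_1,p_2)$ with $(p_1h,\,p_2\,\alpha(\phi_1(p_1)^{-1},h^{-1}))$, with $h^{-1}$ rather than $h$. Your $\sigma$ does not preserve the anchor: it sends $\phi_1\phi_2$ to $t(h)^{-2}\phi_1\phi_2$. Your vertical field $V$, by contrast, is the derivative of the correct map.
\item The transport forms are mislabelled. Condition 1.) for the $A'$-$A''$-connection $\omega_1$ involves $(\tilde\alpha_h)_*(A'')$, and for $\omega_2$ it involves $(\tilde\alpha_k)_*(A')$. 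Condition 2.) for $\omega_1$ reads $t_*\omega_1=\Ad_{\phi_1}A'-A''-\phi_1^*\bar\theta$.
\item The claim about $\Ad_k^{-1}$ is wrong. With $k=\alpha(\phi_1^{-1},h^{-1})$ one has $\alpha_{\phi_1}(k)=h^{-1}$, so $\Ad_h^{-1}(\alpha_{\phi_1})_*\Ad_k^{-1}\omega_2=(\alpha_{\phi_1})_*\omega_2$ exactly. What must then be absorbed is $\Ad_h^{-1}\omega_1-\omega_1$.
\end{enumerate}

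\textbf{How the computation closes.} The missing identity is obtained by differentiating $\tilde\alpha_h(t(x))=h^{-1}xhx^{-1}$:
\begin{equation*}
\Ad_h^{-1}(Y)-Y=(\tilde\alpha_h)_*(t_*Y)\text{.}
\end{equation*}
Together with condition 2.) for $\omega_1$, this gives $\Ad_h^{-1}\omega_1=\omega_1+(\tilde\alpha_h)_*(\Ad_{\phi_1}A'-A''-\phi_1^*\bar\theta)$, and the $A''$-terms cancel.

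The $A'$-terms cancel by
\begin{equation*}
\Ad_h^{-1}(\alpha_{\phi_1})_*(\tilde\alpha_k)_*(X)=-(\tilde\alpha_h)_*(\Ad_{\phi_1}X)\text{,}
\end{equation*}
which follows from $\alpha_{\phi_1}(\tilde\alpha_k(g))=\tilde\alpha_{h^{-1}}(\phi_1g\phi_1^{-1})$ and $h^{-1}\tilde\alpha_{h^{-1}}(g)h=\tilde\alpha_h(g)^{-1}$.

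The Maurer--Cartan terms cancel by
\begin{equation*}
\Ad_h^{-1}(\alpha_{\phi_1})_*(k^*\theta)=-h^*\theta+(\tilde\alpha_h)_*(\phi_1^*\bar\theta)\text{,}
\end{equation*}
obtained by differentiating $\alpha(\phi_1,k)=h^{-1}$ in both arguments. What remains is $\sigma^*\omega=\omega_1+(\alpha_{\phi_1})_*\omega_2=\tau^*\omega$.

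\textbf{The infinitesimal fallback does not close the gap.} Your check $\iota_V\omega=0$ is fine. However, $\iota_V\mathrm{d}\omega=0$ is only asserted, not computed. Even if verified, it gives invariance only under the identity component of $H$, and $H$ is not assumed connected. You need the integrated identity $\sigma^*\omega=\tau^*\omega$, computed as above.
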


The connection $\omega$ on $P_1 \otimes P_2$ defined in \cref{tensor-product-connection} is called the \emph{tensor product connection} and is denoted by $\omega_1 \otimes \omega_2$.
One readily verifies that this construction is functorial with respect to connection-preserving isomorphisms. 

\begin{remark}
Let $B, B', B''\in \Omega^2(X,\mathfrak{h})$.
Then, for the curvatures, we obtain:
\begin{equation*}
\mathrm{curv}_{B,B''}(\omega_1 \otimes \omega_2)=\mathrm{curv}_{B',B''}(\omega_1) + \mathrm{curv}_{B,B'}(\omega_2)\text{.}
\end{equation*}
Thus, the tensor product of flat connections is flat, and one easily deduces that the tensor product of adjusted connections is adjusted.
Thus, the tensor product restricts to a functor
\begin{equation*}
\otimes : \buncon{{\Gamma\!,\kappa}}X_{(A',B'),(A'',B'')} \times \buncon{{\Gamma\!,\kappa}}X_{(A,B),(A',B')} \to \buncon{{\Gamma\!,\kappa}}X_{(A,B),(A'',B'')}\text{.}
\end{equation*}
\end{remark}

\begin{remark}
\begin{enumerate}[(a)]

\item 
The trivial $\Gamma\!$-bundle $\trivlin_1$ carries a canonical $A$-$A$-connection $\omega_0$, for any 1-form $A\in\Omega^1(X,\mathfrak{g})$,  induced by $\alpha=0$ via \cref{eq:connoftrivbun}, and hence is denoted by $\trivlin_1^0$. 
It serves as a unit with  respect to the partially defined tensor product in the sense that the bundle isomorphisms of \cref{eq:unitors} are connection-preserving for the tensor product connections $\omega \otimes \omega_0$ on $P\otimes \trivlin^0_1$  and $\omega_{0}\otimes \omega$ on $\trivlin^0_1\otimes P$, hence yielding isomorphisms
\begin{equation*}
P \otimes \trivlin^0_1\cong P\cong \trivlin^0_1 \otimes P
\end{equation*}
in $\buncon\Gamma X_{A,A'}$. 

\item
We recall that every principal $\Gamma\!$-bundle $P$ has a dual bundle $P^{*}$ with respect to the tensor product. If $P$ is equipped with an $A$-$A'$-connection $\omega$, then $P^{*}$ carries the $A'$-$A$-twisted connection $\omega^{*} := -(\alpha_{\phi^{-1}})_{*}(\omega)$. Then, $P \otimes P^{*} \cong \trivlin_1^0$ and $P^{*} \otimes P \cong \trivlin_1^0$. 

\item
\label{tensor-product-of-trivial-bundles-with-connections}
We consider two trivial principal $\Gamma\!$-bundles $\trivlin_{g_1}$ and $\trivlin_{g_2}$, along with 1-forms $A,A',A'' \in \Omega^1(X,\mathfrak{g})$ and 1-forms $\alpha_1,\alpha_2\in \Omega^1(X,\mathfrak{h})$ such that $t_{*}(\alpha_1) =  \Ad_{g_1}(A) - A' - g_1^{*}\bar\theta$ and $t_{*}(\alpha_2) =  \Ad_{g_2}(A') - A'' - g_2^{*}\bar\theta$. The 1-forms $\alpha_i$ define an $A$-$A'$-connection on $\trivlin_{g_1}$ and an $A'$-$A''$-connection on $\trivlin_{g_2}$, respectively; see \cref{connection-on-trivial-bundle}. Then, the isomorphism $\gamma_{g_1,g_2}$ of \cref{tensor-product-of-trivial-bundles} is a connection-preserving isomorphism 
\begin{equation*} 
 \trivlin^{\alpha_1}_{g_1} \otimes \trivlin^{\alpha_2}_{g_2} \cong \trivlin^{\alpha_1 + (\alpha_{g_1})_{*}(\alpha_2)}_{g_1g_2}\text{.}
\end{equation*} 
In particular, when $\alpha$ determines a connection on $\trivlin_g$, then $-(\alpha_g^{-1})_{*}(\alpha)$ is the connection on the dual bundle $\trivlin_{g}^{*}$.  

\item
\label{tensor-product-connection-for-trivial-bundle}
We consider a principal $\Gamma\!$-bundle $P$ with $A$-$A'$-connection $\omega$, and a trivial principal $\Gamma\!$-bundle $\trivlin_g$ with $A'$-$A''$-connection, given by a smooth map $g:X \to G$ and a 1-form $\alpha\in \Omega^1(X,\mathfrak{h})$ such that $t_{*}(\alpha) =  \Ad_{g}(A') - A'' - g^{*}\bar\theta$ according to \cref{connection-on-trivial-bundle}. 
We denote by $P^{(g,\alpha)}$ the principal $\Gamma\!$-bundle $P^{g}$ of \cref{tensor-product-with-trivial-bundle} equipped with the connection $\omega + (\alpha_{\phi})_{*}(\alpha)$. Then, the isomorphism
\begin{equation*}
\rho_g: P \otimes \trivlin_{g}^{\alpha} \to P^{(g,\alpha)}
\end{equation*}
is connection-preserving. 
Similarly, we may consider a trivial principal $\Gamma\!$-bundle $\trivlin_g$ with $A''$-$A$-connection given by a smooth map $g:X \to G$ and a 1-form $\alpha\in \Omega^1(X,\mathfrak{h})$ such that $t_{*}(\alpha) =  \Ad_{g}(A'') - A - g^{*}\bar\theta$ according to \cref{connection-on-trivial-bundle}. 
We denote by $^{(g,\alpha)}P$ the principal $\Gamma\!$-bundle ${}^{g\!}P$ of \cref{tensor-product-with-trivial-bundle} equipped with the connection $\alpha+(\alpha_{g})_{*}(\omega)$. Then, the isomorphism
\begin{equation*}
\lambda_g: \trivlin_{g}^{\alpha} \otimes P \to {}^{(g,\alpha)\!}P
\end{equation*}
is connection-preserving. 

\item
\label{adjusted-shift-of-tensor-product-connection}
\label{shifting-connection-of-tensor-product}
We analyze the adjusted shift (\cref{adjusted-shift-of-connection}) of a tensor product connection, assuming $\Gamma$ is central and equipped with an adjustment $\kappa$ adapted to a splitting $u$. We suppose that $P_1$ carries an $A$-$A''$-twisted connection $\omega_1$ and $P_2$ carries an $A$-$A'$-connection $\omega_2$, so that $P_1 \otimes P_2$ carries the $A$-$A''$-twisted connection $\omega_1\otimes \omega_2$. Its adjusted shift satisfies
\begin{equation*}
(\omega_1\otimes \omega_2)^{\eta}= \omega_1^{q_u(p_{*}P_2,\eta)} \otimes \omega_2^{\eta}\text{.}
\end{equation*} 
\end{enumerate}
\end{remark}

\begin{remark}
\label{ascj-2}
Finally, we revisit the relationship to principal bibundles. Let $P$ be a principal $H$-bibundle over $X$. 
\begin{itemize}

\item 
Following \cite[Def. 8]{aschieri}, a \quot{2-connection} on $P$ is a pair $(\tilde A,\tilde\omega)$ consisting of an $\der(H)$-valued 1-form $\tilde A$ on $X$ and an $\mathfrak{h}$-valued 1-form $\tilde\omega$ on $P$ such that
\begin{enumerate}[(a)]

\item 
$h_p^{*}\tilde\omega=-\bar\theta$ for all $p\in P$, where $h_p: H \to P$ is defined by $h_p(h):=hp$. 

\item
$l_h^{*}\tilde\omega=\Ad_h(\tilde\omega)+(\tilde\alpha_{h^{-1}})_{*}(\tilde A)$ for all $h\in H$, where $l_h: P \to P$ is defined by $l_h(p):=hp$. 

\end{enumerate}
We remark that the 1-form $(\tilde\alpha_{h^{-1}})_{*}(\tilde A)$ at $v\in T_xX$ is
\begin{equation*}
(\tilde\alpha_{h^{-1}})_{*}(\tilde A_p(v))=\partial_th\alpha(e^{t\tilde A_p(v)},h^{-1})=h\partial_te^{t\tilde A_p(v)}(h^{-1})\text{,}
\end{equation*}
and the latter expression is the one that appears in \cite{aschieri}. 
We also remark that the sign in (a) is due to \cite[Eq. 39, Eq. 51]{aschieri}.
Every 2-connection determines another 1-form denoted $\tilde A^{r}\in \Omega^1(X,\der(H))$. In order to define it properly, we consider the map $\varphi: P \to \mathrm{Aut}(H)$ defined such that $\varphi(p)(h)p=ph$, see \cite[Eq. 4]{aschieri}.  
Then, $\tilde A^{r}$ is characterized by the equation 
\begin{equation*}
\tilde A^{r} =\Ad_{\varphi}^{-1} (\tilde A+t_{*}(\tilde\omega))+\varphi^{*}\theta
\end{equation*} 
of $\der(H)$-valued 1-forms on $P$, see \cite[Eq. 53]{aschieri}, and one can check that $\tilde A^r$ descends to $X$.

\item
A morphism $\varphi:P \to P'$ of principal $H$-bibundles is connection-preserving if $\varphi^{*}\tilde\omega'=\tilde\omega$ and $\tilde A=\tilde A'$. This implies that $\tilde A^{r}=\tilde A'^{r}$. The category of principal $H$-bibundles over $X$ with connections relative to 1-forms $\tilde A$ and $\tilde A^r$ is denoted by $\bibuncon HX_{\tilde A,\tilde A^r}$. 

\item
The curvature of a connection $(\tilde A,\tilde\omega)$ is defined in \cite[Eq. 66]{aschieri} to be the pair of 2-forms
\begin{align}
\label{eq:curvbibun}
\tilde\kappa_{\tilde A,\tilde\omega}&:=\mathrm{d}\tilde\omega +\tfrac{1}{2}[\tilde\omega\wedge\tilde\omega]-\alpha_{*}(\tilde A\wedge \tilde \omega)\in\Omega^2(P,\mathfrak{h})\text{,}
\\
K_{\tilde A}&:=\mathrm{d}\tilde A + \tfrac{1}{2}[\tilde A\wedge \tilde A]\in\Omega^2(X,\der(H))\text{.}
\end{align}

\item
For the discussion of tensor products, suppose $P_1$ and $P_2$ are principal $H$-bibundles equipped with connections $(\tilde A_1,\tilde\omega_1)$ and $(\tilde A_2,\tilde\omega_2)$, respectively, such that $\tilde A_2=\tilde A_1^{r}$ \cite[Eq. 81]{aschieri}. Then, the 1-form $\pr_1^{*}\tilde\omega_1 + (\alpha_{\varphi_1})_{*}(\pr_2^{*}\tilde\omega_2)$ on $P_1 \times_X P_2$ descends to a 1-form $\tilde\omega_1\otimes \tilde\omega_2$ on the tensor product $P_1 \otimes P_2$, and the pair $(A_1,\tilde\omega_1\otimes \tilde\omega_2)$ is a connection on $P_1 \otimes P_2$ that determines the 1-form $\tilde A_2^r$. In this manner, the category $\bibuncon HX$ admits a partially defined tensor product. 

\item
We obtain a functor $\buncon{\AUT(H)}X_{A,A'} \to \bibuncon HX_{A,A'}$ that extends the previous functor $P\mapsto \lli{\phi}P$ by sending an $A$-$A'$-connection $\omega$ on a principal $\AUT(H)$-bundle $P$ to the bibundle connection $(\tilde A,\tilde\omega)$ on $\lli{\phi}P$ given by $\tilde A := A$ and $\tilde\omega := -(\alpha_{\phi^{-1}})_{*}(\omega)$. 
The corresponding 1-form $\tilde A^{r}$ is $A'$.
The curvature forms of $(\tilde A,\tilde\omega)$ defined in \cref{{eq:curvbibun}} are then
\begin{equation*}
\tilde\kappa_{\tilde A,\tilde\omega} = - (\alpha_{\phi^{-1}})_{*} (\mathrm{d}\omega+\tfrac{1}{2}[\omega\wedge\omega]+\alpha_{*}(A'\wedge \omega) )
\quand K_{\tilde A}=\mathrm{d}A +\tfrac{1}{2}[A\wedge A]\text{.}
\end{equation*}
Thus, these curvature forms correspond to the forms $\mathrm{curv}_{B,B'}(\omega)$ and $\mathrm{fcurv}(A,B)$ but without the 2-form contributions. 
It is straightforward to verify that this functor constitutes an equivalence and is monoidal with respect to the opposite monoidal structure on one side.

\end{itemize}
\end{remark}

\subsection{Extension and reduction}

\label{covariance-of-Gamma-bundles}

Let $\Gamma$ and $\Gamma'$ be two Lie 2-groups, and let $f:\Gamma \to \Gamma'$ be a Lie 2-group homomorphism; see \cref{Lie-2-groups}. We construct a functor
\begin{equation*}
f_{*}:\bun{\Gamma}X \to \bun{\Gamma'}X
\end{equation*}
as follows. Let $\pi:P \to X$ be a principal $\Gamma\!$-bundle over $X$ with anchor $\phi:P \to G$. Then, $f_{*}(P)$ is the principal $\Gamma'$-bundle over $X$ with total space $(P \times H')/\sim$, where the equivalence relation is defined by $(ph,h')\sim (p,f_1(h)h')$. The projection is given by $(p,h) \mapsto \pi(p)$, the principal $H'$-action by $(p,h')h'' := (p,h'h'')$, and the anchor by $(p,h') \mapsto t(h')^{-1}f_0(\phi(p))$. 
A morphism $\varphi:P_1 \to P_2$ between principal $\Gamma\!$-bundles is mapped to the morphism $f_{*}(\varphi):f_{*}(P_1) \to f_{*}(P_2)$ defined by $(p,h')\mapsto (\varphi(p),h')$. This defines the functor $f_{*}$ as claimed. Furthermore, this functor is monoidal, with the required natural transformation given by
\begin{equation}
\label{monoidal-covariance}
\mu:f_{*}(P_1 \otimes P_2) \to f_{*}(P_1) \otimes f_{*}(P_2), \quad ((p_1,p_2),h')\mapsto ((p_1,h'),(p_2,1))\text{.}
\end{equation}

We write again $A:=\pi_1\Gamma$ and by $F:=\pi_0\Gamma$, and denote by $i: BA \to \Gamma$ the inclusion  and by $p: \Gamma \to F_{\dis}$ the projection. Under the equivalence $\bun {F_{\dis}}X \cong C^{\infty}(X,F)_{\dis}$ of \cref{Fdis-bundles}, the monoidal functor
\begin{equation*}
p_{*}: \bun\Gamma X \to C^{\infty}(X,F)_{\dis} 
\end{equation*}
is obtained by sending $P$ to the smooth map $p_{*}P:X \to F$ defined in \cref{anchor-push}, thereby justifying this notation. The monoidal functor
\begin{equation*}
i_{*}: \bun {A}X \to \bun \Gamma X
\end{equation*}
maps a principal $A$-bundle $P$ to the extended $H$-bundle $(P \times H)/\sim$, where $(pa,h)\sim (p,ah)$; the projection is $(p,h) \mapsto \pi(p)$, the principal $H$-action is $(p,h)h' := (p,hh')$, and the anchor map is $(p,h) \mapsto t(h)^{-1}$. Thus, the functor $i_{*}$ maps into the subcategory $\bun\Gamma X^1$ of principal $\Gamma\!$-bundles $P$ satisfying $p_{*}P=1$. Since anchors multiply under the tensor product, this subcategory is monoidal. 

\begin{proposition}
\label{reduction-to-abelian}
The functor $i_{*}$ establishes an equivalence of monoidal categories
\begin{equation*}
\bun{A}X \cong \bun\Gamma X^1\text{.}
\end{equation*}
Moreover, its image is central in $\bun \Gamma X$ in the sense that $i_{*}$ extends canonically to the Drinfeld center:
\begin{equation*}
\alxydim{}{ & \mathscr{Z}(\bun\Gamma X) \ar[d] \\ \bun{A}X \ar[r]_{i_{*}} \ar@/^1.4pc/[ur] & \bun\Gamma X}
\end{equation*}
\end{proposition}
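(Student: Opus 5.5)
The plan is to check the three parts of the statement in turn: that $i_{*}$ is an equivalence, that it is monoidal onto its image, and that its image is central.

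\emph{Essential surjectivity.} Let $(P,\phi)$ be a principal $\Gamma\!$-bundle with $p_{*}P=1$. Then $\phi$ takes values in $t(H)$. Consider the subset
\begin{equation*}
Q:=\phi^{-1}(1)\subset P\text{.}
\end{equation*}
Because $\phi(ph)=t(h)^{-1}\phi(p)$, the restriction of the $H$-action to $Q$ is an action of $\ker(t)=A$. This action is free, and transitive on fibres: if $p,p'\in Q$ lie in the same fibre, then $p'=ph$ with $t(h)=1$.

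It remains to show that $Q\to X$ is a smooth principal $A$-bundle. This is where I use smooth separability. The map $\phi$ takes values in the embedded submanifold $t(H)\cong H/A$, and $H\to H/A$ has smooth local sections. Composing $\phi$ with such a local section $s$, the map
\begin{equation*}
p\mapsto p\cdot s(\phi(p))
\end{equation*}
is a smooth local retraction of $P$ onto $Q$. Applying it to smooth local sections of $P$ gives smooth local sections of $Q$, so $Q$ is a principal $A$-bundle.

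Finally, the map $i_{*}Q=(Q\times H)/\!\sim\;\to P$, $(q,h)\mapsto qh$, is an $H$-equivariant isomorphism. It respects anchors because $\phi(qh)=t(h)^{-1}$. I expect this smoothness argument to be the main technical point.

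\emph{Full faithfulness.} Let $\varphi:i_{*}Q_1\to i_{*}Q_2$ be a morphism of principal $\Gamma\!$-bundles. Since $\varphi$ preserves anchors, it maps $\phi^{-1}(1)=Q_1\times\{1\}$ into $Q_2$. Its restriction there is $A$-equivariant, and $\varphi$ is recovered from this restriction by $H$-equivariance. This gives a bijection between morphisms $i_{*}Q_1\to i_{*}Q_2$ and morphisms $Q_1\to Q_2$.

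\emph{Monoidality.} This comes from the general monoidal structure $\mu$ of \cref{monoidal-covariance} for $f=i$. Anchors multiply under the tensor product, so $\bun\Gamma X^1$ is a monoidal subcategory and $i_{*}$ corestricts to it as a monoidal functor.

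\emph{Centrality.} I will construct, for $P=i_{*}Q$ and any principal $\Gamma\!$-bundle $P'$, a natural isomorphism
\begin{equation*}
\beta_{P,P'}:P\otimes P'\to P'\otimes P
\end{equation*}
satisfying the hexagon identity. Locally, $P$ is a trivial bundle $\trivlin_g$ with anchor $g$ in $t(H)$. Indeed, over $U$ we have $Q|_U\cong U\times A$, hence $P|_U\cong U\times H$ with anchor $t(h)^{-1}$, i.e.\ $P|_U\cong\trivlin_1$. So locally the braiding of \cref{left-and-right-shift-isomorphic}, given by $\lambda_g^{-1}\circ\delta_g\circ\rho_g$, applies; it uses that $\Gamma$ is central.

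To avoid gluing, I would write a global formula instead. Define
\begin{equation*}
\beta\big(q h,\,p'\big):=\big(p'\cdot \alpha(\phi'(p')^{-1},h),\;q\big)
\end{equation*}
for $q\in Q$, $h\in H$, $p'\in P'$, adjusting the formula as needed so that it is well defined. The key input is that $A$ is central in $H$ and $\alpha$ acts trivially on $A$, because $\Gamma$ is central. This makes the formula independent of the choice of $q$.

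It then remains to check three things:
\begin{itemize}
\item compatibility of $\beta$ with the relation defining $\otimes$, with the $H$-action, and with anchors (anchors agree since $t(H)$ is normal and the anchor of $P$ is $t(h)^{-1}$, which is conjugated appropriately);
\item naturality in $P'$;
\item the hexagon identity $\beta_{P,P'\otimes P''}=(\id\otimes\beta_{P,P''})\circ(\beta_{P,P'}\otimes\id)$ up to associators, together with compatibility with $\mu$.
\end{itemize}
All of these can be checked pointwise on representatives. Assembling $(i_{*}Q,\beta_{i_{*}Q,-})$ gives the lift of $i_{*}$ to $\mathscr{Z}(\bun\Gamma X)$.

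The main obstacles are the smoothness of $Q$ in the first step and making the braiding formula well defined; the remaining verifications are routine.
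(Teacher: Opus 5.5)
Your route is the paper's. Essential surjectivity goes via $P_{red}=\phi^{-1}(1)$ and $(q,h)\mapsto qh$; the paper defers the details to Nikolaus--Waldorf, and your smoothness argument via local sections of $H\to H/A$ fills that in. Centrality comes from one global half-braiding whose properties are checked pointwise.

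The gap is in the half-braiding formula, which is the only non-formal content of the centrality claim. As written it is wrong, and re-choosing representatives does not fix it. Because $\phi(q)=1$, the tensor product relation gives $(qh,p')=(q,p'h)$ in $i_{*}Q\otimes P'$. Your formula sends the left representative to $(p'\,\alpha(\phi'(p')^{-1},h),q)$ and the right one to $(p'h,q)$. In $P'\otimes i_{*}Q$ these equal $(p',q\,\alpha(\phi'(p')^{-2},h))$ and $(p',q\,\alpha(\phi'(p')^{-1},h))$ respectively. So the map is not well defined unless $\alpha(\phi'(p'),h)=h$. It also fails to preserve anchors: the source has anchor $t(h)^{-1}\phi'(p')$, while your image has anchor $\phi'(p')^{-1}t(h)^{-1}\phi'(p')^{2}$. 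The twist belongs on the other factor.

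The correct map is the plain swap on representatives whose $i_{*}Q$-component has trivial anchor:
\[
\beta(q,p'):=(p',q),\qquad\text{i.e.}\qquad \beta(qh,p')=(p'h,q)=(p',q\,\alpha(\phi'(p')^{-1},h)).
\]
The only remaining ambiguity is $(qa,p')=(q,p'a)$ for $a\in A$. The identity $(p'a,q)=(p',q\,\alpha(\phi'(p')^{-1},a))=(p',qa)$ holds precisely because $\alpha$ fixes $A$. This is where centrality of $\Gamma$ enters; $A$ being central in $H$ is automatic.

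Anchors now match, since $t(h)^{-1}\phi'(p')=\phi'(p')\,t(\alpha(\phi'(p')^{-1},h))^{-1}$. $H$-equivariance, naturality and the hexagon identity are immediate from the swap form. This $\beta$ is the inverse of the paper's half-braiding $P\otimes i_{*}(Q)\to i_{*}(Q)\otimes P$, $(p,(q,h))\mapsto((q,\alpha(\phi(p),h)),p)$. With this correction your remaining list of checks goes through as you describe.
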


\begin{proof}
The first assertion follows from \cite[Thm. 4.1.2]{Nikolausa}.
We recall the proof of essential surjectivity. If $(P,\varphi)$ is a principal $\Gamma\!$-bundle such that $p \circ \varphi=1$, we define $P_{red}\subset P$ as the subset of points where $\varphi(p)=1$. Equipped with the restriction of the $H$-action along $i:A \to H$, this forms a principal $A$-bundle. An isomorphism $i_{*}(P_{red})\cong P$ is given by $(p,h) \mapsto ph$.

The lift to Drinfeld center is provided by a  \quot{half-braiding} isomorphism
\begin{equation*}
\beta: P \otimes i_{*}(Q) \cong i_{*}(Q) \otimes P
\end{equation*}
for each principal $A$-bundle $Q$ and every principal $\Gamma\!$-bundle $P$, defined by
\begin{equation*}
(p ,  (q,h)) \mapsto ((q,\alpha(\phi(p),h)),p)\text{.}
\end{equation*}
Verifying all required properties is routine, including naturality in the bundles and the hexagon axioms. 
\end{proof}

\begin{remark}
\label{braiding-abelian}
If $P$ itself is the extension of a principal $A$-bundle $P'$, i.e., $P=i_{*}(P')$, then the braiding isomorphism $ i_{*}(P') \otimes i_{*}(Q) \cong i_{*}(Q) \otimes i_{*}(P')$ corresponds to $i_{*}$ applied to the standard braiding of the symmetric monoidal category $\bun AM$. 
\end{remark}

\begin{remark}
\label{i-star-trivializes}
It is possible that $i_{*}(Q)$ is trivializable (see \cref{the-trivialization} for an example), i.e., there exists an isomorphism $\varphi: i_{*}(Q) \to \trivlin_g$ of principal $\Gamma\!$-bundles over $X$, with $g:X \to G$. In this case, it follows that the image of $g$ is contained in $t(H)$.  
If moreover $\Gamma$ is central, one can check that the following diagram is commutative,
\begin{equation*}
\alxydim{}{P \otimes i_{*}(Q) \ar[r]^{\beta} \ar[d]_{\id \otimes \varphi} & i_{*}(Q) \otimes P \ar[d]^{\varphi \otimes \id} \\ P \otimes \trivlin_g \ar[d]_{\rho_g}  & \trivlin_g \otimes P \ar[d]^{\lambda_g} \\ P^{g} \ar[r]_{\delta_g} & {}^{g\!}P\text{,}}
\end{equation*}
where $P^{g}$ and ${}^{g\!}P$ denote the anchor shifts of \cref{tensor-product-with-trivial-bundle}, and $\delta_g$ is the isomorphism of \cref{left-and-right-shift-isomorphic}. 
\end{remark}

Next, we incorporate connections into this framework. Suppose $P$ is a principal $\Gamma\!$-bundle over $M$, $A_1,A_2\in \Omega^1(M,\mathfrak{g})$, and $\omega$ is an $A_1$-$A_2$-connection on $P$. Let $f:\Gamma\to\Gamma'$ be a Lie 2-group homomorphism compatible with adjustments as specified in \cref{preserve-adjustment}. Let $f_{*}(P)$ be the extended principal $\Gamma'$-bundle over $M$. We define $A_1':=(f_0)_{*}(A_1)$ and $A_2' := (f_0)_{*}(A_2)$. The following result can be established via straightforward calculations.

\begin{lemma}
The 1-form 
\begin{equation*}
\omega' := \Ad^{-1}_{h'}((f_1)_{*}(p^{*}\omega))+(\tilde\alpha_{h'})_{*}(A_2')+h'^{*}\theta \in \Omega^1(P \times H',\mathfrak{h}') 
\end{equation*} 
where $h':P \times H' \to H'$ and $p:P \times H' \to P$ are the projections,
descends to  an $A_1'$-$A_2'$-connection on $f_{*}(P)$. Moreover, if $B_1,B_2 \in \Omega^2(M,\mathfrak{h})$, we set $B_1':=(f_1)_{*}(B_1)$ and $B_2' := (f_1)_{*}(B_2)$; then, we have the following result:
\begin{itemize}

\item 
$\omega'$ is $B_1'$-$B_2'$-flat if $\omega$ is $B_1$-$B_2$-flat and $(A_1,B_1)$ is fake-flat.

\item
$\omega'$ is $B_1'$-$B_2'$-adjusted if $\omega$ is $B_1$-$B_2$-adjusted.

\end{itemize}
\end{lemma}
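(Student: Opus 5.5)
The plan is to reduce everything to the trivial-bundle case, where the statements become statements about gauge transformations that are already covered by \cref{covariance-of-connections}. All claims are local on $M$, and they are compatible with connection-preserving isomorphisms and with pullback along open inclusions. It therefore suffices to work over an open set where $P$ admits a section. There $P\cong\trivlin_g^{\alpha}$ for some $g:M\to G$ and $\alpha\in\Omega^1(M,\mathfrak{h})$ with $t_{*}\alpha=\Ad_g(A_1)-A_2-g^{*}\bar\theta$, by \cref{connection-on-trivial-bundle}.

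The key observation is that $f_{*}(\trivlin_g)\cong\trivlin_{f_0\circ g}$ via $(x,h,h')\mapsto(x,f_1(h)h')$. Under this identification, the formula for $\omega'$ should reduce to $\omega'_{(f_1)_{*}\alpha}$ in the sense of \cref{eq:connoftrivbun}. I expect this because substituting $\omega_\alpha=\Ad_h^{-1}(\alpha)+(\tilde\alpha_h)_{*}(A_2)+h^{*}\theta$ into the definition of $\omega'$, and using $f_1\circ\tilde\alpha_h=\tilde\alpha'_{f_1(h)}\circ f_0$ together with the cocycle rule $\tilde\alpha_{hh'}(g)=h'^{-1}\tilde\alpha_h(g)h'\,\tilde\alpha_{h'}(g)$, collapses the two $\tilde\alpha$-terms and the two Maurer--Cartan terms into single ones for the product $f_1(h)h'$.

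The proof then proceeds in three steps.

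First, I show that $\omega'$ descends. Invariance under $(ph,h')\sim(p,f_1(h)h')$ is checked globally by applying condition 1.) of \cref{connection-on-Gamma-bundle} for $\omega$, pushed forward by $(f_1)_{*}$, together with the above identities for $\tilde\alpha$. The two defining conditions for an $A_1'$-$A_2'$-connection then hold as follows. Condition 1.) is built into the form of $\omega'$. Condition 2.) follows by applying $t'_{*}$ and using $t'_{*}\circ(f_1)_{*}=(f_0)_{*}\circ t_{*}$ together with $t'_{*}(\tilde\alpha'_{h'})_{*}(X)=\Ad_{t(h')}^{-1}X-X$; this yields $\Ad_{\phi'}(A'_1)-A'_2-\phi'^{*}\bar\theta$ for the anchor $\phi'=t(h')^{-1}f_0(\phi)$.

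Second, I compute curvature locally. By \cref{twisted-curvature-on-trivial-bundle}, $\omega_\alpha$ is $B_1$-$B_2$-adjusted if and only if the gauge transformation $(g,\alpha):(A_1,B_1)\to(A_2,B_2)$ is adjusted. Likewise, it is $B_1$-$B_2$-flat if and only if $(g,\alpha)$ is flat and $(A_1,B_1)$ is fake-flat. By \cref{covariance-of-connections}, $f_{*}(g,\alpha)=(f_0\circ g,(f_1)_{*}\alpha)$ is flat, respectively adjusted, whenever $(g,\alpha)$ is; the adjusted case uses that $f$ is adjustment-preserving. Fake-flatness of $f_{*}(A_1,B_1)$ is also preserved. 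Applying \cref{twisted-curvature-on-trivial-bundle} in the reverse direction for $\Gamma'$ then gives both bullet points. The flat case genuinely needs the fake-flatness hypothesis, because the $\tilde\alpha'$-term involving $\mathrm{fcurv}(A_2',B_2')$ must vanish; this explains why that hypothesis appears in the first bullet.

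The main obstacle is the bookkeeping in the descent computation, and in identifying $\omega'$ with $\omega'_{(f_1)_{*}\alpha}$ under $f_{*}\trivlin_g\cong\trivlin_{f_0 g}$. I will first verify the $\tilde\alpha$ cocycle identity and its pushforward along $f$ as a separate lemma, citing the formula collection \cite[\S A]{Waldorf2016}, and then substitute.
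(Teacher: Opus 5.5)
Your proposal is correct. It is organized differently from the paper, which gives no details beyond ``straightforward calculations'', i.e.\ a direct computation on $P\times H'$.

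In that direct route, descent and the two connection conditions are checked exactly as in your first step. One then computes, exactly as for \cref{eq:twistedcurvtrans},
\[
\mathrm{curv}_{B_1',B_2'}(\omega')=\Ad^{-1}_{h'}\big((f_1)_{*}p^{*}\mathrm{curv}_{B_1,B_2}(\omega)\big)+(\tilde\alpha'_{h'})_{*}\big(\mathrm{fcurv}(A_2',B_2')\big)\text{.}
\]
Adjustedness follows from three facts: $(f_1)_{*}\kappa(\phi,X)=\kappa'(f_0\circ\phi,(f_0)_{*}X)$; the cocycle rule and \cref{condition-for-adjustments-2} for $\kappa'$, applied to the anchor $t'(h')^{-1}f_0(\phi)$; and \cref{eq:adjcurv}. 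For flatness, both terms vanish. The first vanishes by hypothesis. The second vanishes because \cref{eq:toftwistedcurvature} forces $(A_2,B_2)$, hence $(A_2',B_2')$, to be fake-flat.

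You instead replace this global curvature computation by the local identification $f_{*}(\trivlin_g^{\alpha})\cong\trivlin_{f_0\circ g}^{(f_1)_{*}\alpha}$. You justify it correctly via $f_1\circ\tilde\alpha_h=\tilde\alpha'_{f_1(h)}\circ f_0$ and the cocycle rule for $\tilde\alpha$. After that you reuse \cref{twisted-curvature-on-trivial-bundle} and \cref{covariance-of-connections}, so the curvature part needs no new computation.

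The price is some locality and naturality bookkeeping. You need that $\omega'$ is natural under connection-preserving isomorphisms. You also need that every $A_1$-$A_2$-connection on $\trivlin_g$ equals $\omega_\alpha$, with $\alpha$ its pullback along $x\mapsto(x,1)$; this follows from condition 1.). The direct route avoids local sections and gives a global transformation formula.

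One caution about the ``if and only if'' for flatness in \cref{twisted-curvature-on-trivial-bundle}: only two halves hold in general.
\begin{itemize}
\item Flatness of $\omega_\alpha$ forces $\mathrm{curv}(g,\alpha)=0$; restrict to $X\times\{1\}$.
\item A flat $(g,\alpha)$ together with a fake-flat $(A,B)$ gives a flat $\omega_\alpha$.
\end{itemize}
Flatness of $\omega_\alpha$ does not force $(A,B)$ to be fake-flat; take $\Gamma=F_{\dis}$. Your argument uses only these two valid halves, so it goes through. Also, $t(h')$ should read $t'(h')$ in your second step.
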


It is straightforward to check that the morphism $\mu$ of \cref{monoidal-covariance} is connection-preserving for the extended connections.  
Hence, we conclude the following result.

\begin{proposition}
\label{covariance-bundles-with-connections}
Suppose $f: \Gamma \to \Gamma'$ is a Lie 2-group homomorphism that preserves adjustments. It induces a functor
\begin{equation*}
f_{*}:\buncon{\Gamma\!,\kappa} X_{(A_1,B_1),(A_2,B_2)} \to \buncon{{\Gamma',\kappa'}}X_{f_{*}(A_1,B_1),f_{*}(A_2,B_2)}
\end{equation*}
which is monoidal with respect to the partial tensor product.
\end{proposition}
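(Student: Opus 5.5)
The statement essentially assembles the preceding lemma (on extended connections) with the monoidal structure $\mu$ of \cref{monoidal-covariance}. I would proceed in three steps.

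\textbf{Step 1: the functor on objects and morphisms.} On objects, send $(P,\omega)$ to $(f_{*}(P),\omega')$, with $\omega'$ the descended 1-form of the preceding lemma. That lemma shows $\omega'$ is an $(f_0)_{*}(A_1)$-$(f_0)_{*}(A_2)$-connection. Since $f$ is adjustment-preserving, it also shows $\omega'$ is $(f_1)_{*}(B_1)$-$(f_1)_{*}(B_2)$-adjusted. The key identity in that calculation is $(f_1)_{*}\kappa(\phi,\mathrm{fcurv}(A_1,B_1))=\kappa'(f_0\circ\phi,(f_0)_{*}\mathrm{fcurv}(A_1,B_1))$, together with the fact that fake curvature is natural under $f$. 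On morphisms, send a connection-preserving $\varphi:P_1\to P_2$ to $f_{*}(\varphi)(p,h')=(\varphi(p),h')$. Pulling back the formula for $\omega_2'$ along $\varphi\times\id_{H'}$ gives $\omega_1'$, because $\varphi^{*}\omega_2=\omega_1$ and the remaining terms depend only on $h'$ and $A_2'$. Functoriality is inherited from the functor $f_{*}$ on $\bun\Gamma X$.

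\textbf{Step 2: compatibility with the partial tensor product.} Take $\omega_1$ an $A'$-$A''$-connection on $P_1$ and $\omega_2$ an $A$-$A'$-connection on $P_2$. We must show that $\mu$ is connection-preserving, i.e. that it pulls back $(\omega_1')\otimes(\omega_2')$ to $(\omega_1\otimes\omega_2)'$. Both sides are 1-forms on quotients, so I would compare them on $P_1\times_X P_2\times H'$. There, $\mu^{*}$ of the tensor product connection of \cref{tensor-product-connection} is
\begin{equation*}
\mathrm{pr}^{*}\omega_1'\big|_{(p_1,h')}+(\alpha'_{t(h')^{-1}f_0(\phi_1(p_1))})_{*}\,\omega_2'\big|_{(p_2,1)}\text{.}
\end{equation*}
At $h'=1$ the second summand is $(\alpha'_{f_0\phi_1})_{*}(f_1)_{*}\omega_2$, since $\tilde\alpha_1=1$ and $\theta$ vanishes there. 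Using $t'(h')^{-1}$ acting by $\Ad_{h'}^{-1}$ and the equivariance $f_1\circ\alpha=\alpha'\circ(f_0\times f_1)$, the sum becomes $\Ad^{-1}_{h'}(f_1)_{*}(\omega_1+(\alpha_{\phi_1})_{*}\omega_2)+(\tilde\alpha_{h'})_{*}(A_2'')+h'^{*}\theta$, which is exactly $(\omega_1\otimes\omega_2)'$. The unit constraint is checked the same way: $f_{*}(\trivlin_1^0)\cong\trivlin_1^0$ connection-preservingly, since $\alpha=0$ maps to $0$.

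\textbf{Step 3: coherence.} The associativity and unit coherence diagrams for $\mu$ already commute in $\bun{\Gamma'}X$. Since all structure maps are now connection-preserving, these diagrams live in the subcategories with connections, and nothing further is needed.

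The main obstacle is the index bookkeeping in Step 2. It involves the interplay of $\Ad_{h'}^{-1}$, $\tilde\alpha_{h'}$ and $\alpha'$ along the factor $t(h')^{-1}$ in the anchor. I would try to reduce it to the crossed-module identities collected in \cite[\S A]{Waldorf2016}, in particular $\Ad_{h}^{-1}\circ(\alpha_{t(h)^{-1}g})_{*}=(\alpha_{g})_{*}$ modified by $\tilde\alpha$-terms. Otherwise I would verify it first on trivial bundles via \cref{tensor-product-of-trivial-bundles-with-connections} and then conclude by local triviality, since connection-preservation is a local condition.
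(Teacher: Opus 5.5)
Your proposal is correct and follows the paper's argument. The paper likewise rests on the preceding lemma on extended connections, which gives the functor on objects and morphisms including adjustedness, together with the observation that the monoidal isomorphism $\mu$ is connection-preserving for the extended connections. Your Step 2 computation on $P_1\times_X P_2\times H'$ already settles that last point directly, via $(\alpha'_{t'(h')^{-1}g'})_{*}=\Ad_{h'}^{-1}\circ(\alpha'_{g'})_{*}$ and $f_1\circ\alpha=\alpha'\circ(f_0\times f_1)$, so your fallback through trivial bundles is not needed.
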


We note that the inclusion functor $i: BA \to \Gamma$ preserves adjustments (the unique adjustment $\kappa=0$ on $BA$ and an arbitrary adjustment $\kappa$ on $\Gamma$). In the case of extension along $i: BA \to \Gamma$, the extended connection simplifies to
\begin{equation*}
\omega' := p^{*}\omega+h^{*}\theta \text{,}
\end{equation*}
where $\omega$ is a $0$-$0$-connection on a principal $BA$-bundle. If $B,B'\in \Omega^2(X,\mathfrak{a})$ are 2-forms such that $\mathrm{curv}(\omega)=B-B'$ (see \cref{BA-bundles-with-connection}), then  $\omega'$ is $i_{*}B$-$i_{*}B'$-flat.
 We remark that all connections in $\buncon\Gamma X_{(0,i_{*}B),(0,i_{*}B')}$ are automatically $i_{*}B$-$i_{*}B'$-flat, since they are -- by definition of the category -- $i_{*}B$-$i_{*}B'$-adjusted, and  $\mathrm{fcurv}(0,i_{*}B)=0$. We have the following generalization of \cref{reduction-to-abelian}.

\begin{proposition}
\label{bundle-morphisms-under-i}
For any $B,B'\in \Omega^2(X,\mathfrak{a})$, the functor $i_{*}$ establishes an equivalence of categories
\begin{equation*}
\buncon{A}X^{B-B'} \cong \buncon\Gamma X_{(0,i_{*}B),(0,i_{*}B')}^1\text{.}
\end{equation*}
Moreover, it is monoidal with respect to the partial tensor product. 
\end{proposition}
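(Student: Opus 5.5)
The plan is to reduce to \cref{reduction-to-abelian} and then track connections through the reduction. The first step is to check that the functor is well defined. If $\omega$ is a connection on a principal $A$-bundle $P$ with $\mathrm{d}\omega=B-B'$, the extended connection $\omega'=p^{*}\omega+h^{*}\theta$ on $i_{*}(P)$ is a $0$-$0$-connection. It is $i_{*}B$-$i_{*}B'$-flat, as remarked just before the statement. Since $\mathrm{fcurv}(0,i_{*}B)=0$, flatness coincides with adjustedness, so $i_{*}(P,\omega)$ lies in $\buncon\Gamma X_{(0,i_{*}B),(0,i_{*}B')}$. Its anchor takes values in $t(H)$, hence $p_{*}i_{*}P=1$. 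Functoriality on connection-preserving morphisms is clear from the formula, and monoidality follows from \cref{covariance-bundles-with-connections}, since $i$ preserves adjustments (\cref{morphisms-of-central-extension-are-adjustment-preserving}).

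Faithfulness is already contained in \cref{reduction-to-abelian}. For fullness, take a morphism $\psi:i_{*}P_1\to i_{*}P_2$ of $\Gamma$-bundles that preserves the extended connections. By \cref{reduction-to-abelian} it comes from a unique $A$-bundle morphism $\psi_0:P_1\to P_2$. Pulling back along $P_1\to i_{*}P_1$, $q\mapsto (q,1)$, the extended connection restricts to $\omega_1$, because $h^{*}\theta$ vanishes there. Hence $\psi_0^{*}\omega_2=\omega_1$, and $\psi_0$ is connection-preserving.

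The main step is essential surjectivity. Let $(Q,\phi,\omega)$ be an object of the target, i.e.\ $p\circ\phi=1$ and $\omega$ is a $0$-$0$-connection that is $i_{*}B$-$i_{*}B'$-flat. As in the proof of \cref{reduction-to-abelian}, put $Q_{red}=\phi^{-1}(1)$. This is a principal $A$-bundle, and $i_{*}(Q_{red})\cong Q$ via $(q,h)\mapsto qh$. Next restrict $\omega$ to $Q_{red}$. By the second axiom in \cref{connection-on-Gamma-bundle} with $A=A'=0$ we have $t_{*}\omega=-\phi^{*}\bar\theta$, and $\phi$ is constant on $Q_{red}$. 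Therefore $\omega_{red}:=\omega|_{Q_{red}}$ is $\mathfrak{a}$-valued. The first axiom with $A'=0$, restricted to $h\in A$ (central, so $\Ad_h=\id$), shows that $\omega_{red}$ is an ordinary $A$-connection.

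Two points remain. First, the isomorphism $i_{*}(Q_{red})\to Q$ should be connection-preserving. Pulling $\omega$ back along $(q,h)\mapsto qh$ and using the first axiom gives $\Ad_h^{-1}(\omega_{red})+h^{*}\theta=\omega_{red}+h^{*}\theta$, which is exactly the extended connection. Second, we need $\mathrm{d}\omega_{red}=B-B'$. On $Q_{red}$ we have $\phi=1$, $A'=0$, and $[\omega_{red}\wedge\omega_{red}]=0$ because $\mathfrak{a}$ is central in $\mathfrak{h}$. So the flatness condition restricts to $\mathrm{d}\omega_{red}-i_{*}B+i_{*}B'=0$.

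I expect the only delicate point to be checking that the restriction to $Q_{red}$ and the pullback computation are legitimate, i.e.\ that $Q_{red}$ is a submanifold. This follows because locally $Q$ is trivial with anchor $t(h)^{-1}g$, so $Q_{red}$ is locally $X\times A$.
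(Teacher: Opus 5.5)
Your proposal is correct and follows essentially the same route as the paper. Both reduce to $Q_{red}=\phi^{-1}(1)$ as in \cref{reduction-to-abelian} and restrict the connection, which is $\mathfrak{a}$-valued by the second axiom and an ordinary $A$-connection by the first; both check that $(q,h)\mapsto qh$ is connection-preserving, get fullness by restricting morphisms to the reduced bundles, and take monoidality from \cref{covariance-bundles-with-connections}. You also spell out the well-definedness check and the identity $\mathrm{d}\omega_{red}=B-B'$, both of which the paper leaves implicit.
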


\begin{proof}
Suppose $\omega'$ is a $i_{*}B$-$i_{*}B'$-adjusted $0$-$0$-connection    on a principal $\Gamma\!$-bundle $(P,\phi)$ with $p\circ \phi=1$. We consider the principal $A$-bundle $P_{red}$ and $\omega := \omega'|_{P_{red}}$.
The second condition in \cref{connection-on-Gamma-bundle} implies  $t_{*}(\omega) =  0$, so that $\omega$ can be regarded as being $\mathfrak{a}$-valued.
The second condition shows then that $\omega$ is an ordinary connection on $P_{red}$, and indeed, the isomorphism $i_{*}(P_{red})\to P:(p,h) \mapsto ph$ is connection-preserving.
This shows essential surjectivity. Faithfulness is as in \cref{reduction-to-abelian}, and it is full because the restriction of any connection-preserving bundle isomorphism $P \to P'$ to $P_{red} \to P_{red}'$ is connection-preserving. The property of being monoidal is just a special case of \cref{covariance-bundles-with-connections}.
\end{proof}

A limitation of the equivalence in \cref{bundle-morphisms-under-i} is that the image of this functor is not central, as it was without connections (\cref{reduction-to-abelian}): we cannot even tensor $i_{*}(Q)$ from both sides with a general principal $\Gamma\!$-bundle, unless it is equipped with a 0-0-connection. To address this issue, we now assume that $\Gamma$ is central and equipped with a splitting $u$ and an adjustment $\kappa$ adapted to $u$. 

\begin{definition}
\label{extension-and-adjusted-shift}
Let $Q$ be a principal $A$-bundle with connection, and let $\eta\in\Omega^1(X,\mathfrak{g})$. We denote by $i_{*}^{\eta}(Q)$ the principal $\Gamma\!$-bundle $i_{*}(Q)$ with its extended connection, subject to an adjusted shift along $\eta$; it is called the \emph{adjusted extension} of $Q$. 
\end{definition}

The connection on $i_{*}^{\eta}(Q)$
is by \cref{adjusted-shift-of-connection} an $\eta$-$\eta$-connection and by \cref{curvature-of-adjusted-shift}
$\tilde B$-$\tilde B'$-adjusted, where
\begin{align*}
\tilde B := i_{*}B+ \tfrac{1}{2}\kappa_{*}( \eta\wedge  \eta)
\quand
\tilde B' :=i_{*}B'+   \tfrac{1}{2}\kappa_{*}(\eta\wedge\eta)\text{.} 
\end{align*}
Observe that the fake-curvatures are
\begin{equation*}
\mathrm{fcurv}(\eta,\tilde B) = \mathrm{fcurv}(\eta,\tilde B')=  \mathrm{d}\eta+\tfrac{1}{2}qp([\eta\wedge\eta])\in \Omega^2(X,\mathfrak{g})\text{.}
\end{equation*}
We collect these results and further consequences in the following proposition.
\begin{proposition}
\label{reduction-with-shifted-connection}
Suppose $\Gamma$ is central and equipped with an adapted adjustment. Let $\eta\in\Omega^1(X,\mathfrak{g})$. Then, the assignment $Q \mapsto i_{*}^{\eta}(Q)$ defines an equivalence of categories,
\begin{equation*}
\buncon{A}X^{B-B'} \cong \buncon\Gamma X^{1}_{(\eta,\tilde B),(\eta,\tilde B')}\text{.}
\end{equation*}
Moreover, it has the following properties: 
\begin{enumerate}[(i)]

\item
\label{shifting-commutes-with-tensor-product}
It is monoidal: if $Q_1$ and $Q_2$ are principal $A$-bundles with connections, then the isomorphism $\mu$ from \cref{monoidal-covariance} yields a connection-preserving isomorphism
\begin{equation*}
i_{*}^{\eta}(Q' \otimes Q) \cong i_{*}^{\eta}(Q') \otimes i_{*}^{\eta}(Q)\text{.}
\end{equation*}

\item
\label{tensor-product-with-abelian-bundle}
Its image is central: if $Q$ is a principal $A$-bundle with connection, and $P$ is a principal $\Gamma\!$-bundle with $A$-$A'$-connection, then the braiding isomorphism $\beta$ from \cref{reduction-to-abelian} yields a connection-preserving isomorphism
\begin{equation*}
P \otimes i_{*}^{A}(Q) \cong i_{*}^{A'}(Q) \otimes P\text{.}
\end{equation*}
\end{enumerate}
\end{proposition}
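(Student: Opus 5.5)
The plan is to reduce the equivalence to \cref{bundle-morphisms-under-i} by composing with the adjusted-shift functor, and then to verify (i) and (ii) by tracking the shifts through $\mu$ and $\beta$.

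\textbf{The equivalence.} By definition, $i_{*}^{\eta}$ is the composite of the functor $i_{*}:\buncon{A}X^{B-B'} \to \buncon\Gamma X^1_{(0,i_{*}B),(0,i_{*}B')}$ of \cref{bundle-morphisms-under-i} with the adjusted shift $(..)^{\eta}$. On the subcategory with $p_{*}P=1$ we have $\tilde q_u(1,\eta)=\eta$, so the target pair is $(\eta,\tilde B')$, exactly as computed after \cref{extension-and-adjusted-shift}. The shift is a functor by \cref{adjusted-shift-of-connection,curvature-of-adjusted-shift}. It has a strict inverse, namely the shift by $-\eta$, because $(\omega^{\eta})^{-\eta}=\omega$ and $p_{*}P=1$ is unchanged. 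One still has to check that shifting back by $-\eta$ returns the 2-forms $i_{*}B$ and $i_{*}B'$. This holds because the $\tilde B$-formula composes correctly:
\begin{equation*}
\tfrac12\kappa_{*}(\eta\wedge\eta)+\tfrac12\kappa_{*}(\eta\wedge\eta)+\kappa_{*}(\eta\wedge(-\eta))=0\text{.}
\end{equation*}
Consequently the shift is an isomorphism of categories $\buncon\Gamma X^1_{(0,i_*B),(0,i_*B')}\cong\buncon\Gamma X^1_{(\eta,\tilde B),(\eta,\tilde B')}$, and composing with \cref{bundle-morphisms-under-i} gives the claimed equivalence.

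\textbf{Monoidality (i).} The map $\mu$ is connection-preserving for the unshifted extended connections by \cref{covariance-bundles-with-connections}. We then apply the shift formula for tensor products, \cref{adjusted-shift-of-tensor-product-connection}: $(\omega_1\otimes\omega_2)^{\eta}=\omega_1^{\tilde q(p_{*}P_2,\eta)}\otimes\omega_2^{\eta}$. Since $p_{*}i_{*}Q=1$, the first exponent is again $\eta$. Finally, adjusted shifts are compatible with connection-preserving morphisms, so $\mu$ remains connection-preserving after shifting.

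\textbf{Centrality (ii).} This is the main computation. Let $\omega$ be the connection on $P$, with anchor $\phi$. On $P\times_X(Q\times H)$, the connection of $P\otimes i_{*}^{A}(Q)$ is
\begin{equation*}
\pr_P^{*}\omega+(\alpha_\phi)_{*}\bigl(\omega_Q+h^{*}\theta+\kappa(t(h)^{-1},A)\bigr)\text{.}
\end{equation*}
On the other side, the connection of $i_{*}^{A'}(Q)\otimes P$ is
\begin{equation*}
\omega_Q+h'^{*}\theta+\kappa(t(h')^{-1},A')+(\alpha_{t(h')^{-1}})_{*}\omega\text{.}
\end{equation*}
We pull the second form back along $\beta$, which substitutes $h'=\alpha(\phi,h)$, and compare the two expressions. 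The plan is as follows.
\begin{itemize}
\item Since $\Gamma$ is central, $\omega_Q$ is $\mathfrak a$-valued and is fixed by $\alpha_\phi$, so the $\omega_Q$ terms match.
\item Expand $\alpha(\phi,h)^{*}\theta$ using the crossed-module product rule.
\item Rewrite $(\alpha_{t(h')^{-1}})_{*}\omega=\Ad_{h'}^{-1}\omega$.
\item Rewrite the $\kappa$-terms using axiom \cref{condition-for-adjustments-2}, which turns $\kappa(t(h),X)$ into $(\tilde\alpha_{h^{-1}})_{*}X$.
\end{itemize}
After these steps, the difference of the two forms should reduce to a combination of $\mathrm{Ad}_\phi(A)-A'-\phi^{*}\bar\theta-t_{*}\omega$, which vanishes by the second condition of \cref{connection-on-Gamma-bundle}. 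I expect this bookkeeping with the $\tilde\alpha$ terms to be the main obstacle, because the identities for $\tilde\alpha_h$ applied to $\Ad_\phi$ must be used carefully.

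A cleaner route, which I would try first, is to work locally: trivialize $i_{*}Q$ as in \cref{i-star-trivializes}, and trivialize $P$ as $\trivlin_g^{\alpha}$. Then both sides become trivial bundles. By \cref{tensor-product-of-trivial-bundles-with-connections}, the claim reduces to an identity of connection 1-forms, $\alpha+(\alpha_g)_{*}(\beta+\kappa(t(a),A))$ versus $\beta+\kappa(t(a),A')+(\alpha_{t(a)})_{*}\alpha$ corrected by the $\delta_g$ factor. This identity should follow from \cref{condition-for-adjustments-1,condition-for-adjustments-2} together with the relation $t_{*}\alpha=\Ad_g(A)-A'-g^{*}\bar\theta$. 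Connection-preservation is a local property, so the local verification suffices.
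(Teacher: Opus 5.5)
Your proposal is correct and follows the paper's route. The equivalence is the functor $i_{*}$ of \cref{bundle-morphisms-under-i} composed with the adjusted shift, which is strictly invertible via the shift by $-\eta$, as you verify. Part (i) is the monoidality of $i_{*}$ together with \cref{adjusted-shift-of-tensor-product-connection} and $\tilde q_u(1,\eta)=\eta$. Part (ii) is exactly the \quot{direct calculation} the paper leaves unwritten.

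Your global computation for (ii) does close. After inserting $t_{*}\omega=\Ad_\phi(A)-A'-\phi^{*}\bar\theta$, the remainder is $(\alpha_\phi)_{*}\kappa(t(h)^{-1},A)-\kappa(t(\alpha(\phi,h))^{-1},\Ad_\phi(A))$. This vanishes by \cref{condition-for-adjustments-2} together with the crossed-module identity $\alpha_\phi(\tilde\alpha_h(g))=\tilde\alpha_{\alpha(\phi,h)}(\phi g\phi^{-1})$.
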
 

\begin{proof}
(i) follows directly from the fact that $i_{*}$ is monoidal (\cref{bundle-morphisms-under-i})  and of the relation between tensor product and adjusted shift (\cref{adjusted-shift-of-tensor-product-connection}). (ii) follows from a direct calculation. 
\end{proof}

\subsection{Local data}

\label{sec:twistedconnectionsanddeligne}

We perform the Hom-set closure described in \cref{sec:homsetclosure} to the separated presheaf of bicategories  $\conff \Gamma-$, $\conadj{\Gamma\!,\kappa}-$, and $\congen\Gamma-$  of \cref{rem:gammaconn:prepre}, and and demonstrate below in \cref{lem:twistedconnequiv} that this yields the correct local formalism for principal $\Gamma\!$-bundles with connections. We omit a separate discussion of $\conaa{\Gamma\!,u,\kappa}-$, as the distinction to  $\conadj{\Gamma\!,\kappa}-$  lies solely in the objects.

\noindent 
We detail the resulting bicategory $\prestack{\congen\Gamma-}(X)$:
\begin{itemize}

\item
The objects of $\prestack{\congen\Gamma-}(X)$ are, as before, $\Gamma\!$-connections on $X$.

\item 
A 1-morphism in  $\prestack{\congen\Gamma-}(X)$  from $(A,B)$ to $(A',B')$  is a quadruple $(\pi, \varphi,g,a)$  consisting of a surjective submersion $\pi:Y \to X$, 
 a gauge transformation $(g,\varphi): \pi^{*}(A,B) \to \pi^{*}(A',B')$ on $Y$,
and a gauge 2-transformation  $a:\pr_1^{*}(g,\varphi) \Rightarrow \pr_2^{*}(g,\varphi)$ on $Y^{[2]} := Y \times_X Y$,
satisfying a cocycle condition over $Y^{[3]}$:
\begin{equation*}
a(y',y'') \cdot a(y,y') = a(y,y'')\text{.}
\end{equation*}

\item
A 2-morphism in   $\prestack{\congen\Gamma-}(X)$  from $(\pi_1,\varphi_1,g_1,a_1)$ to $(\pi_2,\varphi_2,g_2,a_2)$ is a gauge 2-transformation $h: \pr_{Y_1}^{*}(g_1,\varphi_1) \Rightarrow \pr_{Y_2}^{*}(g_2,\varphi_2)$ over $Y_1 \times Y_2$
such that 
\begin{equation*}
a_{2}(y_2,y_2')\cdot h(y_1,y_2) = h(y_1',y_2') \cdot a_1(y_1,y_1')
\end{equation*}
holds for all $(y_1,y_1',y_2,y_2')\in Y_1^{[2]} \times_X Y_2^{[2]}$.
The vertical composition of 2-morphisms $h_{12}$ and $h_{23}$ is defined by considering the gauge 2-transformation
\begin{equation*}
\pr_{23}^{*}h_{23} \bullet \pr_1^{*}h_{12} :\pr_{Y_1}^{*}(g_1,\varphi_1) \Rightarrow \pr_{Y_3}^{*}(g_3,\varphi_3)
\end{equation*}

over $Y_1 \times_X Y_2 \times_X Y_3$, which then descends, by virtue of  \cref{rem:gammaconn:prepre}, along the projection $\pr_{13}:Y_1 \times_X Y_2 \times_X Y_3 \to Y_1 \times_X Y_3$ to a unique gauge 2-transformation over $Y_1 \times_X Y_3$.  
\item
The composition of 1-morphisms 
\begin{equation*}
(\pi,\varphi,g,a):(A,B) \to (A',B')
\quand
(\pi',\varphi',g',a'):(A',B') \to (A'',B'')
\end{equation*}
is defined by setting 
\begin{equation*}
\tilde Y := Y \times_X Y'
\quand
\tilde\varphi := (\alpha_{g' \circ \pr_2})_{*}(\pr_1^{*}\varphi) + \pr_2^{*}\varphi'
\end{equation*}
and defining the maps
\begin{equation*}
\tilde g(y,y') := g'(y') \cdot g(y)
\quand
\tilde a((y_1,y_1'),(y_2,y_2')) := a'(y_1',y_2')\cdot \alpha(g'(y_1'),a(y_1,y_2))\text{.}
\end{equation*}
The horizontal composition of 2-morphisms is defined in a straightforward way accordingly. 

\end{itemize}

Regarding the sub-presheaves $\conff \Gamma-$ and $\conadj{\Gamma\!,\kappa}-$, the following conditions must be imposed:
\begin{itemize}

\item 
For $\prestack{\conadj {\Gamma\!,\kappa} X}$,  the 1-morphisms $(\pi,\varphi,g,a)$ contain an \emph{adjusted} gauge transformation $(g,\varphi)$. 

\item
For $\prestack{\conff \Gamma X}$, we restrict to \emph{fake-flat} $\Gamma\!$-connections and \emph{flat} gauge transformations $(g,\varphi)$.
\end{itemize}

Next, we  establish a \quot{reconstruction} procedure for principal $\Gamma\!$-bundles with connection from local data. We consider a 1-morphism $(\pi,\varphi,g,a)$  in $\prestack{\congen \Gamma -}(X)$, between $\Gamma\!$-connections $(A,B)$ and $(A',B')$.
We construct a principal $H$-bundle in the standard way,  
\begin{equation*}
P := (Y \times H)/\sim
\quere
(y_1,h) \sim (y_2,a(y_1,y_2)h)\text{,}
\end{equation*}
equipped with the right action $(y,h) \cdot h' := (y,hh')$. The map
\begin{equation*}
\phi: Y \times H \to G: (y,h) \mapsto t(h)^{-1} g(y)
\end{equation*}
descends to a smooth map $\phi: P \to G$, rendering $(P,\phi)$ a principal $\Gamma\!$-bundle. 

We define the 1-form 
\begin{equation*}
\omega := \Ad_{\pr_H}^{-1}(\pr_Y^{*}\varphi) + (\tilde\alpha_{\pr_H})_{*}(\pr_X^{*}A')  + \pr_H^{*} \theta \in \Omega^1(Y \times H,\mathfrak{h})\text{,} 
\end{equation*}
where $\pr_H$, $\pr_Y$ and $\pr_X$ are the projections from $Y \times H$ to the indexed manifolds. 
It is straightforward to check that $\omega$ descends to a 1-form $\omega\in\Omega(P,\mathfrak{ h})$ that is an $A$-$A'$-connection on $P$. 
A direct, albeit lengthy, calculation demonstrates that
\begin{equation*}
\mathrm{curv}_{B,B'}(\omega)= \pr_Y^{*}\mathrm{curv}(g,\varphi)+ (\tilde\alpha_{\pr_H})_{*}(\Ad_{g \circ \pr_Y}(\mathrm{fcurv}(A,B)))
\text{.}
\end{equation*}

If the morphism is in $\prestack{\conadj{\Gamma\!,\kappa} -}(X)$ then the last equation simplifies to
\begin{equation*}
\mathrm{curv}_{B,B'}(\omega) =\kappa(\phi,\mathrm{fcurv}(A,B))\text{,}
\end{equation*} 
i.e., $\omega$ is $B$-$B'$-adjusted.
If the morphism is in $\prestack{\conff\Gamma -}(X)$ we obtain immediately  $\mathrm{curv}_{B,B'}(\omega)=0$, i.e., $\omega$ is $B$-$B'$-flat.

We now consider a 2-morphism $h$  between  two 1-morphisms $(\pi_1,\varphi_1,g_1,a_1)$ and $(\pi_2,\varphi_2,g_2,a_2)$ in $\prestack{\congen\Gamma-}(X)$. We denote by $(P_1,\phi_1,\omega_1)$ and $(P_2,\phi_2,\omega_2)$ the corresponding reconstructed principal $\Gamma\!$-bundles with their $A$-$A'$-connections. We consider the map
\begin{equation*}
f': Y_1 \times H \to P_2 : (y_1,h') \mapsto (y_2,h(y_1,y_2) \cdot h')\text{,}
\end{equation*} 
where $y_2\in Y_2$ is any point satisfying $\pi_2(y_2)=\pi_1(y_1)$; the definition of $f'$ is independent of that choice. Furthermore, $f'$ descends to $P_1$ to a well-defined  bundle morphism $f:P_1\to P_2$, which  moreover preserves the $A$-$A'$-connections. 
It is straightforward to show that the vertical composition of 2-morphisms corresponds to the composition of bundle morphisms. Thus, we have defined a \quot{reconstruction} functor
\begin{align}
\label{reconstruction-functor}
\hom_{\prestack{\congen \Gamma -}(X)}((A,B),(A',B')) &\to \buncon {\Gamma} X_{A,A'}
\end{align}
that restricts to functors
\begin{align}
\label{reconstruction-functor-adj}
\hom_{\prestack{\conadj {\Gamma\!,\kappa} -}(X)}((A,B),(A',B')) &\to \buncon {\Gamma\!,\kappa} X_{(A,B),(A',B')}
\\
\label{reconstruction-functor-ff}
\hom_{\prestack{\conff \Gamma -}(X)}((A,B),(A',B')) &\to \bunconflat \Gamma X_{(A,B),(A',B')}\text{.}
\end{align}

\begin{proposition}
\label{lem:twistedconnequiv}
The reconstruction procedure possesses the following properties:
\begin{enumerate}[(a)]
\item 
All functors defined in \cref{reconstruction-functor,reconstruction-functor-adj,reconstruction-functor-ff} map the composition of 1-morphisms to the partial tensor product of principal $\Gamma\!$-bundles.

\item
For any pair of $\Gamma\!$-connections $(A,B)$ and $(A',B')$ on $X$, the functor \cref{reconstruction-functor-adj} is an equivalence of categories. 

\item
For any pair of fake-flat $\Gamma\!$-connections $(A,B)$ and $(A',B')$ on $X$, the functor \cref{reconstruction-functor-ff} is an equivalence of categories. 
\end{enumerate}
\end{proposition}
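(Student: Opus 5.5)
For (a), I will compare the two constructions directly. Take composable 1-morphisms $(\pi,\varphi,g,a)$ and $(\pi',\varphi',g',a')$. Their composite lives on $\tilde Y=Y\times_X Y'$ with data $(\tilde\varphi,\tilde g,\tilde a)$. Let $P,P'$ be the reconstructed bundles. I define a map
\begin{equation*}
\tilde Y\times H \to P'\otimes P,\qquad ((y,y'),h)\mapsto ((y',h),(y,1)).
\end{equation*}
This matches the order in \cref{tensor-product-connection}, where the later morphism sits in the left factor. I will check three things:
\begin{itemize}
\item The map respects the relation defined by $\tilde a$. This uses $\tilde a = a'\cdot\alpha(g',a)$ together with the tensor relation $(p_1h,p_2)\sim(p_1,p_2\alpha(\phi_1(p_1)^{-1},h))$.
\item It intertwines the anchors: $\phi'(y',h)\phi(y,1)=t(h)^{-1}g'g$.
\item It pulls back $\omega'\otimes\omega=\pr_1^*\omega'+(\alpha_{\phi'})_*\pr_2^*\omega$ to the reconstructed connection built from $\tilde\varphi$. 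This is a direct computation with $\Ad$ and $\tilde\alpha$, using the crossed-module identities.
\end{itemize}
Naturality in 2-morphisms is then routine. The restrictions to the adjusted and fake-flat subcategories follow immediately, since they are full.

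For (b), I first show the functor is fully faithful. For fixed $(A,B),(A',B')$, the category $\hom_{\prestack{\conadj{\Gamma,\kappa}-}(X)}$ is exactly the descent category of the presheaf of adjusted gauge transformations and gauge 2-transformations. The reconstruction functor factors as follows. First, local gauge transformations $(g,\varphi)$ on $Y$ go to the trivial bundles $\trivlin_g^{\varphi}$. By \cref{connection-on-trivial-bundle} and \cref{twisted-curvature-on-trivial-bundle}, this is an isomorphism of categories onto trivial bundles with $B$-$B'$-adjusted connection, with connection-preserving morphisms corresponding exactly to gauge 2-transformations. Second, one glues along $Y^{[2]}$. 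Since $\buncon{\Gamma,\kappa}-_{(A,B),(A',B')}$ is a stack (principal $H$-bundles, anchors and forms all descend), full faithfulness follows. Concretely, a connection-preserving morphism $f:P_1\to P_2$ pulled back to $Y_1\times_X Y_2$ is a map $h$ into $H$ satisfying the gauge 2-transformation equation, and it is compatible with $a_1,a_2$.

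Next I show essential surjectivity. Given $(P,\phi,\omega)$ that is $B$-$B'$-adjusted, choose local sections $s_i$ over an open cover, and set $Y=\coprod U_i$. Put $g:=\phi\circ s$ and $\varphi:=s^*\omega$. The second condition of \cref{connection-on-Gamma-bundle} gives
\begin{equation*}
t_*\varphi=\Ad_g(A)-A'-g^*\bar\theta,
\end{equation*}
so $(g,\varphi)$ is a gauge transformation. The transition functions $a_{ij}$ with $s_j=s_ia_{ij}$ give gauge 2-transformations by the first condition, and they satisfy the cocycle condition. Moreover, $s^*\mathrm{curv}_{B,B'}(\omega)=\mathrm{curv}(g,\varphi)$, so adjustedness of $\omega$ becomes $\mathrm{curv}(g,\varphi)=\kappa(g,\mathrm{fcurv}(A,B))$, i.e.\ $(g,\varphi)$ is adjusted. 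The map $(y,h)\mapsto s(y)h$ then gives a connection-preserving isomorphism from the reconstructed bundle to $P$.

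For (c), the argument is identical, using flat gauge transformations and $B$-$B'$-flatness. Since $(A,B)$ is fake-flat, the two notions coincide via \cref{twisted-curvature-on-trivial-bundle}.

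The main obstacle is the connection-matching computation in (a). The twisting by $\alpha_{g'}$ in $\tilde\varphi$ and the $\tilde\alpha$-terms must recombine correctly, and getting them to do so will rely on the formula collection of \cite[\S A]{Waldorf2016}.
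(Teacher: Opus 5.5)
Your proposal is correct. For (a) and for full faithfulness it matches the paper. The paper only asserts that (a) is straightforward, and it proves full faithfulness exactly as in your ``concretely'' sentence, defining $h(y_1,y_2)$ by $\varphi(y_1,1)=(y_2,1)\cdot h(y_1,y_2)$. Your explicit map for (a) works. The connection check you call the main obstacle is in fact immediate: the pullback of $\omega$ along $y\mapsto(y,1)$ is $\varphi$, and $(\alpha_{t(h)^{-1}})_{*}=\Ad_h^{-1}$, so $\omega'\otimes\omega$ pulls back to $\Ad_h^{-1}(\tilde\varphi)+(\tilde\alpha_h)_{*}(A'')+h^{*}\theta$.

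The genuine difference is in essential surjectivity. The paper makes no choices: it takes the bundle projection $P\to X$ itself as the surjective submersion, with $g:=\phi$, $\varphi:=\omega$ and $a(ph,p):=h$. Then the gauge-transformation equation and adjustedness are literally the defining conditions of a $B$-$B'$-adjusted $A$-$A'$-connection, and the comparison map is $(p,h)\mapsto ph$. Your local sections give the familiar cocycle picture, at the cost of choices.

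In your version, mind the direction. From $s_j=s_ia_{ij}$ you get $g_j=t(a_{ij})^{-1}g_i$, so the required gauge 2-transformation $\pr_1^{*}(g,\varphi)\Rightarrow\pr_2^{*}(g,\varphi)$ is $a_{ij}^{-1}$ (or use $s_i=s_ja_{ij}$ instead). This is also what makes $(y,h)\mapsto s(y)h$ respect the relation $(y_1,h)\sim(y_2,a(y_1,y_2)h)$.

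Finally, do not cite the stack property of $\buncon{\Gamma\!,\kappa}{..}_{(A,B),(A',B')}$ as known: the paper derives it as a corollary of this very proposition. Your parenthetical descent argument avoids the circularity. In any case, full faithfulness only needs that bundle morphisms (locally, maps into $H$) glue along $Y_1\times_X Y_2\to X$.
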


\begin{proof}
The compatibility between composition and tensor product is straightforward to check in the general case of \cref{reconstruction-functor}. 
We  show essential surjectivity of the functor \cref{reconstruction-functor-adj}. Suppose we have a principal $\Gamma\!$-bundle $(P,\phi)$,  equipped with a $A$-$A'$-connection $\omega$. We extract local data with respect to the bundle projection $\pi:P \to X$, by setting $g:=\phi:P \to G$, $\varphi := \omega$, and $a :P\times_X P \to H$ defined by $a(ph,p):=h$. 
We have
\begin{equation*}
\mathrm{curv}(g,\varphi)=\mathrm{curv}_{B,B'}(\omega)=\kappa(\phi,\mathrm{fcurv}(A,B))=\kappa(g,\mathrm{fcurv}(A,B))
\end{equation*}
which implies that the cocycle $(\pi,g,\varphi,a)$ is a 1-morphism in $\prestack{\conadj \Gamma -}(X)$.  
Then, let $(\tilde P,\tilde\phi,\tilde\omega)$ be principal $\Gamma\!$-bundle reconstructed from  $(\pi,g,\varphi,a)$. A morphism $f: \tilde P \to P$ is defined by $f(p,h):= ph$.
Moreover, we get $f^{*}\omega=\tilde\omega$.
This shows that   $(\pi,g,\varphi,a)$ is an essential preimage of $(P,\phi)$ under the functor  \cref{reconstruction-functor-adj}.
In the flat case, we have $\mathrm{curv}(g,\varphi)=\mathrm{curv}_{B,B'}(\omega)=0$, which implies that the cocycle is in $\prestack{\conff \Gamma -}(X)$ and hence also is an essential preimage of $(P,\phi)$ under the functor  \cref{reconstruction-functor-ff}.  

Next, we prove that both functors are fully faithful; the proofs are identical. We assume that $(\pi_1,\varphi_1,g_1,a_1)$ and $(\pi_2,\varphi_2,g_2,a_2)$ are objects, and we let $(P_1,\phi_1,\omega_1)$ and $(P_2,\phi_2,\omega_2)$ be the corresponding reconstructed principal $\Gamma\!$-bundles with connections. Suppose $\varphi: P_1 \to P_2$ is a morphism with $\varphi^{*}\omega_2=\omega_1$. We define a 2-morphism $h:(\pi_1,\varphi_1,g_1,a_1) \to (\pi_2,\varphi_2,g_2,a_2)$ by letting $h(y_1,y_2)\in H$ be the unique element such that 
\begin{equation*}
\varphi(y_1,1) = (y_2,1)\cdot h(y_1,y_2)\text{.}
\end{equation*}
It is straightforward to show that this is a morphism in  $\hom_{\prestack{\conadj\Gamma-}(X)}((A,B),(A',B'))$, 
and that the assignment $\varphi \mapsto h$ is a two-sided inverse to reconstruction. 
\end{proof}

\begin{remark}
We have inclusions
\begin{equation*}
{\conadj \Gamma -}(X) \subset \,\prestack{\conadj \Gamma -}(X)
\quand
{\conff \Gamma -}(X) \subset \,\prestack{\conff \Gamma -}(X)
\end{equation*}
induced by using identity surjective submersions, $\pi=\id_X$. Under reconstruction, these inclusions correspond to \emph{trivial} principal $\Gamma\!$-bundles: a morphism $(g,\varphi): (A,B) \to (A',B')$ in ${\conadj \Gamma -}(X)$ is mapped to $\trivlin_{g}^{\varphi}$, which is indeed $B$-$B'$-adjusted, and is $B$-$B'$-flat if and only if $(g,\varphi)$ is flat and $(A,B)$ is fake-flat (see \cref{twisted-curvature-on-trivial-bundle}). 
\end{remark}

\begin{corollary}
Let $(A,B)$ and $(A',B')$ be $\Gamma\!$-connections on $X$.
The presheaves of categories over $X$,
\begin{equation*}
\buncon {\Gamma\!,\kappa} {..}_{(A,B),(A',B')}
\quand
\bunconflat \Gamma {..}_{(A,B),(A',B')},
\end{equation*}
are stacks on the site $\man/X$ of manifolds over $X$.
\end{corollary}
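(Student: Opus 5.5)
The plan is to deduce the stack property directly from \cref{lem:twistedconnequiv} together with the general theory of the Hom-set closure recalled in \cref{sec:homsetclosure}. For a separated presheaf of bicategories $\mathcal{F}$, the Hom-set closure $\prestack{\mathcal{F}}$ has, for each pair of objects over $X$, a hom-presheaf $Y \mapsto \hom_{\prestack{\mathcal F}(Y)}(x|_Y,x'|_Y)$ on $\man/X$, and this presheaf is a stack of categories. Indeed, it is constructed exactly as the stackification of the presheaf of hom-categories of $\mathcal F$: its objects are descent data with respect to surjective submersions, and its morphisms are morphisms of descent data. I will either cite this fact from \cref{sec:homsetclosure} or verify it directly. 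The separatedness needed for the construction is supplied by \cref{rem:gammaconn:prepre}.

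The steps are as follows.

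First, I fix $(A,B)$ and $(A',B')$ on $X$ and regard both sides of the reconstruction functor \cref{reconstruction-functor-adj} as presheaves on $\man/X$. Over $f\colon Y \to X$ the source is $\hom_{\prestack{\conadj{\Gamma\!,\kappa}-}(Y)}(f^{*}(A,B),f^{*}(A',B'))$ and the target is $\buncon{\Gamma\!,\kappa}Y_{f^{*}(A,B),f^{*}(A',B')}$.

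Second, I check that reconstruction commutes with pullback up to canonical isomorphism. Pulling back a cocycle $(\pi,\varphi,g,a)$ and reconstructing yields $f^{*}P$ with the pulled-back connection, since $(f^{*}Y\times H)/\!\sim\;\cong f^{*}((Y\times H)/\!\sim)$ and all the defining formulas are natural. Hence reconstruction is a morphism of presheaves of categories, pseudonatural in $Y$.

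Third, \cref{lem:twistedconnequiv}(b) says the functor is an equivalence over every $Y$. Being a stack is invariant under objectwise equivalences of presheaves of categories, because descent categories are transported along equivalences. So the target is a stack once the source is. The flat case is identical, using \cref{reconstruction-functor-ff} and \cref{lem:twistedconnequiv}(c); one only needs that fake-flatness is preserved by pullback.

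The main obstacle is the stack property of the source. If it is not available verbatim from \cref{sec:homsetclosure}, I would prove it directly. Gluing of morphisms reduces to the sheaf property of smooth $H$-valued maps, as in the proof of \cref{rem:gammaconn:prepre}. Gluing of objects, meaning descent data along a surjective submersion $\zeta\colon Z\to Y$, is handled by composing submersions. Given cocycles $(\pi_Z,\varphi,g,a)$ on $Z$ together with 2-morphisms over $Z^{[2]}$ satisfying the cocycle condition, I take the composite submersion $W\to Z\to Y$ and keep $(g,\varphi)$ on $W$. The new transition map on $W^{[2]}_Y$ is assembled from $a$ and the descent 2-morphisms, and its cocycle condition follows from the coherence of the descent data. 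Adjustedness survives because the condition on the gauge transformation is pointwise, so no new curvature computation is needed.
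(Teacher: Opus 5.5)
Your proposal is essentially the paper's proof. The paper gets the stack property of the hom-presheaves of $\prestack{\conadj{\Gamma\!,\kappa}-}$ and $\prestack{\conff\Gamma-}$ directly from \cref{lem:pre2stackification} and \cref{pre-2-stack}, so your fallback gluing argument is not needed, and then transfers it along the equivalences of \cref{lem:twistedconnequiv}, leaving implicit the compatibility of reconstruction with pullback that you spell out.

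One caveat applies to both your argument and the paper's: part (c) of \cref{lem:twistedconnequiv} only covers fake-flat pairs. For arbitrary $(A,B)$, $(A',B')$, the flat statement is best obtained by noting that $\bunconflat\Gamma{..}_{(A,B),(A',B')}$ is a full sub-presheaf of the stack $\buncon{\Gamma\!,\kappa}{..}_{(A,B),(A',B')}$, cut out by a curvature condition that is invariant under isomorphism and can be checked after pullback along surjective submersions.
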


\begin{proof}
The equivalences established in \cref{lem:twistedconnequiv} transfer this result from the abstract properties of the plus construction: $\prestack{\conadj {\Gamma\!,\kappa}-}$ and $\prestack{\conff {\Gamma} -}$ are pre-2-stacks by \cref{lem:pre2stackification}, and \cref{pre-2-stack} demonstrates that the Hom-presheaves of pre-2-stacks are stacks.
\end{proof}

\setsecnumdepth{2}

\section{Bundle gerbes with connections}

\label{sec:bundlegerbes}

In this section, we address the central topic of this article: the definition of non-abelian bundle gerbes with adjusted connections. $\Gamma\!$-bundle gerbes with connection were introduced by Aschieri, Cantini, and Jur?o \cite{aschieri} for the case of $\Gamma=\AUT(H)$. In \cite{Nikolaus}, we defined $\Gamma\!$-bundle gerbes for arbitrary Lie 2-groups as the result of the sheafification of the pre-2-stack $B\bun\Gamma-$. Here, we extend this framework to include connections.

We discuss bundle gerbes with connections exclusively within the adjusted setting, as this provides the most general working framework. Consequently, throughout this section, $\Gamma$ denotes a central Lie 2-group equipped with an adjustment $\kappa$, and $M$ is a smooth manifold.

\subsection{Main definition}

We reformulate the constructions from \cref{section-2-group-bundles} as follows. We consider a bicategory with:
\begin{enumerate}[(a)]

\item 
objects: all $\Gamma\!$-connections $(A,B)$ on $M$.

\item
1-morphisms from $(A,B)$ to $(A',B')$: principal $\Gamma\!$-bundles over $M$ equipped with a $B$-$B'$-adjusted $A$-$A'$-connection. Composition is given by the partially defined tensor product.

\item
2-morphisms: connection-preserving bundle morphisms. Vertical composition corresponds to the composition of bundle morphisms, while horizontal composition arises from the functoriality of the partially defined tensor product. 

\end{enumerate}
We call this bicategory the bicategory of \emph{trivial $\Gamma\!$-bundle gerbes with connections}, and denote it by $\trivgrbcon {\Gamma\!,\kappa} M$. Under the pullback along smooth maps between manifolds, $\trivgrbcon {\Gamma\!,\kappa} -$ becomes a presheaf of bicategories.   

\begin{definition}
\label{bundle-gerbes}
The presheaf of $\Gamma\!$-bundle gerbes with connection is defined as an application of the plus construction,
\begin{equation*}
\grbcon {\Gamma\!,\kappa} - := (\trivgrbcon {\Gamma\!,\kappa} -)^{+}\text{.}
\end{equation*}
\end{definition}

\begin{theorem}
\label{th:equivcongrb}
The presheaf $\grbcon{\Gamma\!,\kappa}-$ is a sheaf of bicategories. 
\end{theorem}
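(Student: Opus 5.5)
The plan is to reduce the statement to the general properties of the Nikolaus--Schweigert plus construction recalled in \cref{sec:plus}. The relevant abstract result is that $\mathcal{F}^{+}$ is a sheaf (2-stack) of bicategories whenever $\mathcal{F}$ is a pre-2-stack, i.e.\ whenever all Hom-presheaves of $\mathcal{F}$ are stacks. So I would not verify descent for bundle gerbes directly; instead, I would show that $\trivgrbcon{\Gamma\!,\kappa}-$ is a pre-2-stack and then quote the abstract theorem.

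\textbf{Step 1: identify the Hom-presheaves.} For two $\Gamma\!$-connections $(A,B)$ and $(A',B')$ on $M$, the Hom-presheaf over $\man/M$ is
\begin{equation*}
N \mapsto \buncon{\Gamma\!,\kappa} N_{(A,B),(A',B')},
\end{equation*}
with the connections pulled back to $N$. This presheaf is exactly the one treated in the corollary following \cref{lem:twistedconnequiv}, which states that it is a stack. That corollary was itself obtained by transporting the stack property along the equivalence of \cref{lem:twistedconnequiv}(b), from the Hom-categories of the Hom-set closure $\prestack{\conadj{\Gamma\!,\kappa}-}$, which are stacks by \cref{pre-2-stack}.

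\textbf{Step 2: check compatibility with pullback and composition.} The partial tensor product, the unitors and the associators must be compatible with pullback, so that $\trivgrbcon{\Gamma\!,\kappa}-$ is a genuine presheaf of bicategories. All of these are defined pointwise on total spaces, and the tensor-product connection of \cref{tensor-product-connection} is natural under pullback, so this check is routine. By \cref{lem:twistedconnequiv}(a), the reconstruction functors intertwine composition of 1-morphisms with the tensor product. Together with (b), this gives an equivalence of presheaves of bicategories between $\prestack{\conadj{\Gamma\!,\kappa}-}$ and $\trivgrbcon{\Gamma\!,\kappa}-$: it is the identity on objects and an equivalence on all Hom-categories.

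\textbf{Step 3: conclude.} Either from Step 1 directly, or via the equivalence of Step 2 and \cref{lem:pre2stackification}, $\trivgrbcon{\Gamma\!,\kappa}-$ is a pre-2-stack. Applying the plus construction then yields a 2-stack, i.e.\ a sheaf of bicategories, which proves \cref{th:equivcongrb}.

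The byproduct of Step 2 is that $\grbcon{\Gamma\!,\kappa}-$ is equivalent to $(\prestack{\conadj{\Gamma\!,\kappa}-})^{+}$. By the sheafification shortcut \cref{sheafification-shortcut}, this is in turn equivalent to $(\conadj{\Gamma\!,\kappa}-)^{+}$, which will later be used for the classification statement.

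\textbf{Main obstacle.} The only non-formal point is making sure that the stack property of the Hom-presheaves holds on the whole slice site, so that pullbacks along arbitrary maps $N\to M$ and descent along arbitrary surjective submersions are covered. The subtle point is that the objects $(A,B)$ and $(A',B')$ are fixed forms on $M$, whereas descent of adjusted connections requires the $B$-$B'$-adjustedness condition to descend. I expect this to be harmless, because adjustedness is a pointwise equation of differential forms and $\pi^{*}$ is injective on forms. I would nonetheless check it against the hypotheses of the corollary rather than assume it.
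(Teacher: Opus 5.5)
Your proposal is correct and is essentially the paper's proof: the paper likewise uses \cref{lem:twistedconnequiv} to obtain an equivalence $\trivgrbcon{\Gamma\!,\kappa}- \cong \prestack{\conadj{\Gamma\!,\kappa}-}$, notes that the right-hand side is a pre-2-stack by \cref{lem:pre2stackification}, and concludes with \cref{th:2stackification}. Your Step 1, which reads the pre-2-stack property off the corollary on stacks of adjusted bundle connections via \cref{pre-2-stack}, is an unpacked version of the same argument, and that corollary already settles the adjustedness-descent concern you raise.
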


\begin{proof}
\Cref{lem:twistedconnequiv} provides an equivalence
\begin{equation*}
\trivgrbcon {\Gamma\!,\kappa} - \cong \prestack{\conadj{\Gamma\!,\kappa} -}
\end{equation*}
and the right-hand side is a pre-2-stack by \cref{lem:pre2stackification}. \Cref{th:2stackification} then establishes the claim.
\end{proof}

\begin{theorem}
\label{classification}
$\Gamma\!$-bundle gerbes with connections are classified up to isomorphism by the adjusted non-abelian differential cohomology:
\begin{align*}
\hc 0 \grbcon {\Gamma\!,\kappa} M  &\cong \hat \h^1(M,(\Gamma\!,\kappa))^{\adj}  
\end{align*}
\end{theorem}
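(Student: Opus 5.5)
The plan is to compare the two plus constructions involved, using that the plus construction is invariant under equivalences of presheaves and that, for a presheaf that is already a pre-2-stack, it can equally be computed from the separated presheaf whose Hom-set closure it is. By \cref{lem:twistedconnequiv}(a),(b) there is an equivalence of presheaves of bicategories
\begin{equation*}
\trivgrbcon{\Gamma\!,\kappa}- \cong \prestack{\conadj{\Gamma\!,\kappa}-}\text{.}
\end{equation*}
On objects it is the identity, on 1-morphisms it is inverse to reconstruction, and it sends tensor products to compositions. The plus construction is functorial and takes equivalences of presheaves to equivalences. Hence $\grbcon{\Gamma\!,\kappa}M$ is equivalent to $(\prestack{\conadj{\Gamma\!,\kappa}-})^{+}(M)$, and in particular $\hc 0$ of the two agree.

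The second step is to identify $(\prestack{\conadj{\Gamma\!,\kappa}-})^{+}$ with $(\conadj{\Gamma\!,\kappa}-)^{+}$. Here I would invoke the shortcut result \cref{sheafification-shortcut} from the appendix: for a separated presheaf $\mathcal{F}$, which $\conadj{\Gamma\!,\kappa}-$ is by \cref{rem:gammaconn:prepre}, the canonical 2-functor $\mathcal{F}^{+}\to(\prestack{\mathcal{F}})^{+}$ induced by $\mathcal{F}\subset\prestack{\mathcal{F}}$ is an equivalence. The reason is that every 1-morphism of $\prestack{\mathcal{F}}$ becomes locally trivial after refinement. A descent datum in $(\prestack{\mathcal{F}})^{+}$ over a surjective submersion $Y\to M$ has transition 1-morphisms of the form $(\pi,\varphi,g,a)$ over $Y^{[2]}$. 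Refining along the submersions $\pi$, or along open covers where local sections exist, turns these into honest gauge transformations, and this yields the cocycle data $(A_i,B_i,\varphi_{ij},g_{ij},a_{ijk})$ over an open cover. If that appendix statement only asserts the result at the level of sheafifications, I would instead prove essential surjectivity directly by this refinement, using that a principal $\Gamma\!$-bundle with connection admits local trivializations, in which its adjusted connection becomes a trivial bundle $\trivlin_g^{\varphi}$ with $(g,\varphi)$ adjusted (\cref{twisted-curvature-on-trivial-bundle}).

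The third step is to pass to open covers. Objects of $(\conadj{\Gamma\!,\kappa}-)^{+}(M)$ over arbitrary surjective submersions are isomorphic to objects over open covers: pull back along $\coprod U_i\to Y$ built from local sections. Two such objects are isomorphic if and only if they are isomorphic over a common refinement. So $\hc 0$ is exactly the set of cocycles described in the sheafification section modulo the 1-morphisms described there, which is by definition $\hat\h^1(M,(\Gamma\!,\kappa))^{\adj}$.

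I expect the main obstacle to be the second step: checking that refinement preserves adjustedness and the cocycle identities, and that the comparison is essentially surjective on 1-morphisms as well as objects. Adjustedness is a pointwise condition on forms and is stable under pullback, so I expect this to reduce to bookkeeping.
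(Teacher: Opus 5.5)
Your proposal is correct and is essentially the paper's proof. You use the equivalence $\trivgrbcon{\Gamma\!,\kappa}-\cong\prestack{\conadj{\Gamma\!,\kappa}-}$ from \cref{lem:twistedconnequiv}, functoriality of the plus construction, and then \cref{sheafification-shortcut} (applicable by \cref{rem:gammaconn:prepre}) to remove the Hom-set closure, so that $\hc 0$ becomes by definition $\hat\h^1(M,(\Gamma\!,\kappa))^{\adj}$. Your third step, reducing surjective submersions to open covers, only makes explicit a standard reduction that the paper leaves implicit when it describes $(\conadj{\Gamma\!,\kappa}-)^{+}$ through open covers. The fallback in your second step is unnecessary, since \cref{sheafification-shortcut} asserts exactly the equivalence you need.
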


\begin{proof}
There exist canonical isomorphisms
\begin{align*}
\grbcon {\Gamma\!,\kappa} M =(\trivgrbcon {\Gamma\!,\kappa} -)^{+}(M)\cong \overline{\conadj {\Gamma\!,\kappa} -}^{+}(M)\cong {(\conadj {\Gamma\!,\kappa} -)}^{+}(M)
\end{align*}
of bicategories; the last equivalence follows from \cref{sheafification-shortcut}.
\end{proof}

In the following, we elaborate on the abstract definition \cref{bundle-gerbes} in full detail. Ignoring the differential form data, \cref{def:grb} coincides with the non-abelian bundle gerbes defined in \cite{Nikolaus}. 

\begin{definition} \label{def:grb}
A \emph{$\Gamma\!$-bundle gerbe with connection over $M$} is a tuple 
\begin{equation*}
\mathcal{G}=(Y,\pi,(A,B),P,\mu)
\end{equation*}
consisting of a smooth manifold $Y$, of a surjective submersion $\pi\maps Y \to M$, of a $\Gamma\!$-connection $(A,B)$ on $Y$, with a 1-form $A\in\Omega^1(Y,\mathfrak{g})$ and a 2-form $B\in\Omega^2(Y,\mathfrak{h})$, of a principal $\Gamma\!$-bundle $P$ over $Y^{[2]}$ with  $\pr_1^{*}B$-$\pr_2^{*}B$-adjusted $\pr_1^{*}A$-$\pr_2^{*}A$-connection,  and of a connection-preserving morphism
    \begin{equation*}
    \mu: \pr_{23}^{*}P \otimes \pr_{12}^{*}P \to \pr_{13}^{*}P
    \end{equation*}
of $\Gamma\!$-bundles over $Y^{[3]}$ that is associative in the sense that, in the fibres over  each point $(y_1,y_2,y_3)\in Y^{[3]}$, the following diagram is commutative:
\begin{equation*}
\alxydim{@C=7em}{P_{y_3,y_4} \otimes P_{y_2,y_3} \otimes P_{y_1,y_2} \ar[d]_{\mu_{y_2,y_3,y_4} \otimes \id} \ar[r]^-{\id \otimes \mu_{y_1,y_2,y_3}} & P_{y_3,y_4} \otimes P_{y_1,y_3} \ar[d]^{\mu_{y_1,y_3,y_4}} \\ P_{y_2,y_4} \otimes P_{y_1,y_2} \ar[r]_-{\mu_{y_1,y_2,y_4}} & P_{y_1,y_4}}
\end{equation*}
\end{definition}

\begin{remark}
\label{connections-on-bundle-gerbes}
\begin{enumerate}[(i)]

\item   
\label{connections-on-bundle-gerbes:1}
A $\Gamma\!$-bundle gerbe $\mathcal{G}$ with connection is called \emph{adapted} to a splitting $u$ if the $\Gamma\!$-connection $(A,B)$ is adapted to $u$, i.e., $u(\mathrm{fcurv}(A,B))=0$. Adapted bundle gerbes form a full sub-bicategory 
\begin{equation*}
\grbconaa {\Gamma\!,u,\kappa} M\subset \grbcon{\Gamma\!,\kappa}M\text{.}
\end{equation*}
Recall that, in the present adapted setting, the only remaining information from the fake-curvature is the differential form 
\begin{equation*}
F_A = p_{*}(\mathrm{fcurv}(A,B)) =p_{*}(\mathrm{d}A+\tfrac{1}{2}[A \wedge A])\in \Omega^2(Y,\mathfrak{f})\text{.}
\end{equation*}
In general, $F_A$ does not descend to $M$ but transforms on $Y^{[2]}$ according to $\pr_1^{*}F_A = \mathrm{Ad}_{p_{*}P}(\pr_2^{*}F_A)$. 

\item 
\label{connections-on-bundle-gerbes:2}
A $\Gamma\!$-bundle gerbe $\mathcal{G}$ with connection is called \emph{fake-flat} if $(A,B)$ is a fake-flat $\Gamma\!$-connection. Note that, due to the adjustedness of the connection $\omega$ on $P$, this implies
\begin{equation*}
\mathrm{curv}_{\pr_1^{*}B,\pr_2^{*}B}(\omega) = \kappa(\phi,\mathrm{fcurv}(\pr_1^{*}A,\pr_2^{*}A))=0\text{,}
\end{equation*}
meaning that the connection $\omega$ on $P$ is $\pr_1^{*}B$-$\pr_2^{*}B$-flat. Fake-flat bundle gerbes form a full sub-bicategory 
\begin{equation*}
\grbconff\Gamma M \subset \grbconaa{\Gamma\!,u,\kappa}M\subset \grbcon{\Gamma\!,\kappa} M\text{.}
\end{equation*}
   
\item
\label{connections-on-bundle-gerbes:3}
The \emph{curvature} of $\mathcal{G}$ is the unique 3-form $\mathrm{curv}_{\kappa}(\mathcal{G})\in \Omega^3(M,\mathfrak{h})$ satisfying 
\begin{equation*}
\pi^{*}\mathrm{curv}_{\kappa}(\mathcal{G}) = \mathrm{d}B+\kappa_{*}(A\wedge \mathrm{d}A )+\tfrac{1}{2}\kappa_{*}(A\wedge  [A \wedge A])
\end{equation*}
on $Y$. If $\mathcal{G}$ is adapted or fake-flat, this is an $\mathfrak{a}$-valued 3-form. We say that $\mathcal{G}$ is flat if it is fake-flat and $\mathrm{curv}_{\kappa}(\mathcal{G})=0$. 

\end{enumerate}
\end{remark}

\begin{remark}
\label{stackification-other-gerbes}
In analogy with \cref{th:equivcongrb,classification}, adapted and fake-flat bundle gerbes form sheaves of bicategories,
\begin{equation*}
\grbconaa{\Gamma\!,u,\kappa}{...} = (\prestack{\conaa{\Gamma\!,u,\kappa} -})^{+} 
\quand
\grbconff{\Gamma}{...} = (\prestack{\conff{\Gamma} -})^{+} \text{,}
\end{equation*}
and they are classified by the corresponding versions of non-abelian differential cohomology:
\begin{align*}
\hc 0 \grbconff \Gamma M  &\cong \hat \h^1(M,\Gamma)^{\ff}  
\\
\hc 0 \grbconaa {\Gamma\!,u,\kappa} M  &\cong \hat \h^1(M,(\Gamma\!,u,\kappa))^{\aptadj}  
\end{align*}
\end{remark}

\begin{example}
\label{trivial-gamma-bundle-gerbe}
For a $\Gamma\!$-connection $(A,B)$ on $M$, we define a $\Gamma\!$-bundle gerbe  
\begin{equation*}
\mathcal{I}_{A,B}:=(M,\id_M,(A,B),\trivlin_1^0,\gamma_{1,1})
\end{equation*}
consisting of the identity surjective submersion $\id_M:M \to M$, the given $\Gamma\!$-connection $(A,B)$, the trivial principal $\Gamma\!$-bundle $\trivlin_1^0$ over $M \times_M M = M$ with connection $\omega_0$ induced by the zero 1-form  (see \cref{connection-on-trivial-bundle}),
and the  bundle morphism $\gamma_{1,1}: \trivlin_1^{0} \otimes \trivlin_1^{0} \to \trivlin_1^0$ of \cref{tensor-product-of-trivial-bundles}. According to \cref{twisted-curvature-on-trivial-bundle}, the $B$-$B$-curvature of $\omega_0$ is, at a point $(x,h)\in \trivlin_1$, 
\begin{multline*}
\mathrm{curv}_{B,B}(\omega_{0})_{x,h}= (\tilde\alpha_{h})_{*}(\mathrm{fcurv}(A_x,B_x))\\=\kappa(t(h)^{-1},\mathrm{fcurv}(A_x,B_x))=\kappa(\phi_{\trivlin_1}(x,h),\mathrm{fcurv}(A_x,B_x))\text{.}
\end{multline*}
This confirms that $\omega_0$ is $B$-$B$-adjusted, and thus $\mathcal{I}_{A,B}$ is a $\Gamma\!$-bundle gerbe with connection. 
The construction $(A,B)\mapsto \mathcal{I}_{A,B}$ implements the canonical inclusion
\begin{equation*}
\trivgrbcon {\Gamma\!,\kappa} M \to \grbcon {\Gamma\!,\kappa}M
\end{equation*}
of a presheaf into its sheafification.
Since this inclusion is fully faithful, we obtain an equivalence
\begin{equation*}
\buncon \Gamma M_{(A,B),(A',B')} = \Hom_{\trivgrbcon {\Gamma\!,\kappa} M}((A,B),(A',B')) \cong \Hom_{\grbcon {\Gamma\!,\kappa} M}(\mathcal{I}_{A,B},\mathcal{I}_{A',B'})\text{.}
\end{equation*}
In other words, morphisms between the trivial $\Gamma\!$-bundle gerbes with connection $\mathcal{I}_{A,B}$ and $\mathcal{I}_{A',B'}$ are precisely principal $\Gamma\!$-bundles with $B$-$B'$-adjusted $A$-$A'$-connections. 
\end{example}

\begin{remark}
\label{re:cantriv}
Restricting the isomorphism $\mu$ of a $\Gamma\!$-bundle gerbe to elements of the form $(y,y,y)\in Y^{[3]}$ and $(y,y',y')\in Y^{[3]}$ yields a connection-preserving isomorphism $t: \Delta^{*}P \to \trivlin_1^0$ of principal $\Gamma\!$-bundles over $Y$, where $\Delta: Y \to Y^{[2]}$ is the diagonal map. We denote by $t_y\in P_{y,y}$ the element corresponding to $((y,y),1)\in \trivlin_1^0$ under the isomorphism $t$. The isomorphism $t$ is uniquely characterized by the property that the elements $t_y$ are \quot{neutral} with respect to the \quot{multiplication} $\mu$, i.e.,
\begin{equation*}
\mu(t_{y_2} \otimes p)=p
\quand
\mu(p \otimes t_{y_1})=p
\end{equation*}
for all $p\in P_{y_1,y_2}$.
Furthermore, $s^{*}P$ is canonically isomorphic to the dual bundle $P^{*}$ as principal $\Gamma\!$-bundles with connection, where $s:Y^{[2]} \to Y^{[2]}$ swaps the two factors.   
\end{remark}

\begin{remark}
\label{re:recongrb}
We explicitly describe the reconstruction 2-functor
\begin{equation*}
(\conadj{\Gamma\!,\kappa}-)^{+} \to \grbcon{\Gamma\!,\kappa}-
\end{equation*}
underlying the classification result of \cref{classification}.
An object in $(\conadj{\Gamma\!,\kappa}-)^{+}(M)$ is a quadruple $(\pi,(A,B),(g,\varphi),a)$ consisting of a surjective submersion $\pi:Y \to M$, a $\Gamma\!$-connection $(A,B)$ on $Y$, a gauge transformation $(g,\varphi): \pr_1^{*}(A,B) \to \pr_2^{*}(A,B)$ over $Y^{[2]}$, and a gauge 2-transformation $a: \pr_{23}^{*}(g,\varphi)\circ \pr_{12}^{*}( g,\varphi)\Rightarrow \pr_{13}^{*}(g,\varphi)$ over $Y^{[3]}$ satisfying a cocycle condition over $Y^{[4]}$. 
The corresponding bundle gerbe comprises the surjective submersion $\pi$, the $\Gamma\!$-connection $(A,B)$ over $Y$, the trivial $\Gamma\!$-bundle $\trivlin_{g}^{\varphi}$ over $Y^{[2]}$ equipped with a $\pr_1^{*}A$-$\pr_2^{*}A$-connection (\cref{connection-on-trivial-bundle}) that is $\pr_1^{*}B$-$\pr_2^{*}B$-adjusted (\cref{twisted-curvature-on-trivial-bundle}), and the morphism between trivial $\Gamma\!$-bundles over $Y^{[3]}$ given by the smooth map $a$.
\end{remark}

\begin{remark}
If $\mathcal{G}=(Y,\pi,(A,B),P,\mu)$ is a bundle gerbe with connection over $M$, and $\lambda\in \Omega^2(M,\mathfrak{h})$, then by \cref{shift-of-curving} the tuple $\mathcal{G}^{\lambda}=(Y,\pi,(A,B+\lambda),P,\mu)$ defines another bundle gerbe with connection. Fake-flatness and adaptedness are preserved by such shifts if $\lambda \in \Omega^2(M,\mathfrak{a})$. 
\end{remark}

\begin{remark}
\label{rem:aschieri}
The definition of the underlying bundle gerbe coincides (for $\Gamma=\AUT(H)$) with  \cite[Def. 11]{aschieri} under the equivalence of \cref{prop:equivbibun}. The fact that this equivalence is monoidal for the opposite monoidal structure is reflected by the fact that in \cite[Def. 11]{aschieri}
the isomorphism $\mu$ goes $\pr_{12}^{*}P \otimes \pr_{23}^{*}P \to \pr_{13}^{*}P$. The connection data considered in \cite[Def. 22]{aschieri} consists in the first place of a bibundle connection $(\tilde A,\tilde \omega)$ on $P$ such that $\pr_{12}^{*}\tilde A^{r}=\pr_{23}^{*}\tilde A$, and such that $\mu$ is connection-preserving. 
Secondly, a \quot{curving} form $\tilde B\in \Omega^2(Y,\mathfrak{h})$ is considered, together with the 2-form (see \cite[Eq. 147]{aschieri})
\begin{equation}
\label{eq:defdelta}
\delta :=-(\alpha_{\varphi^{-1}})_{*}(\pr_1^{*}\tilde B)  + \pr_2^{*}\tilde B+ \tilde\kappa_{\tilde A,\tilde\omega} \in \Omega^2(P,\mathfrak{h})\text{.}
\end{equation}
The relationship to our definition is as follows. Suppose an $\AUT(H)$-bundle gerbe with connection as in \cref{def:grb} is given. Then, set $\tilde B:= B$, and let the connection $(\tilde A,\tilde\omega)$ on $P_{\phi}$ correspond to the given connection $\omega$ on $P$, i.e., $\tilde A := \pr_1^{*}A$ and $\tilde\omega := -(\alpha_{\phi^{-1}})_{*}(\omega)$. This provides the first part. There is no useful expression for $\delta$ in this context; however, if one were to replace \cref{eq:defdelta} by
\begin{equation*}
\delta' :=\pr_1^{*}\tilde B  - (\alpha_{\varphi})_{*}(\pr_2^{*}\tilde B)+ \tilde\kappa_{\tilde A,\tilde\omega}\text{,}
\end{equation*}
then we would obtain $\delta' = -(\alpha_{\phi^{-1}})_{*}\mathrm{curv}_{\pr_1^{*}B,\pr_2^{*}B}(\omega)$; thus, $\delta$ essentially represents the twisted curvature of $\omega$. As noted in \cite{aschieri}, the condition $\delta=0$ cannot generally be achieved, and no other condition is formulated there. Our formalism imposes such a condition via adjustments.   
\end{remark}

\subsection{Bicategorical structure}

We elaborate on the consequences of \cref{bundle-gerbes} for the 1-morphisms and 2-morphisms in the bicategory of $\Gamma\!$-bundle gerbes with connections.

\begin{definition}
\label{definition-of-1-morphism}
Let $\mathcal{G}_1=(\pi_1,(A_1, B_1),P_1,\mu_1)$ and $\mathcal{G}_2=(\pi_2,(A_2, B_2),P_2,\mu_2)$ be $\Gamma\!$-bundle gerbes with connections over $M$. A \emph{1-morphism}
$\mathcal{P}:\mathcal{G}_1 \to \mathcal{G}_2$
is a tuple $\mathcal{P}=(Z,\zeta,Q,\nu)$ consisting of:
\begin{itemize}

\item 
a smooth manifold $Z$ and a surjective submersion $\zeta: Z \to Y_1 \times_M Y_2$,

\item
a principal $\Gamma\!$-bundle $Q$ over $Z$ with  $B_1$-$B_2$-adjusted $A_1$-$A_2$-connection (here, all forms are understood to be pulled back along $Z \stackrel\zeta\to Y_1 \times_M Y_2 \stackrel{\pr_i}\to Y_i$),
and
\item
a connection-preserving bundle isomorphism
\begin{equation*}
\nu:  \pr_2^{*}Q \otimes (\zeta ^2)^{*}\pr_{13}^{*}P_1 \to (\zeta ^2)^{*}\pr_{24}^{*}P_2 \otimes \pr_1^{*}Q
\end{equation*}
over $Z^{[2]} = Z \times_M Z$. Here, $\pr_i: Z^{[2]} \to Z$ denote the projections, while $\zeta^2$ is the map $Z^{[2]} \to Y_1 \times_M Y_2 \times_M Y_1 \times_M Y_2$. The projections $\pr_{13}$ and $\pr_{24}$ map to the respective indices in $Y_1^{[2]}$ and $Y_2^{[2]}$, respectively. 

\end{itemize}
This structure is subject to the condition that $\nu$ acts as a \quot{homomorphism} with respect to $\mu_1$ and $\mu_2$. Specifically, in the fiber over any point $(z,z',z'')\in Z^{[3]}$, the following diagram must commute:

\begin{equation}\label{diag:1Morphisms}
\begin{gathered}
\xymatrix@C=8em{
Q_{z^{\prime\prime}} \otimes (P_1)_{y_1',y_1''} \otimes (P_1)_{y_1,y_1'}  \ar[r]^-{ \id \otimes (\mu_1)_{y_1,y_1',y_1''}} \ar[d]_{\nu_{z^\prime,z^{\prime\prime}}\otimes \id} &
Q_{z^{\prime\prime}} \otimes (P_1)_{y_1,y_1''}   \ar[dd]^{\nu_{z,z''}} \\ 
 (P_2)_{y_2',y_2''} \otimes Q_{z'} \otimes (P_1)_{y_2,y_2'} \ar[d]_{\id \otimes \nu_{z,z'}} & \\ 
  (P_2)_{y_2',y_2''} \otimes (P_2)_{y_2,y_2'}  \otimes Q_{z} \ar[r]_-{(\mu_2)_{y_2,y_2',y_2''} \otimes \id} & 
 (P_2)_{y_2,y_2''} \otimes Q_{z} 
 }
\end{gathered}
\end{equation}
where $\zeta(z)=: (y_1,y_2)$, $\zeta(z')=:(y_1',y_2')$, and $\zeta(z'')=:(y_1'',y_2'')$.
 \end{definition}

\begin{remark}
Disregarding the connection data, \Cref{definition-of-1-morphism} is nearly identical to the definition of morphisms between $\Gamma\!$-bundle gerbes given in \cite{Nikolaus}. However, the treatment in \cite{Nikolaus} suffers from minor inaccuracies discussed in \cite{nikolaus2}, see \cref{incompleteness-ns}.
\end{remark}

\begin{remark}
\label{fake-curvature-under-isomorphisms}
Suppose $\mathcal{P}:\mathcal{G}_1 \to \mathcal{G}_2$ is a 1-morphism, $\mathcal{P}=(Z,\zeta,Q,\nu)$, let $\phi:Q \to G$ denote the anchor of $Q$, and let $\omega$ be the connection on $Q$. 
\begin{enumerate}[(i)]

\item
\Cref{eq:adjcurv} implies the following identity:
\begin{equation*}
t_{*}(\kappa(\phi,\zeta^{*}\pr_{1}^{*}\mathrm{fcurv}(A_1,B_1)))= \Ad_{\phi}(\zeta^{*}\pr_{1}^{*}\mathrm{fcurv}(A_1,B_1)) - \zeta^{*}\pr_{2}^{*}\mathrm{fcurv}(A_2,B_2)\text{.}
\end{equation*}
We conclude the following:
\begin{itemize}[leftmargin=2em,labelsep=0.5em]

\item 
If $\mathcal{G}_1$ is fake-flat, then $\mathcal{G}_2$ is also fake-flat (and the converse follows from the invertibility of 1-morphisms, see \cref{LemmaInvertibilityA}). 

\item
If $\kappa$ is adapted to a splitting $u$, then \cref{eq:adjcurv} implies
\begin{equation*}
\zeta^{*}\pr_{1}^{*}u(\mathrm{fcurv}(A_1,B_1)) = \zeta^{*}\pr_{2}^{*} u(\mathrm{fcurv}(A_2,B_2))\text{.}
\end{equation*}
Thus, if one of the connections is adapted to $u$, the other is adapted as well. In this case, the relation between the fake-curvatures simplifies to
\begin{equation*}
\Ad_{\phi}(\zeta^{*}\pr_{1}^{*}\mathrm{fcurv}(A_1,B_1))=\zeta^{*}\pr_{2}^{*}\mathrm{fcurv}(A_2,B_2)\text{.}
\end{equation*}
\end{itemize}
In summary, fake-flatness and adaptedness of bundle gerbe connections are preserved under 1-morphisms. 

\item 
If $\mathcal{G}_1$ is fake-flat (and thus $\mathcal{G}_2$ as well), we have
\begin{equation*}
\mathrm{curv}_{\zeta^{*}\pr_1^{*}B_1,\zeta^{*}\pr_2^{*}B_2}(\omega) =\kappa(\phi,\zeta^{*}\pr_{1}^{*}\mathrm{fcurv}(A_1,B_1))=0\text{,} 
\end{equation*}
meaning that $\omega$ is $\zeta^{*}\pr_1^{*}B_1$-$\zeta^{*}\pr_2^{*}B_2$-flat. This confirms that $\grbconff\Gamma M$, defined in \cref{connections-on-bundle-gerbes:2} as the \emph{full} subcategory of $\grbcon{\Gamma\!,\kappa} M$ over the fake-flat bundle gerbes, arises via sheafification from a completely fake-flat setting, 
\begin{equation*}
\grbconff{\Gamma}{M} = (\prestack{\conff{\Gamma} -})^{+}(M)\text{,}
\end{equation*}
as claimed in \cref{stackification-other-gerbes}.  

\end{enumerate}

\end{remark}

\begin{example}
\label{identity-1-morphism}
The \emph{identity 1-morphism} of a $\Gamma\!$-bundle gerbe $\mathcal{G}=(Y,\pi,(A,B),P,\mu)$ is $\id_{\mathcal{G}} :=(Y^{[2]},\id_{Y^{[2]}},P,\nu_{\mu})$ with 
\begin{equation*}
(\nu_{\mu})_{y_1,y_2,y_1',y_2'} := \mu^{-1}_{y_1,y_1',y_2'}\circ \mu_{y_1,y_2,y_2'}\text{,}
\end{equation*}
for a point $(y_1,y_2,y_1',y_2')\in Y^{[2]} \times_M Y^{[2]}$. 
\end{example}

The following proposition demonstrates that connections can be induced from a second $\Gamma\!$-bundle gerbe along a 1-morphism. 

\begin{proposition}
\label{connection-transport-along-1-morphism}
Suppose $\mathcal{P}: \mathcal{G}_1 \to \mathcal{G}_2$ is a 1-morphism of bundle gerbes without connection, $\mathcal{P}=(Z,\zeta,Q,\nu)$, and suppose $\mathcal{G}_1$ is equipped with a connection. Then, there exist connections on $\mathcal{G}_2$ and on $Q$ such that $\mathcal{P}$ becomes connection-preserving.  
\end{proposition}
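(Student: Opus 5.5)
The idea is to build all connection data on the 1-morphism first, using the existence results for adjusted connections on single principal $\Gamma\!$-bundles. These data are then transported through $\nu$ onto $\mathcal{G}_2$.

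\emph{Step 1.} Write $\mathcal{P}=(Z,\zeta,Q,\nu)$, and let $(A_1,B_1)$, $\omega_1$ be the connection data of $\mathcal{G}_1$, pulled back to $Z$ along $\pr_1\circ\zeta$. By \cref{existence-of-adjusted-connection} there exist a $\Gamma\!$-connection $(A',B')$ on $Z$ and a $B_1$-$B'$-adjusted $A_1$-$A'$-connection $\omega_Q$ on $Q$.

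\emph{Step 2.} Over $Z^{[2]}$, form
\begin{equation*}
P' := \pr_2^{*}Q \otimes (\zeta^2)^{*}\pr_{13}^{*}P_1 \otimes \pr_1^{*}Q^{*}\text{.}
\end{equation*}
Its tensor product connection is built from $\omega_Q$, $\omega_1$ and $\omega_Q^{*}$. By the remarks on the partial tensor product and on duals, it is an adjusted $\pr_1^{*}A'$-$\pr_2^{*}A'$-connection, and it is adjusted for $\pr_1^{*}B'$-$\pr_2^{*}B'$, because curvatures add under $\otimes$. Next define a multiplication $\mu'$ from $\mu_1$ together with the evaluation $Q^{*}\otimes Q\cong\trivlin_1^0$. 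This $\mu'$ is connection-preserving and associative, so $\mathcal{G}':=(Z,\pi_1\circ\pr_1\circ\zeta,(A',B'),P',\mu')$ is a $\Gamma\!$-bundle gerbe with connection. Moreover, $(Z\times_M Z\ldots,Q,\text{canonical }\nu')$ is a connection-preserving 1-morphism $\mathcal{G}_1\to\mathcal{G}'$.

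\emph{Step 3.} The isomorphism $\nu$ identifies $P'$ with $(\zeta^2)^{*}\pr_{24}^{*}P_2$, compatibly with $\mu'$ and $\mu_2$; this is exactly diagram \eqref{diag:1Morphisms}. So, after forgetting connections, $\mathcal{G}'$ is the pullback of $\mathcal{G}_2$ along the surjective submersion $\pr_2\circ\zeta:Z\to Y_2$. The remaining task is to move the connection from $Z$ down to $Y_2$.

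\emph{Step 3, descent (main obstacle).} The forms $(A',B')$ do not descend to $Y_2$. The adjustedness conditions are non-linear, so averaging with a partition of unity is not available. I would first choose local sections $s_i$ of $\pr_2\circ\zeta$ over an open cover $(V_i)$ of $Y_2$. On $V_i$, put $(A_2,B_2):=s_i^{*}(A',B')$. On overlaps and fibre products, the bundle $P_2$ then receives the pullback of the connection on $P'$ along $(s_i,s_j)$, carried over via $\nu$. All structure maps remain connection-preserving, since they are pullbacks of those of $\mathcal{G}'$.

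This replaces $Y_2$ by the refinement $\coprod_i V_i\to Y_2$, which is an isomorphic gerbe. I would therefore use \cref{th:equivcongrb}: the refined gerbe defines the same object of the sheaf $\grbcon{\Gamma\!,\kappa}M$ up to canonical 1-morphisms, so a connection on $\mathcal{G}_2$ may be taken to mean one on this refinement. If a connection on literally the same $Y_2$ is wanted, I would instead glue the local $s_i^{*}(A',B')$ using the adjusted gauge transformations furnished by $P'$ between different sections. This would be done inductively over a good cover, in the spirit of the proof of \cref{existence-of-adjusted-connection}, by absorbing the discrepancy into $B$ via \cref{shift-of-curving} and into $A$ via an adjusted shift (\cref{adjusted-shift-of-connection}). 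This gluing is the step I expect to be most delicate.

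Finally, $Q$ with $\omega_Q$, composed with the canonical refinement morphism, is the desired connection-preserving 1-morphism $\mathcal{G}_1\to\mathcal{G}_2$.
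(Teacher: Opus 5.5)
Your Steps 1 and 2 are sound and match the paper's opening. The paper also picks $(A',B')$ and an adjusted connection $\omega_Q$ on $Q$ via \cref{existence-of-adjusted-connection}. It then defines $\omega_2'$ on $(\zeta^2)^{*}\pr_{24}^{*}P_2$ by demanding that $\nu$ be connection-preserving, which is your $P'$ transported through $\nu$.

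The gap is in Step 3. You are right that $(A',B')$ need not descend, but neither of your remedies proves the proposition.
\begin{itemize}
\item Passing to $\coprod_i V_i\to Y_2$ and composing with a refinement replaces both $\mathcal{G}_2$ and $\mathcal{P}$. The statement asks for connection data on $Y_2$, $P_2$, $\mu_2$ and on the given $Q$, such that the given $\nu$ is connection-preserving. This literal form is what \cref{existence-of-connections} needs, since its purpose is to equip the given gerbe itself. The sections $s_i$ also add nothing, because $\mathcal{G}'$ over $Z$ already is a refinement of $\mathcal{G}_2$ carrying a connection.
\item The inductive gluing is only announced, and shifts of curving or adjusted shifts do not act on the actual obstruction.
\end{itemize}
The paper's own proof claims that $\omega_2'$, $A'$ and $B'$ descend for an arbitrary choice of $\omega_Q$. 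However, in its chain of equalities, the step justified by $\mu_1(p_1'\otimes p_1)=\tilde p_1'$ replaces $\omega_2'|_{z,z',p_2}$ by $\omega_2'|_{\tilde z,\tilde z',p_2}$, which is the claim itself. A counterexample already exists for $\Gamma=BA$: take $\mathcal{G}_1=\mathcal{I}_{B_1}$, $\mathcal{G}_2$ trivial, $Z=M\sqcup M$, and $\omega_Q$ differing by a non-closed $1$-form $\lambda$ on the two copies. Then $B'$ differs by $\mathrm{d}\lambda$ on the two copies and does not descend. So this step has to be supplied, not circumvented.

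The missing idea is that the obstruction is linear and is removed by re-choosing $\omega_Q$. In your language, the connection on $\mathcal{G}'$ must first be made compatible with the descent datum $\nu|_{Z\times_{Y_2}Z}$.
\begin{itemize}
\item Let $s$ be the section of $(\zeta^2)^{*}\pr_{24}^{*}P_2$ over $Z\times_{Y_2}Z$ given by the neutral elements $t_{y_2}$ of \cref{re:cantriv}. Set $\varphi:=s^{*}\omega_2'\in\Omega^1(Z\times_{Y_2}Z,\mathfrak{h})$.
\item All other arrows in \cref{diag:1Morphisms} are connection-preserving, so the pullback of $\mu_2$ is too. Since $s$ has anchor $1$, this gives the additive cocycle condition $\pr_{12}^{*}\varphi+\pr_{23}^{*}\varphi=\pr_{13}^{*}\varphi$.
\item By exactness of the \v Cech complex of forms along the surjective submersion $Z\to Y_2$ (partition of unity), $\varphi=\pr_2^{*}\lambda-\pr_1^{*}\lambda$ for some $\lambda\in\Omega^1(Z,\mathfrak{h})$.
\item Replace $\omega_Q$ by $\omega_Q-\lambda$, which is an $A_1$-$(A'+t_{*}\lambda)$-connection. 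Redefine $B'$ by the formula in the proof of \cref{existence-of-adjusted-connection}. The new $\varphi$ vanishes.
\end{itemize}
With $\varphi=0$, everything descends:
\begin{itemize}
\item $t_{*}\varphi=\pr_1^{*}A'-\pr_2^{*}A'$ shows that $A'$ descends to $Y_2$.
\item Pulling the adjustedness equation of $\omega_2'$ back along $s$ gives $\pr_2^{*}B'-\pr_1^{*}B'=0$, so $B'$ descends.
\item The two identities from $\mu_2$ over $(z,z',\tilde z')$ and $(z,\tilde z,\tilde z')$ give $\omega_2'|_{z,z',p_2}-\omega_2'|_{\tilde z,\tilde z',p_2}=(\alpha_{\phi_2(p_2)})_{*}(\varphi(z,\tilde z))-\varphi(z',\tilde z')=0$. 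Hence $\omega_2'$ descends to $P_2$, and $\nu$ becomes connection-preserving for the given $Q$.
\end{itemize}
No non-linear gluing is needed.
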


\begin{proof}
By \cref{existence-of-adjusted-connection}, there exists a $\Gamma\!$-connection $(A',B')$ on $Z$ and a $\zeta^{*}\pr_1^{*}B_1$-$B'$-adjusted $\zeta^{*}\pr_1^{*}A_1$-$A'$-connection $\omega$ on $Q$.
We next show that there exists a unique connection $\omega_2'$ on the pullback bundle $(\zeta \times \zeta)^{*}\pr_{24}^{*}P_2$ over $Z^{[2]}$ such that $\nu$ is connection-preserving.

Let $(z,z')\in Z^{[2]}$ with $\zeta(z)=: (y_1,y_2)$ and $\zeta(z')=:(y_1',y_2')$. Let $q\in Q_z$, $q'\in Q_{z'}$, and $p_1\in P_1|_{y_1,y_1'}$. Since $\nu$ is an isomorphism, there exists a unique $p_2\in P_2|_{y_2,y_2'}$ such that $\nu(q'\otimes p_1)=p_2 \otimes q$. Assuming the existence of $\omega_2'$ such that $\nu$ preserves connections, the equation
\begin{equation*}
\omega_{q'} + (\alpha_{\phi(q')})_{*}(\omega_1|_{p_1}) =\omega_2'|_{z,z',p_2}+(\alpha_{\phi_2(p_2)})_{*}(\omega_q)\text{.} 
\end{equation*}
holds. Conversely, this equation can be used to define $\omega_2'$, and one can check that it is a  $\pr_1^{*}A'$-$\pr_2^{*}A'$-connection and that it is
$\pr_1^{*}B'$-$\pr_2^{*}B'$-adjusted.

We next verify that $\omega_2'$ descends along $(\zeta \times \zeta)^{*}\pr_{24}^{*}P_2 \to P_2$. To this end, we fix a point $p_2\in P_2$ in the fibre over a point $(y_2,y_2')\in Y_2^{[2]}$, and two preimages of this point. These may be given as $(z,z',p_2)$ and $(\tilde z,\tilde z',p_2)$, where  $(z,z',\tilde z,\tilde z')\in Z^{[4]}$ are points with $\zeta(z)=(y_1,y_2)$, $\zeta(z')=(y_1',y_2')$, $\zeta(\tilde z)=(\tilde y_1,y_2)$, and $\zeta(\tilde z')=(\tilde y_1',y_2')$.
We start with some preparation. To leverage the various bundle bundle morphisms, we choose $p_1\in P_1|_{y_1,y_1'}$, $p_1''\in P_1|_{y_1,\tilde y_1}$, as well as $q\in Q_z$, $\tilde q'\in Q_{\tilde z'}$. Then, there exist unique
\begin{enumerate}[1.]

\item 
\label{desc-eq-1}
$q'\in Q_{z'}$ such that $\nu_{z,z'}(q' \otimes p_1)=p_2 \otimes q$,
and\item
\label{desc-eq-2}
$\tilde p_1' \in P_1|_{y_1,\tilde y_1'}$  such that $\nu_{z,\tilde z'}(\tilde q' \otimes \tilde p_1')=p_2 \otimes q$.

\end{enumerate}
Given these points, there exist unique:
\begin{enumerate}[1.,start=3]

\item 
\label{desc-eq-4}
$p_1' \in P_1|_{y_1',\tilde y_1'}$ such that $\mu_1(p_1 '\otimes p_1)=\tilde p_1'$, and

\item
\label{desc-eq-5}
$\tilde p_1\in P_1|_{\tilde y_1,\tilde y_1'}$ such that $\mu_1(\tilde p_1 \otimes p_1'')=\tilde p_1'$.

\end{enumerate}
Lastly, there exists a unique:
\begin{enumerate}[1.,start=5]
\item 
\label{desc-eq-6}
$\tilde q\in Q_{\tilde z}$ such that $\nu_{\tilde z,\tilde z'}(\tilde q' \otimes \tilde p_1)=p_2 \otimes \tilde q$.  

\end{enumerate}
The commutativity of Diagram \cref{diag:1Morphisms} yields two further equalities:
\begin{enumerate}[1.,start=6]

\item 
\label{desc-eq-7}
$\nu_{z',\tilde z'}(\tilde q' \otimes p_1')= t_{y_2'} \otimes q'$, and

\item
\label{desc-eq-8}
$\nu_{z,\tilde z}(\tilde q \otimes p_1'')= t_{y_2} \otimes  q\text{.}$
\end{enumerate}
Here, $t_{y_2}$ and $t_{y_2'}$ are the canonical elements in the fiber over the diagonal (see \cref{re:cantriv}).
Since $\nu$ and $\mu_1$ preserve anchors and connections, each of the seven equalities above yields a corresponding equation for anchors and connections. For instance, \cref{desc-eq-1} gives
\begin{equation*}
\phi(q')\phi_1(p_1)=\phi_2(p_2)\phi(q) \quand \omega_{q'} + (\alpha_{\phi(q')})_{*}(\omega_1|_{p_1})= \omega_2'|_{z,z',p_2} +(\alpha_{\phi_2(p_2)})_{*}(\omega_q) \text{.}
\end{equation*} 
Throughout the following, $\phi$, $\phi_1$, and $\phi_2$ denote the anchors of $Q$, $P_1$, and $P_2$, respectively. 
Finally, since in \cref{diag:1Morphisms} all isomorphisms except  (the pullback to $Z^{[3]}$ of)  $\mu_2$ are known to be connection-preserving, it follows that also (the pullback to $Z^{[3]}$ of)  $\mu_2$ is connection-preserving. We evaluate that condition over three different points in $Z^{[3]}$:
\begin{enumerate}[1.,start=8]

\item 
\label{omega-2-1}
over $(z,z',\tilde z')$, we obtain $\omega_2'|_{z,\tilde z',p_2} = \omega_2'|_{z',\tilde z',t_{y_2'}} + \omega_2'|_{z,z',p_2}$,

\item
\label{omega-2-2}
over $(z,\tilde z,\tilde z')$, we obtain $\omega_2'|_{z,\tilde z',p_2} = \omega_2'|_{\tilde z,\tilde z',p_2} +(\alpha_{\phi_2(p_2)})_{*} (\omega_2'|_{z,\tilde z,t_{y_2}})$, and

\item
\label{omega-2-3}
over $(z, z,z)$, we obtain $\omega_2'|_{z,z,p_2} = \omega_2'|_{z,z,t_{y_2}} + \omega_2'|_{z,z,p_2}$.  

\end{enumerate}
We are now in a position to prove that $\omega_2'$ descends:
\begin{align*}
\omega'_2|_{z,z',p_2}
&\eqcref{desc-eq-1}\omega_{q'} + (\alpha_{\phi(q')})_{*}(\omega_1|_{p_1}) -(\alpha_{\phi_2(p_2)})_{*}(\omega_q)
\\&\eqcref{desc-eq-7}\omega_{\tilde q'} + (\alpha_{\phi(\tilde q')})_{*}(\omega_1|_{p_1'})+ (\alpha_{\phi(q')})_{*}(\omega_1|_{p_1})-\omega_2'|_{z',\tilde z',t_{y_2'}}  -(\alpha_{\phi_2(p_2)})_{*}(\omega_q)  
\\&\eqcref{desc-eq-7,omega-2-1}\omega_{\tilde q'} + (\alpha_{\phi(\tilde q')})_{*}(\omega_1|_{p_1'}+ (\alpha_{\phi(p_1')})_{*}(\omega_1|_{p_1}))-\omega_2'|_{z,\tilde z',p_2}  + \omega_2'|_{z,z',p_2}-(\alpha_{\phi_2(p_2)})_{*}(\omega_q) \\&\eqcref{desc-eq-4} \omega_{\tilde q'} + (\alpha_{\phi(\tilde q')})_{*}( \omega_1|_{\tilde p_1'}) -(\omega_2'|_{z,\tilde z',p_2} -\omega_2'|_{\tilde z,\tilde z',p_2})-(\alpha_{\phi_2(p_2)})_{*}(\omega_{q})
\\&\eqcref{omega-2-2,desc-eq-5} \omega_{\tilde q'} + (\alpha_{\phi(\tilde q')})_{*}(\omega_1|_{\tilde p_1}+(\alpha_{\phi_1(\tilde p_1)})_{*}(\omega_1|_{p_1''})) -(\alpha_{\phi_2(p_2)})_{*}(\omega_2'|_{z,\tilde z,t_{y_2}} )-(\alpha_{\phi_2(p_2)})_{*}(\omega_{q})
\\&\eqcref{desc-eq-6} \omega_{\tilde q'} + (\alpha_{\phi(\tilde q')})_{*}(\omega_1|_{\tilde p_1}) -(\alpha_{\phi_2(p_2)})_{*}(\omega_2'|_{z,\tilde z,t_{y_2}} -(\alpha_{\phi(\tilde q)})_{*}(\omega_1|_{p_1''})+\omega_{q})
\\&\eqcref{desc-eq-8} \omega_{\tilde q'} + (\alpha_{\phi(\tilde q')})_{*}(\omega_1|_{\tilde p_1}) -(\alpha_{\phi_2(p_2)})_{*}(\omega_{\tilde q})
\\&\eqcref{desc-eq-6} \omega'_2|_{\tilde z,\tilde z',p_2}
\end{align*}
This shows that $\omega_2'$ descends to a 1-form $\omega_2$ on $P_2$. Before verifying that it defines a connection, we observe that, since it is now proved that $\omega_2'$ does not depend on the $z$'s, \cref{omega-2-3} implies that $\omega_2|_{z,z',t_{y_2}}=0$.
We also note that the descent already implies that $\mu_2$ is connection-preserving. 

Next, we prove that the $\Gamma\!$-connection $(A',B')$ descends along $\pr_{2} \circ \zeta: Z \to Y_2$. 
For this purpose, consider $(z,z')\in Z^{[2]}$ with $\zeta(z)=:(y_1,y_2)$ and $\zeta(z')=:(y_1',y_2)$.
Choose $q\in Q_z$ and $q'\in Q_{z'}$, and let $p_1\in P_1|_{y_1,y_1'}$ be the unique point such that $\nu(q' \otimes p_1) = t_{y_2} \otimes q$. The fact that $\nu$ preserves anchors and connections yields
\begin{equation*}
\phi(q')\phi_1(p_1)=\phi(q) \quand \omega_{q'} + (\alpha_{\phi(q')})_{*}(\omega_1|_{y_1,y_1'}) =  \omega_q\text{.}
\end{equation*}
At our disposal we have further the second equation in \cref{connection-on-Gamma-bundle}  for the connection $\omega$ at $q\in Q$ and at $q'\in Q$,
and for the connection
$\omega_1$ at $p_1\in P_1$.
Using these it is straightforward to prove that $A'_{z}=A'_{z'}$, so that $A'$ descends to a 1-form $A\in \Omega^1(Y_2,\mathfrak{g})$. 

The proof that $B'$ descends follows from a similar calculation using that $\omega$ is adjusted, 
\begin{equation*}
B'_{z} = \kappa(\phi(q),\mathrm{fcurv}(A_{y_1},B_{y_1}))-\mathrm{d}\omega_q - \tfrac{1}{2}[\omega_q \wedge \omega_q] - \alpha_{*}(A'_z\wedge \omega_q)+(\alpha_{\phi(q)})_{*}(B_{y_1}) 
\end{equation*}
and similarly over $z'$. 
Consequently, the $\Gamma\!$-connection $(A',B')$ descends to a $\Gamma\!$-connection $(A_2,B_2)$ on $Y_2$. Now,  $\omega_2'$ was a  $\pr_1^{*}A'$-$\pr_2^{*}A'$-connection and 
$\pr_1^{*}B'$-$\pr_2^{*}B'$-adjusted, and since all forms are by now identified as being pulled back from $Y_1$, due to the injectivity of the pullback along a surjective submersion,
$\omega_2$ is a  $\pr_1^{*}A_2$-$\pr_2^{*}A_2$-connection and $\pr_1^{*}B_2$-$\pr_2^{*}B_2$-adjusted.  
\end{proof}

Consider three $\Gamma\!$-bundle gerbes with connections and two 1-morphisms 
\begin{equation*}
\xymatrix{\mathcal{G}_1 \ar[r]^{\mathcal{P}_{12}} & \mathcal{G}_2 \ar[r]^{\mathcal{P}_{23}} & \mathcal{G}_3\text{,}}
\end{equation*}
retaining the notation from the preceding definitions. The \emph{composition} $\mathcal{P}_{23} \circ \mathcal{P}_{12} = (Z,\zeta, Q,\nu)$ is defined as follows. We set $Z:= Z_{12} \times_{Y_2} Z_{23}$, and consider a point $z:=(z_{12},z_{23})\in Z$ with $\zeta_{12}(z_{12})=:(y_1,y_2)$ and $\zeta_{23}(z_{23})=:(y_2,y_3)$. The surjective submersion $\zeta:Z \to Y_1 \times_M Y_3$ is then given by $\zeta(z) := (y_1,y_3)$. The principal $\Gamma\!$-bundle $Q$ with connection over $Z$ is the tensor product
\begin{equation*}
Q:=\pr_{Z_{23}}^{*}Q_{23} \otimes \pr_{Z_{12}}^{*}Q_{12}\text{.}
\end{equation*}
Finally, the isomorphism $\nu$ over a point $((z_{12},z_{23}),(z_{12}',z_{23}'))\in Z^{[2]}$ is defined by
\begin{equation*}
\xymatrix@C=4em{
\hspace{-10em}Q_{z_{12}',z_{23}' } \otimes_{} P_1|_{y_1,y_1'}=(Q_{23})_{z'_{23}} \otimes_{} (Q_{12})_{z_{12}'} \otimes_{} P_1|_{y_1,y_1'}\ar[d]^-{\id\otimes( \nu_{12})_{z_{12},z_{12}'}} \\
(Q_{23})_{z_{23}'}\otimes_{} P_2|_{y_2,y_2'} \otimes_{} (Q_{12})_{z_{12}}  \ar[d]^-{(\nu_{23})_{z_{23},z_{23}'} \otimes \id}
\\
P_3 |_{y_3,y_3'}\otimes_{} (Q_{23})_{z_{23}} \otimes_{} (Q_{12})_{z_{12}}=P_3 |_{y_3,y_3'}\otimes_{} Q_{z_{12},z_{23}}\text{.}\hspace{-10em}  }
\end{equation*}

We consider two 1-morphisms between the same $\Gamma\!$-bundle gerbes with connections,
\begin{equation*}
\xymatrix{\mathcal{G}_1 \ar@/^1pc/[r]^{\mathcal{P}}\ar@/_1pc/[r]_{\mathcal{P}'} & \mathcal{G}_2\text{.}}
\end{equation*}
To simplify notation, we set $Y_{12} := Y_1 \times_M Y_2$. 
Then, a \emph{2-morphism} $\mathcal{P} \Rightarrow \mathcal{P}'$ is represented by a triple $(W,\rho,\varphi)$ consisting of:
\begin{itemize}

\item 
a smooth manifold $W$ and a surjective submersion $\rho: W \to Z \times_{Y_{12}} Z'$, and

\item
a connection-preserving isomorphism $\varphi:\rho^{*}\pr_Z^{*}Q \to \rho^{*}\pr_{Z'}^{*}Q'$ of $\Gamma\!$-bundles over $W$.
\end{itemize}
This structure must satisfy the condition that $\varphi$ commutes with the isomorphisms $\nu$ and $\nu'$, i.e., the diagram
\begin{equation*}
\xymatrix@C=4em{
Q_{\tilde z} \otimes_{} (P_1)_{y_1,\tilde y_1} \ar[d]_{\varphi_{\tilde w} \otimes \id} \ar[r]^-{\nu_{z,\tilde z}} & (P_2)_{y_2,\tilde y_2} \otimes Q_{z} \ar[d]^{\id \otimes \varphi_w}
 \\ 
 Q'_{\tilde z'} \otimes_{} (P_1)_{y_1',\tilde y_1'}  \ar[r]_-{\nu'_{z',\tilde z'}} &(P_2)_{y_2',\tilde y_2'} \otimes_{} Q'_{z'}
 }
\end{equation*}
commutes for all $(w,\tilde w) \in W \times_M W$, where $\rho(w)=(z,z')$ and $\rho(\tilde w)=(\tilde z, \tilde z')$.
Two triples $(W,\rho,\varphi)$ and $(W',\rho',\varphi')$ are identified if the pullbacks of $\varphi$ and $\varphi'$ coincide over $W \times_{Z \times_{Y_{12}} Z'} W'$. 
Since the choice of $\rho$ is unimportant in this sense, we usually denote the 2-morphism simply by $\varphi$.

We consider three 1-morphisms between the same $\Gamma\!$-bundle gerbes and two 2-morphisms:
\begin{equation*}
\xymatrix@C=5em{\mathcal{G}_1 \ar[r]|*+{\mathcal{P}'}="2" \ar@/^2.5pc/[r]^*+{\mathcal{P}}="1"\ar@/_2.5pc/[r]_*+{\mathcal{P}''}="3" & \mathcal{G}_2\text{.} \ar@{=>}"1";"2"|-{\varphi} \ar@{=>}"2";"3"|-{\varphi'}}
\end{equation*}
Suppose the 1-morphisms $\mathcal{P}$, $\mathcal{P}'$, and $\mathcal{P}''$ come with surjective submersions $\zeta:Z \to Y_{12}$, $\zeta': Z'\to Y_{12}$, and $\zeta'': Z'' \to Y_{12}$, respectively, and that the 2-morphisms $\varphi$ and $\varphi'$ come with surjective submersions $\rho: W \to Z \times_{Y_{12}} Z'$ and $\rho': W' \to Z' \times_{Y_{12}} Z''$, respectively. 
We consider $W \times_{Z'} W'$ equipped with the surjective submersion $(w,w') \mapsto (z,z'')$, where $(z,z'):=\rho(w)$ and $(z',z'') := \rho'(w')$. Then the vertical composition $\varphi' \bullet \varphi: \mathcal{P} \Rightarrow \mathcal{P}''$ is the isomorphism over $W \times_{Z'} W'$ defined fibrewise over $(w,w')$ by
\begin{equation*}
\xymatrix{Q_{z} \ar[r]^{\varphi_{w}} & Q'_{z'} \ar[r]^{\varphi'_{w'}} & Q''_{z''}}\text{.}
\end{equation*}

The identity 2-morphism $\id_{\mathcal{P}}$ of a 1-morphism $\mathcal{P}=(Z,\zeta,Q,\nu)$ is obtained by restricting the intertwiner $\nu$ to $W:=Z \times_{Y_{12}} Z \subset Z \times_M Z$. Over $(z_1,z_2)$ with $\zeta(z_1)=\zeta(z_2)=(y_1,y_2)$, this becomes an isomorphism 
\begin{equation*}
Q_{z_1}\otimes_{} (P_1)_{y_1,y_1} \to (P_2)_{y_2,y_2} \otimes_{} Q_{z_2} \text{.}
\end{equation*}
Under the canonical isomorphism $\Delta^{*}P_i \cong \trivlin_1^0$ of \cref{re:cantriv}, this yields  an isomorphism $\varphi$ with $\varphi_{z_1,z_2}: Q_{z_1} \to Q_{z_2}$, and the pair $(\id_W,\varphi)$ is the identity 2-morphism of $\mathcal{P}$.

Consider the following $\Gamma\!$-bundle gerbes with connections, 1-morphisms, and 2-morphisms,
\begin{equation*}
\xymatrix@C=6em{\mathcal{G}_1 \ar@/^1.5pc/[r]^-{\mathcal{P}_{12}}="1"\ar@/_1.5pc/[r]_{\mathcal{P}_{12}'}="2" & \mathcal{G}_2 \ar@/^1.5pc/[r]^-{\mathcal{P}_{23}}="3" \ar@/_1.5pc/[r]_-{\mathcal{P}_{23}'}="4" &\mathcal{G}_3\text{,} \ar@{=>}"1";"2"|{\varphi_{12}}\ar@{=>}"3";"4"|{\varphi_{23}}}
\end{equation*}
with all structure labelled as above. The horizontal composition of $\varphi_{12}$ and $\varphi_{23}$, denoted $\varphi_{23}\circ \varphi_{12}$, is defined by the smooth manifold $W:=W_{12} \times_{Y_2} W_{23}$ equipped with the surjective submersion $(w_{12},w_{23})\mapsto ((z_{12},z_{23}),(z_{12}',z_{23}'))$, where $(z_{12},z_{12}'):= \rho_{12}(w_{12})$ and $(z_{23},z_{23}'):=\rho_{23}(w_{23})$, and the isomorphism of principal $\Gamma\!$-bundles over $W$, given in the fibre over $(w_{12},w_{23})\in W$ by
\begin{equation*}
\xymatrix{(\varphi_{23})_{w_{23}} \otimes (\varphi_{12})_{w_{12}}:(Q_{23})_{z_{23}} \otimes_{} (Q_{12})_{z_{12}} \to (Q_{23}')_{z_{23}'}\otimes_{} (Q_{12}')_{z_{12}'}\text{.}}
\end{equation*}   
This completes the description of the bicategory $\grbcon{\Gamma\!,\kappa}M$ of $\Gamma\!$-bundle gerbes.

We present three technical results, which follow directly from the plus construction \cite{nikolaus2} and are well-known in the context of, e.g., abelian bundle gerbes.
\begin{lemma}
\begin{enumerate}[ (a)]

\item \label{LemmaInvertibilityA}
Every 1-morphism $\mathcal{P}=(Z,\zeta,Q,\nu)$ is invertible.

\item \label{LemmaInvertibilityB}
Every 2-morphism $(W,\rho,\varphi)$ is invertible.

\end{enumerate}  
\end{lemma}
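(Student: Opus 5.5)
Both parts follow from the general structure of the plus construction of Nikolaus--Schweigert, applied to the presheaf $\trivgrbcon{\Gamma\!,\kappa}-$. There, inverses of 1- and 2-morphisms are built from inverses in the underlying presheaf together with descent. The key input is that every 1-morphism in $\trivgrbcon{\Gamma\!,\kappa} X$, a principal $\Gamma\!$-bundle with adjusted connection, is invertible, and that every 2-morphism, a connection-preserving bundle morphism, is invertible. The latter holds because bundle morphisms of principal $H$-bundles are isomorphisms, and the inverse automatically preserves connections and anchors.

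For (b), I proceed as follows. Given $(W,\rho,\varphi)$, $\varphi$ is a connection-preserving isomorphism of $\Gamma\!$-bundles over $W$. Its fibrewise inverse $\varphi^{-1}$ is again connection-preserving and anchor-preserving. After composing $\rho$ with the flip $Z\times_{Y_{12}}Z' \cong Z'\times_{Y_{12}}Z$, the triple $(W,\rho,\varphi^{-1})$ is a 2-morphism $\mathcal{P}'\Rightarrow\mathcal{P}$, since the compatibility square with $\nu,\nu'$ is obtained by inverting the vertical arrows of the original square. The composites with $\varphi$ are then the identity 2-morphisms. This requires checking that the vertical composite over $W\times_{Z'}W$ agrees, modulo the equivalence relation on triples, with the identity 2-morphism constructed from $\nu$ restricted to $Z\times_{Y_{12}}Z$. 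I would verify this by using the compatibility diagram with $\tilde w$ over the diagonal and the neutrality of $t_y$ from \cref{re:cantriv}.

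For (a), I construct the inverse $\bar{\mathcal{P}}:=(Z,s\circ\zeta,Q^{*},\bar\nu)$, where $s:Y_1\times_M Y_2\to Y_2\times_M Y_1$ is the flip and $Q^{*}$ is the dual bundle with the dual connection $\omega^{*}=-(\alpha_{\phi^{-1}})_{*}(\omega)$. This connection is an $A_2$-$A_1$-connection. I must check it is $B_2$-$B_1$-adjusted. Using $\mathrm{curv}(\text{inverse})=-(\alpha_{g^{-1}})_{*}\mathrm{curv}$ as in \cref{eq:twcurvinv} and the equivalent target form of adjustedness in \cref{remark-adj-gt-1}, the dual of a $B_1$-$B_2$-adjusted connection is $B_2$-$B_1$-adjusted. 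The verification is local, since adjustedness is local and $Q$ is locally trivial, so it reduces to the statement for trivial bundles $\trivlin_g^\alpha$ via \cref{twisted-curvature-on-trivial-bundle}. The isomorphism $\bar\nu$ is obtained from $\nu^{-1}$ by tensoring with $Q^{*}$ on both sides and using the evaluation and coevaluation isomorphisms $\varepsilon,\eta$ of \cref{lem:invertibility}. These are connection-preserving by the dual-bundle remark, with $P\otimes P^{*}\cong\trivlin_1^0$.

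It then remains to give 2-morphisms $\bar{\mathcal{P}}\circ\mathcal{P}\Rightarrow\id_{\mathcal{G}_1}$ and $\mathcal{P}\circ\bar{\mathcal{P}}\Rightarrow\id_{\mathcal{G}_2}$. Over $Z\times_{Y_2}Z$, the bundle $Q^{*}_{z'}\otimes Q_z$ must be mapped to $(P_1)_{y_1,y_1'}$, and $\nu_{z,z'}$ together with $\varepsilon$ provides exactly such a map. The compatibility with the $\nu$'s follows from diagram \cref{diag:1Morphisms}. I expect this last bookkeeping to be the main obstacle, namely verifying the coherence diagrams for these 2-morphisms. Rather than computing everything by hand, I would try first to appeal to the abstract statement in \cite{nikolaus2}: in the plus construction of a presheaf of bigroupoids, all 1- and 2-morphisms are invertible. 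Since $\trivgrbcon{\Gamma\!,\kappa}X$ is a bigroupoid by the above, this gives (a) and (b) directly, with the explicit formulas serving only as a description of the inverses.
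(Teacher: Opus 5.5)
Your proposal is correct and is essentially the paper's argument. The paper gives no separate proof: it states only that both parts follow directly from the Nikolaus--Schweigert plus construction applied to the presheaf of bigroupoids of trivial $\Gamma$-bundle gerbes with adjusted connection (whose 1-morphisms are invertible via dual bundles and whose 2-morphisms are bundle isomorphisms), which is exactly your final appeal, while your explicit inverses (the dual bundle $Q^{*}$ with connection $-(\alpha_{\phi^{-1}})_{*}(\omega)$, and the inverted $\varphi$ with flipped submersion) are consistent with it and serve only as a concrete description.
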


Our second result shows that the surjective submersions of 1-morphisms and 2-morphisms can be assumed to be identities.

\begin{lemma}\
\label{lem:canonicalrefinements}
\begin{enumerate}[ (a)]

\item
\label{lem:canonicalrefinements:a}
Every 1-morphism is  2-isomorphic to one with $Z=Y_1 \times_M Y_2$ and $\zeta=\id_{Z}$.

\item 
\label{lem:canonicalrefinements:b}
Every 2-morphism can be represented by a pair $(\rho,\varphi)$ with $W= Z \times_{Y_{12}} Z'$ and $\rho=\id_W$.

\end{enumerate}
\end{lemma}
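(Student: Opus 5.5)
For (a), I would refine the surjective submersion of the given 1-morphism. Start from $\mathcal{P}=(Z,\zeta,Q,\nu)$ and set $Z':=Y_1\times_M Y_2$ with $\zeta'=\id$. The task is then to descend the bundle $Q$ with connection along $\zeta:Z\to Z'$.

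The descent datum comes from $\nu$. Restrict $\nu$ to $Z\times_{Z'}Z\subset Z^{[2]}$, meaning pairs $(z,\tilde z)$ with $\zeta(z)=\zeta(\tilde z)=(y_1,y_2)$. Over such pairs $\nu$ is an isomorphism
\begin{equation*}
Q_{\tilde z}\otimes (P_1)_{y_1,y_1}\to (P_2)_{y_2,y_2}\otimes Q_z\text{.}
\end{equation*}
Composing with the canonical connection-preserving trivializations $\Delta^{*}P_i\cong\trivlin_1^0$ of \cref{re:cantriv} and the unitors of \cref{eq:unitors} gives a connection-preserving isomorphism $d_{z,\tilde z}:Q_{\tilde z}\to Q_z$.

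Next I check the cocycle condition for $d$ over $Z\times_{Z'}Z\times_{Z'}Z$. This should follow from diagram \cref{diag:1Morphisms} restricted to triples over a common point of $Y_1\times_MY_2$. On such triples $\mu_1,\mu_2$ act on neutral elements, and by \cref{re:cantriv} they act there as unitors.

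Since bundles with adjusted connections form a stack (the Corollary after \cref{lem:twistedconnequiv}, applied to the pulled-back $\Gamma\!$-connections, which are pulled back from $Y_1\times_MY_2$), $Q$ descends to a bundle $Q'$ over $Y_1\times_MY_2$ with a $B_1$-$B_2$-adjusted $A_1$-$A_2$-connection. It comes with a connection-preserving isomorphism $\zeta^{*}Q'\cong Q$.

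The isomorphism $\nu$ then descends along $Z^{[2]}\to (Y_1\times_MY_2)^{[2]}$ to some $\nu'$. The required compatibility over $Z^{[4]}$ is once more an instance of \cref{diag:1Morphisms} combined with the cocycle property. The homomorphism condition for $\nu'$ follows because pullback along a surjective submersion is faithful.

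Finally, the 2-isomorphism $\mathcal{P}\Rightarrow(Z',\id,Q',\nu')$ is given over $W:=Z\times_{Z'}Z'=Z$ by the descent isomorphism $\zeta^{*}Q'\cong Q$. Its compatibility with $\nu,\nu'$ holds by construction of $\nu'$.

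For (b), take a representative $(W,\rho,\varphi)$. Set $V:=Z\times_{Y_{12}}Z'$. Define a descent datum for $\varphi$ along $\rho:W\to V$ by comparing $\varphi_w$ and $\varphi_{\tilde w}$ for $\rho(w)=\rho(\tilde w)$.

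The compatibility square with $\nu,\nu'$, evaluated at $(w,\tilde w)$, has both vertical arrows over the same point $(z,z')$. Using the identity 2-morphisms, i.e. $\nu$ restricted over the diagonal as above, this yields $\varphi_w=\varphi_{\tilde w}$. Hence $\varphi$ is a bundle morphism of pulled-back bundles that agrees on fibres of $\rho$. Because bundle morphisms form a sheaf, as in \cref{rem:gammaconn:prepre}, $\varphi$ descends to $V$. Connection preservation descends by injectivity of $\rho^{*}$, and the compatibility square descends likewise. The resulting pair is identified with $(W,\rho,\varphi)$ by the stated equivalence relation.

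I expect the main obstacle to be in (a): verifying that the descent datum extracted from $\nu$ via the neutral elements $t_y$ is a cocycle, and that the descended $\nu'$ still satisfies \cref{diag:1Morphisms}. Both reduce to bookkeeping with \cref{diag:1Morphisms} and the neutrality of $t_y$ from \cref{re:cantriv}. As an alternative, I would invoke the general plus-construction fact from \cite{nikolaus2} that refinements of 1- and 2-morphisms are 2-isomorphic, together with \cref{th:equivcongrb}.
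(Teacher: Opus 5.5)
Your proposal is correct and follows the route the paper intends. The paper gives no written-out proof: it only says the lemma follows from the plus construction of \cite{nikolaus2} by a descent procedure. Your explicit descent is exactly that procedure, namely descending $(Q,\nu)$ along $\zeta$ with the descent datum read off from $\nu$ on $Z\times_{Y_{12}}Z$ (as in the paper's construction of identity 2-morphisms), and descending $\varphi$ along $\rho$. In (b), you get $\varphi_w=\varphi_{\tilde w}$ most cleanly by comparing the compatibility squares at $(w,\tilde w)$ and at $(w,w)$ and cancelling the isomorphism $\nu'_{z',z'}$.
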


The reason our definitions permit general $Z$ and $W$ is that all kinds of compositions result in such more general choices, and in practice it is often easier to keep those rather than performing the descent procedure used to prove  \cref{lem:canonicalrefinements}. 
Our third result allows one to refine the surjective submersion of a $\Gamma\!$-bundle gerbe without changing its isomorphism class.

\begin{lemma}
\label{lem:refinementofss}
If $\mathcal{G}=(\pi,(A,B),P,\mu)$ is a $\Gamma\!$-bundle gerbe with connection, with surjective submersion $\pi:Y \to M$, and $\rho:Y' \to Y$ is a smooth map such that $\pi':= \pi\circ\rho$ is again a surjective submersion, then $\mathcal{G}^{\rho}:=(\pi',\rho^{*}(A,B),(\rho^{[2]})^{*}P,(\rho^{[3]})^{*}\mu)$ is another $\Gamma\!$-bundle gerbe, and there exists a canonical isomorphism $\mathcal{G}^{\rho}\cong \mathcal{G}$. \end{lemma}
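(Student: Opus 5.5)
We will construct the isomorphism $\mathcal{G}^{\rho}\to\mathcal{G}$ explicitly as a 1-morphism in the sense of \cref{definition-of-1-morphism}. We first check that $\mathcal{G}^{\rho}$ is a $\Gamma\!$-bundle gerbe with connection. All axioms are pulled back from the data of $\mathcal{G}$: the map $\rho$ induces maps $\rho^{[k]}:Y'^{[k]}\to Y^{[k]}$ compatible with all projections $\pr_{ij}$. Adjustedness of the connection on $P$, connection-preservation of $\mu$, and associativity are all stable under pullback, because $\kappa(\phi,\mathrm{fcurv}(A,B))$ and $\mathrm{curv}_{B,B'}(\omega)$ commute with pullback.

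For the 1-morphism we take $Z:=Y'\times_M Y$ with $\zeta:=\id_Z$, and
\begin{equation*}
Q:=(\rho\times\id)^{*}P \quad\text{over } Y'\times_M Y\text{,}
\end{equation*}
i.e.\ $Q_{y',y}=P_{\rho(y'),y}$. By construction this carries a $\pr_1^{*}\rho^{*}B$-$\pr_2^{*}B$-adjusted $\pr_1^{*}\rho^{*}A$-$\pr_2^{*}A$-connection. For the intertwiner, over a point $((y_1',y_1),(y_2',y_2))\in Z^{[2]}$ we need
\begin{equation*}
\nu: P_{\rho(y_2'),y_2}\otimes P_{\rho(y_1'),\rho(y_2')} \to P_{y_1,y_2}\otimes P_{\rho(y_1'),y_1}\text{.}
\end{equation*}
We define it as $\mu_{y_1,y_1,y_2}^{-1}$-type composite; concretely, $\nu:=\mu^{-1}_{\rho(y_1'),y_1,y_2}\circ\mu_{\rho(y_1'),\rho(y_2'),y_2}$, exactly as in the identity 1-morphism of \cref{identity-1-morphism}. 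In fact, $\mathcal{P}$ is the pullback of $\id_{\mathcal{G}}$ along $\rho\times\id$. Since $\mu$ is connection-preserving, so is $\nu$.

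The main step is the commutativity of diagram \cref{diag:1Morphisms}. We do not verify it by hand. Instead, we observe that it is the pullback along $\rho\times\id$ (on the $Z$ factors) of the corresponding diagram for $\id_{\mathcal{G}}$, which in turn follows from the associativity pentagon of $\mu$ in \cref{def:grb}, used several times. Invertibility is then automatic from \cref{LemmaInvertibilityA}.

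We expect the only real obstacle to be bookkeeping: the diagram for $\id_{\mathcal{G}}$ must be shown to follow from associativity of $\mu$. If that is not already available in the paper, we would reduce it by writing every arrow as a composite of $\mu^{\pm1}$ and applying associativity twice. The statement is canonical in the sense that no choices enter beyond $\rho$.
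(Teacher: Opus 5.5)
Your proposal is correct and is essentially the route the paper indicates. The paper identifies $\mathcal{G}^{\rho}\cong\mathcal{G}$ with the refinement $(\rho,(1,0),\id)$, and pushing this through the framing functors gives exactly your 1-morphism on $Z=Y'\times_M Y$ with bundle $(\rho\times\id)^{*}P$ (up to the unitor $P\otimes\trivlin_1^0\cong P$) and intertwiner $\mu^{-1}\circ\mu$. Invertibility then comes from the general invertibility of 1-morphisms, and your fallback check of the compatibility diagram does go through: it takes three applications of the associativity condition for $\mu$, which in the definition is a square rather than a pentagon.
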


A proof of \Cref{lem:refinementofss} can alternatively be derived with the methods introduced in the next section, identifying the isomorphism $\mathcal{G}^{\rho}\cong \mathcal{G}$ as a refinement in the sense of \cref{refinement}.

\begin{example}
\label{BA-bundle-gerbes}
Let $\Gamma=BA$. We recall that $BA$ has a unique adjustment $\kappa=0$ and a unique splitting $u=0$. Every $BA$-bundle gerbe with connection is automatically adapted and fake-flat. Moreover, up to canonical changes of convention, they coincide with the usual abelian $A$-bundle gerbes as defined in \cite{waldorf1}: they consist of a surjective submersion $\pi:Y \to M$, a 2-form $B \in \Omega^2(Y,\mathfrak{a})$, a principal $A$-bundle $P$ over $Y^{[2]}$ with connection $\omega$, and a connection-preserving bundle isomorphism $\mu$.
There are two conventional differences. The first lies in the curvature condition, which here reads
\begin{equation}
\label{relation-between-B-and-omega}
\mathrm{curv}_{\pr_1^{*}B,\pr_2^{*}B}(\omega) := \mathrm{d}\omega - \pr_1^{*}B + \pr_2^{*}B=0\text{,}
\end{equation}
whereas in \cite{waldorf1} the sign is reversed. Second, the isomorphism $\mu$ is defined as $\mu: \pr_{23}^{*}P \otimes \pr_{12}^{*}P \to \pr_{13}^{*}P$, whereas in \cite{waldorf1} the order of the tensor product is permuted. Transitioning between conventions corresponds to replacing $P$ with $P^{*}$.
For brevity, we write
\begin{equation*}
\grbcon A- :=\grbcon{BA,0}-\text{,}
\end{equation*}
and use the term \quot{$A$-bundle gerbe} from now on as a synonym for \quot{$BA$-bundle gerbe}. We also omit the notation $A=0$ in the tuple notation of \cref{def:grb} for bundle gerbes. Moreover, concerning the trivial bundle gerbes of \cref{trivial-gamma-bundle-gerbe}, we write $\mathcal{I}_{B} := \mathcal{I}_{0,B}$ for any $B\in \Omega^2(M,\mathfrak{a})$. 
\end{example}

\begin{example}
\label{dis-bundle-gerbes}
$\Gamma = F_{\dis}$. We recall that $F_{\dis}$ has a unique adjustment $\kappa=0$ and a unique splitting $u=0$.  Every $F_{\dis}$-bundle gerbe with connection is adapted. 
There is a  canonical equivalence
\begin{equation*}
\grbcon {F_{\dis},0}-  \cong (\buncon F-)_{\dis}
\end{equation*}
of sheaves of bicategories, where on the right-hand side, $\buncon F- $ denotes the sheaf of principal $F$-bundles with connection, viewed as a sheaf of bicategories having only identity 2-morphisms.
The equivalence restricts to
\begin{equation*}
\grbconff {F_{\dis},0}{...}  \cong (\bunconflat F-)_{\dis}\text{,}
\end{equation*}
i.e., to one between \emph{fake-flat} $F_{\dis}$-bundle gerbes and \emph{flat} $F$-bundles.
These equivalences are obtained as follows.

Abstractly, we let $\con FX$  be the category of ordinary $F$-connections on $X$, i.e., the objects are 1-forms $A\in \Omega^1(X,\mathfrak{f})$ and the morphisms $A_1 \to A_2$ are gauge transformations: smooth maps $f: X \to F$ such that $A_2 = \mathrm{Ad}_f^{-1}(A_1) + f^{*}\theta$. Then, the assignment $X \mapsto \con FX$ is a presheaf of groupoids, and   $(\con F-)^{+} \cong \buncon F-$. 
We may re-interpret \cref{Fdis-bundles-with-connection} so that $\buncon {F_{\dis}}X_{A_1,A_2} \cong \hom_{\con F-}(A_1,A_2)_{\dis}$, and this implies that
\begin{equation*}
\trivgrbcon {F_{\dis},0} M \cong (\con F-)_{\dis}\text{.}
\end{equation*}
Applying the plus construction yields the claimed equivalence.

Concretely, for a $F_{dis}$-bundle gerbe $\mathcal{G}=(Y,\pi,(A,0),P,\mu)$, since $H=\mathfrak{h}=0$, we necessarily have $P=\trivlin_g^{0}$, with $g: Y^{[2]} \to F$. The condition that the connection $\omega_0=0$ on $P$ is an $\pr_1^{*}A$-$\pr_2^{*}A$-connection implies, by \cref{connection-on-Gamma-bundle}, that $0=\Ad_{g}(\pr_1^{*}A)-\pr_2^{*}A-g^{*}\bar\theta$.  The requirement that the isomorphism $\mu$ preserves anchors requires that $g$ satisfies the usual cocycle condition over $Y^{[3]}$, $g(y_2,y_3)g(y_1,y_2)=g(y_1,y_3)$. Thus, $(A,g)$ constitutes descent data for a principal $F$-bundle with connection along the surjective submersion $\pi:Y \to M$. The clutching construction canonically provides a principal $F$-bundle
\begin{equation*}
P := (Y \times F) / \sim
\quith
(y,f) \sim (y',g(y,y')f)
\end{equation*} 
with connection $\omega\in \Omega^1(P,\mathfrak{f})$ given on $Y \times F$ by $\omega := \mathrm{Ad}^{-1}_{\pr_F}(\pr_Y^{*}A) + \pr_F^{*}\theta$. Clearly, the condition for fake-flatness of $\mathcal{G}$ coincides with the condition for $\omega$ being flat.
Completing this to a 2-functor is straightforward.
\end{example}

\subsection{Framing}

The definition of morphisms between bundle gerbes is intricate. In this section, we present a method to induce 1-morphisms from simpler structures, \emph{refinements} and \emph{strict 1-morphisms}. 
Bundle gerbes with refinements as morphisms even form  a (1-)category $\grbcon{\Gamma\!,\kappa} M^{\refine}$, which is  defined as follows.
The objects are all $\Gamma\!$-bundle gerbes with connection over $M$. 
The morphisms are defined as follows.

\begin{definition}
\label{refinement}
Let $\mathcal{G}_1=(Y_1,\pi_1,(A_1, B_1),P_1,\mu_1)$ and $\mathcal{G}_2=(Y_2,\pi_2,(A_2, B_2),P_2,\mu_2)$ be $\Gamma\!$-bundle gerbes with connection over $M$. 
A \emph{refinement} $\mathcal{R}:\mathcal{G}_1\to \mathcal{G}_2$ is a
 triple $\mathcal{R}=(\rho,(g,\varphi),\nu)$ consisting of a smooth map $\rho:Y_1 \to Y_2$ such that $\pi_2\circ \rho=\pi_1$, a gauge transformation $(g,\varphi): (A_1,B_1)\to \rho^{*}(A_2,B_2)$, and 
of a connection-preserving bundle isomorphism 
    \begin{equation*}
    \nu: {}^{\pr_2^{*}(g,\varphi)} P_1 \to  (\rho^{[2]})^* (P_2)^{\pr_1^{*}(g,\varphi)} 
    \end{equation*}
 over $Y_1^{[2]}$
that  renders the diagram
\begin{equation}
\label{eq:refinementdiag}
\begin{aligned}
\xymatrix@C=6em{ (P_1)_{y',y''} \otimes_{} (P_1)_{y,y'}\ar[d]_{\nu_{y',y''} \otimes \id} \ar[r]^-{\id \otimes \mu_1} & (P_1)_{y,y''} \ar[dd]^{\nu_{y,y''}}\\ (P_2)_{\rho(y'),\rho(y'')} \otimes (P_1)_{y,y'}\ar[d]_{\id \otimes \nu_{y,y'}} \\ (P_2)_{\rho(y'),\rho(y'')} \otimes_{} (P_2)_{\rho(y),\rho(y')}  \ar[r]_-{\mu_2\otimes \id} & (P_2)_{\rho(y),\rho(y'')}}
\end{aligned}
\end{equation}
commutative for all $(y,y',y'')\in Y_1^{[3]}$.
\end{definition}

Here, we employ the left and right shifted $\Gamma\!$-bundles $P^{(g,\varphi)}$ and $^{(g,\varphi)\!}P$ introduced in \cref{tensor-product-connection-for-trivial-bundle}.
Given two refinements,
\begin{equation*}
\mathcal{R}_{12}=(\rho_{12},(g_{12},\varphi_{12}),\nu_{12}):\mathcal{G}_1 \to \mathcal{G}_2
\quand
\mathcal{R}_{23}=(\rho_{23},(g_{23},\varphi_{23}),\nu_{23}):\mathcal{G}_2 \to \mathcal{G}_3\text{,}
\end{equation*} 
their composition is given by 
\begin{equation*}
\mathcal{R}_{23} \circ \mathcal{R}_{12} := (\rho_{23} \circ \rho_{12},\rho_{12}^{*}(g_{23},\varphi_{23})\circ (g_{12},\varphi_{12}),(\rho_{12}^{[2]})^{*}\nu_{23} \circ \nu_{12})\text{,}
\end{equation*}
and the identity morphism of $\mathcal{G}$ is $(\id_Y,(1,0),\id_{P})$. 
This defines  the category  $\grbcon\Gamma M^{\refine}$. 
Next, we introduce the slightly weaker notion of strict 1-morphisms. 

\begin{definition}
\label{strict-1-morphism}
Let $\mathcal{G}_1=(Y_1,\pi_1,(A_1, B_1),P_1,\mu_1)$ and $\mathcal{G}_2=(Y_2,\pi_2,(A_2, B_2),P_2,\mu_2)$ be $\Gamma\!$-bundle gerbes with connection over $M$. 
A \emph{strict 1-morphism} $\mathcal{R}:\mathcal{G}_1\to \mathcal{G}_2$ is a
 triple $\mathcal{R}=(\rho,Q,\nu)$ consisting of a smooth map $\rho:Y_1 \to Y_2$ such that $\pi_2\circ \rho=\pi_1$, of a principal $\Gamma\!$-bundle $Q$ over $Y_1$ with  $B_1$-$\rho^{*}B_2$-adjusted $A_1$-$\rho^{*}A_2$-connection,  and 
of a connection-preserving bundle isomorphism 
    \begin{equation*}
    \nu:\pr_2^{*}Q \otimes P_1 \to  (\rho^{[2]})^* P_2 \otimes \pr_1^{*}Q
    \end{equation*}
 over $Y_1^{[2]}$
that  renders the diagram
\begin{equation}
\label{eq:refinementdiag-strict} 
\begin{aligned}
\xymatrix@C=6em{Q{_{y''}}\otimes (P_1)_{y',y''} \otimes_{} (P_1)_{y,y'}\ar[d]_{\nu_{y',y''} \otimes \id} \ar[r]^-{\id \otimes \mu_1} & Q{_{y''}}\otimes(P_1)_{y,y''} \ar[dd]^{\nu_{y,y''}}\\ (P_2)_{\rho(y'),\rho(y'')} \otimes Q{_{y'}}\otimes (P_1)_{y,y'}\ar[d]_{\id \otimes \nu_{y,y'}} \\ (P_2)_{\rho(y'),\rho(y'')} \otimes_{} (P_2)_{\rho(y),\rho(y')}  \ar[r]_-{\mu_2\otimes \id} & (P_2)_{\rho(y),\rho(y'')}\otimes Q{_{y}}}
\end{aligned}
\end{equation}
commutative for all $(y,y',y'')\in Y_1^{[3]}$.
\end{definition}

Given two strict 1-morphisms,
\begin{equation*}
\mathcal{R}_{12}=(\rho_{12},Q_{12},\nu_{12}):\mathcal{G}_1 \to \mathcal{G}_2
\quand
\mathcal{R}_{23}=(\rho_{23},Q_{23},\nu_{23}):\mathcal{G}_2 \to \mathcal{G}_3\text{,}
\end{equation*} 
their composition is given by 
\begin{equation*}
\mathcal{R}_{23} \circ \mathcal{R}_{12} := (\rho_{23} \circ \rho_{12},\rho_{12}^{*}Q_{23}\otimes  Q_{12},(\rho_{12}^{[2]})^{*}\nu_{23} \circ \nu_{12})\text{.}
\end{equation*}
If $\mathcal{R}=(\rho,Q,\nu)$ and $\mathcal{R}'=(\rho,Q',\nu')$ are strict 1-morphisms $\mathcal{G}_1 \to \mathcal{G}_2$ (sharing the same map $\rho$), then a \emph{strict 2-morphism} is a connection-preserving bundle isomorphism $\varphi:Q \to Q'$ over $Y_1$ such that the diagram
\begin{equation}
\label{condition-for-2-morphisms-strict}
\xymatrix@C=4em{
Q_{y'} \otimes_{} (P_1)_{y,y'} \ar[d]_{\varphi_{y'} \otimes \id} \ar[r]^-{\nu_{y,y'}} & (P_2)_{\rho(y),\rho(y')} \otimes Q_{y} \ar[d]^{\id \otimes \varphi_y}
 \\ 
 Q'_{y'} \otimes_{} (P_1)_{y,y'}  \ar[r]_-{\nu'_{y,y'}} &(P_2)_{\rho(y),\rho(y')} \otimes_{} Q'_{y}
 }
\end{equation}
over $Y_1^{[2]}$ commutes. 
Completing these structures to a bicategory, denoted $\grbcon{\Gamma\!,\kappa} M^{strict}$, is straightforward.

We now define \quot{framing} functors
\begin{equation}
\label{framing-functors}
\grbcon{\Gamma\!,\kappa} M^{\refine} \to \grbcon{\Gamma\!,\kappa} M^{strict} \to \grbcon{\Gamma\!,\kappa} M
\end{equation}
as follows.
They act, of course, as the identity on the level of objects.

On the level of 1-morphisms, we associate to a refinement $\mathcal{R}=(\rho,(g,\varphi),\nu):\mathcal{G}_1 \to \mathcal{G}_2$ the strict 1-morphism $(\rho,\trivlin_g^{\varphi},\nu)$. This respects composition up to a strict 2-isomorphism. 
Given a strict 1-morphism $(\rho,Q,\nu)$, we associate the following 1-morphism $(Z,\zeta,\tilde Q,\tilde\nu)$. We define $Z:= Y_1 \times_M Y_2$ and $\zeta=\id_Z$. Consider the smooth map $\tilde \rho: Z \to Y_2^{[2]}$ given by $\tilde\rho(y_1,y_2):=(\rho(y_1),y_2)$. 
We define $\tilde Q := \tilde\rho^{*}P_2 \otimes \pr_1^{*}Q$ over $Z$. Note that $\tilde\rho^{*}P_2$ carries a $\pr_1^{*}\rho^{*}A_2$-$\pr_2^{*}A_2$-connection and that $\pr_1^{*}Q$ carries a $\pr_1^{*}A_1$-$\pr_1^{*}\rho^{*}A_2$-connection. Consequently, the tensor product connection on $\tilde Q$ is a $\pr_1^{*}A_1$-$\pr_2^{*}A_2$-connection, as required.
Finally, we define the isomorphism $\tilde\nu$ fibrewise over a point $((y_1,y_2),(y_1',y_2'))\in Z^{[2]}$ by
\begin{equation*}
\xymatrix{
\hspace{-13em}  \tilde Q_{y_1',y_2'} \otimes_{} (P_1)_{y_1,y_1'}=(P_2)_{\rho(y_1'),y_2'} \otimes_{} Q_{y_1'} \otimes (P_1)_{y_1,y_1'} 
 \ar[d]^{\id \otimes \nu_{y_1,y_1'}} 
\\ 
(P_2)_{\rho(y_1^\prime), y_2^\prime} \otimes_{} (P_2)_{\rho(y_1),\rho(y_1^\prime)} \otimes Q_{y_1} 
 \ar[d]^{(\mu_2)_{\rho(y_1),\rho(y_1'),y_2'}\otimes \id}
\\ 
(P_2)_{\rho(y_1),y_2'} \otimes Q_{y_1} \ar[d]^{(\mu_2)_{\rho(y_1),y_2,y_2'}^{-1} \otimes \id} 
\\
(P_2)_{y_2,y_2'} \otimes_{} (P_2)_{\rho(y_1),y_2}\otimes Q_{y_1}
= (P_2)_{y_2,y_2'} \otimes_{} \tilde Q_{y_1,y_2}\text{.}  \hspace{-13em} }
\end{equation*}
It is straightforward to verify that the tuple $(Z,\zeta,\tilde Q,\tilde \phi)$ defined above constitutes a 1-morphism $\mathcal{G}_1 \to \mathcal{G}_2$ and that composition is preserved.  

Finally, given a strict 2-morphism $\varphi:\mathcal{R} \Rightarrow \mathcal{R}'$, with $\mathcal{R}=(\rho,Q,\nu)$ and $\mathcal{R}'=(\rho,Q',\nu')$, we assign the 2-morphism $(W,\rho,\tilde \varphi):(Z,\zeta,\tilde Q,\tilde\nu) \Rightarrow (Z,\zeta,\tilde Q',\tilde\nu')$ with $W := Z=Y_1 \times_M Y_2$, $\rho=\id_Z$, and $\tilde \varphi := \id_{P_2} \otimes \pr_1^{*}\varphi: \tilde Q \to \tilde Q'$. 

\begin{remark}
The existence of the framing functors from \cref{framing-functors} implies, by \cref{LemmaInvertibilityA}, that every refinement and every strict 1-morphism is \quot{weakly} invertible, i.e., invertible by a general 1-morphism. 
\end{remark}

\begin{remark}
Under the framing functors from \cref{framing-functors}, the identity refinement of a bundle gerbe $\mathcal{G}$ induces the identity 1-morphism $\id_{\mathcal{G}}$ defined in \cref{identity-1-morphism}.
\end{remark}

\begin{remark}
A trivialization of a $\Gamma\!$-bundle gerbe $\mathcal{G}$ with connection over $M$ consists of a $\Gamma\!$-connection $(A,B)$ on $M$ together with a 1-morphism $\mathcal{T}: \mathcal{G} \to \mathcal{I}_{A,B}$. By \cref{lem:canonicalrefinements:a}, every trivialization is 2-isomorphic to a \emph{strict} 1-morphism. 
\end{remark}

\subsection{Extension and reduction}

We consider a second Lie 2-group $\Gamma'$, equipped with an adjustment $\kappa'$, and a Lie 2-group homomorphism $f: \Gamma \to \Gamma'$ that preserves adjustments (\cref{preserve-adjustment}). By \cref{covariance-bundles-with-connections}, we have a functor
\begin{equation*}
f_{*}:\buncon{\Gamma\!,\kappa} X_{(A_1,B_1),(A_2,B_2)} \to \buncon{{\Gamma'\!\!,\kappa'}}X_{F_{*}(A_1,B_1),F_{*}(A_2,B_2)}
\end{equation*}
that is monoidal with respect to the partially defined tensor products. Consequently, it induces a morphism
\begin{equation*}
\trivgrbcon {\Gamma\!,\kappa}- \to \trivgrbcon {\Gamma'\!\!,\kappa'}-
\end{equation*}
of presheaves of bicategories, and by functoriality of the plus construction, a functor
\begin{equation*}
f_{*}:\grbcon{\Gamma\!,\kappa}- \to \grbcon{\Gamma'\!\!,\kappa'}-
\end{equation*}
of sheaves. Specifically, it applies $f_{*}$ to all $\Gamma\!$-connections, principal $\Gamma\!$-bundles, and bundle morphisms. We note that $f_{*}$ preserves fake-flatness and flatness. 

In the following, we apply this general covariance to the Lie 2-group homomorphisms
\begin{equation*}
BA \stackrel i\to \Gamma \stackrel p\to F_{\dis}
\end{equation*} 
associated with a smoothly separable, central Lie 2-group $\Gamma$, which are adjustment-preserving (\cref{morphisms-of-central-extension-are-adjustment-preserving}) for an arbitrary adjustment on $\Gamma$ and the zero adjustments on $BA$ and $F_{\dis}$.

We begin with $p: \Gamma \to F_{\dis}$. Using \cref{dis-bundle-gerbes} we identify $F_{\dis}$-bundle gerbes canonically with principal  $F$-bundles, so that extension becomes a functor
\begin{equation*}
p_{*}: \grbcon{\Gamma\!,\kappa}M \to \buncon FM_{\dis}\text{.}
\end{equation*}
Via this functor, higher gauge theory has an underlying \quot{ordinary gauge theory}. The following lemma demonstrates  that higher gauge theory should not be restricted to a fake-flat setting:

\begin{lemma}
\label{flatness-of-underlying-bundle}
Let $\mathcal{G}$ be a $\Gamma\!$-bundle gerbe with connection. Then, $\mathcal{G}$ is fake-flat if and only if $\mathcal{G}$ is adapted to a splitting $u$ and $p_{*}(\mathcal{G})$ is flat. In particular, we have
\begin{equation*}
\mathcal{G}\text{ fake-flat} \quad\Rightarrow\quad p_{*}(\mathcal{G})\text{ flat.}
\end{equation*}
\end{lemma}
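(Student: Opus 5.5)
The plan is to reduce everything to a pointwise statement about the $\Gamma$-connection $(A,B)$ on $Y$, using the decomposition $t_{*}\circ u + q_u\circ p_{*} = \id_{\mathfrak{g}}$ of $\mathfrak{g}$ determined by a splitting $u$. The key observation is that, by \cref{dis-bundle-gerbes}, the $F$-bundle $p_{*}(\mathcal{G})$ is obtained by clutching the descent data $(p_{*}A, p\circ g)$ along $\pi:Y\to M$. Its connection is flat if and only if the local connection forms $p_{*}A$ are flat on $Y$, i.e.
\begin{equation*}
F_A = p_{*}(\mathrm{d}A+\tfrac12[A\wedge A]) = p_{*}(\mathrm{fcurv}(A,B)) = 0\text{.}
\end{equation*}
This holds because the curvature of the clutched connection pulls back along $Y\times F\to p_{*}(\mathcal{G})$ to $\mathrm{Ad}^{-1}_{\pr_F}(\pr_Y^{*}F_A)$, and the projection is surjective. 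Here I use that $p_{*}$ on gerbes applies $p_{*}$ to the $\Gamma$-connection, so the $F_{\dis}$-gerbe has connection $(p_{*}A,0)$ with fake-curvature exactly $F_A$, and that fake-flatness of an $F_{\dis}$-gerbe corresponds to flatness of the bundle.

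For the direction ``$\Rightarrow$'', assume $\mathrm{fcurv}(A,B)=0$. Then $u(\mathrm{fcurv}(A,B))=0$ for every splitting $u$, so $\mathcal{G}$ is adapted to any splitting. Splittings exist since $\Gamma$ is smoothly separable, which is implicit in the existence of $p_{*}$. Moreover $F_A = p_{*}(\mathrm{fcurv}(A,B))=0$, so $p_{*}(\mathcal{G})$ is flat by the observation above.

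For the direction ``$\Leftarrow$'', assume $u(\mathrm{fcurv}(A,B))=0$ and that $p_{*}(\mathcal{G})$ is flat. The latter gives $F_A=0$. Applying the decomposition identity to $\mathrm{fcurv}(A,B)$ yields
\begin{equation*}
\mathrm{fcurv}(A,B) = t_{*}u(\mathrm{fcurv}(A,B)) + q_u(p_{*}\mathrm{fcurv}(A,B)) = 0 + q_u(F_A) = 0\text{.}
\end{equation*}
This is the identity $\mathrm{fcurv}(A,B)=q_u(F_A)$ for adapted connections already noted after \cref{adjusted-curvature}. The ``in particular'' statement is just the forward implication.

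The only step requiring care is the identification of flatness of $p_{*}(\mathcal{G})$ with $F_A=0$ on $Y$. That identification comes down to checking that the equivalence in \cref{dis-bundle-gerbes} matches curvature on $Y$ with curvature of the clutched bundle. The text there already states this: fake-flatness corresponds to flatness. The remaining steps are immediate.
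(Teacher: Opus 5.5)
Your proposal is correct and follows the paper's proof: both compute the curvature of the clutched connection on $p_{*}(\mathcal{G})$ as $\mathrm{Ad}_f^{-1}(p_{*}(\mathrm{fcurv}(A,B)))$, and both obtain the converse from adaptedness via $t_{*}\circ u + q_u\circ p_{*}=\id_{\mathfrak{g}}$. You also spell out a point the paper leaves implicit, namely that a fake-flat gerbe is adapted to every splitting (and that splittings exist).
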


\begin{proof}
Let $(A,B)$ be the $\Gamma\!$-connection of $\mathcal{G}$. By \cref{dis-bundle-gerbes}, $\omega_{y,f}=\mathrm{Ad}_f^{-1}(p_{*}(A_y))+f^{*}\theta$ defines the connection on $p_{*}(\mathcal{G})$. Its curvature is
\begin{equation*}
(\mathrm{d}\omega + \tfrac{1}{2}[\omega\wedge \omega])_{y,f} = \mathrm{Ad}_f^{-1}(p_{*}(\mathrm{d}A + \tfrac{1}{2}[A \wedge A]))=\mathrm{Ad}_f^{-1}(p_{*}(\mathrm{fcurv}(A,B)))\text{.}
\end{equation*}
Hence, if $(A,B)$ is fake-flat, then $\omega$ is flat. Conversely, if $\omega$ is flat, then $p_{*}(\mathrm{fcurv}(A,B))=0$. If $(A,B)$ is adapted, we conclude that $\mathrm{fcurv}(A,B)=0$.  
\end{proof}

Next, we consider the Lie 2-group homomorphism $i:BA \to \Gamma$, which induces a functor
\begin{equation*}
i_{*}:\grbcon AM \to \grbconff{\Gamma}M\text{,}
\end{equation*} 
see \cref{BA-bundle-gerbes}.
Explicitly, if $\mathcal{G}=(Y,\pi,b,P,\mu)$ is an $A$-bundle gerbe with connection over $M$, we have 
\begin{equation*}
i_{*}(\mathcal{G}) = (Y,\pi,(0,i_{*}(b)),i_{*}(P),i_{*}(\mu))\text{.}
\end{equation*}
The connection on this bundle gerbe is fake-flat, and the curvature is
\begin{equation*}
\mathrm{curv}(i_{*}(\mathcal{G}))=\mathrm{curv}_{\kappa}(i_{*}(\mathcal{G}))=i_{*}\mathrm{curv}(\mathcal{G})\text{.}
\end{equation*}

We now examine the interplay between the functors $i$ and $p$ in the central extension $BA \to \Gamma \to F_{\dis}$. 
To this end, we additionally consider a splitting $u$ on $\Gamma$ and assume that $\kappa$ is adapted to $u$. We denote by $\grbcon{\Gamma\!,u,\kappa}M^{\pflat}$ the bicategory with:
\begin{itemize}

\item 
objects: pairs $(\mathcal{G},\sigma)$ consisting of a $\Gamma\!$-bundle gerbe $\mathcal{G}$ with adapted connection and a flat global section $\sigma$ of the principal $F$-bundle $p_{*}(\mathcal{G})$. 

\item
morphisms between $(\mathcal{G}_1,\sigma_1)$ and $(\mathcal{G}_2,\sigma_2)$: 1-morphisms $\mathcal{A}:\mathcal{G}_1 \to \mathcal{G}_2$ in $\grbcon{\Gamma\!,u,\kappa}M$ such that $p_{*}(\mathcal{A})\circ \sigma_1 = \sigma_2$. 

\item
2-morphisms: all 2-morphisms in $\grbcon{\Gamma\!,u,\kappa}M$. 

\end{itemize}
This generalizes a bicategory defined in \cite[Def. 5.2.1]{Nikolausa} to a setting with connections. As $p \circ i$ is the constant trivial functor, $i_{*}$ admits a canonical lift along the projection functor $\grbcon{\Gamma\!,u,\kappa}M^{\pflat} \to \grbconff \Gamma M$, yielding a functor
\begin{equation*}
i_{*}^{\pflat}: \grbcon AM \to \grbcon{\Gamma\!,u,\kappa}M^{\pflat}\text{.}
\end{equation*}   
The following result generalizes \cite[Thm. 4.1.3]{Nikolausa} to a setting with connections. 

\begin{theorem}
\label{gerbe-morphisms-under-i}
Let $\Gamma$ be a smoothly separable, central Lie 2-group equipped with a splitting $u$ and an adapted adjustment $\kappa$. 
The functor $i^{\pflat}_{*}$ is an equivalence of categories
\begin{equation*}
\grbcon{A}{M} \cong \grbcon{\Gamma\!,u,\kappa}M^{\pflat}\text{.} 
\end{equation*}
\end{theorem}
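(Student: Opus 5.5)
The plan is to reduce the statement to the bundle-level equivalence of \cref{bundle-morphisms-under-i}, via descent, in the same way \cite[Thm. 4.1.3]{Nikolausa} reduces to \cref{reduction-to-abelian}. The key idea is to use the flat section $\sigma$ to rigidify a $\Gamma\!$-bundle gerbe into a fake-flat one whose structure reduces to $A$.

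\textbf{Step 1: essential surjectivity.} Let $(\mathcal{G},\sigma)$ be given with $\mathcal{G}=(Y,\pi,(A,B),P,\mu)$ adapted. By \cref{dis-bundle-gerbes}, $p_{*}(\mathcal{G})$ is described by the descent data $(p_{*}A, p_{*}P)$ on $Y$. A global section $\sigma$ then amounts to a smooth map $s:Y\to F$ with $\pr_2^{*}s=p_{*}P\cdot \pr_1^{*}s$. Flatness of $\sigma$ means $\mathrm{Ad}_s(p_{*}A)=s^{*}\bar\theta$, up to sign conventions. After refining $Y$ (\cref{lem:refinementofss}), I lift $s$ through the smooth local sections of $p:G\to F$ to a map $\tilde s:Y\to G$.

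Next I change the gerbe by the strict 1-morphism given by the trivial bundle $\trivlin_{\tilde s}^{\alpha}$. For this I need a 1-form $\alpha$ and a new $\Gamma\!$-connection $(A',B')$ on $Y$ with $t_{*}\alpha=\mathrm{Ad}_{\tilde s}(A)-A'-\tilde s^{*}\bar\theta$. Flatness gives $p_{*}(\mathrm{Ad}_{\tilde s}A-\tilde s^{*}\bar\theta)=0$, so I can choose $\alpha$ with $A'=0$, using the splitting $u$. I then set $B'$ so that $\trivlin_{\tilde s}^{\alpha}$ is adjusted, exactly as in \cref{existence-of-adjusted-connection}: $B':=(\alpha_{\tilde s})_*B-\mathrm{curv}\ldots+\kappa(\tilde s,\mathrm{fcurv}(A,B))$.

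Conjugating $P$ by these bundles gives $P':=\trivlin_{\tilde s}\otimes P\otimes \trivlin_{\tilde s}^{*}$, a bundle over $Y^{[2]}$ with $0$-$0$-connection and with $p_{*}P'=1$. Adaptedness is preserved, by \cref{fake-curvature-under-isomorphisms}, so $u(\mathrm{fcurv}(0,B'))=0$. This forces $t_{*}B'=0$, i.e. $B'$ is $\mathfrak{a}$-valued and the new connection is fake-flat. Now \cref{bundle-morphisms-under-i} applied to $P'$ and to $\mu'$ yields an $A$-bundle gerbe $\mathcal{G}^{ab}$ with $i_{*}\mathcal{G}^{ab}\cong\mathcal{G}$. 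It remains to check that the induced section of $p_{*}$ matches $\sigma$, which holds by construction since $p\circ\tilde s=s$.

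\textbf{Step 2: fully faithfulness on 1- and 2-morphisms.} By \cref{lem:canonicalrefinements}, I may take 1-morphisms with $Z=Y_1\times_MY_2$. For a 1-morphism $(Z,\zeta,Q,\nu)$ between $i_{*}\mathcal{G}_1$ and $i_{*}\mathcal{G}_2$ respecting the trivial sections, the map $p_{*}Q:Z\to F$ is locally constant. Moreover, the condition $p_{*}(\mathcal{A})\circ\sigma_1=\sigma_2$ says exactly that $p_{*}Q=1$. Since both connections are $(0,i_{*}B_i)$, \cref{bundle-morphisms-under-i} identifies $Q$ with some $i_{*}Q^{ab}$ and $\nu$ with $i_{*}\nu^{ab}$. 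Fullness of that equivalence on morphisms handles $\nu$, the cocycle diagram, and the 2-morphisms. Essential surjectivity on Hom-categories and full faithfulness on 2-morphisms then follow.

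\textbf{Main obstacle.} I expect the hardest step to be Step 1: choosing $\alpha$ and $B'$ so that the conjugated bundle $P'$ remains adjusted, and so that the adjustment correction $\kappa(\tilde s,\cdot)$ does not spoil the $\mathfrak{a}$-valuedness of $B'$. Here the adaptedness identity $t_*\kappa(g,X)=t_*u(\mathrm{Ad}_gX-X)$ and \cref{eq:adjcurv} must be used carefully. I would first try to use the adjusted shift of \cref{adjusted-shift-of-connection} with $\eta=-A$, which is the cleanest way to reach source form $0$.
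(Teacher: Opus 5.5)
Your proposal is correct and follows essentially the paper's route. You refine so that the section lifts to $G$; the paper uses the pullback $Y\times_F G$ instead of local sections of $p$. You then use $u$ to build a gauge transformation to a connection with vanishing $\mathfrak{g}$-part and $\mathfrak{a}$-valued $2$-form, conjugate $P$ to get $p_*P'=1$, reduce via \cref{bundle-morphisms-under-i}, and obtain full faithfulness from $p_*Q=1$ exactly as the paper does.

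Minor remarks:
\begin{enumerate}[(1)]
\item For your flatness equation and for $p_*P'=1$ to hold, $s$ must be the inverse of the representative satisfying $\pr_2^*s=p_*P\cdot\pr_1^*s$. This is the convention slip you already flagged.
\item The $\kappa$-correction you worry about vanishes, because $\mathcal{G}$ is already fake-flat by \cref{flatness-of-underlying-bundle}. So the fallback via an adjusted shift is unnecessary, and such a shift is not available for a general $\Gamma$-bundle gerbe anyway.
\item Your derivation of $t_*B'=0$ from preserved adaptedness is a valid alternative to the paper's direct computation.
\end{enumerate}
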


\begin{proof}
We begin by establishing essential surjectivity. Let $\mathcal{G}=(Y,\pi,(A,B),P,\mu)$ be a $\Gamma\!$-bundle gerbe with adapted connection over $M$, and let $\sigma$ be a flat section of $p_{*}(\mathcal{G})$. The principal $F$-bundle $p_{*}(\mathcal{G})$ was obtained in  \cref{dis-bundle-gerbes} by the clutching construction performed to  $\pi:Y \to M$ and the map $f:=p_{*}P: Y^{[2]} \to F$. Thus, the section $\sigma$ can be identified with a  smooth map $\tilde \sigma:Y \to F$ such that $\tilde\sigma(y')=f(y,y')\tilde\sigma(y)$ for all $(y,y')\in Y^{[2]}$.   
Flatness of $\sigma$ is equivalent to the condition $p_{*}(A)=-\tilde \sigma^{*}\bar\theta$. We consider the pullback diagram
\begin{equation*}
\alxydim{}{\mquad Y':=Y \times_F G \ar[r]^-{g} \ar[d]_{\rho} & G \ar[d]^{p} \\ Y \ar[r]_{\tilde \sigma} &F}
\end{equation*} 
which exists since $p$ is a surjective submersion. Now, $\rho^{*}\tilde\sigma^{*}\bar\theta = g^{*}p^{*}\bar\theta = p_{*}(g^{*}\bar\theta)$, and the flatness condition above yields $p_{*}(\rho^{*}A+g^{*}\bar\theta)=0$. Thus, there exists $\varphi\in \Omega^1(Y',\mathfrak{h})$ such that 
\begin{equation}
\label{red-1}
t_{*}(\varphi)+\rho^{*}A+g^{*}\bar\theta=0\text{.}
\end{equation}
Using the splitting $u$, we may set $\varphi :=- u(\rho^{*}A+g^{*}\bar\theta)$ to obtain a canonical choice. A straightforward calculation reveals that
\begin{equation*}
t_{*}(\mathrm{d}\varphi + \tfrac{1}{2}[\varphi \wedge \varphi]+ \alpha_{*}(\rho^{*}A \wedge \varphi)+\rho^{*} B)=-\rho^{*}\mathrm{fcurv}(A,B)\text{,}
\end{equation*}
which vanishes by virtue of \cref{flatness-of-underlying-bundle}. Hence, there exists a unique $b\in \Omega^2(Y',\mathfrak{a})$ such that 
\begin{equation}
\label{red-2}
i_{*}(b) = \mathrm{d}\varphi + \tfrac{1}{2}[\varphi \wedge \varphi]+ \alpha_{*}(\rho^{*}A \wedge \varphi)+\rho^{*} B\text{.}
\end{equation}
\Cref{red-1,red-2} then show that
\begin{equation*}
(g,\varphi): (0,i_{*}(b)) \to \rho^{*}(A,B)
\end{equation*}  
constitutes a flat gauge transformation. 

We consider the following $\Gamma\!$-bundle gerbe 
\begin{equation*}
\mathcal{G}'=(Y',\pi',(0,i_{*}(b)), P',\mu')\text{,}
\end{equation*}
with $\pi':=\pi \circ \rho$, with a principal $\Gamma\!$-bundle $P'$ with connection defined by
\begin{equation*}
P' := (\pr_2^{*}\trivlin_g^{\varphi})^{*} \otimes (\rho^{[2]})^{*}P \otimes \pr_1^{*}\trivlin^{\varphi}_g\text{,}
\end{equation*}
and with a bundle morphism $\mu'$ defined using the unit map $(\pr_2^{*}\trivlin_g^{\varphi})^{*} \otimes (\pr_2^{*}\trivlin_g^{\varphi}) \cong \trivlin_1^0$. 
The rules for the tensor product show that $P'$ has a $0$-$0$-connection that is $\pr_1^{*}i_{*}(b)$-$\pr_2^{*}i_{*}(b)$-flat. Thus, $\mathcal{G}'$ is a bundle gerbe with connection, and it is fake-flat. It is readily verified that $\mathcal{R}=(\rho, \trivlin_g^{\varphi},\nu)$, with
\begin{equation*}
\nu:\pr_2^{*}\trivlin_g^{\varphi} \otimes P' \to  (\rho^{[2]})^* P \otimes \pr_1^{*}\trivlin_g^{\varphi}
\end{equation*}
defined by utilizing the unit map $(\pr_2^{*}\trivlin_g^{\varphi})^{*} \otimes (\pr_2^{*}\trivlin_g^{\varphi}) \cong \trivlin_1^0$ once more, is a strict isomorphism $\mathcal{R}:\mathcal{G}' \to \mathcal{G}$.

We compute the map $p_{*}P': Y'^{[2]} \to F$ of the new principal $\Gamma\!$-bundle $P'$, obtaining
\begin{equation*}
p_{*}P'(y_1',y_2') = p(g(y_2'))\cdot p_{*}P(\rho(y_1'),\rho(y_2'))\cdot p(g(y_1'))
= \tilde \sigma(\rho(y_2'))\cdot f(\rho(y_1'),\rho(y_2'))\cdot \tilde\sigma(\rho(y_1'))
=1\text{.}
\end{equation*}
Thus, by \cref{bundle-morphisms-under-i}, there is a principal $A$-bundle $P^{\mathrm{red}}$ together with a connection-preserving bundle isomorphism $i_{*}(P^{\mathrm{red}}) \cong P'$, and $\mu'$ restricts to a suitable morphism between $A$-bundles. This shows that $\mathcal{G}''=(Y',\pi',b,P^{\mathrm{red}},\mu'')$ is an $A$-bundle gerbe with connection, with an obvious refinement $i_{*}(\mathcal{G}'')\cong \mathcal{G}'$. This completes the proof that the functor $i^{\pflat}_{*}$ is essentially surjective.

Next we demonstrate  that $i_{*}^{\pflat}$ is full and faithful. Let $\mathcal{G}_1,\mathcal{G}_2$ be $A$-bundle gerbes with connection over $M$. We have to show that  the Hom-functor
\begin{equation*}
\hom_{\grbcon AM}(\mathcal{G}_1,\mathcal{G}_2) \to \hom_{\grbcon {\Gamma\!,u,\kappa} M^{\pflat}}(i_{*}^{\pflat} (\mathcal{G}_1),i_{*}^{\pflat}(\mathcal{G}_2))
\end{equation*}
is an equivalence of categories. We first show again that it is essentially surjective. Suppose that $\mathcal{A}=(Z,\zeta,Q,\nu):i_{*}^{\pflat}(\mathcal{G}_1) \to i_{*}^{\pflat}(\mathcal{G}_2)$ is a morphism in $\grbcon {\Gamma\!,u,\kappa} M^{\pflat}$. The condition on the morphisms of $\grbcon {\Gamma\!,u,\kappa} M^{\pflat}$ requires that $p_{*}(\mathcal{A})=1$. We recall that $p_{*}(\mathcal{A})$ pulls back to the map $p_{*}Q: Z \to F$, which is hence trivial. By \cref{bundle-morphisms-under-i}, we have $Q=i_{*}(Q^{\mathrm{red}})$ for a principal $A$-bundle $Q^{\mathrm{red}}$ with connection, and there exists a unique bundle isomorphism $\nu^{\mathrm{red}}$ over $Z^{[2]}$ such that $(Z,\zeta,Q^{\mathrm{red}},\nu^{\mathrm{red}})$ is an isomorphism between $\mathcal{G}_1$ and $\mathcal{G}_2$ in $\grbcon AM$. This shows essential surjectivity. Full faithfulness follows directly from the fact that the functor $i_{*}$ in \cref{bundle-morphisms-under-i} is full and faithful.
\end{proof}

\begin{corollary}
\label{corr-1}
If $\mathcal{G}$ is a fake-flat $\Gamma\!$-bundle gerbe, then every point $x\in M$ has an open neighborhood $U$ with a 2-form $\rho\in \Omega^2(U,\mathfrak{a})$ such that $\mathcal{G}|_U \cong i_{*}(\mathcal{I}_{\rho})$. 
\end{corollary}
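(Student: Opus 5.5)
The plan is to combine \cref{flatness-of-underlying-bundle}, \cref{gerbe-morphisms-under-i}, and the local triviality of abelian bundle gerbes.

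First, since $\mathcal{G}$ is fake-flat, its connection is adapted to any splitting $u$, because $u(\mathrm{fcurv}(A,B))=u(0)=0$. Such a splitting exists since $\Gamma$ is smoothly separable. By \cref{flatness-of-underlying-bundle}, the principal $F$-bundle $p_{*}(\mathcal{G})$ is flat. A flat principal bundle admits flat local sections near every point: on a contractible neighborhood $U$ of $x$ it is isomorphic, as a bundle with connection, to the trivial bundle with the zero connection. Hence we can choose an open neighborhood $U$ of $x$ together with a flat section $\sigma$ of $p_{*}(\mathcal{G})|_U = p_{*}(\mathcal{G}|_U)$. Here we use that restriction commutes with $p_{*}$, since $p_{*}$ is a morphism of sheaves.

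Second, the pair $(\mathcal{G}|_U,\sigma)$ is an object of $\grbcon{\Gamma\!,u,\kappa}U^{\pflat}$. By the essential surjectivity part of \cref{gerbe-morphisms-under-i}, there is an $A$-bundle gerbe $\mathcal{G}^{ab}$ with connection over $U$ and an isomorphism $\mathcal{G}|_U\cong i_{*}(\mathcal{G}^{ab})$. Only the underlying isomorphism in $\grbcon{\Gamma\!,\kappa}U$ is needed.

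Third, after shrinking $U$ to a contractible (say, a ball) neighborhood of $x$, the abelian gerbe $\mathcal{G}^{ab}$ becomes trivializable with connection. Its Dixmier–Douady class lies in $\h^3(U,\Z)$-type cohomology (more precisely, in the sheaf cohomology $\h^2(U,\underline{A})$), which vanishes on contractible $U$. So, by the standard abelian theory, there is an isomorphism $\mathcal{G}^{ab}\cong \mathcal{I}_{\rho}$ for some $\rho\in\Omega^2(U,\mathfrak{a})$. Alternatively, one can use the classification of $A$-bundle gerbes with connection by Deligne cohomology \cref{classification}, applied to $BA$: on a contractible set, a cocycle is equivalent to one given solely by a 2-form. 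Applying the functor $i_{*}$ yields $\mathcal{G}|_U\cong i_{*}(\mathcal{G}^{ab})\cong i_{*}(\mathcal{I}_{\rho})$.

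The main point requiring care is the third step for general (possibly non-connected, non-compact) abelian $A$. The vanishing of $\h^2(U,\underline{A})$ on contractible $U$ follows from the exponential sequence $0\to\pi_1\to\mathfrak a\to A_0\to 0$ together with the discreteness of $\pi_0(A)$. A more hands-on alternative is to choose $U$ so small that the surjective submersion $Y\to M$ has a local section $s$ over $U$. Pulling back along $s$ gives a refinement from $\mathcal{I}_{s^{*}B}$-type data, since for a gerbe with a section the standard argument produces a trivialization $\mathcal{G}^{ab}|_U\cong \mathcal{I}_{s^{*}b}$ via the bundle $P$ restricted along $(s\circ\pi,\id)$. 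This avoids cohomology entirely, and I would present this version.
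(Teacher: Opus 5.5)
Your proposal is correct and follows the paper's proof. The paper argues the same way: flatness of $p_{*}(\mathcal{G})$ gives a flat local section over some $U$, essential surjectivity in Theorem~\ref{gerbe-morphisms-under-i} then gives $\mathcal{G}|_U\cong i_{*}(\mathcal{G}^{ab})$, and shrinking $U$ makes $\mathcal{G}^{ab}$ trivializable. Your section-of-the-submersion argument, which gives $\mathcal{G}^{ab}|_U\cong\mathcal{I}_{s^{*}b}$, is a valid explicit justification of the step the paper only states as ``by possibly shrinking $U$ further''. You were also right to prefer it to the cohomological vanishing argument, which would need extra care if $A$ is only a Fr\'echet Lie group.
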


\begin{proof}
$p_{*}(\mathcal{G})$ is flat and thus has a flat local section supported on a sufficiently small open neighborhood $U$. 
Thus, $\mathcal{G}|_U \cong i_{*}(\mathcal{G}_U)$ for an $A$-bundle gerbe $\mathcal{G}_U$ with connection over $U$. By possibly shrinking $U$ further, the bundle gerbe $\mathcal{G}_U$ itself becomes trivializable, so that $\mathcal{G}_U \cong \mathcal{I}_{\rho}$ for a 2-form $\rho$. 
\end{proof}

\begin{corollary}
\label{corr-2}
Suppose $\mathcal{G}$ is a $\Gamma\!$-bundle gerbe with an adapted connection over $M$. Then, a flat section $\sigma:M \to p_{*}(\mathcal{G})$ determines an $A$-bundle gerbe $\mathcal{G}^{\mathrm{red}}$ with connection over $M$ together with an isomorphism $i_{*}(\mathcal{G}^{\mathrm{red}}) \cong \mathcal{G}$. 
\end{corollary}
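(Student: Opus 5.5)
This is essentially a repackaging of \cref{gerbe-morphisms-under-i}. The pair $(\mathcal{G},\sigma)$ is an object of $\grbcon{\Gamma\!,u,\kappa}M^{\pflat}$, because $\mathcal{G}$ is adapted and $\sigma$ is a flat global section of $p_{*}(\mathcal{G})$. We therefore apply essential surjectivity of the equivalence $i_{*}^{\pflat}$. This yields an $A$-bundle gerbe $\mathcal{G}^{\mathrm{red}}$ with connection together with an isomorphism $i^{\pflat}_{*}(\mathcal{G}^{\mathrm{red}})\cong(\mathcal{G},\sigma)$ in $\grbcon{\Gamma\!,u,\kappa}M^{\pflat}$. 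Forgetting $\sigma$, this gives a 1-isomorphism $i_{*}(\mathcal{G}^{\mathrm{red}})\cong\mathcal{G}$ in $\grbcon{\Gamma\!,\kappa}M$.

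To make the word ``determines'' concrete, I would follow the explicit construction in the proof of \cref{gerbe-morphisms-under-i}. First, identify $\sigma$ with $\tilde\sigma\colon Y\to F$ satisfying $\tilde\sigma(y')=f(y,y')\tilde\sigma(y)$ and $p_{*}A=-\tilde\sigma^{*}\bar\theta$. Then pass to $Y'=Y\times_F G$, and choose $\varphi$ canonically as $-u(\rho^{*}A+g^{*}\bar\theta)$. Define $b$ by \cref{red-2}; it is $\mathfrak{a}$-valued because $\mathcal{G}$ is adapted and $p_{*}$ of its fake curvature vanishes, so it is fake-flat by \cref{flatness-of-underlying-bundle}. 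Next, form the conjugated bundle $P'$ and note that $p_{*}P'=1$. Finally, reduce $P'$ to $P^{\mathrm{red}}$ via \cref{bundle-morphisms-under-i}. The isomorphism is the composite of the refinement $i_{*}(\mathcal{G}'')\cong\mathcal{G}'$ and the strict isomorphism $\mathcal{R}\colon\mathcal{G}'\to\mathcal{G}$, both pushed through the framing functors \cref{framing-functors}.

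The main point needing care is well-definedness, that is, the independence of $\mathcal{G}^{\mathrm{red}}$ from the choices. Since the splitting $u$ fixes $\varphi$ and the surjective submersion $Y'$ is built functorially from $\sigma$, I expect no real obstacle. Full faithfulness of $i^{\pflat}_{*}$ further guarantees that $\mathcal{G}^{\mathrm{red}}$ is unique up to a unique-up-to-2-isomorphism isomorphism, which is all the statement requires.
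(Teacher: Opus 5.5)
Your proposal is correct and follows the paper's approach. The paper states this corollary without a separate proof, as an immediate consequence of the essential surjectivity of $i_{*}^{\pflat}$ in Theorem~\ref{gerbe-morphisms-under-i}, and your explicit unpacking ($Y'=Y\times_F G$, $\varphi=-u(\rho^{*}A+g^{*}\bar\theta)$, the form $b$, the bundle $P'$ with $p_{*}P'=1$, and the reduction to $P^{\mathrm{red}}$) is exactly the construction given in that theorem's proof.
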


\begin{corollary}
\label{exact-sequence-1}
The sequence 
\begin{equation*}
\hat\h^1(M,A) \to \hat \h^1(M,(\Gamma\!,u,\kappa))^{\aptadj} \to \hat\h^0(M,F)
\end{equation*}
in differential cohomology, induced by the functors $i_{*}$ and $p_{*}$, is an exact sequence of pointed sets. 
\end{corollary}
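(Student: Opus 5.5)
Exactness of a sequence of pointed sets means two things: the composite $p_{*}\circ i_{*}$ sends every class to the base point, and every class in $\hat\h^1(M,(\Gamma\!,u,\kappa))^{\aptadj}$ that maps to the base point of $\hat\h^0(M,F)$ lies in the image of $i_{*}$. I would first translate both cohomology sets into isomorphism classes of bundle gerbes. By \cref{stackification-other-gerbes} (the adapted version of \cref{classification}), $\hat \h^1(M,(\Gamma\!,u,\kappa))^{\aptadj}\cong \hc 0\grbconaa{\Gamma\!,u,\kappa}M$. Likewise $\hat\h^1(M,A)\cong\hc 0\grbcon AM$, by the case $\Gamma=BA$ in \cref{BA-bundle-gerbes}. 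Finally, $\hat\h^0(M,F)$ is identified with isomorphism classes of principal $F$-bundles with connection, whose base point is the trivial bundle with the trivial connection; this uses the equivalence $\grbcon{F_{\dis},0}-\cong(\buncon F-)_{\dis}$ of \cref{dis-bundle-gerbes}. Since sheafification is functorial, the maps in cohomology are $\hc 0$ of the 2-functors $i_{*}$ and $p_{*}$ defined earlier.

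\emph{Composite trivial.} For an $A$-bundle gerbe $\mathcal{G}$, the gerbe $i_{*}(\mathcal{G})$ has $\Gamma$-connection $(0,i_{*}b)$ and bundle $i_{*}(P)$ with anchor valued in $t(H)$. Hence $p_{*}i_{*}(\mathcal{G})$ is given by the descent data $(p_{*}(0),\,p_{*}i_{*}P)=(0,1)$, which is the trivial $F$-bundle with the trivial connection. Equivalently, $p\circ i$ is the trivial homomorphism. This lands in the base point, and it also exhibits the canonical flat section used to define $i_{*}^{\pflat}$.

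\emph{Kernel contained in image.} Suppose $\mathcal{G}$ is adapted and $p_{*}(\mathcal{G})$ is isomorphic, as a bundle with connection, to the trivial bundle with trivial connection. Transporting the constant unit section along this isomorphism gives a global section $\sigma$ of $p_{*}(\mathcal{G})$, and $\sigma$ is flat because the isomorphism is connection-preserving. So $(\mathcal{G},\sigma)$ is an object of $\grbcon{\Gamma\!,u,\kappa}M^{\pflat}$. \Cref{gerbe-morphisms-under-i}, or equivalently \cref{corr-2}, then gives an $A$-bundle gerbe $\mathcal{G}^{\mathrm{red}}$ with $i_{*}(\mathcal{G}^{\mathrm{red}})\cong\mathcal{G}$. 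Therefore $[\mathcal{G}]$ lies in the image of $i_{*}$.

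The step I expect to need the most care is the identification of $\hat\h^0(M,F)$ and its base point with isomorphism classes of $F$-bundles with connection, together with checking that the map induced by $p_{*}$ on cocycles agrees with the clutching construction of \cref{dis-bundle-gerbes}. Once this bookkeeping is done, the argument reduces to the two observations above.
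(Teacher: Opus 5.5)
Your proposal is correct and follows the paper's own argument. The paper states this corollary without a separate proof, directly after \cref{gerbe-morphisms-under-i} and \cref{corr-2}. Image-in-kernel comes from the triviality of $p\circ i$, and kernel-in-image from \cref{corr-2} applied to the flat section obtained by transporting the unit section of the trivial bundle, which is exactly what you do.

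Your reading of $\hat\h^1(M,A)$ and $\hat\h^0(M,F)$ is the intended one: isomorphism classes of $A$-bundle gerbes with connection and of principal $F$-bundles with connection, respectively. In the later exact-sequence corollary the paper writes the same sets as $\hat\h^2(M,A)$ and $\hat\h^1(M,F)$.
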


In the remainder of this section, we extend the concept of an \emph{adjusted extension} from principal $\Gamma\!$-bundles (see \cref{extension-and-adjusted-shift}) to bundle gerbes. One key result (\cref{gerbe-morphisms-under-i-non-flat}) generalizes \cref{gerbe-morphisms-under-i} to cases where $p_{*}(\mathcal{G})$ is not necessarily flat. The following lemma introduces the basic construction. 

\begin{lemma}
\label{extension-and-adjusted-shift-lemma}
Suppose $\mathcal{G}=(Y,\pi,b,P,\mu)$ is an $A$-bundle gerbe with connection. 
Let $A\in\Omega^1(M,\mathfrak{g})$ and $\chi\in \Omega^2(M,\mathfrak{a})$ be differential forms.  
We consider the $\Gamma\!$-connection $(A,B^{(A,\chi)})$ on $M$ with
\begin{equation*}
B^{(A,\chi)}:= i_{*}(\chi)+\tfrac{1}{2}\kappa_{*}(A \wedge A)\in \Omega^2(M,\mathfrak{h})\text{.}
\end{equation*}
Then, the tuple
\begin{equation*}
i_{*}^{(A,\chi)}(\mathcal{G}):=(Y,\pi,i_{*}(0,b)+\pi^{*}(A,B^{(A,\chi)}),i_{*}^{A}(P),i_{*}(\mu))\text{,}
\end{equation*}
defines a $\Gamma\!$-bundle gerbe over $M$ with connection. Moreover, if $\kappa$ is adapted to a splitting $u$ and $A=q_u(\beta)$ for a 1-form $\beta\in \Omega^1(M,\mathfrak{f})$, then we have:
\begin{itemize}
\item 
$i_{*}^{(A,\chi)}(\mathcal{G})$ is adapted to $u$,

\item
its fake-curvature is given by $\pi^{*}\mathrm{fcurv}(A,B^{(A,\chi)})=\pi^{*}q_u(\mathrm{d}\beta+\tfrac{1}{2}[\beta\wedge\beta])$, and

\item
its adjusted curvature is
\begin{align*}
\mathrm{curv}_{\kappa}(i_{*}^{(A,\chi)}(\mathcal{G}))
=  i_{*}(\mathrm{curv}(\mathcal{G}) + \mathrm{d}\chi+b_{\kappa}(\beta\wedge \mathrm{d}\beta)+\tfrac{1}{3}b_{\kappa}(\beta\wedge  [\beta \wedge \beta]))\text{.}
\end{align*}
\end{itemize}
\end{lemma}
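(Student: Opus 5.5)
The plan is to verify each component of the tuple $i_{*}^{(A,\chi)}(\mathcal{G})$ against \cref{def:grb}, using the adjusted extension of \cref{extension-and-adjusted-shift} and \cref{reduction-with-shifted-connection}.

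\textbf{The bundle $i_{*}^{A}(P)$.} Write $\eta:=\pi^{*}A$ on $Y$. Then $\pr_1^{*}\eta=\pr_2^{*}\eta$ on $Y^{[2]}$. The $\Gamma\!$-connection on $Y$ is $(\eta,\, i_{*}b+\pi^{*}B^{(A,\chi)})$. The extended connection on $i_{*}(P)$ is a $0$-$0$-connection which is $\pr_1^{*}i_{*}(b)$-$\pr_2^{*}i_{*}(b)$-adjusted (\cref{bundle-morphisms-under-i}). By \cref{adjusted-shift-of-connection,curvature-of-adjusted-shift}, the adjusted shift by $\eta$ is an $\eta$-$\eta'$-connection with
\begin{equation*}
\eta'=\tilde q(p_{*}i_{*}P,\eta)=\tilde q(1,\eta)=\eta.
\end{equation*}
It is $\tilde B$-$\tilde B'$-adjusted, where $\tilde B=\pr_1^{*}i_{*}b+\tfrac12\kappa_{*}(\eta\wedge\eta)$ and $\tilde B'=\pr_2^{*}i_{*}b+\tfrac12\kappa_{*}(\eta\wedge\eta)$. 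Adding $\lambda:=\pi^{*}i_{*}\chi$ to both sides is allowed by \cref{shift-of-curving}. So $i_{*}^{A}(P)$ carries a connection of the type required in \cref{def:grb}.

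\textbf{The multiplication $i_{*}(\mu)$.} We need $i_{*}(\mu)$ to be connection-preserving for the shifted connections. First, $i_{*}(\mu)$ is connection-preserving for the unshifted extended connections. Adjusted shifts commute with connection-preserving morphisms (\cref{adjusted-shift-of-connection}). They also commute with tensor products by \cref{shifting-connection-of-tensor-product}; since $p_{*}i_{*}P=1$, the first-factor shift $\tilde q(p_{*}i_{*}P,\eta)$ reduces to $\eta$, matching \cref{reduction-with-shifted-connection}(i). Associativity of $i_{*}(\mu)$ is inherited from that of $\mu$.

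\textbf{Adaptedness and fake-curvature.} Take $A=q(\beta)$ with $q:=q_u$, and write $X:=\mathrm{d}A+\tfrac12[A\wedge A]$. By \cref{kappa-with-differential-forms}(b) and the adapted identity of \cref{rule-for-adapted-kappa},
\begin{equation*}
t_{*}B=t_{*}\kappa_{*}^{skew}(A\wedge A)=t_{*}u([A\wedge A]).
\end{equation*}
Hence $\mathrm{fcurv}(A,B^{(A,\chi)})=X-\tfrac12 t_{*}u([A\wedge A])$. Now $uq=0$ gives $X=q(\mathrm{d}\beta)+\tfrac12[q\beta\wedge q\beta]$. Using $t_{*}u+qp_{*}=\id$ on the bracket term, this becomes $q(\mathrm{d}\beta+\tfrac12[\beta\wedge\beta])$, so $u(\mathrm{fcurv})=0$ and the second bullet holds. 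The $i_{*}b$ summand lies in $\ker t_{*}$, so it changes neither the fake-curvature nor adaptedness.

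\textbf{Adjusted curvature.} Use the formula of \cref{adjusted-curvature},
\begin{equation*}
\mathrm{curv}_{\kappa}=\mathrm{d}B+\kappa_{*}(A\wedge\mathrm{d}A)+\tfrac12\kappa_{*}(A\wedge[A\wedge A]),
\end{equation*}
with $B=i_{*}b+i_{*}\chi+\tfrac12\kappa_{*}(A\wedge A)$. The terms $\mathrm{d}i_{*}b=i_{*}\mathrm{curv}(\mathcal{G})$ and $i_{*}\mathrm{d}\chi$ are immediate. For the remaining terms:
\begin{itemize}
\item By \cref{kappa-with-differential-forms}(c), $\tfrac12\kappa_{*}(A\wedge[A\wedge A])=\tfrac13 b_\kappa(\beta\wedge[\beta\wedge\beta])$.
\item The term $\tfrac12\mathrm{d}\kappa_{*}(A\wedge A)=\tfrac12\kappa_{*}(\mathrm{d}A\wedge A)-\tfrac12\kappa_{*}(A\wedge\mathrm{d}A)$ combines with $\kappa_{*}(A\wedge\mathrm{d}A)$ to give the symmetrization $\kappa_{*}^{sym}$ applied to $A$ and $\mathrm{d}A$, which equals $b_\kappa(\beta\wedge\mathrm{d}\beta)$.
\end{itemize}

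\textbf{Main obstacle.} The hardest step is the sign and symmetrization bookkeeping for $\kappa_{*}$ on mixed-degree forms. In particular, one must check that the skew part of $\kappa_{*}(A\wedge\mathrm{d}A)$ cancels exactly against $\tfrac12\mathrm{d}\kappa_{*}(A\wedge A)$. One must also confirm that the total is $\mathfrak{a}$-valued, which should follow from adaptedness.
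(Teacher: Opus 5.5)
Your proposal is correct and follows the paper's proof essentially step for step. The steps are: the adjusted shift with its curvature formula for $i_{*}^{A}(P)$ (you additionally make explicit the shift of curving by $\pi^{*}i_{*}\chi$, which the paper leaves implicit); compatibility of adjusted shifts with tensor products and connection-preserving maps for $i_{*}(\mu)$; the adapted identity $t_{*}\kappa_{*}(X_1,X_2)=t_{*}u([X_1,X_2])$ for the fake-curvature; and the cancellation $\tfrac12\mathrm{d}\kappa_{*}(A\wedge A)+\kappa_{*}(A\wedge\mathrm{d}A)=\kappa_{*}^{sym}(A\wedge\mathrm{d}A)$ for the adjusted curvature. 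There is one cosmetic slip: your display should read $t_{*}B=\tfrac12 t_{*}\kappa_{*}^{skew}(A\wedge A)=\tfrac12 t_{*}u([A\wedge A])$, which is the factor your next line correctly uses.
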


\begin{proof}
Observe that $i_{*}^{A}(P)$ carries a $\pr_1^*\pi^{*}A$-$\pr_2^{*}\pi^{*}A$-connection by \cref{adjusted-shift-of-connection} and is $\pr_1^{*}(i_{*}(b)+\pi^{*}B^{(A,\chi)})$-$\pr_2^{*}(i_{*}(b)+\pi^{*}B^{(A,\chi)})$-adjusted by \cref{curvature-of-adjusted-shift}, as required. The adjusted shift by $A$ satisfies, by \cref{shifting-connection-of-tensor-product}, $\pr_{23}^{*}i_{*}^{A}(P) \otimes \pr_{12}^{*}i_{*}^{A}(P)=i_{*}^A(\pr_{23}^{*}P \otimes \pr_{12}^{*}P)$, and since $\mu$ was connection-preserving, it remains so after extension and shift (\cref{adjusted-shift-of-connection}). This confirms that $i_{*}^{(A,\chi)}(\mathcal{G})$ is a bundle gerbe with connection. 

If $\kappa$ is adapted to a splitting $u$, we have:
\begin{align*}
\mathrm{fcurv}(A,B^{(A,\chi)}) &= \mathrm{d}A+\tfrac{1}{2}[A\wedge A]-\tfrac{1}{2}t_{*}\kappa_{*}(A \wedge A)
\\&= \mathrm{d}A+\tfrac{1}{2}[A\wedge A]- \tfrac{1}{2} t_{*}u([A\wedge  A])
\\&= \mathrm{d}A+\tfrac{1}{2}qp([A\wedge A])\text{.}
\end{align*}
The adjusted curvature is
\begin{align*}
\mathrm{curv}_{\kappa}(i_{*}^{(A,\chi)}(\mathcal{G}))
&= \mathrm{d}(i_{*}(b)+\pi^{*}B^{(A,\chi)})+\pi^{*}(\kappa_{*}(A\wedge \mathrm{d}A )+\tfrac{1}{2}\kappa_{*}(A\wedge  [A \wedge A]))
\\&= i_{*}(\mathrm{d}(b+\pi^{*}\chi))-\pi^{*}(\kappa_{*}^{skew}(A \wedge \mathrm{d}A)+\kappa_{*}(A\wedge \mathrm{d}A )+\tfrac{1}{2}\kappa_{*}(A\wedge  [A \wedge A]))
\\&= i_{*}(\mathrm{d}(b+\pi^{*}\chi))+\pi^{*}(\kappa_{*}^{sym}(A\wedge \mathrm{d}A )+\tfrac{1}{2}\kappa_{*}(A\wedge  [A \wedge A]))\text{.}
\end{align*}
For $A=q_u(\beta)$, this implies, together with \cref{kappa-with-differential-forms}, the claimed formulas.
\end{proof}

\begin{definition}
\label{extension-and-adjusted-shift-gerbes}
\label{shifting-connections}
The $\Gamma\!$-bundle gerbe $i_{*}^{(A,\chi)}(\mathcal{G})$ with connection of \cref{extension-and-adjusted-shift-lemma} is called the \emph{adjusted extension} of $\mathcal{G}$ along $(A,\chi)$.
\end{definition}

\begin{remark}
\label{remark-adjusted-extension}
\begin{enumerate}[(i)]
\item 
\label{remark-adjusted-extension:1}
The adjusted extension along $(0,\chi)$ coincides with the ordinary shift-of-curving operation available for abelian bundle gerbes, implemented by tensoring with a trivial $A$-bundle gerbe $\mathcal{I}_{\chi}$, i.e., 
\begin{equation*}
i_{*}^{(A,\chi)}(\mathcal{G})=i_{*}^{(A,0)}(\mathcal{G}\otimes \mathcal{I}_{\chi})\text{.}
\end{equation*}

\item
\label{remark-adjusted-extension:2}
We have 
\begin{equation*}
i_{*}^{(A,\chi)}(\mathcal{I}_0)=i_{*}^{(A,0)}(\mathcal{I}_\chi)=\mathcal{I}_{A,B^{(A,\chi)}}\text{,}
\end{equation*}
where $B^{(A,\chi)}$ is defined in \cref{extension-and-adjusted-shift-lemma}, and $\mathcal{I}_{A,B}$ is the trivial $\Gamma\!$-bundle gerbe of \cref{trivial-gamma-bundle-gerbe}. 
Consequently, a trivial $\Gamma\!$-bundle gerbe $\mathcal{I}_{A,B}$, with $B=B^{(\chi,A)}$ for some $\chi\in \Omega^2(M,\mathfrak{a})$, is the adjusted extension of the trivial $A$-bundle gerbe. 

\item
\label{remark-adjusted-extension:3}
We have $p_{*}(i_{*}^{(A,\chi)}(\mathcal{G}))=\trivlin_1^{p_{*}A}$, the trivial $F$-bundle with connection 1-form induced by $p_{*}A$. 

\end{enumerate}
\end{remark}

Although an \quot{adjusted shift} of a connection on a \emph{general} $\Gamma\!$-bundle gerbe is not feasible (only for those in the image of the extension functor $i_{*}$, as above), one may perform an adjusted shift of \emph{general} 1-morphisms between adjusted extensions of bundle gerbes.
First, let $\mathcal{A}:i_{*}(\mathcal{G}_1) \to i_{*}(\mathcal{G}_2)$ be a 1-morphism in $\grbcon{\Gamma\!,\kappa}M$. Then, $p_{*}\mathcal{A}$ is a morphism of trivial principal $F$-bundles, and can hence be identified with a smooth map $p_{*}\mathcal{A}:M \to F$. 

\begin{lemma}
\label{shift-of-1-morphisms}
Suppose $\mathcal{G}_1$ and $\mathcal{G}_2$ are $A$-bundle gerbes with connections over $M$, and suppose $A_1,A_2 \in \Omega^1(M,\mathfrak{g})$ and $\chi_1,\chi_2\in \Omega^2(M,\mathfrak{a})$ are differential forms. We assume that
\begin{equation*}
\mathcal{A}:i_{*}^{( A _1,\chi_1)}(\mathcal{G}_1) \to i_{*}^{( A _2,\chi_2)}(\mathcal{G}_2)
\end{equation*}
is a 1-morphism in $\grbcon{\Gamma\!,\kappa}M$, $\mathcal{A}=(Z,\zeta,Q,\nu)$, and that $\eta \in\Omega^1(M,\mathfrak{g})$ is a 1-form. Then,
$\mathcal{A}^{\eta}:=(Z,\zeta,Q^{\eta},\nu)$, where $Q^{\eta}$ denotes the adjusted shift of the connection on $Q$ along (the pullback to $Z$ of) $\eta$, constitutes a 1-morphism
\begin{equation*}
\mathcal{A}^{ \eta }: i_{*}^{( A _1',\chi_1')}(\mathcal{G}_1)\to i_{*}^{( A' _2,\chi_2')}(\mathcal{G}_2)
\end{equation*}
in $\grbcon{\Gamma\!,\kappa}M$, where
\begin{align*}
A_1'&:= A_1+\eta 
&\chi_1' &:= \chi_1+b_{\kappa}(p_{*}( A _1)\wedge p_{*}( \eta )) 
\\
A_2' &:= A_2+ \tilde q_u(p_{*}\mathcal{A},\eta)
& \chi_2' &:= \chi_2+b_{\kappa}(p_{*}( A _2)\wedge p_{*}(\tilde q_u(p_{*}\mathcal{A},\eta)))\text{,}
\end{align*}
where $\tilde q_u$ is defined in \cref{adjusted-shift-of-connection}, and $b_{\kappa}$ is the symmetric bilinear form associated with the adjustment $\kappa$. 
Moreover,
\begin{enumerate}[(a)]
\item 
\label{shift-of-1-morphisms:a}
if $\mu\in \Omega^1(M,\mathfrak{g})$ is another 1-form, then
\begin{equation*}
(\mathcal{A}^{ \eta })^{\mu} = \mathcal{A}^{ \eta  + \mu}\text{,}
\end{equation*}

\item
\label{shift-of-1-morphisms:b}
if $\mathcal{B}: i_{*}^{( A _2,\chi_2)}(\mathcal{G}_2) \to i_{*}^{( A _3,\chi_3)}(\mathcal{G}_3)$ is another 1-morphism, then
\begin{equation*}
(\mathcal{B} \circ \mathcal{A})^{ \eta } = \mathcal{B}^{\tilde q_u(p_{*}\mathcal{A}, \eta)} \circ \mathcal{A}^{ \eta }\text{.}
\end{equation*}

\end{enumerate}
\end{lemma}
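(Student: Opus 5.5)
The plan is to work fibrewise on $Z$ and reduce everything to the behaviour of adjusted shifts under tensor products, which the paper has already established for principal $\Gamma\!$-bundles. Write $f:=p_{*}\mathcal{A}:M\to F$. The anchors of $i_{*}^{A_k}(P_k)$ project to $1$ in $F$, so $p_{*}Q$ descends to $M$, and the descended map is precisely $f$.

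\emph{Step 1: the shifted connection on $Q$.} By definition of 1-morphisms, $Q$ carries an adjusted $A_1$-$A_2$-connection with respect to the 2-forms
\begin{equation*}
B_k:=i_{*}(b_k)+i_{*}(\chi_k)+\tfrac{1}{2}\kappa_{*}(A_k\wedge A_k)\text{,}
\end{equation*}
all pulled back to $Z$. By \cref{adjusted-shift-of-connection}, $Q^{\eta}$ is then an $(A_1+\eta)$-$(A_2+\eta')$-connection with $\eta'=\tilde q_u(f,\eta)$. Since $f$ lives on $M$, both $\eta$ and $\eta'$ are pulled back from $M$.

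\emph{Step 2: adjustedness of $Q^{\eta}$.} By \cref{curvature-of-adjusted-shift}, $Q^{\eta}$ is $\tilde B_1$-$\tilde B_2$-adjusted, where
\begin{equation*}
\tilde B_1=B_1+\tfrac{1}{2}\kappa_{*}(\eta\wedge\eta)+\kappa_{*}(A_1\wedge\eta)\text{,}
\end{equation*}
and analogously $\tilde B_2$ with $(A_2,\eta')$ in place of $(A_1,\eta)$. Expanding $\tfrac{1}{2}\kappa_{*}(A_1'\wedge A_1')$ bilinearly produces $\tfrac12\kappa_{*}(A_1\wedge A_1)+\tfrac12\kappa_{*}(\eta\wedge\eta)$ plus the cross terms
\begin{equation*}
\tfrac{1}{2}\big(\kappa_{*}(A_1\wedge\eta)+\kappa_{*}(\eta\wedge A_1)\big)=\kappa_{*}^{skew}(A_1\wedge\eta)\text{,}
\end{equation*}
using the graded-commutativity rule for 1-forms. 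Comparing with $\tilde B_1$ leaves the remainder $\kappa_{*}^{sym}(A_1\wedge\eta)=b_{\kappa}(p_{*}A_1\wedge p_{*}\eta)$. Hence $\tilde B_1=i_{*}(b_1)+i_{*}(\chi_1')+\tfrac12\kappa_{*}(A_1'\wedge A_1')$, exactly the curving of $i_{*}^{(A_1',\chi_1')}(\mathcal{G}_1)$. The identical computation with $\eta'$ gives $\chi_2'$. I expect this bookkeeping to be the main obstacle: sign conventions for $\kappa_{*}$ on wedges of 1-forms, and making sure that $b_{\kappa}$ is evaluated on $p_{*}\eta'$ rather than $p_{*}\eta$ on the target side. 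I would first check it by evaluating both sides on a pair of tangent vectors.

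\emph{Step 3: $\nu$ remains connection-preserving.} Use the tensor-product rule of \cref{shifting-connection-of-tensor-product} together with $(\omega^{\eta})^{\mu}=\omega^{\eta+\mu}$. On the source side, $p_{*}$ of the extended abelian bundle is $1$ and $\tilde q_u(1,\eta)=\eta$, so
\begin{equation*}
(\pr_2^{*}Q\otimes i_{*}^{A_1}(P_1))^{\eta}=Q^{\eta}\otimes i_{*}^{A_1+\eta}(P_1)\text{.}
\end{equation*}
On the target side, the shift passes to the left factor through $\tilde q_u(f,\eta)=\eta'$, giving
\begin{equation*}
(i_{*}^{A_2}(P_2)\otimes \pr_1^{*}Q)^{\eta}=i_{*}^{A_2+\eta'}(P_2)\otimes Q^{\eta}\text{.}
\end{equation*}
Adjusted shifts preserve connection-preserving morphisms, so the unchanged $\nu$ is connection-preserving between the shifted bundles. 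The diagram \cref{diag:1Morphisms} does not involve connections, so it still commutes.

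\emph{Step 4: the rules (a) and (b).} Claim (a) follows from $(\omega^{\eta})^{\mu}=\omega^{\eta+\mu}$, together with linearity of $\tilde q_u(f,-)$ so that the target forms add up correctly. For claim (b), the composite has bundle $\pr^{*}Q_{\mathcal{B}}\otimes\pr^{*}Q_{\mathcal{A}}$. Applying \cref{shifting-connection-of-tensor-product} gives
\begin{equation*}
(Q_{\mathcal{B}}\otimes Q_{\mathcal{A}})^{\eta}=Q_{\mathcal{B}}^{\tilde q_u(p_{*}Q_{\mathcal{A}},\eta)}\otimes Q_{\mathcal{A}}^{\eta}\text{.}
\end{equation*}
Since $p_{*}Q_{\mathcal{A}}=p_{*}\mathcal{A}$, this is exactly the stated formula. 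The intertwiners of both sides agree, because they are built from the same unshifted isomorphisms.
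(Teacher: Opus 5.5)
Your proposal is correct and follows essentially the same route as the paper. Shift the connection on $Q$, then identify the shifted curvings $\tilde B_i$ with those of $i_{*}^{(A_i',\chi_i')}(\mathcal{G}_i)$ by splitting the cross terms of $\tfrac12\kappa_{*}(A_i'\wedge A_i')$ into $\kappa_{*}^{skew}$ and $\kappa_{*}^{sym}(X_1,X_2)=b_{\kappa}(p_{*}X_1,p_{*}X_2)$. Use the tensor-product rule for adjusted shifts to see that the unchanged $\nu$ stays connection-preserving, and obtain (a) and (b) from the basic properties of the shift. Your extra remarks, that $p_{*}Q$ descends to $M$ because the anchors of $i_{*}^{A_k}(P_k)$ lie in $t(H)$ and that the compatibility diagram for $\nu$ involves no connection data, fill in details the paper leaves implicit.
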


\begin{proof}
Suppose $\mathcal{G}_i=(Y_i,\pi_i,b_i,P_i,\mu_i)$.
We recall that $i_{*}^{(A_i,\chi_i)}(\mathcal{G}_i)$ carries the $\Gamma\!$-connection $( A _i, \tilde B_i)$ with
\begin{align*}
\tilde B_i &:=i_{*}(b_i)+B^{(A_i,\chi_i)}= i_{*}(b_i+\chi_i) +\tfrac{1}{2}\kappa_{*}( A _i \wedge  A _i)\text{,} 
\end{align*}
and that it carries the principal $\Gamma\!$-bundle $i_{*}^{A_i}(P_i)$, with $A_i$ understood to be pulled back from $M$.
The principal $\Gamma\!$-bundle $Q$ of $\mathcal{A}$ has an $A_1$-$A_2$ connection of curvature 
\begin{equation*}
\mathrm{curv}_{\tilde B_1,\tilde B_2}(Q) = \kappa(\chi,\mathrm{fcurv}( A _1,\tilde B_1))\text{.}
\end{equation*}
Finally, the bundle morphism $\nu$ of $\mathcal{A}$ goes
\begin{equation*}
\nu:  \pr_2^{*}Q \otimes i_{*}^{ A _1}(P_1) \to i_{*}^{ A _2}(P_2) \otimes \pr_1^{*}Q\text{.}
\end{equation*}

The adjusted shift $Q^{\eta}$ has, by \cref{adjusted-shift-of-connection,curvature-of-adjusted-shift}, a $A_1'$-$A_2'$-connection that is $\tilde B_1'$-$\tilde B_2'$-adjusted, where \begin{align*}
\tilde B_1' &:= \tilde B_1 + \tfrac{1}{2}\kappa_{*}( \eta  \wedge  \eta )+\kappa_{*}(  A _1 \wedge \eta ) 
\\
\tilde B_2' &:= \tilde B_2 + \tfrac{1}{2}\kappa_{*}( \tilde q_u(p_{*}\mathcal{A},\eta)\wedge   \tilde q_u(p_{*}\mathcal{A},\eta))+\kappa_{*}(  A _2 \wedge  \tilde q_u(p_{*}\mathcal{A},\eta))\text{.} 
\end{align*}
By \cref{adjusted-shift-of-connection}, 
\begin{equation*}
\nu:  (\pr_2^{*}Q \otimes i_{*}^{ A _1}(P_1) )^{ \eta }\to (i_{*}^{ A _2}(P_2) \otimes \pr_1^{*}Q)^{ \eta }
\end{equation*}
is connection-preserving. Using \cref{shifting-connection-of-tensor-product} on the left and on the right, this may be viewed as a connection-preserving isomorphism
\begin{equation}
\label{lsjdfoiadfja}
\nu: \pr_2^{*}Q^{ \eta } \otimes i_{*}^{ A _1+ \eta }(P_1) \to i_{*}^{ A _2+  \tilde q_u(p_{*}\mathcal{A},\eta)}(P_2) \otimes \pr_1^{*}Q^{ \eta }\text{.}
\end{equation}

Now we are in a position to show that $\mathcal{A}^{\eta}:=(Z,\zeta,Q^{\eta},\nu)$ is a 1-morphism from $i_{*}^{( A _1',\chi_1')}(\mathcal{G}_1)$ to $i_{*}^{( A' _2,\chi_2')}(\mathcal{G}_2)$. Due to the definitions of $A_1'$ and $A_2'$, $\nu$ in \cref{lsjdfoiadfja} goes precisely between the correct bundles with connections. It then remains to prove that the 2-forms of $i_{*}^{( A_i',\chi_i')}(\mathcal{G}_i)$, namely, $i_{*}(b_i)+B^{(A_i',\chi_i')}$, coincide with the 2-forms $\tilde B'_i$ above. 
Indeed, 
\begin{align*}
 i_{*}(b_1)+B^{(A_1',\chi_1')} 
&=i_{*}(b_1+\chi_1')+\tfrac{1}{2}\kappa_{*}(A_1'\wedge A_1')
\\&=i_{*}(b_1+\chi_1+b_{\kappa}(p_{*}( A _1)\wedge p_{*}( \eta ))) +\tfrac{1}{2}\kappa_{*}(( A _1+ \eta ) \wedge ( A _1+ \eta ))
\\&= \tilde B_1 +\kappa_{*}^{sym}( A _1\wedge \eta )+\kappa_{*}^{skew}( A _1 \wedge  \eta )+\tfrac{1}{2}\kappa_{*}( \eta  \wedge  \eta )
\\&= \tilde B_1'
\end{align*}
and, similar, $i_{*}(b_2)+B^{(A_2',\chi_2')}=\tilde B'_2$.
Claims (a) and (b) follow readily from the properties of the adjusted shift of connections on principal $\Gamma\!$-bundles. 
\end{proof}

\begin{remark}
\label{adjusted-shift-of-i-of-a-morphism}
The adjusted shift applies, in particular, to the case when $\mathcal{A}=i_{*}(\mathcal{B})$ for an isomorphism $\mathcal{B}: \mathcal{G}_1 \to \mathcal{G}_2$ of $A$-bundle gerbes. Then, $i_{*}(\mathcal{B})^{\eta}: i_{*}^{(\eta,\chi)}(\mathcal{G}_1) \to i_{*}^{(\eta,\chi)}(\mathcal{G}_2)$ for any $\chi\in \Omega^2(M,\mathfrak{a})$. Since adjusted shifts  preserve connection-preserving morphisms, the extension $i_{*}(\beta)$ of a 2-morphism $\varphi: \mathcal{B} \Rightarrow \mathcal{B}'$ yields a 2-morphism $i_{*}(\varphi): i_{*}(\mathcal{B})^{\eta} \Rightarrow i_{*}(\mathcal{B}')^{\eta}$. Consequently, the adjusted extension defines a functor
\begin{equation*}
i_{*}^{(\eta,\chi)}: \grbcon AM \to  \grbcon{\Gamma\!,u,\kappa}M\text{.} 
\end{equation*}
\end{remark}

Next, we consider, for a 1-form $\beta\in \Omega^1(M,\mathfrak{f})$, a bicategory $\grbcon{\Gamma\!,u,\kappa}M^{\pcurved\beta}$ analogous  to $\grbcon{\Gamma\!,u,\kappa}M^{\pflat}$,  with the new condition that the section $\sigma$ does not need to be flat but has \quot{covariant derivative} $\beta$, i.e.  $\beta = \sigma^{*}\omega  \in \Omega^1(M,\mathfrak{f})$, where $\omega$ is the connection on $p_{*}(\mathcal{G})$. Now, it is the \emph{adjusted} extension functor
\begin{equation*}
i_{*}^{(q_u(\beta),0)}: \grbcon AM \to \grbcon{\Gamma\!,u,\kappa}M 
\end{equation*}   
that lifts canonically to a functor $i^{\pcurved{\beta}}_{*}$ with values in the bicategory $\grbcon{\Gamma\!,u,\kappa}M^{\pcurved\beta}$, as $p_{*}(i^{(q_u(\beta),0)}(\mathcal{G}))=\trivlin_1^{\beta}$ by \cref{remark-adjusted-extension:3}, and the canonical section $x \mapsto (x,1)$ has covariant derivative $\beta$.  

\begin{theorem}
\label{gerbe-morphisms-under-i-non-flat}
Let $\Gamma$ be a smoothly separable, central Lie 2-group equipped with a splitting $u$ and an adapted adjustment $\kappa$. Let $\beta\in \Omega^1(M,\mathfrak{f})$ be a 1-form.
The functor $i^{\pcurved{\beta}}_{*}$ is an equivalence of categories
\begin{equation*}
\grbcon{A}{M} \cong \grbcon{\Gamma\!,u,\kappa}M^{\pcurved\beta}\text{.} 
\end{equation*}
\end{theorem}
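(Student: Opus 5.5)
The plan is to reduce \cref{gerbe-morphisms-under-i-non-flat} to the flat case \cref{gerbe-morphisms-under-i} using the adjusted shift of \cref{shift-of-1-morphisms}. Write $\eta := -q_u(\beta)$. There are two ingredients. On the $A$-bundle gerbe side, \cref{remark-adjusted-extension} gives
\begin{equation*}
i_{*}^{(q_u(\beta),0)}(\mathcal{G}) = i_{*}^{(0,0)}(\mathcal{G})^{q_u(\beta)},
\end{equation*}
interpreted through the adjusted shift of the identity 1-morphism (\cref{adjusted-shift-of-i-of-a-morphism}). The shift correction terms $\chi'$ in \cref{shift-of-1-morphisms} vanish whenever one of the two 1-forms $A_i$ is zero, since $b_{\kappa}(0\wedge\cdot)=0$. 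On the $\Gamma$-side, I will not try to shift a general $\Gamma$-bundle gerbe. Instead I will trivialize the $F$-bundle data using the section $\sigma$, exactly as in the proof of \cref{gerbe-morphisms-under-i}.

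\emph{Essential surjectivity.} Take $(\mathcal{G},\sigma)$ with $\mathcal{G}=(Y,\pi,(A,B),P,\mu)$ adapted and $\sigma^{*}\omega=\beta$. As before, identify $\sigma$ with $\tilde\sigma:Y\to F$ and pass to $Y':=Y\times_F G$ with $g:Y'\to G$. The covariant-derivative condition now reads
\begin{equation*}
p_{*}(\rho^{*}A+g^{*}\bar\theta)=\mathrm{Ad}_{p\circ g}\,\pi'^{*}\beta
\end{equation*}
up to sign and convention. Set $\varphi:=-u(\rho^{*}A+g^{*}\bar\theta-\mathrm{Ad}_g q_u(\pi'^{*}\beta))$. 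This makes $(g,\varphi)$ a gauge transformation from $(\pi'^{*}q_u(\beta),B')$ to $\rho^{*}(A,B)$, for a suitable 2-form $B'$. I will choose $B'$ so that $(g,\varphi)$ is adjusted, i.e. $\mathrm{curv}(g,\varphi)=\kappa(g,\mathrm{fcurv}(q_u\beta,B'))$; this is possible by solving for $B'$ as in \cref{existence-of-adjusted-connection}. Using adaptedness and \cref{adapted-to-u}, the difference $B'-\tfrac12\kappa_{*}(q_u\beta\wedge q_u\beta)$ is then $\mathfrak{a}$-valued, which gives the component $i_{*}(b)$. Conjugating $P$ by $\trivlin_g^{\varphi}$ as in the flat proof produces a gerbe $\mathcal{G}'$ over $Y'$. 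It is isomorphic to $\mathcal{G}$ via a strict 1-morphism, has $\Gamma$-connection $\pi'^{*}(q_u\beta,B^{(q_u\beta,0)})+i_{*}(0,b)$, and satisfies $p_{*}P'=1$. Its bundle $P'$ carries a $q_u\beta$-$q_u\beta$-connection with trivial $F$-part. By \cref{reduction-with-shifted-connection}, $P'\cong i_{*}^{q_u\beta}(P^{\mathrm{red}})$ for an $A$-bundle $P^{\mathrm{red}}$ with connection over $Y'^{[2]}$. The isomorphism $\mu'$ reduces by full faithfulness in that proposition, using the monoidality in \cref{shifting-commutes-with-tensor-product}. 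Hence $\mathcal{G}'\cong i_{*}^{(q_u\beta,0)}(\mathcal{G}'')$, compatibly with sections.

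\emph{Full faithfulness.} Given $A$-bundle gerbes $\mathcal{G}_1,\mathcal{G}_2$ and a 1-morphism $\mathcal{A}$ between their adjusted extensions that preserves the canonical sections, we have $p_{*}\mathcal{A}=1$. Hence $\tilde q_u(1,\eta)=\eta$. By \cref{shift-of-1-morphisms} with $\eta=-q_u(\beta)$, $\mathcal{A}^{\eta}$ is a 1-morphism $i_{*}(\mathcal{G}_1)\to i_{*}(\mathcal{G}_2)$; the $\chi$-corrections are $-b_{\kappa}(\beta\wedge\beta)=0$ by symmetry. It preserves the trivial flat sections. Shifting back by $q_u(\beta)$ is inverse by \cref{shift-of-1-morphisms:a}, and the shift is compatible with composition by \cref{shift-of-1-morphisms:b}, again because $p_{*}=1$. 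So the Hom-categories of $\grbcon{\Gamma\!,u,\kappa}M^{\pcurved\beta}$ between extended objects are equivalent, via shifting, to those of $\grbcon{\Gamma\!,u,\kappa}M^{\pflat}$, and these shifts commute with $i_{*}$ on the image. Full faithfulness then follows from \cref{gerbe-morphisms-under-i}. On 2-morphisms, shifts preserve connection-preserving isomorphisms, so nothing further is needed.

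\emph{Main obstacle.} I expect the hardest step to be the local computation in essential surjectivity. It requires showing that the adjusted gauge transformation $(g,\varphi)$ has target curving exactly of the form $i_{*}(b)+\tfrac12\kappa_{*}(q_u\beta\wedge q_u\beta)$, so that $P'$ lands in the image of \cref{reduction-with-shifted-connection}. If the direct computation is too messy, my fallback is to first apply \cref{gerbe-morphisms-under-i} locally, then glue using the shifted morphisms and the sheaf property \cref{th:equivcongrb}.
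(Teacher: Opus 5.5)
Your proposal is correct. Essential surjectivity coincides with the paper's argument; full faithfulness goes by a different route.

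In essential surjectivity you use the same $\varphi$, read the source curving $B'$ off from \cref{remark-adj-gt-1}, form the conjugated gerbe $\mathcal{G}'$ with $p_{*}P'=1$, and reduce via \cref{reduction-with-shifted-connection}. The step you flag as the main obstacle is only a short computation, and no gluing fallback is needed. By \cref{adaptedness-fake}, adaptedness passes from $\rho^{*}(A,B)$ to $(\pi'^{*}q_u\beta,B')$. Hence $t_{*}B'=\tfrac12 t_{*}u([q_u\beta\wedge q_u\beta])=\tfrac12 t_{*}\kappa_{*}(q_u\beta\wedge q_u\beta)$ by \cref{rule-for-adapted-kappa}, which is exactly the paper's line.

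For full faithfulness, the paper only says the argument is analogous to the flat case. That means rerunning it with \cref{reduction-with-shifted-connection} in place of \cref{bundle-morphisms-under-i}: a 1-morphism $(Z,\zeta,Q,\nu)$ between adjusted extensions with $p_{*}Q=1$ has $Q\cong i_{*}^{q_u\beta}(Q^{\mathrm{red}})$, and $\nu$ reduces by full faithfulness and monoidality of that proposition. You instead transport the Hom-categories to the flat setting by the adjusted shift by $-q_u(\beta)$, and then cite \cref{gerbe-morphisms-under-i}.

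Your route works, for these reasons:
\begin{itemize}
\item Since $p_{*}\mathcal{A}=1$, one has $\tilde q_u(1,\eta)=\eta$, so both source and target 1-forms drop to $0$.
\item The $\chi$-corrections are $\pm b_{\kappa}(\beta\wedge\beta)=0$.
\item The shift leaves anchors and underlying bundle isomorphisms untouched. It is therefore an isomorphism of Hom-categories, inverted by the shift by $q_u(\beta)$ via \cref{shift-of-1-morphisms:a}.
\item $i_{*}^{\pcurved{\beta}}$ acts on morphisms as $i_{*}(-)^{q_u\beta}$, so under the shift it corresponds strictly to $i_{*}^{\pflat}$.
\end{itemize}

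Your approach buys modularity: the flat theorem is reused verbatim, and compatibility with horizontal composition (\cref{shift-of-1-morphisms:b}) is not even needed for local full faithfulness. The paper's version avoids the shift bookkeeping and mirrors its own essential-surjectivity step.
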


\begin{proof}
We adapt the proof of \cref{gerbe-morphisms-under-i}. The relationship between the map $\tilde\sigma:Y \to F$ (see the proof of \cref{gerbe-morphisms-under-i}) and the section $\sigma$ is now given by $\pi^{*}\beta = \mathrm{Ad}^{-1}_{\tilde\sigma}(p_{*}(A))+\tilde\sigma^{*}\theta$. 
We set $\eta := q_u(\beta)\in \Omega^1(M,\mathfrak{g})$. We now pull back along $\rho:Y' \to Y$, again using results and notation of the proof of \cref{gerbe-morphisms-under-i}, obtaining
\begin{equation*}
p_{*}(\rho^{*}\pi^{*}\eta )= \rho^{*}(\mathrm{Ad}^{-1}_{\tilde\sigma}(p_{*}(A))+\tilde\sigma^{*}\theta)=p_{*}(\mathrm{Ad}^{-1}_{g}(\rho^{*}A)+g^{*}\theta)\text{.}
\end{equation*}
Thus, defining $\varphi := u(\mathrm{Ad}_g(\pi'^{*}\eta) -\rho^{*}A-g^{*}\bar\theta)\in \Omega^1(Y',\mathfrak{h})$, we obtain the equation
\begin{equation*}
 \rho^{*}A+t_{*}\varphi= \mathrm{Ad}_g(\pi'^{*}\eta) -g^{*}\bar\theta
\end{equation*}
which we recognize as the condition for a gauge transformation
\begin{equation*}
(g,\varphi): (\pi'^{*}\eta,B') \to (\rho^{*}A,\rho^{*}B)\text{,}
\end{equation*}
where $B'\in \Omega^2(Y',\mathfrak{h})$ is to be determined from the condition that $(g,\varphi)$ be adjusted. Invoking \cref{remark-adj-gt-1}, we set
\begin{equation*}
B' :=\mathrm{d}\varphi + \tfrac{1}{2}[\varphi \wedge \varphi]+ \alpha_{*}(\rho^{*}A \wedge \varphi)+\rho^{*} B-\kappa(g,\pi'^{*}(\mathrm{d}\eta + \tfrac{1}{2}[\eta\wedge \eta]))\text{.} 
\end{equation*}
We now consider the following $\Gamma\!$-bundle gerbe 
\begin{equation*}
\mathcal{G}'=(Y',\pi',(\pi'^{*}\eta,B'), P',\mu')\text{,}
\end{equation*}
with $P'$ and $\mu'$ defined as in the proof of \cref{gerbe-morphisms-under-i}. 
However,  $P'$ now carries a $\pr_1^{*}\pi'^{*}\eta$-$\pr_2^{*}\pi'^{*}\eta$-connection that is $\pr_1^{*}B'$-$\pr_2^{*}B'$-adjusted. As in the proof  of  \cref{gerbe-morphisms-under-i}, there is a strict 1-morphism  $\mathcal{R}:=(\rho, \trivlin_g^{\varphi},\nu):\mathcal{R}:\mathcal{G}' \to \mathcal{G}$.

Next, we compute, generalizing the calculation from the proof of \cref{gerbe-morphisms-under-i}, 
\begin{equation*}
t_{*}(B')=\tfrac{1}{2}\pi'^{*} t_{*}u([q_u(\beta)\wedge q_u(\beta)])=\tfrac{1}{2}\pi'^{*}t_{*}\kappa_{*}(\eta \wedge \eta)\text{.} 
\end{equation*} 
Thus, there exists a unique $b \in \Omega^2(Y',\mathfrak{a})$ such that
\begin{equation*}
B' = i_{*}(b) +\tfrac{1}{2}\pi'^{*}\kappa_{*}(\eta \wedge \eta)\text{.}  
\end{equation*}
Furthermore, as in \cref{gerbe-morphisms-under-i}, $p_{*}P'=1$.
By \cref{reduction-with-shifted-connection}, there exists a principal $A$-bundle $P^{\mathrm{red}}$ with connection of curvature $\pr_1^{*}b - \pr_2^{*}b$, together with a connection-preserving bundle isomorphism $i_{*}^{\eta}(P^{\mathrm{red}}) \cong P'$. This shows that $\mathcal{G}''=(Y',\pi',b,P^{\mathrm{red}},\mu^{\mathrm{red}})$ is an $A$-bundle gerbe, and $i_{*}^{(\eta,0)}(\mathcal{G}'')\cong \mathcal{G}'$. This shows that the functor $i_{*}^{\pcurved\beta}$ is essentially surjective. The proof that $i_{*}^{\pcurved\beta}$ is full and faithful proceeds analogously to the one for $i_{*}^{\pflat}$ in \cref{gerbe-morphisms-under-i}.
\end{proof}

We also generalize \cref{corr-1,corr-2}, yielding the following results:

\begin{corollary}
If $\mathcal{G}$ is a $\Gamma\!$-bundle gerbe with adapted connection, then every point $x\in M$ has an open neighborhood $U$ with a 1-form $\eta\in \Omega^1(U,\mathfrak{g})$ and a 2-form $\rho\in \Omega^2(U,\mathfrak{a})$ such that $\mathcal{G}|_U \cong i_{*}^{(\eta,\rho)}(\mathcal{I}_{0})$. 
\end{corollary}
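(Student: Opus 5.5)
The plan is to mimic the proof of \cref{corr-1}, replacing \cref{gerbe-morphisms-under-i} by its non-flat generalization \cref{gerbe-morphisms-under-i-non-flat}. Let $\mathcal{G}$ have adapted connection and let $x\in M$. First choose an open neighborhood $U$ of $x$ over which the principal $F$-bundle $p_{*}(\mathcal{G})|_U$ admits a smooth section $\sigma$; this exists since $p_{*}(\mathcal{G})$ is locally trivial, and smooth separability makes $F$ a Lie group. No flatness is required. Set $\beta:=\sigma^{*}\omega\in\Omega^1(U,\mathfrak{f})$, where $\omega$ is the connection on $p_{*}(\mathcal{G})$. Then $(\mathcal{G}|_U,\sigma)$ is an object of $\grbcon{\Gamma\!,u,\kappa}U^{\pcurved\beta}$, because restriction to $U$ preserves adaptedness.

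Next, apply the essential surjectivity part of \cref{gerbe-morphisms-under-i-non-flat} over $U$. This yields an $A$-bundle gerbe $\mathcal{G}_U$ with connection over $U$ and an isomorphism $\mathcal{G}|_U\cong i_{*}^{(q_u(\beta),0)}(\mathcal{G}_U)$ in $\grbcon{\Gamma\!,u,\kappa}U$. Shrinking $U$ to a contractible neighborhood, the abelian gerbe $\mathcal{G}_U$ becomes trivializable with connection, as in \cref{corr-1}. We then get an isomorphism $\mathcal{B}:\mathcal{G}_U\to\mathcal{I}_{\rho}$ of $A$-bundle gerbes for some $\rho\in\Omega^2(U,\mathfrak{a})$.

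The one step needing care is transporting $\mathcal{B}$ through the adjusted extension. I would use \cref{adjusted-shift-of-i-of-a-morphism}, applied with $\eta:=q_u(\beta)$ and $\chi=0$, which says that $i_{*}^{(\eta,0)}$ is a functor. It gives an isomorphism $i_{*}^{(\eta,0)}(\mathcal{G}_U)\cong i_{*}^{(\eta,0)}(\mathcal{I}_\rho)$. By \cref{remark-adjusted-extension:1,remark-adjusted-extension:2},
\begin{equation*}
i_{*}^{(\eta,0)}(\mathcal{I}_{\rho})=i_{*}^{(\eta,\rho)}(\mathcal{I}_0)\text{.}
\end{equation*}
Composing the isomorphisms yields $\mathcal{G}|_U\cong i_{*}^{(\eta,\rho)}(\mathcal{I}_0)$.

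The main obstacle is the functoriality step. One has to check that $i_{*}(\mathcal{B})^{\eta}$ really runs between the adjusted extensions with unchanged parameters $(\eta,0)$. By \cref{shift-of-1-morphisms}, we can write $i_{*}^{(\eta,0)}(\mathcal{G}_j)$ as the shift of $i_{*}^{(0,0)}(\mathcal{G}_j)=i_{*}(\mathcal{G}_j)$ along $\eta$, and then the new parameters are $A_1'=\eta$ and $\chi_1'=b_\kappa(0\wedge\cdot)=0$. On the target side, $p_{*}i_{*}(\mathcal{B})=1$, so $\tilde q_u(1,\eta)=\eta$ and again $\chi_2'=0$. This confirms the claim, and the rest is bookkeeping.
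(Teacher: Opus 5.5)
Your proposal is correct and follows exactly the route the paper intends. The paper states this corollary without a written proof, as the generalization of \cref{corr-1} obtained by replacing the flat local section with an arbitrary local section of $p_{*}(\mathcal{G})$, applying \cref{gerbe-morphisms-under-i-non-flat}, and then trivializing the abelian gerbe locally. Your explicit check that $i_{*}(\mathcal{B})^{\eta}$ runs from $i_{*}^{(\eta,0)}(\mathcal{G}_U)$ to $i_{*}^{(\eta,0)}(\mathcal{I}_{\rho})=i_{*}^{(\eta,\rho)}(\mathcal{I}_0)$ supplies the bookkeeping that the paper leaves to \cref{adjusted-shift-of-i-of-a-morphism,remark-adjusted-extension}.
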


\begin{corollary}
Suppose $\mathcal{G}$ is a $\Gamma\!$-bundle gerbe with an adapted connection over $M$. Then, any section $\sigma:M \to p_{*}(\mathcal{G})$ determines a 1-form $\eta\in \Omega^1(M,\mathfrak{g})$ and a 2-form $\rho\in \Omega^2(M,\mathfrak{a})$, and an $A$-bundle gerbe $\mathcal{G}^{\mathrm{red}}$ with connection over $M$ together with an isomorphism $i_{*}^{(\eta,\rho)}(\mathcal{G}^{\mathrm{red}}) \cong \mathcal{G}$.
\end{corollary}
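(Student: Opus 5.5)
The plan is to derive this from \cref{gerbe-morphisms-under-i-non-flat}, in the same way that \cref{corr-2} follows from \cref{gerbe-morphisms-under-i}. Let $\omega$ be the connection on $p_{*}(\mathcal{G})$ and let $\sigma$ be the given section. I will set $\beta := \sigma^{*}\omega\in\Omega^1(M,\mathfrak{f})$. With this choice, $(\mathcal{G},\sigma)$ is by definition an object of $\grbcon{\Gamma\!,u,\kappa}M^{\pcurved\beta}$, because $\mathcal{G}$ is adapted and $\sigma$ has covariant derivative $\beta$.

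Next I apply essential surjectivity of $i^{\pcurved\beta}_{*}$ from \cref{gerbe-morphisms-under-i-non-flat}. This gives an $A$-bundle gerbe $\mathcal{G}^{\mathrm{red}}$ with connection and a 1-morphism $i_{*}^{(q_u(\beta),0)}(\mathcal{G}^{\mathrm{red}})\to\mathcal{G}$ that is compatible with the sections. By \cref{LemmaInvertibilityA} this 1-morphism is an isomorphism. Forgetting the section, this yields the claimed isomorphism $i_{*}^{(\eta,\rho)}(\mathcal{G}^{\mathrm{red}})\cong\mathcal{G}$ with $\eta := q_u(\beta)$ and $\rho := 0$. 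The statement allows a general $\rho$, and one may also use \cref{remark-adjusted-extension:1} to move any $\chi$ into $\mathcal{G}^{\mathrm{red}}$ via $\mathcal{G}^{\mathrm{red}}\otimes\mathcal{I}_\chi$. The word \quot{determines} is justified because the construction depends only on $\sigma$, up to the canonical choices made in the proof of the theorem, for example $\varphi$ defined via $u$.

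The main point to check is that the construction in the proof of \cref{gerbe-morphisms-under-i-non-flat} really produces an isomorphism in the pointed bicategory, and not only an abstract preimage. The proof builds a strict 1-morphism $\mathcal{R}:\mathcal{G}'\to\mathcal{G}$ and a refinement $i_{*}^{(\eta,0)}(\mathcal{G}'')\cong\mathcal{G}'$. I will compose these and apply the framing functors \eqref{framing-functors} to obtain a genuine 1-morphism in $\grbcon{\Gamma\!,\kappa}M$. Since the corollary only asks for an isomorphism of bundle gerbes with connection, compatibility with sections is not needed, and this step is essentially bookkeeping.
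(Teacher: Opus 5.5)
Your proposal is correct and is exactly how the paper obtains this corollary: the paper states it without separate proof, as an immediate consequence of the essential surjectivity in \cref{gerbe-morphisms-under-i-non-flat}, with $\beta=\sigma^{*}\omega$, $\eta=q_u(\beta)$ and $\rho=0$. Your final paragraph on framing is unnecessary, because essential surjectivity already supplies an isomorphism in the pointed bicategory, and its underlying 1-morphism is the required isomorphism in $\grbcon{\Gamma\!,u,\kappa}M$.
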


\subsection{The trivialization trick}

The bicategory $\grbcon{A}M$ of $A$-bundle gerbes with connections is symmetric monoidal \cite{waldorf1}, whereas $\grbcon{\Gamma\!,\kappa}M$ fails to be monoidal (unless $\Gamma$ is commutative). Thus, for $A$-bundle gerbes $\mathcal{G}$ and $\mathcal{H}$, there is no natural way to relate $i_{*}(\mathcal{G} \otimes \mathcal{H})$ with $i_{*}(\mathcal{G})$ or $i_{*}(\mathcal{H})$. However, if $i_{*}(\mathcal{G})$ is trivializable, one expects $i_{*}(\mathcal{G} \otimes \mathcal{H})$ to be isomorphic to $i_{*}(\mathcal{H})$. Our current framework does not yet capture this intuition; the following lemma constructs the missing isomorphism.    

\begin{lemma}
\label{versuchs-iso}
Suppose $\mathcal{G}$ and $\mathcal{H}$ are $A$-bundle gerbes with connection over $M$. Suppose 
\begin{equation*}
\mathcal{T}: i_{*}(\mathcal{G}) \to i_{*}^{(A,\chi)}(\mathcal{I}_0)
\end{equation*}
is a trivialization for some $A\in \Omega^1(M,\mathfrak{g})$ and $\chi\in \Omega^2(M,\mathfrak{a})$. Then, there exists a canonical strict 1-isomorphism
\begin{equation*}
\mathcal{T}\otimes \id_{\mathcal{H}}: i_{*}(\mathcal{G} \otimes \mathcal{H}) \to  i_{*}^{(A,\chi)}(\mathcal{H})
\end{equation*}
in $\grbcon{\Gamma\!,\kappa}M$ with the following properties:
\begin{enumerate}[(a)]

\item 
\label{versuchs-iso:1}
$p_{*}(\mathcal{T}\otimes \id_{\mathcal{H}})=p_{*}(\mathcal{T})$.  

\item
\label{versuchs-iso:2}
It is natural in $\mathcal{H}$: for any 1-morphism $\mathcal{A}:\mathcal{H}_1 \to \mathcal{H}_2$, there exists a canonical 2-morphism such that the diagram
\begin{equation*}
\alxydim{@C=5em}{i_{*}(\mathcal{G} \otimes \mathcal{H}_1) \ar[r]^-{\mathcal{T} \otimes \id_{\mathcal{H}_1}} \ar[d]_{i_{*}(\id \otimes \mathcal{A})} & i_{*}^{(A,\chi)}(\mathcal{H}_1) \ar@{=>}[dl]|{\beta_{\mathcal{A}}} \ar[d]^{i_{*}(\mathcal{A})^{A}} \\ i_{*}(\mathcal{G} \otimes \mathcal{H}_2) \ar[r]_-{\mathcal{T} \otimes \id_{\mathcal{H}_2}} & i_{*}^{(A,\chi)}(\mathcal{H}_2)}
\end{equation*}
commutes, where $i_{*}(\mathcal{A})^{A}$ is the adjusted shift from \cref{shift-of-1-morphisms}; see \cref{adjusted-shift-of-i-of-a-morphism}. 

\item
\label{versuchs-iso:3}
If $\mathcal{H}=\mathcal{I}_{\rho}$, for a 2-form $\rho\in\Omega^2(M,\mathfrak{a})$, then $\mathcal{T} \otimes \id_{\mathcal{I}_{\rho}}=\mathcal{T}$.

\end{enumerate}
\end{lemma}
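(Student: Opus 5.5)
By \cref{lem:canonicalrefinements:a} and the remark on trivializations, we may assume that $\mathcal{T}$ is a strict 1-morphism. Write $\mathcal{G}=(Y,\pi,b,P,\mu)$ and $\mathcal{H}=(Y_H,\pi_H,b_H,P_H,\mu_H)$, and let $\mathcal{T}=(\rho,Q,\nu)$ with $\rho:Y\to M$ the projection (the target $i_{*}^{(A,\chi)}(\mathcal{I}_0)=\mathcal{I}_{A,B^{(A,\chi)}}$ lives over $\id_M$). Then $Q$ is a principal $\Gamma\!$-bundle over $Y$. It carries an $i_{*}b$-$(\pi^{*}B^{(A,\chi)})$-adjusted $0$-$\pi^{*}A$-connection. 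The map $\nu:\pr_2^{*}Q\otimes i_{*}(P)\to \pr_1^{*}Q$ is an isomorphism over $Y^{[2]}$, after the unit isomorphism with $\trivlin_1^0$. The tensor product gerbe $\mathcal{G}\otimes\mathcal{H}$ has surjective submersion $Y\times_M Y_H$, curving $b+b_H$, and bundle $P\otimes P_H$. The proposed strict 1-morphism is $\mathcal{T}\otimes\id_{\mathcal{H}}:=(\pr_{Y_H},\pr_Y^{*}Q,\tilde\nu)$. Its refinement map is the projection $Y\times_M Y_H\to Y_H$, and $\tilde\nu$ is an isomorphism
\begin{equation*}
\pr_2^{*}Q\otimes i_{*}(P\otimes P_H)\to i_{*}^{A}(P_H)\otimes \pr_1^{*}Q .
\end{equation*}

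The key steps are as follows. First, I check that $\pr_Y^{*}Q$ has the right type. It must be a $0$-$A$-connection that is $i_{*}(b+b_H)$-$(i_{*}b_H+B^{(A,\chi)})$-adjusted. This follows from \cref{shift-of-curving}, by adding $i_{*}b_H$ to both curvings. Second, I build $\tilde\nu$ as the composite
\begin{equation*}
\pr_2^{*}Q\otimes i_{*}(P)\otimes i_{*}(P_H)\xrightarrow{\nu\otimes\id}\pr_1^{*}Q\otimes i_{*}(P_H)\xrightarrow{\beta}i_{*}^{A}(P_H)\otimes\pr_1^{*}Q,
\end{equation*}
precomposed with the monoidality isomorphism $\mu$ of \cref{monoidal-covariance}. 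Here $\beta$ is the braiding of \cref{tensor-product-with-abelian-bundle}, and it is connection-preserving precisely because $Q$ is a $0$-$A$-connection. This is exactly why the target acquires the adjusted extension $i_{*}^{A}(P_H)$. Third, I verify the diagram \cref{eq:refinementdiag-strict}. It splits into the compatibility of $\nu$ with $\mu$, which is the strict 1-morphism condition for $\mathcal{T}$, and the naturality and hexagon properties of the half-braiding $\beta$ from \cref{reduction-to-abelian}, with \cref{braiding-abelian} handling the braiding of $i_{*}(P)$ past $i_{*}(P_H)$. I expect this coherence verification, together with tracking that all the isomorphisms remain connection-preserving after the adjusted shift, to be the main bookkeeping obstacle. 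The difficulty is not conceptual, however: every ingredient is already connection-preserving.

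For (a), the underlying $F$-map of the new morphism is $p_{*}Q$, since $p_{*}$ of the extended $A$-bundles is trivial, and this equals $p_{*}(\mathcal{T})$. For (c), when $\mathcal{H}=\mathcal{I}_\rho$ we have $Y_H=M$ and $P_H=\trivlin$. Then $\beta$ reduces to the identity via the unitors, so $\tilde\nu=\nu$ under the identifications $Y\times_M M=Y$ and $\mathcal{G}\otimes\mathcal{I}_\rho$, and \cref{remark-adjusted-extension:1} matches the targets. For (b), by \cref{lem:canonicalrefinements:a} I write $\mathcal{A}=(Z,\zeta,R,\nu_R)$. The two composites are then explicit 1-morphisms whose bundles are $R^{A}\otimes Q$ and $Q\otimes i_{*}(R)$ over a common refinement. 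I define $\beta_{\mathcal{A}}$ as the braiding isomorphism of \cref{tensor-product-with-abelian-bundle} between these bundles. It commutes with the $\nu$'s by the same hexagon and naturality arguments used in the third step.
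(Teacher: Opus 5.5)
Your proposal is correct and follows the paper's proof essentially step for step. It uses the same strict 1-morphism $(\pr_{Y_H},\pr_Y^{*}Q,\tilde\nu)$ with $\tilde\nu=\beta\circ(\nu\otimes\id)$, the same appeal to \cref{shift-of-curving} for the curvings, the same naturality and hexagon argument (with \cref{braiding-abelian}) for the coherence diagram, and the half-braiding of \cref{tensor-product-with-abelian-bundle} as $\beta_{\mathcal{A}}$.

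The only nuance is in (b). The paper first reduces to a strict $\mathcal{A}=(\rho,R,\chi)$, and only there do the two composites literally carry $i_{*}(R)^{A}\otimes Q$ and $Q\otimes i_{*}(R)$ as you describe. The canonical form from \cref{lem:canonicalrefinements:a} (bundle over $Y_1'\times_M Y_2'$, $\zeta=\id$) is not itself a strict 1-morphism.
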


We remark that $\mathcal{T}\otimes \id_{\mathcal{H}}$ is merely a notation; there is no tensor product between $\Gamma\!$-bundle gerbes and their morphisms. Furthermore, from \cref{remark-adjusted-extension:2}, we have
\begin{equation*}
i_{*}^{(A,\chi)}(\mathcal{I}_0)=\mathcal{I}_{A,B^{(A,\chi)}}\text{,}
\end{equation*}
with $B^{(A,\chi)}:= i_{*}(\chi)+\tfrac{1}{2}\kappa_{*}(A \wedge A)$. 
All bundle gerbes in \cref{versuchs-iso} are fake-flat; in particular, $(A,B^{(A,\chi)})$ is a fake-flat $\Gamma\!$-connection (since $\mathcal{I}_{A,B^{(A,\chi)}}$ is isomorphic to $i_{*}(\mathcal{G})$, which is fake-flat by assumption).

\begin{proof}
Suppose $\mathcal{G}=(Y,\pi,b,P,\mu)$ and $\mathcal{H}=(Y',\pi',b',P',\mu')$.
Recall that 
\begin{equation*}
i_{*}^{(A,\chi)}(\mathcal{H})=(Y',\pi',\pi'^{*}A,i_{*}(b')+\pi'^{*}B^{(A,\chi)},i_{*}^{A}(P'),i_{*}(\mu'))\text{.}
\end{equation*}
Let the trivialization $\mathcal{T}: i_{*}(\mathcal{G}) \to i_{*}^{(A,\chi)}(\mathcal{I}_0)$, viewed as a strict 1-morphism, be $\mathcal{T}=(\pi,Q,\nu)$, where $Q$ is a principal $\Gamma\!$-bundle over $Y$ with an $i_{*}(b)$-$\pi^{*}B^{(A,\chi)}$-adjusted $0$-$\pi^{*}A$-connection, and $\nu$ is an isomorphism
\begin{equation*}
\nu:\pr_2^{*}Q \otimes i_{*}(P) \to \pr_1^{*}Q\text{.}
\end{equation*}
We define $\mathcal{T}\otimes \id_{\mathcal{H}}$ as a strict 1-morphism using the projection $\pr_2: Y \times_X Y' \to Y'$, the principal $\Gamma\!$-bundle $\tilde Q := \pr_1^{*}Q$ over $Y \times_X Y'$, and the bundle isomorphism $\tilde\nu$ defined fibrewise over $((y_1,y_1'),(y_2,y_2'))$ by
\begin{equation*}
\alxydim{@C=1em}{\tilde Q_{y_2,y_2'} \otimes i_{*}(P_{y_1,y_2} \otimes P'_{y_1',y_2'}) \ar@{=}[r] & Q_{y_2}\otimes i_{*}(P_{y_1,y_2} )\otimes i_{*}(P'_{y_1',y_2'} )\ar[d]^{ \nu_{y_1,y_2} \otimes \id} \\ &
 Q_{y_1}\otimes i_{*}(P'_{y_1',y_2'} )
\ar[d]^{\beta \text{ from \cref{tensor-product-with-abelian-bundle}}}\\ & i_{*}^{ A}(P'_{y_1',y_2'} ) \otimes Q_{y_1} \ar@{=}[r] & i_{*}^{ A}(P'_{y_1',y_2'}) \otimes \tilde Q_{y_1,y_1'}\text{.}}
\end{equation*}
Note that $\tilde Q$ carries a $0$-$\pi'^{*}A$-connection that is $\pr_1^{*}i_{*}(b)$-$\pi'^{*}B^{(A,\chi)}$-adjusted, and hence also $i_{*}(\pr_1^{*}b+\pr_2^{*}b')$-$(\pi'^{*}B^{(A,\chi)}+i_{*}(b'))$-adjusted by \cref{shift-of-curving}. The condition from \cref{eq:refinementdiag-strict} in \cref{strict-1-morphism} reads:
\begin{equation*}
\alxydim{@C=6em}{Q{_{y_3}}\otimes i_{*}(P_{y_2,y_3} )\otimes i_{*}(P'_{y_2',y_3'}) \otimes i_{*}(P_{y_1,y_2} )\otimes i_{*}( P'_{y_1',y_2'}) \ar[d]_{\nu_{y_2,y_3} \otimes \id \otimes \id \otimes \id} \ar[r]^-{\id \otimes i_{*}(\mu_{\mathcal{G} \otimes \mathcal{H}})} & Q{_{y_3}}\otimes i_{*}(P_{y_1,y_3} )\otimes i_{*}(P'_{y_1',y_3'})  \ar[dd]^{\nu_{y_1,y_3} \otimes \id}\\  Q_{y_2} \otimes  i_{*}(P'_{y_2',y_3'}) \otimes i_{*}(P_{y_1,y_2} )\otimes i_{*}( P'_{y_1',y_2'}) \ar[d]_{\beta \otimes \id \otimes \id}  \\   i_{*}^{A}(P'_{y_2',y_3'}) \otimes Q_{y_2}\otimes  i_{*}(P_{y_1,y_2} )\otimes i_{*}( P'_{y_1',y_2'})\ar[d]_{\id \otimes \nu_{y_1,y_2} \otimes \id} &  Q_{y_1} \otimes i_{*}(P'_{y_1',y_3'})  \ar[dd]^{\beta} \\ i_{*}^{A}(P'_{y_2',y_3'}) \otimes Q_{y_1}\otimes i_{*}( P'_{y_1',y_2'}) \ar[d]_{\id \otimes \beta } & \\ i_{*}^{A}(P'_{y_2',y_3'}) \otimes  i_{*}^{A}(P'_{y'_1,y'_2} )\otimes Q_{y_1}  \ar[r]_-{\mu'\otimes \id} & i_{*}^{A}(P'_{y_1',y_3'}) \otimes  Q{_{y_1}}}
\end{equation*}
The top of the diagram involves the bundle gerbe product $\mu_{\mathcal{G} \otimes \mathcal{H}}$ of $\mathcal{G}\otimes \mathcal{H}$, given by  
\begin{equation*}
\alxydim{}{P_{y_2,y_3} \otimes P'_{y_2',y_3'} \otimes P_{y_1,y_2} \otimes P'_{y_1',y_2'} \ar[r]^{\id \otimes \beta \otimes \id} &  P_{y_2,y_3}  \otimes P_{y_1,y_2}\otimes P'_{y_2',y_3'} \otimes P'_{y_1',y_2'} \ar[r]^-{\mu \otimes \mu'} & P_{y_1,y_3}   \otimes P'_{y_1',y_3'}\text{,}}
\end{equation*}
where $\beta$ denotes the standard braiding of principal $A$-bundles, which coincides with the half-braiding by \cref{braiding-abelian}. The diagram commutes due to the commutativity of the corresponding diagram for $\nu$, the naturality of the half-braiding $\beta$, and the hexagon axioms for $\beta$ (\cref{reduction-to-abelian}).
This completes the construction of the strict 1-morphism $\mathcal{T}\otimes \id_{\mathcal{H}}$.

\cref{versuchs-iso:1*,versuchs-iso:3*} are evident. It suffices to prove  \cref{versuchs-iso:2*} in the case where $\mathcal{A}$ is a strict 1-morphism, i.e. $\mathcal{A}=(\rho,R,\chi)$. If $\mathcal{H}_i=(\pi'_i,Y_i',b_i',P_i',\mu_i')$, then  $\rho: Y_1' \to Y_2'$, $R$ is a principal $A$-bundle over $Y_1'$ with connection of curvature $b_1'-b_2'$. Then, the clockwise composition yields the strict 1-morphism consisting of the map  $\rho \circ \pr_2: Y \times_X Y_1' \to Y_2'$, the principal $\Gamma\!$-bundle $\pr_2^{*}i_{*}(R)^{A} \otimes \pr_1^{*} Q$, and the bundle isomorphism given over a point $((y,y_1'),(\tilde y, \tilde y_1'))$ by $\chi_{y_1',\tilde y_1'} \circ \nu_{y,\tilde y}$. The counter-clockwise  composition yields the strict 1-morphism consisting of the map $\pr_2 \circ (\id \times \rho): Y \times_X Y_1' \to Y_2'$, the principal $\Gamma\!$-bundle $\pr_1^{*}Q \otimes \pr_2^{*}i_{*}(R)$, and the bundle isomorphism given by $\nu_{y,\tilde y} \circ \chi_{y_1'\tilde y_1'}$. The two maps $Y \times_X Y_1' \to Y_2'$  are identical, and the half-braiding establishes a connection-preserving isomorphism
\begin{equation*}
\pr_2^{*}i_{*}(R)^{A} \otimes \pr_1^{*}Q \to \pr_1^{*}Q \otimes \pr_2^{*}i_{*}(R) 
\end{equation*}   
by \cref{tensor-product-with-abelian-bundle}. Its naturality ensures it defines a 2-morphism. 
\end{proof}

\begin{remark}
If the trivialization $\mathcal{T}$ is a refinement, i.e., $Q=\trivlin_g^{\varphi}$ for a gauge transformation $(g,\varphi):(0,i_{*}(b)) \to \pi^{*}(A,B^{(A,\chi)})$ over $Y$, and $\nu$ is an isomorphism $\nu: \pr_2^{*}\trivlin_{g}^{\varphi} \otimes i_{*}(P) \to  \pr_1^{*}\trivlin_g^{\varphi}$ over $Y^{[2]}$, then $\mathcal{T} \otimes \id_{\mathcal{H}}$ is again a refinement. The gauge transformation is
\begin{equation*}
\pr_1^{*}(g,\varphi):(0,i_{*}(\pr_1^{*}b+\pr_2^{*}b')) \to \pr_2^{*}(\pi'^{*}A,i_{*}(b')+\pi'^{*}B^{(A,\chi)})
\end{equation*}
over $Y \times_X Y'$, and the isomorphism over $((y,y'),(\tilde y,\tilde y'))\in (Y \times_X Y') \times_X (Y \times_X Y')$ is
\begin{equation*}
\alxydim{@C4em}{(\trivlin_{g}^{\varphi})_{\tilde y} \otimes i_{*}(P_{y,\tilde y} \otimes P'_{y',\tilde y'})\ar[r]^-{\nu_{y,\tilde y}\otimes \id}& (\trivlin_{g}^{\varphi})_{y} \otimes i_{*}(P'_{y',\tilde y'}) \ar[r]^{\beta} &  i_{*}^{A}(P'_{y',\tilde y'}) \otimes (\trivlin_{g}^{\varphi})_{y}.}
\end{equation*} 
\end{remark}

\setsecnumdepth{2}

\section{The multiplicative bundle gerbe of a Lie 2-group}

\label{multiplicative-bundle-gerbe}

Let $\Gamma$ be a smoothly separable, central Lie 2-group equipped with an adjustment $\kappa$ adapted to a splitting $u$.
In \cref{the-bundle-gerbe-of-a-crossed-module,multiplicativity}, we review the construction of an associated strictly multiplicative $A$-bundle gerbe $\mathcal{G}_{\Gamma}$ with connection over $F$. Without connections, this construction is due to \cite{Nikolausa}; the incorporation of connections (using the adjustment) is due to Téllez-Domínguez \cite{Tellez2023}.
In \cref{the-trivialization}, we show that the $\Gamma\!$-bundle gerbe $i_{*}(\mathcal{G}_{\Gamma})$ is trivializable and demonstrate how this trivialization is compatible with the multiplicative structure. Without connections, this result again appears in \cite{Nikolausa}; the extension to connections is new. The material presented here is essential for the lifting theory developed in \cref{lifting-theory}. 

\subsection{The bundle gerbe of a crossed module}

\label{the-bundle-gerbe-of-a-crossed-module}
 
We construct an $A$-bundle gerbe $\mathcal{G}_{\Gamma}$ from the crossed module $\Gamma$, reviewing \cite{Nikolausa}. The surjective submersion of $\mathcal{G}_{\Gamma}$ is $p: G \to F$, and its principal $A$-bundle over $G \times_F G$ has total space $\Gamma_1 := H \times G$, projection $(h,g) \mapsto (g,t(h)g)$, and $A$-action $(h,g)\cdot a := (ha,g)$.
The bundle gerbe product is the composition in $\Gamma$, i.e.,
\begin{equation*}
\mu((h_{23},g_{23}) ,(h_{12},g_{12})) := (h_{23},g_{23})\circ (h_{12},g_{12})=(h_{23}h_{12},g_{12})\text{.}
\end{equation*}

\begin{remark}
\label{semi-direct-product}
The notation $\Gamma_1$ reflects that $\Gamma_1=H \times G$ is the Lie group of morphisms when Lie 2-groups are regarded as groupoids internal to the category of Lie groups. The group structure is given by the semi-direct product
\begin{equation*}
(h_1,g_1) \cdot (h_2,g_2) := (h_1\alpha(g_1,h_2),g_1g_2)\text{.}
\end{equation*}
\end{remark}

In order to define a connection on $\mathcal{G}_{\Gamma}$, we use the splitting $u:\mathfrak{g} \to \mathfrak{h}$, and its associated retract $j:=j_u$; see \cref{Lie-2-groups}.
Following \cite[Prop. 3.9]{Tellez2023}, a connection $\omega$ on the principal $A$-bundle $\Gamma_1$ over $G \times_F G$ is defined by setting
\begin{equation}
\label{definition-of-omega}
\omega := j((\alpha_{\pr_2}^{-1})_{*}(\pr_1^{*}\theta)) \in \Omega^1(\Gamma_1,\mathfrak{a})\text{,}
\end{equation}
where $\theta \in \Omega^1(H,\mathfrak{h})$ is the Maurer-Cartan form on $H$.
One can easily check (using that $\Gamma$ is central) that this is indeed a connection,
and that the isomorphism $\mu$ is connection-preserving.

Next we describe how an adjustment adds a curving, i.e., a 2-form $B_{\Gamma}\in \Omega^2(G)$ such that \cref{relation-between-B-and-omega} holds, i.e. $\mathrm{d}\omega=\pr_1^{*}B_{\Gamma}-\pr_2^{*}B_{\Gamma}$. Curvings always exist \cite{murray}, but they are only  unique up to the addition of pullbacks of 2-forms from $F$. 
We recall that, for any curving, $\mathrm{d}B_{\Gamma}$ descends to a unique 3-form $H_{\Gamma}\in \Omega^3(F)$, the curvature of $\mathcal{G}_{\Gamma}$. 

\begin{lemma}
\label{curvature-of-curving}
The 2-form
\begin{equation*}
B_{\Gamma} := \tfrac{1}{2}j(\kappa_{*}(\theta \wedge \theta)) \in \Omega^2(G,\mathfrak{a})
\end{equation*}
is a curving for the connection $\omega$ on $\mathcal{G}_{\Gamma}$, and its curvature is
\begin{equation*}
H_{\Gamma}:=- \tfrac{1}{6}b_{\kappa}(\theta\wedge [\theta\wedge\theta])\in \Omega^3(F,\mathfrak{a})\text{,}
\end{equation*}
where $b_{\kappa}$ is the symmetric invariant bilinear form associated with $\kappa$.
\end{lemma}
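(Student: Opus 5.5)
We check two things: the curving identity $\mathrm{d}\omega=\pr_1^{*}B_{\Gamma}-\pr_2^{*}B_{\Gamma}$ on $\Gamma_1=H\times G$, and then compute $\mathrm{d}B_{\Gamma}$ and show that it is the pullback along $p$ of $-\tfrac{1}{6}b_{\kappa}(\theta\wedge[\theta\wedge\theta])$. Here $\pr_1,\pr_2:\Gamma_1\to G$ are the two bundle projections $(h,g)\mapsto g$ and $(h,g)\mapsto t(h)g$ (not the factor projections used in \cref{definition-of-omega}).

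\textbf{Step 1: $\mathrm{d}\omega$.} Write $\omega=j(\vartheta)$ with $\vartheta:=(\alpha_{g^{-1}})_{*}(h^{*}\theta)$. Differentiating gives
\[
\mathrm{d}\vartheta=-\tfrac12[\vartheta\wedge\vartheta]-\alpha_{*}(g^{*}\theta\wedge\vartheta),
\]
using $\mathrm{d}\theta=-\tfrac12[\theta\wedge\theta]$ and the derivative of $\alpha_{g^{-1}}$, where $g^{*}\theta$ is the left-invariant Maurer--Cartan form of $G$. Since $j=\id-u t_{*}$ on the image of $i_{*}$ complement decomposition ($i_{*}j+ut_{*}=\id$), we can rewrite each bracket term: $\alpha_{*}(X,Y)=\kappa_{*}(X,t_{*}Y)$ by \cref{properties-of-kappa-stern:1}, and $[\vartheta\wedge\vartheta]=\alpha_{*}(t_{*}\vartheta\wedge\vartheta)=\kappa_{*}(t_{*}\vartheta\wedge t_{*}\vartheta)$. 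Thus $\mathrm{d}\omega$ is expressed purely through $\kappa_{*}$ evaluated on $\mathfrak g$-valued forms built from $g^{*}\theta$ and $t_{*}\vartheta$.

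\textbf{Step 2: $\pr_2^{*}B_{\Gamma}$.} On the other side, the left Maurer--Cartan form of $t(h)g$ is
\[
\pr_2^{*}\theta=\Ad_{g}^{-1}(t_{*}h^{*}\theta)+g^{*}\theta=t_{*}\vartheta+g^{*}\theta.
\]
Hence
\[
\pr_2^{*}B_{\Gamma}=\tfrac12 j\kappa_{*}\big((t_{*}\vartheta+g^{*}\theta)\wedge(t_{*}\vartheta+g^{*}\theta)\big).
\]
Expanding, the $g^{*}\theta\wedge g^{*}\theta$ term is $\pr_1^{*}B_{\Gamma}$. The two cross terms combine, via \cref{properties-of-kappa-stern:1,properties-of-kappa-stern:2} and graded antisymmetry of 1-forms, into $\alpha_{*}(g^{*}\theta\wedge\vartheta)$ up to sign. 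The $t_{*}\vartheta\wedge t_{*}\vartheta$ term gives $\tfrac12[\vartheta\wedge\vartheta]$. Matching with Step 1 yields $\mathrm{d}\omega=\pr_1^{*}B_{\Gamma}-\pr_2^{*}B_{\Gamma}$. The main obstacle is sign bookkeeping: the convention that $\kappa_{*}$ is the derivative in the \emph{first} argument, wedge-ordering signs for 1-forms, and the fact that $j$ kills $u$-parts. The latter should not matter, since all identities hold before applying $j$ except the $\mathfrak a$-projection.

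\textbf{Step 3: curvature.} Compute
\[
\mathrm{d}B_{\Gamma}=\tfrac12 j\big(\kappa_{*}(\mathrm{d}\theta\wedge\theta)-\kappa_{*}(\theta\wedge\mathrm{d}\theta)\big).
\]
With $\mathrm{d}\theta=-\tfrac12[\theta\wedge\theta]$, this becomes a combination of $\kappa_{*}([\theta\wedge\theta]\wedge\theta)$ and $\kappa_{*}(\theta\wedge[\theta\wedge\theta])$. These can be related using the cyclic identity \cref{properties-of-kappa-stern:5}, which converts the first into a multiple of the second. Then \cref{kappa-with-differential-forms}(c) gives
\[
\kappa_{*}(\theta\wedge[\theta\wedge\theta])=\tfrac23 b_{\kappa}(p_{*}\theta\wedge[p_{*}\theta\wedge p_{*}\theta]),
\]
which is already $\mathfrak a$-valued, so $j$ acts as the identity on it. Collecting coefficients should produce $-\tfrac16$. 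Finally, $p_{*}\theta=p^{*}\theta_F$, so this equals $p^{*}H_{\Gamma}$, which identifies $H_{\Gamma}$ as the unique descended 3-form.
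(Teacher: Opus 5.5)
Your proposal is correct and follows essentially the same route as the paper. Both sides of $\mathrm{d}\omega=\pr_1^{*}B_{\Gamma}-\pr_2^{*}B_{\Gamma}$ reduce to the common expression $-\tfrac12 j[\vartheta\wedge\vartheta]-j\alpha_{*}(g^{*}\theta\wedge\vartheta)$. Your cross-term signs work out, since $\kappa_{*}(t_{*}\vartheta\wedge g^{*}\theta)=\kappa_{*}(g^{*}\theta\wedge t_{*}\vartheta)$ by \cref{properties-of-kappa-stern:1,properties-of-kappa-stern:2}.

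The cyclic identity \cref{properties-of-kappa-stern:5} then gives $\kappa_{*}([\theta\wedge\theta]\wedge\theta)=2\kappa_{*}(\theta\wedge[\theta\wedge\theta])$, hence $\mathrm{d}B_{\Gamma}=-\tfrac14 j\kappa_{*}(\theta\wedge[\theta\wedge\theta])$, exactly as in the paper. The only cosmetic difference is that you convert to $b_{\kappa}$ via \cref{kappa-with-differential-forms}(c), using the factor $\tfrac23$, whereas the paper symmetrizes $\kappa_{*}$ directly; both give $-\tfrac16$.
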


\begin{proof}
To verify \cref{relation-between-B-and-omega}, one can check that both sides, $\mathrm{d}\omega$ and $\pr_1^{*}B_{\Gamma}-\pr_2^{*}B_{\Gamma}$, are equal to  
\begin{equation}
\label{curvature-of-the-connection}
\mathrm{d}\omega = -\tfrac{1}{2}j(\alpha_{\pr_2}^{-1})_{*}(\pr_1^{*}[\theta\wedge \theta])-j\alpha_{*}(\pr_2^{*}\theta \wedge (\alpha_{\pr_2}^{-1})_{*}(\pr_1^{*}\theta))=\pr_1^{*}B_{\Gamma}-\pr_2^{*}B_{\Gamma}\text{.} 
\end{equation}
One can then easily derive using \cref{properties-of-kappa-stern:5} that $\mathrm{d}B_{\Gamma}=-\tfrac{1}{4}j(\kappa_{*}(\theta\wedge[\theta\wedge\theta]))$. 
It is then a straightforward matter of using the definition of $b_{\kappa}$ as the symmetrization of $\kappa_{*}$ to show that this is indeed equal to the claimed expression.
\end{proof}

\subsection{Multiplicativity}

\label{multiplicativity}

A multiplicative structure on an  $A$-bundle gerbe  $\mathcal{G}$ with connection over $F$ consists of a 2-form $\rho\in\Omega^2(F \times F,\mathfrak{a})$, a 1-isomorphism
\begin{equation*}
\mathcal{M}:\pr_1^{*}\mathcal{G} \otimes \pr_2^{*}\mathcal{G} \to m^{*}\mathcal{G} \otimes \mathcal{I}_{\rho}
\end{equation*}
over $F \times F$, where $m: F \times F \to F$ denotes the multiplication of the group $F$, and a certain \quot{associativity} 2-isomorphism over $F \times F \times F$; see \cite[Def. 1.3]{waldorf5}. 
In certain cases, the multiplicative structure is strict (see \cite[\S 2]{Waldorf}), meaning that the 1-isomorphism $\mathcal{M}$ is induced from a refinement (\cref{refinement}), and the associativity 2-isomorphism is just a strict associativity condition. 

For the bundle gerbe $\mathcal{G}_{\Gamma}$, this is indeed the case: we set 
\begin{equation}
\label{definition-of-rho}
\rho_{\Gamma} := b_{\kappa}(\pr_2^{*}\bar\theta \wedge \pr_1^{*}\theta)\in \Omega^2(F \times F,\mathfrak{a})\text{,}
\end{equation}
where $\theta$ and $\bar\theta$ denote the left and right invariant Maurer-Cartan forms, respectively. We collect the following structure for a refinement $\mathcal{M}_{\Gamma}:=(f,(1,\tau),\nu)$ in the notation of \cref{refinement}. The first entry is the map $f := (p\times p,m)$ between the domains of the surjective submersions of the bundle gerbes $\pr_1^{*}\mathcal{G} \otimes \pr_2^{*}\mathcal{G}$ and $m^{*}\mathcal{G} \otimes \mathcal{I}_{\rho}$,
\begin{equation*}
\alxydim{@C=5em}{
G \times G \ar[d]_{p \times p} \ar[r]^-{f} & (F \times F) \ttimes mp G \ar[d]^{\pr_1}
\\ F \times F \ar@{=}[r] & F \times F\text{.}}
\end{equation*}
The third entry is the following 1-form $\tau$, defined in \cite{Tellez2023}, on the domain of the surjective submersion of the source bundle gerbe:
\begin{equation}
\label{definition-of-tau}
\tau := j \kappa(\inv \circ \pr_2,\pr_1^{*}\theta) \in \Omega^1(G \times G,\mathfrak{a})\text{.}
\end{equation}
For the last entry, we observe that the principal $A$-bundle of $\pr_1^{*}\mathcal{G} \otimes \pr_2^{*}\mathcal{G}$ lives over $G^2{\times_{F^2}}G^2$, and is given  by $\pr_{13}^{*}\Gamma_1 \otimes_A \pr_{24}^{*}\Gamma_1$.  
The last entry is the isomorphism 
\begin{equation}
\label{iso-nu-of-mult-structure}
\nu_{\Gamma}: {}^{\pr_2^{*}(1,\tau)}(\pr_{13}^{*}\Gamma_1 \otimes_A \pr_{24}^{*}\Gamma_1) \to  (m^{[2]})^* (\Gamma_1^{\pr_1^{*}(1,\tau)})  
\end{equation}
between principal $A$-bundles over $G^2{\times_{F^2}}G^2$, which is induced by the multiplication $\Gamma_1 \times \Gamma_1 \to \Gamma_1$ of the Lie group $\Gamma_1$; see \cref{semi-direct-product}. 
For this, it is important that we assumed $\Gamma$ to be central, because only then the multiplication map is $A$-equivariant and induces a bundle morphism. 

\begin{lemma}
\label{multiplication-refinement}
The quadruple $\mathcal{M}_{\Gamma}=(f,(1,\tau),\nu_\Gamma)$ is a refinement 
\begin{equation*}
\pr_1^{*}\mathcal{G}_{\Gamma} \otimes \pr_2^{*}\mathcal{G}_{\Gamma} \to m^{*}\mathcal{G}_{\Gamma} \otimes \mathcal{I}_{\rho_{\Gamma}}
\end{equation*}
and equips $\mathcal{G}_{\Gamma}$ with a strictly multiplicative structure. 
\end{lemma}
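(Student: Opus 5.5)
The plan is to verify the axioms of \cref{refinement}, specialised to $A$-bundle gerbes, one at a time, and then the strict associativity condition over $F\times F\times F$. Since $BA$ is abelian with $\mathfrak{g}=0$, a gauge transformation $(1,\tau)$ from $(0,b_1)$ to $(0,b_2)$ only requires the curvature identity $\mathrm{d}\tau+b_2-b_1=0$. Here $b_1=\pr_1^{*}B_\Gamma+\pr_2^{*}B_\Gamma$ on $G\times G$, and $b_2=m^{*}B_\Gamma+(p\times p)^{*}\rho_\Gamma$ is pulled back along $f$. So the first step is the 2-form identity
\begin{equation*}
m^{*}B_\Gamma - \pr_1^{*}B_\Gamma-\pr_2^{*}B_\Gamma + (p\times p)^{*}\rho_\Gamma = -\mathrm{d}\tau\text{.}
\end{equation*}
I would compute it by writing $m^{*}\theta=\Ad_{\pr_2}^{-1}(\pr_1^{*}\theta)+\pr_2^{*}\theta$ and expanding $\tfrac12 j\kappa_{*}(m^{*}\theta\wedge m^{*}\theta)$. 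The cross terms are then handled with \cref{properties-of-kappa-stern:4}, which converts $\kappa_{*}(\Ad_g^{-1}X_1,\Ad_g^{-1}X_2)$ into $\kappa_*(X_1,X_2)$ plus a $\kappa(g^{-1},[X_1,X_2])$-term. The remaining mixed terms are $\kappa_*(\Ad^{-1}_{g_2}\theta_1\wedge\theta_2)$. The derivative $\mathrm{d}\tau=j\,\mathrm{d}\kappa(g_2^{-1},\theta_1)$ is computed with \cref{kappa-with-differential-forms}(a). The $\kappa(g_2^{-1},\mathrm{d}\theta_1)$ piece cancels the $\kappa(g^{-1},[\cdot,\cdot])$ piece, because $\mathrm{d}\theta=-\tfrac12[\theta\wedge\theta]$. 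What remains is a $\kappa_*$ applied to mixed 1-forms. Splitting that into symmetric and skew parts, the skew part should cancel against the mixed terms. The symmetric part is $b_\kappa$ applied to the $p_*$-images, and this should produce exactly $\rho_\Gamma=b_\kappa(\bar\theta_2\wedge\theta_1)$ after using $\Ad$-invariance of $b_\kappa$. The retract $j$ has to be tracked throughout. I would use that $j\circ i_*=\id$, and that only $\mathfrak{a}$-valued combinations survive, by adaptedness (the last statement of \cref{rule-for-adapted-kappa}).

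The second step is that $\nu_\Gamma$, which is induced by group multiplication in $\Gamma_1$, is a well-defined $A$-bundle isomorphism covering $m^{[2]}$. This uses centrality, as noted in the text. One also has to check that it is connection-preserving between the shifted bundles, i.e.
\begin{equation*}
m_{\Gamma_1}^{*}\omega+\pr_1^{*}\tau=\pr_1^{*}\omega+\pr_2^{*}\omega+\pr_2^{*}\tau
\end{equation*}
on $\Gamma_1\times\Gamma_1$, with $\tau$ pulled back appropriately. I would evaluate $m_{\Gamma_1}^{*}$ of $\theta_H$ via the semidirect product formula $h_1\alpha(g_1,h_2)$. Then I would use \cref{condition-for-adjustments-2} to rewrite the $\tilde\alpha$-terms that arise as $\kappa(t(h),\cdot)$, and the cocycle identity \cref{condition-for-adjustments-1} to combine $\kappa(g_2^{-1},\cdot)$ at source and target, where the $G$-coordinates differ by $t(h_2)$.

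The third step is the diagram \cref{eq:refinementdiag}. It reduces to the interchange law in the strict 2-group, $(x'\circ x)\cdot(y'\circ y)=(x'\cdot y')\circ(x\cdot y)$, which holds because $\Gamma_1$ is a group object in groupoids. Strict associativity of the multiplicative structure follows from associativity of the multiplication in $\Gamma_1$ and $G$. The forms satisfy the analogous identities: $\rho_\Gamma$ is a cocycle, $\delta\rho_\Gamma=0$ (standard for $b(\bar\theta_2\wedge\theta_1)$), and $\tau$ satisfies $\delta\tau=0$ up to the pullbacks. The latter follows again from \cref{condition-for-adjustments-1} applied to $\kappa((g_2g_3)^{-1},\cdot)$.

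I expect the main obstacle to be the first step: the bookkeeping of $j$, the symmetric/skew decomposition and the conventions for $\theta$ versus $\bar\theta$, so that exactly $\rho_\Gamma$ appears with the right sign. If the direct computation gets unwieldy, I would instead compare both sides after applying $i_*$ and use injectivity of $i_*$, so that everything is handled inside $\mathfrak{h}$.
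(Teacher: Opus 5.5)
Your proposal is correct and follows the paper's proof essentially step for step. It uses the same 2-form identity for $(1,\tau)$, proved via the symmetric/skew splitting of $\kappa_*$ with the symmetric part giving $\rho_\Gamma$. It uses the same pointwise identity on $\Gamma_1\times\Gamma_1$ for $\nu_\Gamma$ to be connection-preserving. It obtains strict associativity from $\delta\tau=0$ (via \cref{condition-for-adjustments-1}) together with associativity in $\Gamma_1$. Your interchange-law check of the refinement diagram makes explicit a step the paper leaves implicit.

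One small addition: in the connection-preserving step you will also need \cref{condition-for-adjustments-3}. At the target point the second argument of $\kappa$ is $\theta_{t(h_1)g_1}=\Ad_{g_1}^{-1}(t_*\theta_{h_1})+\theta_{g_1}$, and its $t_*$-part has to be converted into $\mathfrak{h}$-valued terms by that axiom.
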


\begin{proof}
We need to verify the following three conditions. First, the pair $(1,\tau)$ must be a gauge transformation 
\begin{equation*}
(0,\pr_1^{*}B_{\Gamma} + \pr_2^{*}B_{\Gamma}) \to (0,m^{*}B_{\Gamma}+(p \times p)^{*}\rho_{\Gamma})\text{,}
\end{equation*}
which is equivalent to the condition
\begin{equation}
\label{multiplicativity-of-B}
\pr_1^{*}B_{\Gamma} + \pr_2^{*}B_{\Gamma}-(p \times p)^{*}\rho_{\Gamma} - m^{*}B_{\Gamma} =\mathrm{d}\tau\text{.} 
\end{equation}
Indeed, a first calculation reveals 
\begin{equation*}
\pr_1^{*}B_{\Gamma} + \pr_2^{*}B_{\Gamma}-m^{*}B_{\Gamma}=-\tfrac{1}{2} j\kappa(\inv \circ \pr_2,[\pr_1^{*}\theta\wedge \pr_1^{*}\theta])- j\kappa_{*}^{skew}(\Ad^{-1}_{\pr_2}(\pr_1^{*}\theta) \wedge  \pr_2^{*}\theta)
\end{equation*}
and a second calculation shows that
\begin{equation*}
\mathrm{d}\tau =-j\kappa_{*}(\pr_2^{*}\theta \wedge \Ad_{\pr_2}^{-1}(\pr_1^{*}\theta))-\tfrac{1}{2}j  \kappa(\inv \circ \pr_2,[\pr_1^{*}\theta\wedge \pr_1^{*}\theta])\text{.} 
\end{equation*}
The difference between these expressions is 
\begin{align*}
\pr_1^{*}B_{\Gamma} + \pr_2^{*}B_{\Gamma}-m^{*}B_{\Gamma}-\mathrm{d}\tau 
&=- j\kappa_{*}^{skew}(\Ad^{-1}_{\pr_2}(\pr_1^{*}\theta) \wedge  \pr_2^{*}\theta)+j\kappa_{*}(\pr_2^{*}\theta \wedge \Ad_{\pr_2}^{-1}(\pr_1^{*}\theta))
\\&=-\tfrac{1}{2} j\kappa_{*}(\Ad^{-1}_{\pr_2}(\pr_1^{*}\theta) \wedge  \pr_2^{*}\theta) +\tfrac{1}{2}j\kappa_{*}(\pr_2^{*}\theta\wedge \Ad^{-1}_{\pr_2}(\pr_1^{*}\theta))
\\&= j\kappa_{*}^{sym}(\pr_2^{*}\theta \wedge \Ad^{-1}_{\pr_2}(\pr_1^{*}\theta))
\\&= b_\kappa(p^{*}\pr_2^{*}\theta \wedge \Ad^{-1}_{p \circ \pr_2}(p^{*}\pr_1^{*}\theta))
\\&= p^{*}b_\kappa(\pr_2^{*}\bar\theta \wedge \pr_1^{*}\theta)
\\&= (p \times p)^{*}\rho_{\Gamma}\text{;} 
\end{align*}
this shows \cref{multiplicativity-of-B}.

Second, we must show that $\nu_\Gamma$ is connection-preserving. This amounts to verifying the identity 
\begin{equation}
\label{multiplicativity-of-omega}
\tau_{t(h_1)g_1,t(h_2)g_2}+\omega_{h_1,g_1} + \omega_{h_2,g_2}  = \omega_{h_1\alpha(g_1,h_2),g_1g_2}+ \tau_{g_1,g_2}
\end{equation}
at each point $((g_1,h_1),(g_2,h_2))$ of $\Gamma_1 \times \Gamma_1$. 
To prove it, we calculate
\begin{multline*}
\omega_{h_1,g_1} + \omega_{h_2,g_2} - \omega_{h_1\alpha(g_1,h_2),g_1g_2}\\=j((\alpha_{g_1})_{*}^{-1}(\theta_{h_1})-(\alpha_{g_2})_{*}^{-1}(\Ad^{-1}_{h_2}((\alpha_{g_1})^{-1}_{*}(\theta_{h_1}))+(\tilde\alpha_{h_2})_{*}(\theta_{g_1})))
\end{multline*}
and obtain the same result for $\tau_{g_1,g_2}-\tau_{t(h_1)g_1,t(h_2)g_2}$; this shows \cref{multiplicativity-of-omega}.

Third, 
strict associativity of the refinement $\mathcal{M}_{\Gamma}$ requires 
\begin{equation}
\label{associativity-refinement-connection}
\tau_{g_1,g_2g_3} + \tau_{g_2,g_3} = \tau_{g_1g_2,g_3} + \tau_{g_1,g_2}\text{,}
\end{equation}
and the associativity of the bundle isomorphism $\nu_\Gamma$. \cref{associativity-refinement-connection} is straightforward to check, and the associativity of $\nu_\Gamma$ follows from the associativity law of the group $\Gamma_1$.
\end{proof}

\begin{remark}
Going beyond the presentation here, Téllez-Domínguez has proved that equivalence classes of adjustments on $\Gamma$ are in bijection with equivalence classes of connections on $\mathcal{G}_{\Gamma}$.
\end{remark}

\subsection{Triviality}

\label{the-trivialization}

We show that the extended $\Gamma\!$-bundle gerbe $i_{*}(\mathcal{G}_{\Gamma})$ is canonically trivializable and demonstrate how this trivialization is compatible with the multiplicative structure of $\mathcal{G}_{\Gamma}$. This extends \cite[Lemma 4.2.2]{Nikolausa} to the setting with connections.

\begin{proposition}
\label{trivialization-of-g-gamma}
Let $\Gamma$ be a smoothly separable, central crossed module with a splitting $u$ and an adjustment $\kappa$ adapted to $u$. Let $q:=q_u$ be the section of $\mathfrak{G}$ determined by $u$ (see \cref{Lie-2-groups}).
Consider the differential forms
\begin{align*}
A &:= q(\theta)\in \Omega^1(F,\mathfrak{g})
\\
B &:= \tfrac{1}{2}\kappa_{*}(q(\theta)\wedge q(\theta)) \in \Omega^2(F,\mathfrak{h})\text{.} 
\end{align*}
Then $(A,B)$ is a fake-flat $\Gamma\!$-connection on $F$, and there exists a canonical isomorphism 
\begin{equation*}
 \mathcal{T}_{\Gamma}:i_{*}(\mathcal{G}_{\Gamma}) \to \mathcal{I}_{A,B}\text{.} 
\end{equation*}
Moreover, $p_{*}(\mathcal{T}_{\Gamma})$, as an isomorphism between trivial principal $F$-bundles over $F$, is given by the inversion map $\inv: F \to F$. 
\end{proposition}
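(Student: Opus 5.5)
The plan has two parts. First I check fake-flatness of $(A,B)$ by a direct Lie-algebra computation. Then I build $\mathcal{T}_{\Gamma}$ as a refinement in the sense of \cref{refinement}, push it through the framing functors \cref{framing-functors}, and check the conditions.

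\emph{Fake-flatness.} Since $q$ is linear, $\mathrm{d}A=q(\mathrm{d}\theta)=-\tfrac12 q([\theta\wedge\theta])$. By \cref{rule-for-adapted-kappa} (adapted case), $t_{*}B=\tfrac12 t_{*}u([q\theta\wedge q\theta])$. Then $t_{*}\circ u+q\circ p_{*}=\id_{\mathfrak{g}}$ and $p_{*}[q\theta\wedge q\theta]=[\theta\wedge\theta]$ give
\begin{equation*}
\mathrm{fcurv}(A,B)=-\tfrac12 q[\theta\wedge\theta]+\tfrac12 [q\theta\wedge q\theta]-\tfrac12 t_{*}u[q\theta\wedge q\theta]=-\tfrac12 q[\theta\wedge\theta]+\tfrac12 qp_{*}[q\theta\wedge q\theta]=0\text{.}
\end{equation*}

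\emph{The trivialization.} $\mathcal{G}_{\Gamma}$ has surjective submersion $p:G\to F$ and $\Gamma\!$-connection $(0,i_{*}B_{\Gamma})$ on $G$. I take $\rho:=p:G\to F$, viewed as a map to the identity submersion of $\mathcal{I}_{A,B}$. I take the gauge transformation $(\inv,\varphi):(0,i_{*}B_{\Gamma})\to p^{*}(A,B)$ on $G$, with $\varphi$ fixed by the splitting. The defining equation is $p^{*}A+t_{*}\varphi=-\inv^{*}\bar\theta=\theta$, so $t_{*}\varphi=\theta-qp_{*}\theta=t_{*}u(\theta)$, and I set $\varphi:=u(\theta)$. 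Since $(0,i_{*}B_{\Gamma})$ is fake-flat, adjustedness means flatness, i.e.
\begin{equation*}
\mathrm{d}u(\theta)+\tfrac12[u\theta\wedge u\theta]+\alpha_{*}(q\theta\wedge u\theta)+\tfrac12\kappa_{*}(q\theta\wedge q\theta)-\tfrac12 i_{*}j\kappa_{*}(\theta\wedge\theta)=0\text{.}
\end{equation*}
I verify this by splitting $\theta=t_{*}u\theta+qp_{*}\theta$ inside $\kappa_{*}(\theta\wedge\theta)$. I then use \cref{properties-of-kappa-stern:1,properties-of-kappa-stern:2} to turn the mixed terms into $\alpha_{*}$-terms, and $\kappa_{*}(t_{*}X\wedge t_{*}X)$ into brackets in $\mathfrak{h}$. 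Finally I decompose $\id_{\mathfrak{h}}=i_{*}j+ut_{*}$ to compare the $u$-parts and $\mathfrak{a}$-parts separately. I expect this identity, bookkeeping the parts of $\kappa_{*}$ not controlled by $t_{*}$ with the right signs, to be the main obstacle. If the signs do not close, I would try $(\inv,-u\theta)$ or reverse the roles of $\theta$ and $\bar\theta$.

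\emph{The bundle isomorphism and the remaining checks.} Over $G^{[2]}$, $\nu:{}^{\pr_2^{*}(\inv,\varphi)}i_{*}(\Gamma_1)\to (p^{[2]})^{*}(\trivlin_1^0)^{\pr_1^{*}(\inv,\varphi)}$ is induced by $((h,g),h')\mapsto h\,h'$ or a similarly twisted variant. I choose the twist so that anchors match: $t(h)^{-1}$-shifts of $g^{-1}$ versus $(t(h)g)^{-1}$. Connection preservation reduces to an identity between $\omega=j((\alpha_{\pr_2}^{-1})_{*}\pr_1^{*}\theta)$ and the difference $\pr_1^{*}u\theta-(\alpha_{\cdot})_{*}\pr_2^{*}u\theta$ plus Maurer--Cartan terms of $h$. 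I check it again with $\id_{\mathfrak{h}}=i_{*}j+ut_{*}$ and centrality. The compatibility diagram \cref{eq:refinementdiag} follows from associativity of composition in $\Gamma_1$, using that $\mu$ is this composition.

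\emph{The projection.} The anchor of $\trivlin_{\inv}$ is $g\mapsto g^{-1}$, so $p_{*}$ of the resulting morphism between trivial $F$-bundles over $F$ is the descended map $p(g)\mapsto p(g)^{-1}$, i.e. $\inv:F\to F$. This gives the final claim.
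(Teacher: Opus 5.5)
Your proposal is correct and is essentially the paper's proof: $\mathcal{T}_{\Gamma}$ is the refinement $(p,(\inv,u(\theta)),\nu)$, and the flatness of $(\inv,u(\theta))\colon(0,i_{*}(B_{\Gamma}))\to p^{*}(A,B)$ is exactly \cref{lemma-triv-2} (your decomposition $\theta=t_{*}u(\theta)+q p_{*}(\theta)$, combined with $\id_{\mathfrak{h}}=i_{*}j+u t_{*}$ and adaptedness, closes with the signs as written, so no fallback is needed), while $p_{*}(\mathcal{T}_{\Gamma})=\inv$ is read off from the anchor of $\trivlin_{\inv}$. The only difference is that the paper makes your \quot{twisted variant} explicit by factoring $\nu$ as $\id\otimes\tau$ followed by $\gamma_{\inv\circ\pr_2,\Delta}$, where $\tau\colon i_{*}(\Gamma_1)\to\trivlin_{\Delta}^{\alpha}$, $(h,g,h')\mapsto(g,t(h)g,hh')$, is a connection-preserving isomorphism onto a trivial bundle over $G\times_F G$ (\cref{lemma-triv-3}); the paper reuses this factorization later for compatibility with the multiplicative structure.
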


\begin{remark}
We have $B=B^{(A,0)}$; hence $\mathcal{I}_{A,B}=i_{*}^{(q(\theta),0)}(\mathcal{I}_0)$ is the adjusted extension of the trivial $A$-bundle gerbe (\cref{shifting-connections}). By \cref{extension-and-adjusted-shift-lemma}, we have
\begin{equation*}
\mathrm{fcurv}(A,B)=q(\mathrm{d}\theta + \tfrac{1}{2}[\theta \wedge\theta])=0\text{.}
\end{equation*} 
\end{remark}

We decompose the proof of \cref{trivialization-of-g-gamma} into a sequence of lemmas. The following 1-form on $G \times_F G$ will be important:
\begin{equation*}
\alpha :=- (\alpha_{\pr_2})_{*}(u(\pr_1^{*}\theta - \pr_2^{*}\theta)) \in \Omega^1(G \times_F G,\mathfrak{h})\text{.}
\end{equation*}
The following lemma establishes two key properties of $\alpha$.

\begin{lemma}
\label{lemma-triv-1}
The 1-form $\alpha$ satisfies the following conditions:
\begin{enumerate}[(i)]

\item 
\label{lemma-triv-1:i}
Over $G \times_FG \times_FG$, it satisfies a \quot{twisted} cocycle condition
\begin{equation*}
\pr_{23}^{*}\alpha +(\alpha_{\Delta\circ \pr_{23}})_{*} (\pr_{12}^{*}\alpha)=\pr_{13}^{*}\alpha\text{,}
\end{equation*}
where $\Delta: G \times_F G \to G$ is defined by $\Delta(g,g') := g'g^{-1}$. 

\item
\label{lemma-triv-1:ii}
The curvature of $\alpha$ satisfies the equation
\begin{equation*}
\mathrm{d}\alpha + \tfrac{1}{2}[\alpha\wedge \alpha]  = i_{*}(\pr_1^{*}B_{\Gamma}-\pr_2^{*}B_{\Gamma})
\end{equation*}
over $G \times_F G$.

\end{enumerate}
\end{lemma}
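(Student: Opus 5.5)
Both statements reduce to pointwise identities on the fibre products of $G$ over $F$. I will use $g_1,g_2,g_3$ for points of $G\times_F G\times_F G$, so that $g_j g_i^{-1}\in t(H)$, and the abbreviation $\Delta_{ij}:=g_jg_i^{-1}$. The key structural input is the decomposition $t_*\circ u+q\circ p_*=\id_{\mathfrak g}$. On $G\times_FG$ we have $p_*(\pr_1^*\theta-\pr_2^*\theta)$ equal to a difference of pullbacks of $p^*\theta_F$ that are related by $\Ad$ of an element of $t(H)$. That adjoint action is trivial on $\mathfrak f$, and this holds because the crossed module is central, so $t(H)$ acts trivially through $F$. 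Hence $\pr_1^*\theta-\pr_2^*\theta$ lies in $\mathrm{im}(t_*)$, and $t_*u$ acts as the identity on it. This gives the basic relation
\begin{equation*}
t_*\alpha=-\Ad_{g_2}(\pr_1^*\theta-\pr_2^*\theta)=\pr_2^*\bar\theta-\Ad_{\Delta}(\pr_1^*\bar\theta)-\Delta^*\bar\theta,
\end{equation*}
after rewriting left-invariant forms through right-invariant ones. The second equality is the statement that $(\Delta,\alpha)$ is a gauge transformation $(0,\cdot)\to(0,\cdot)$ in the sense of \cref{curvature-of-a-gauge-transformation}, i.e.\ $\alpha$ is a connection on $\trivlin_\Delta$.

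For (i), I expand both sides using $\theta$ at $g_1,g_2,g_3$. The left side is
\begin{equation*}
-(\alpha_{g_3})_*u(\theta_2-\theta_3)-(\alpha_{g_3g_2^{-1}})_*(\alpha_{g_2})_*u(\theta_1-\theta_2)=-(\alpha_{g_3})_*u(\theta_1-\theta_3),
\end{equation*}
where I have used $\alpha_{g_3g_2^{-1}}\alpha_{g_2}=\alpha_{g_3}$ and the linearity of $u$. This equals the right side identically. So (i) is essentially a one-line computation, and the only care needed is keeping track of the convention for $\Delta$.

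For (ii) I would proceed as follows. First compute $\mathrm d\alpha$ by the Leibniz rule: $\mathrm d(\alpha_{g_2})_*X=(\alpha_{g_2})_*(\mathrm dX)+(\alpha_{g_2})_*\alpha_*(\theta_2\wedge X)$, together with $\mathrm d\theta=-\frac12[\theta\wedge\theta]$. Then add $\frac12[\alpha\wedge\alpha]$ and pull everything back by $(\alpha_{g_2})_*^{-1}$, which is harmless because the right-hand side is $\mathfrak a$-valued and $\Gamma$ is central. This reduces the claim to an algebraic identity in the forms $\theta_1,\theta_2$ and their difference $\delta:=\theta_1-\theta_2\in\mathrm{im}\,t_*$. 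Note that $[u\delta\wedge u\delta]$ can be rewritten via $\alpha_*(t_*u\delta\wedge u\delta)=\alpha_*(\delta\wedge u\delta)$. For the right side, $B_\Gamma=\frac12 j\kappa_*(\theta\wedge\theta)$, so $i_*(\pr_1^*B_\Gamma-\pr_2^*B_\Gamma)=\frac12 i_*j(\kappa_*(\theta_1\wedge\theta_1)-\kappa_*(\theta_2\wedge\theta_2))$. I then expand with $\theta_1=\theta_2+\delta$ and write $\delta=t_*u\delta$. Applying \cref{properties-of-kappa-stern:1,properties-of-kappa-stern:2} turns $\kappa_*(\theta_2\wedge t_*u\delta)$ and $\kappa_*(t_*u\delta\wedge\theta_2)$ into $\alpha_*$-terms, and $\kappa_*(\delta\wedge\delta)$ into $\alpha_*(\delta\wedge u\delta)$.

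The main obstacle is the projector $i_*j=\id-u t_*$. I must show that the $u t_*$-component of the left-hand expression vanishes, and that the remaining terms match the $\kappa$-expansion. My plan is to first apply $t_*$ to $\mathrm d\alpha+\frac12[\alpha\wedge\alpha]$. Using the gauge-transformation identity above and \cref{eq:ttwcurv} with zero fake curvatures of the source and target (both $\Gamma$-connections are $(0,\cdot)$ with $\mathrm d0=0$), this shows that the left side is $\mathfrak a$-valued, since it is the curvature of $(\Delta,\alpha)$ up to the $B$-terms. Then I compare the $j$-components. Here the adaptedness $t_*\kappa_*(X_1,X_2)=t_*u[X_1,X_2]$ from \cref{rule-for-adapted-kappa} and the identities $j u=0$, $j i_*=\id$ should make the bookkeeping close. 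Alternatively, one can differentiate the known relation \cref{curvature-of-the-connection} for $\omega$ along a local section of $\Gamma_1\to G\times_FG$ with $t(h)=\Delta$, since $\alpha$ is essentially $-(\alpha_{g_2})_*$ of the $u$-part of $h^*\theta$. That would reduce (ii) to the already proven \cref{curvature-of-curving}; this is the fallback I would try if the direct expansion becomes unwieldy.
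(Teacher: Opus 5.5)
Your part (i) is correct: it is linear in $\alpha$, and the computation with $\alpha_{g_3g_2^{-1}}\circ\alpha_{g_2}=\alpha_{g_3}$ is all that is needed. (Also, $\pr_1^*\theta-\pr_2^*\theta\in\mathrm{im}(t_*)$ holds simply because $p_*\theta=p^*\theta_F$ and $p\circ\pr_1=p\circ\pr_2$; centrality plays no role there.)

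\textbf{The gap in (ii).} The problem begins with your \quot{basic relation}. The first equality $t_*\alpha=-\Ad_{g_2}(\pr_1^*\theta-\pr_2^*\theta)$ is right, but the second is not. Since $\Ad_{g_2}(\pr_1^*\theta)=\Ad_\Delta(\pr_1^*\bar\theta)$ and $\Delta^*\bar\theta=\pr_2^*\bar\theta-\Ad_\Delta(\pr_1^*\bar\theta)$, the right-hand side you wrote vanishes identically. In fact $t_*\alpha=+\Delta^*\bar\theta$, whereas a gauge transformation $(\Delta,\alpha):(0,\cdot)\to(0,\cdot)$, i.e.\ a $0$-$0$-connection on $\trivlin_\Delta$, requires $t_*\alpha=-\Delta^*\bar\theta$. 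Consequently your first step for (ii), showing that the left-hand side is $\mathfrak a$-valued, fails:
\begin{equation*}
t_*\Big(\mathrm d\alpha+\tfrac12[\alpha\wedge\alpha]\Big)=\Delta^*\Big(\mathrm d\bar\theta+\tfrac12[\bar\theta\wedge\bar\theta]\Big)=\Delta^*[\bar\theta\wedge\bar\theta]\text{,}
\end{equation*}
which is nonzero as soon as $t_*(\mathfrak h)$ is non-abelian. So with the leading minus sign in the definition of $\alpha$, statement (ii) is false, and no $\kappa$-bookkeeping can close it. For a concrete counterexample, take $\Gamma=(H\stackrel{\id}{\to}H)$ with $H$ non-abelian acting on itself by conjugation, $u=\id$ and $\kappa(g,X)=\Ad_g(X)-X$. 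Then $\mathfrak a=0$, but $\alpha=\Delta^*\bar\theta$ is not flat. In your direct expansion, this appears as a leftover $[\delta\wedge\delta]$ in the $t_*$-component after applying $(\alpha_{g_2})_*^{-1}$.

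\textbf{The fix.} The definition must read $\alpha:=(\alpha_{\pr_2})_*(u(\pr_1^*\theta-\pr_2^*\theta))$. This is also the sign forced by the later use of $\alpha$: $\trivlin^\alpha_\Delta$ must carry a $0$-$0$-connection, and the proof of \cref{trivialization-of-g-gamma} needs $\pr_2^*\varphi+(\alpha_{\inv\circ\pr_2})_*(\alpha)=\pr_1^*\varphi$ for $\varphi=u(\theta)$.

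For this $\alpha$, your fallback gives the cleanest proof. Pull back along the surjective submersion $\Gamma_1\to G\times_FG$, $(h,g)\mapsto(g,t(h)g)$.
There one has $\pr_1^*\theta-\pr_2^*\theta=-t_*\psi$ with $\psi:=(\alpha_g^{-1})_*(\theta_h)$. Using $u\circ t_*=\id-i_*\circ j$ and centrality, this gives $\alpha=-\bar\theta_h+i_*(\omega)$. Here $\bar\theta_h$ is the right-invariant Maurer--Cartan form of $H$ pulled back along $(h,g)\mapsto h$, and $\omega=j(\psi)$ is the connection of \cref{definition-of-omega}.
Since $\mathfrak a$ is central in $\mathfrak h$ and $\mathrm d\bar\theta=\tfrac12[\bar\theta\wedge\bar\theta]$, this yields $\mathrm d\alpha+\tfrac12[\alpha\wedge\alpha]=i_*(\mathrm d\omega)$. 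By \cref{curvature-of-curving}, $\mathrm d\omega=\pr_1^*B_\Gamma-\pr_2^*B_\Gamma$, and injectivity of the pullback finishes the proof.

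Your direct expansion also closes for the corrected sign. After applying $(\alpha_{g_2})_*^{-1}$, the $t_*$-image is $-[\theta_2\wedge\delta]-\tfrac12[\delta\wedge\delta]+[\theta_2\wedge\delta]+\tfrac12[\delta\wedge\delta]=0$. The $j$-components match via $\kappa_*(\theta_1\wedge\theta_1)-\kappa_*(\theta_2\wedge\theta_2)=2\alpha_*(\theta_2\wedge u\delta)+[u\delta\wedge u\delta]$ and $j\circ u=0$, with no use of adaptedness.
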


With \cref{lemma-triv-1}, we may consider the trivial principal $\Gamma\!$-bundle $\trivlin_{\Delta}^{\alpha}$ over $G \times_FG$. \cref{lemma-triv-1:i}, together with the obvious equality $\Delta(g_1,g_3)=\Delta(g_2,g_3)\Delta(g_1,g_2)$, implies, by \cref{tensor-product-of-trivial-bundles-with-connections}, the existence of a connection-preserving bundle isomorphism
\begin{equation*}
\gamma_{\pr_{23},\pr_{12}}: \pr_{23}^{*}\trivlin^{\alpha}_{\Delta} \otimes \pr_{12}^{*}\trivlin^{\alpha}_{\Delta} \to \pr_{13}^{*}\trivlin^{\alpha}_{\Delta}\text{.}
\end{equation*}

\begin{lemma}
\label{lemma-triv-3}
\label{extension-of-bundle-is-trivializable}
There is a canonical isomorphism 
\begin{equation*}
\tau: i_{*}(\Gamma_1) \to \trivlin_{\Delta}^{\alpha}
\end{equation*}
over $G \times_F G$. It has the following two properties:
\begin{enumerate}[(a)]
\item 
\label{commutative-diagram-for-tau}
It is compatible with the bundle gerbe product $\mu$ in the sense that the diagram
\begin{equation*}
\alxydim{}{\pr_{23}^{*}i_{*}(\Gamma_1) \otimes \pr_{12}^{*}i_{*}(\Gamma_1) \ar[r]^-{\tau \times \tau} \ar[d]_{i_{*}(\mu)} & \pr_{23}^{*}\trivlin^{\alpha}_{\Delta} \otimes \pr_{12}^{*}\trivlin^{\alpha}_{\Delta} \ar[d]^{\gamma_{\pr_{23},\pr_{12}}} \\ \pr_{13}^{*}i_{*}(\Gamma_1) \ar[r]_-{\tau} & \pr_{13}^{*}\trivlin^{\alpha}_{\Delta}}
\end{equation*}
of bundle isomorphisms over $G \times_FG\times_FG$ is commutative. 

\item
\label{multiplicativity-of-tau}
It is compatible with the multiplication isomorphism $\nu_{\Gamma}$ of \cref{iso-nu-of-mult-structure} in the sense that the following diagram of bundle isomorphisms over $((g_1,g_2),(g_1',g_2'))\in G^2 \times_{F^2} G^2$ is commutative:
\begin{equation*}
\begin{tikzcd}
i_{*}(\Gamma_1|_{g_1,g_1'} \otimes \Gamma_1|_{g_2,g_2'}) \arrow[r, "\mu"] \arrow[dd, "i_{*}(\nu_\Gamma)"'] & i_{*}(\Gamma_1)_{g_1,g_1'} \otimes i_{*}(\Gamma_1)_{g_2,g_2'}  \arrow[r, "{\tau \otimes \id}"]
&
\trivlin_\Delta|_{g_1,g_1'} \otimes i_{*}(\Gamma_1)_{g_2,g_2'}
\arrow[d, "\gamma ^{-1}\otimes \id"] 
\\
&&\trivlin_\id|_{g_1'} \otimes \trivlin_\inv|_{g_1} \otimes i_{*}(\Gamma_1)_{g_2,g_2'}
  \arrow[d, "\id \otimes \beta"]
\\
i_{*}(\Gamma_1)_{g_1g_2,\,g_1'g_2'}
  \arrow[d, "\tau"']
&& \trivlin_\id|_{g_1'} \otimes i_{*}(\Gamma_1)_{g_2,g_2'} \otimes \trivlin_\inv|_{g_1}
  \arrow[d, "{\id \otimes \tau \otimes \id}"]
\\
\trivlin_\Delta|_{g_1g_2,g_1'g_2'}  & \trivlin_\id|_{g_1'} \otimes\trivlin_\Delta|_{g_1g_2,g_2'} \arrow[l, "\gamma"]   &\trivlin_\id|_{g_1'} \otimes\trivlin_\Delta|_{g_2,g_2'}\otimes \trivlin_\inv|_{g_1} \arrow[l, "\id \otimes \gamma"]
\end{tikzcd}
\end{equation*}
Here, $\mu$ is the isomorphism of \cref{monoidal-covariance} expressing the monoidality of the extension along $i$, and $\beta$ is the half-braiding of \cref{reduction-to-abelian}.

\end{enumerate}
\end{lemma}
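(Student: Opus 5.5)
We define $\tau$ explicitly on total spaces and check compatibility with anchors, connections and the two diagrams. Recall that $i_{*}(\Gamma_1)=(\Gamma_1\times H)/\sim$, with $(\gamma a,h)\sim(\gamma,ah)$, the $H$-action on the right and anchor $(\gamma,h)\mapsto t(h)^{-1}$. The target $\trivlin^{\alpha}_{\Delta}$ is $(G\times_F G)\times H$ with anchor $t(h)^{-1}\Delta$. We set
\begin{equation*}
\tau((h_0,g),h) := ((g,t(h_0)g),\, h_0^{-1}h)\text{.}
\end{equation*}
Replacing $h_0$ by $h_0a$ and $h$ by $a^{-1}h$ leaves this invariant, since $a$ is central, so $\tau$ is well-defined. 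It is visibly $H$-equivariant. On anchors we get $t(h_0^{-1}h)^{-1}\Delta(g,t(h_0)g)=t(h)^{-1}t(h_0)\,t(h_0)gg^{-1}$, which has the wrong order. So we adjust the definition to $\tau((h_0,g),h):=((g,t(h_0)g),h_0 h)$. The anchor is then $t(h)^{-1}t(h_0)^{-1}t(h_0)=t(h)^{-1}$, matching $\Delta(g,t(h_0)g)=t(h_0)$; the relation $\Delta=t(h_0)$ on $\Gamma_1$ is exactly what makes this work. The map is smooth and $H$-equivariant, hence a bundle isomorphism.

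The main computational step is connection preservation. Pull back the connection $\omega_\alpha$ of \cref{eq:connoftrivbun} (with $A'=0$, so $\omega_\alpha=\Ad_h^{-1}(\alpha)+h^{*}\theta$) along $\tau$, and compare it with the extended connection $\Ad_h^{-1}(i_{*}\omega)+h^{*}\theta$. This reduces to the identity on $\Gamma_1$
\begin{equation*}
\Ad_{h_0}^{-1}(\alpha_{(g,t(h_0)g)})+h_0^{*}\theta=i_{*}\,j\big((\alpha_{g}^{-1})_{*}(h_0^{*}\theta)\big)\text{.}
\end{equation*}
To verify it, write $\theta_{t(h_0)g}=\Ad_g^{-1}(t_*h_0^*\theta)+\theta_g$, so that $\pr_1^{*}\theta-\pr_2^{*}\theta$ pulls back to $-\Ad_g^{-1}t_{*}(h_0^{*}\theta)$ up to the convention for which factor is $\pr_1$. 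Then use $\Ad_g^{-1}t_*=t_*(\alpha_{g}^{-1})_*$ and the decomposition $i_*j_u+u t_*=\id_{\mathfrak h}$. Centrality gives $\Ad_{h_0}^{-1}\circ(\alpha_{t(h_0)g})_*=(\alpha_g)_*$ on the relevant terms. The $u t_*$-part cancels against $h_0^*\theta$ and leaves exactly the $i_*j$-part. Part~(i) of \cref{lemma-triv-1} is not needed here. Part~(ii) is consistent with this, because it is precisely the curvature compatibility required by \cref{twisted-curvature-on-trivial-bundle}, but the connection identity is the only computation that has to be done by hand.

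Property (a) is a pointwise check. The left-bottom route sends $((h_{23},g_{23}),h),((h_{12},g_{12}),h')$ through $\mu$ to $h_{23}h_{12}$ and then to the factor $h_{23}h_{12}\cdot(\dots)$. The top-right route uses $\gamma$ of \cref{tensor-product-of-trivial-bundles}, namely $(x,k_1),(x,k_2)\mapsto\alpha(g_1,k_2)k_1$, with $g_1=\Delta$ for the first factor. Both sides agree after one application of the crossed-module identity $\alpha(t(h),h')=hh'h^{-1}$, together with the fact that points of $i_*(\Gamma_1)$ are identified modulo $A$.

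Property (b) is the expected obstacle, purely as bookkeeping, since every map involved is explicit. We chase an element $((h_1,g_1),(h_2,g_2))$ around the diagram. The left-hand route uses the semidirect product $(h_1\alpha(g_1,h_2),g_1g_2)$. The right-hand route factors $\trivlin_\Delta=\trivlin_\id\otimes\trivlin_\inv$, and the half-braiding $\beta$ of \cref{reduction-to-abelian} moves $i_*(\Gamma_1)$ past $\trivlin_\inv$, contributing $\alpha(g_1^{-1},\cdot)$. Equating the two resulting $H$-components again reduces to the crossed-module axioms and centrality. Since every map in (b) is a bundle isomorphism, connection compatibility is automatic once each edge is known to be connection-preserving: $\beta$ by \cref{tensor-product-with-abelian-bundle}, the maps $\gamma$ by \cref{tensor-product-of-trivial-bundles-with-connections}, and $\tau$ by the computation above.
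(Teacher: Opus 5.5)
Your proposal is correct and follows the paper's proof: the same map $\tau((h_0,g),h)=((g,t(h_0)g),h_0h)$, a direct check that $\tau^{*}\omega_{\alpha}$ agrees with the extended connection $i_{*}j((\alpha_{g}^{-1})_{*}(h_0^{*}\theta))+h^{*}\theta$, and element chases for (a) and (b). Your connection computation is more explicit than the paper's one-line claim, and the sign you leave open (``up to the convention for which factor is $\pr_1$'') genuinely matters: the identity holds for $\alpha=(\alpha_{\pr_2})_{*}(u(\pr_1^{*}\theta-\pr_2^{*}\theta))$, which is also the sign forced by the requirement $t_{*}\alpha=-\Delta^{*}\bar\theta$ for a $0$-$0$-connection on $\trivlin_{\Delta}$, so the leading minus sign in the paper's displayed definition of $\alpha$ should be dropped.
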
 

\begin{proof}
We describe explicitly the principal $\Gamma\!$-bundle $i_{*}(\Gamma_1)$ over $G \times_F G$ according to \cref{covariance-of-Gamma-bundles}. It has total space $i_{*}(\Gamma_1):=(H \times G \times H)/A$, where $a\in A$ acts on $(h,g,h')$ by $(h a,g,a^{-1}h')$. The anchor $i_{*}(\Gamma_1) \to G$ is $(h,g,h') \mapsto t(h')^{-1}$, the action is $(h,g,h')\cdot h'' := (h,g,h'h'')$, and the projection is $(h,g,h') \mapsto (g,t(h)g)$. 
The claimed bundle isomorphism $\tau$ is defined by
\begin{equation*}
\tau(h,g,h'):=(g,t(h)g,hh')\text{.}
\end{equation*}

Next, we include the connection $\omega$ on $\Gamma_1$ defined in \cref{definition-of-omega}. Again following \cref{covariance-of-Gamma-bundles}, the extended connection on $i_{*}(\Gamma_1)$ is, at a point $(h,g,h')\in i_{*}(\Gamma_1)$,
\begin{align*}
\omega'|_{h,g,h'} &= \Ad^{-1}_{h'}(i_{*}(\omega|_{h,g}))+\theta_{h'}=\Ad^{-1}_{h'}(i(j((\alpha_g^{-1})_{*}(\theta_h))))+\theta_{h'}=ij((\alpha_g^{-1})_{*}(\theta_h))+\theta_{h'}\text{;}
\end{align*}
and this is a $0$-$0$-connection. Checking that $\tau$ is connection-preserving amounts to showing that
\begin{equation*}
\omega' = \tau^{*}\omega_{\alpha}
\end{equation*}
where $\omega_{\alpha}$ is the connection 1-form on $\trivlin_{\Delta}^{\alpha}$, see \cref{connection-on-trivial-bundle}. This follows directly from combining the defining formulas.

\Cref{commutative-diagram-for-tau} is an easy check; see also \cite[Lemma 4.2.1]{Nikolausa}.
\Cref{multiplicativity-of-tau} is a simple check using the definitions: we start with an element 
\begin{equation*}
(((h_1,g_1),(h_2,g_2)),h')\in i_{*}(\Gamma_1|_{g_1,g_1'} \otimes \Gamma_1|_{g_2,g_2'})
\end{equation*}
i.e., $g_1'=t(h_1)g_1$ and $g_2'=t(h_2)g_2$. The left vertical arrows result in
\begin{equation*}
\tau(i_{*}(\nu_\Gamma)(((h_1,g_1),(h_2,g_2)),h'))=\tau(h_1\alpha(g_1,h_2),g_1g_2,h')=(g_1g_2,g_1'g_2',h_1\alpha(g_1,h_2)h')\text{.}
\end{equation*}
Going clockwise, we obtain
\begin{align*}
(((h_1,g_1),(h_2,g_2)),h')\; & 
\xymatrix@C=4em{\ar@{|->}[r]^-{\mu} &} && \mquad((h_1,g_1,h'),(h_2,g_2,1))
\\&\xymatrix@C=4em{\ar@{|->}[r]^-{\tau \otimes \id} &}&& \mquad((g_1,g'_1,hh'),(h_2,g_2,1))
\\&\xymatrix@C=4em{\ar@{|->}[r]^-{\gamma^{-1} \otimes \id}&}&& \mquad((g_1',h'),(g_1,\alpha(g_1^{\prime-1},h_1)),(h_2,g_2,1))
\\&&&\qquad = ((g_1',h'),(g_1,\alpha(g_1^{-1},h_1)),(h_2,g_2,1))
\\&\xymatrix@C=4em{\ar@{|->}[r]^-{\id \otimes \beta} & }&&\mquad((g_1',h'),(h_2,g_2,1),(g_1,\alpha(g_1^{-1},h_1)))
\\&\xymatrix@C=4em{\ar@{|->}[r]^-{\id \otimes \tau \otimes \id}& }&& \mquad((g_1',h'),(g_2,g_2',h_2),(g_1,\alpha(g_1^{-1},h_1)))
\\&\xymatrix@C=4em{\ar@{|->}[r]^-{\id \otimes \gamma}& }&& \mquad((g_1',h'),(g_1g_2,g_2',\alpha(g_2'g_2^{-1},\alpha(g_1^{-1},h_1))h_2))
\\&&&\qquad=((g_1',h'),(g_1g_2,g_2',h_2\alpha(g_1^{-1},h_1)))
\\&\xymatrix@C=4em{\ar@{|->}[r]^-{\gamma}& }&& \mquad(g_1g_2,g_1'g_2',h_2h_1h')
\\&&&\qquad=(g_1g_2,g_1'g_2',\alpha(g_1',h_2\alpha(g_1^{-1},h_1))h')
\\&&&\qquad=(g_1g_2,g_1'g_2',h_1\alpha(g_1,h_2)h')
\end{align*}
This shows that the diagram is commutative. 
\end{proof}

\begin{lemma}
\label{lemma-triv-2}
The inversion $g := \inv: G \to G$ and the 1-form $\varphi:=u(\theta) \in \Omega^1(G,\mathfrak{h})$ form a flat gauge transformation
\begin{equation*}
(g,\varphi): (0,i_{*}(B_{\Gamma}))\to p^{*}(A,B)\text{.}
\end{equation*}
\end{lemma}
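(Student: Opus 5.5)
The plan is to check the two defining conditions directly: first the gauge transformation equation of \cref{curvature-of-a-gauge-transformation}, then the vanishing of $\mathrm{curv}(g,\varphi)$. Throughout, $\theta$ is the left-invariant Maurer--Cartan form on $G$, and I use $p^{*}\theta_F = p_{*}\theta$. The target connection is therefore
\begin{equation*}
p^{*}A = q(p_{*}\theta)
\quand
p^{*}B = \tfrac{1}{2}\kappa_{*}(q p_{*}\theta \wedge q p_{*}\theta)\text{.}
\end{equation*}

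\emph{Gauge condition.} Since the source 1-form is $0$, the condition reads $q p_{*}\theta + t_{*}u(\theta) = -\inv^{*}\bar\theta$. The standard identity $\inv^{*}\bar\theta = -\theta$ makes the right-hand side equal to $\theta$. The left-hand side is $(q_u p_{*} + t_{*}u)(\theta) = \theta$ by the decomposition $t_{*}\circ u + q_u\circ p_{*} = \id_{\mathfrak{g}}$ from \cref{Lie-2-groups}. So this step is immediate.

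\emph{Flatness.} By centrality, $(\alpha_{g})_{*}$ acts trivially on $\mathfrak{a}$, so $(\alpha_g)_{*}(i_{*}B_{\Gamma}) = i_{*}B_{\Gamma} = \tfrac12 i j\kappa_{*}(\theta\wedge\theta)$. By the Maurer--Cartan equation, $\mathrm{d}\varphi = -\tfrac12 u[\theta\wedge\theta]$. Writing $Y := q p_{*}\theta$, flatness therefore amounts to the identity
\begin{equation*}
-\tfrac12 u[\theta\wedge\theta] + \tfrac12[u\theta\wedge u\theta] + \alpha_{*}(Y\wedge u\theta) + \tfrac12\kappa_{*}(Y\wedge Y) = \tfrac12 ij\kappa_{*}(\theta\wedge\theta)\text{.}
\end{equation*}

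\emph{Proof of the identity.} I expand $\kappa_{*}(\theta\wedge\theta)$ using $\theta = t_{*}u\theta + Y$. The mixed and pure-$t_{*}$ terms are evaluated with \cref{properties-of-kappa-stern:1,properties-of-kappa-stern:2}, keeping track of the sign from swapping 1-forms in the wedge. This should give
\begin{equation*}
\kappa_{*}(\theta\wedge\theta) = [u\theta\wedge u\theta] + 2\alpha_{*}(Y\wedge u\theta) + \kappa_{*}(Y\wedge Y)\text{.}
\end{equation*}
Next I write $ij = \id_{\mathfrak{h}} - u t_{*}$. The adapted-case identity $t_{*}\kappa_{*}(X_1,X_2) = t_{*}u([X_1,X_2])$ of \cref{rule-for-adapted-kappa}, combined with $u t_{*} u = u$, yields
\begin{equation*}
u t_{*}\kappa_{*}(\theta\wedge\theta) = u[\theta\wedge\theta]\text{.}
\end{equation*}
Substituting both expansions into $\tfrac12 ij\kappa_{*}(\theta\wedge\theta)$ reproduces exactly the left-hand side above.

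The main obstacle is the sign bookkeeping when applying the non-symmetric $\kappa_{*}$ to wedge products of $\mathfrak{g}$-valued 1-forms, as warned before \cref{kappa-with-differential-forms}. I would fix conventions by evaluating on a pair of tangent vectors before summing.
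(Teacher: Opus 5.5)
Your proof is correct and is essentially the paper's argument. The gauge condition is checked via $t_{*}\circ u+q_u\circ p_{*}=\id_{\mathfrak{g}}$ and $\inv^{*}\bar\theta=-\theta$. Flatness is a direct computation using the same ingredients: $\theta=t_{*}u\theta+q_up_{*}\theta$, $i_{*}\circ j_u+u\circ t_{*}=\id_{\mathfrak{h}}$, the rules for $\kappa_{*}$ with a $t_{*}$-argument, the adapted identity, and $u t_{*} u=u$.

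The only difference is bookkeeping. The paper expands $\mathrm{d}\varphi+\tfrac12[\varphi\wedge\varphi]$, $p^{*}B-i_{*}(B_{\Gamma})$ and $\alpha_{*}(p^{*}A\wedge\varphi)$ separately into $ij$- and $u$-components that cancel. You instead rewrite $\tfrac12 ij\kappa_{*}(\theta\wedge\theta)$ directly, which is a bit more economical. Your signs check out with the convention $\kappa_{*}(\alpha\wedge\beta)(v,w)=\kappa_{*}(\alpha(v),\beta(w))-\kappa_{*}(\alpha(w),\beta(v))$.
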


\begin{proof}
The condition for a gauge transformation is
\begin{equation*}
t_{*}(\varphi) = t_{*}(u(\theta))=\theta - q(p_{*}(\theta))=-\inv^{*}\bar\theta - p^{*}A\text{.}  
\end{equation*}
To compute the curvature of $(g,\varphi)$  
we make three separate calculations: first,
\begin{multline*}
\mathrm{d}\varphi + \tfrac{1}{2}[\varphi \wedge \varphi]=  \tfrac{1}{2}ij \kappa_{*}(\theta \wedge \theta)- ij \kappa_{*}^{skew}(\theta \wedge q(p^{*}\theta))\\+ \tfrac{1}{2}p^{*}ij \kappa_{*}(q(\theta) \wedge q(\theta))-u([\theta \wedge q(p^{*}\theta)])+\tfrac{1}{2}p^{*}u([q(\theta)\wedge q(\theta)])\text{.} 
\end{multline*}
Second,
\begin{equation*}
p^{*}B-i_{*}(B_{\Gamma})= \tfrac{1}{2}p^{*}ij \kappa_{*}(q(\theta)\wedge q(\theta))-\tfrac{1}{2}ij \kappa_{*}(\theta \wedge \theta)+\tfrac{1}{2}p^{*}u([q(\theta)\wedge q(\theta)])\text{.} 
\end{equation*} 
Third,
\begin{multline*}
\alpha_{*}(p^{*}A \wedge \varphi) \\=ij \kappa_{*}^{skew}(q(p^{*}\theta) \wedge \theta)-ip^{*}j \kappa_{*}(q(\theta) \wedge q( \theta))+u([q(p^{*}\theta) \wedge \theta])-p^{*}u([q(\theta) \wedge q(\theta)])\text{.}
\end{multline*}
The combination of these three computations yields the claim that the curvature of $(g,\varphi)$ vanishes. 
\end{proof}

\begin{proof}[Proof of \cref{trivialization-of-g-gamma}]
We define the trivialization $\mathcal{T}_{\Gamma}$ as a refinement $\mathcal{T}_{\Gamma}=(p,(g,\varphi),\nu)$; see \cref{refinement}.
The first ingredient is the projection $p:G \to F$. The second is the gauge transformation
\begin{equation*}
(g,\varphi): (0,i_{*}(B_{\Gamma}))\to p^{*}(A,B)
\end{equation*}
of \cref{lemma-triv-2}, and the third is the bundle isomorphism
\begin{equation*}
\nu: \pr_2^{*}\trivlin_g^{\varphi}\otimes  i_{*}(\Gamma_1) \to  (\rho^{[2]})^* (\trivlin_1^0 )\otimes \pr_1^{*}\trivlin_{g}^{\varphi}
\end{equation*}
over $G \times_F G$ defined as the composition
\begin{equation*}
\alxydim{@C=4em}{\pr_2^{*}\trivlin_g^{\varphi}\otimes  i_{*}(\Gamma_1) \ar[r]^{\id \otimes \tau} & \pr_2^{*}\trivlin_g^{\varphi}\otimes  \trivlin_{\Delta}^{\alpha} \ar[r]^-{\gamma_{g \circ \pr_2,\Delta}} & \pr_1^{*}\trivlin_g^{\varphi}}\text{.}
\end{equation*}
That this is connection-preserving follows, for the first part, from \cref{lemma-triv-3}, and, for the second part, from the definitions of $\alpha$ and $\varphi$, as $\pr_2^{*}\varphi + (\alpha_{g \circ \pr_2})_{*}(\pr_1^{*}\varphi)=\pr_1^{*}\varphi$, and \cref{tensor-product-of-trivial-bundles-with-connections}. That diagram \cref{eq:refinementdiag} in \cref{refinement} is commutative follows from \cref{commutative-diagram-for-tau} and the associativity of $\gamma$ in \cref{tensor-product-of-trivial-bundles}. 

The map $p_{*}(\mathcal{T}_{\Gamma})$ is derived from the anchor of the bundle $\trivlin_{g}^{\varphi}$, which is $p \circ g = \inv$. 
\end{proof}

Next, we discuss the compatibility between the trivialization $\mathcal{T}_{\Gamma}$ and the multiplicative structure of $\mathcal{G}_{\Gamma}$. We consider the following three variations of the trivialization $\mathcal{T}_{\Gamma}$. First, we apply the trivialization trick of \cref{versuchs-iso} to 
\begin{equation*}
\pr_1^{*}\mathcal{T}_{\Gamma}: \pr_1^{*}\mathcal{G}_{\Gamma} \to \pr_1^{*}\mathcal{I}_{A,B} =  i_{*}^{(\pr_1^{*}q(\theta),0)}(\mathcal{I}_0)\text{,}
\end{equation*}
getting a 1-morphism
\begin{equation}
\label{first-variation-of-T}
\alxydim{@C=5em}{i_{*}(\pr_1^{*}\mathcal{G}_{\Gamma} \otimes \pr_2^{*}\mathcal{G}_{\Gamma}) \ar[r]^-{\pr_1^{*}\mathcal{T} \otimes \id_{\pr_2^{*}\mathcal{G}_{\Gamma}}} & i_{*}^{(\pr_1^{*}q(\theta),0)}(\pr_2^{*}\mathcal{G}_{\Gamma})\text{.} }
\end{equation} 
Second, we apply the adjusted shift of \cref{shift-of-1-morphisms} to the 1-morphism
\begin{equation*}
\pr_2^{*}\mathcal{T}_{\Gamma}:i_{*}(\pr_2^{*}\mathcal{G}_{\Gamma}) \to \pr_2^{*}\mathcal{I}_{A,B}=i_{*}^{(\pr_2^{*}q(\theta),0)}(\mathcal{I}_0)
\end{equation*} 
and to the shifting 1-form $\nu=\pr_1^{*}q(\theta)\in \Omega^1(G\times G,\mathfrak{g})$. This requires some calculations. The anchor map $p_{*}(\pr_2^{*}\mathcal{T}_{\Gamma}): F\times F \to M$ is $f := \inv \circ \pr_2$, and
\begin{equation*}
\tilde q_u(\nu,f) = \nu + q(\Ad_f(p(\nu))-p(\nu))=q(\Ad_{\pr_2}^{-1}(\pr_1^{*}\theta))=q(m^{*}\theta-\pr_2^{*}\theta)\text{.}
\end{equation*}
According to \cref{shift-of-1-morphisms}, the adjusted shift in the source is then by the pair $(\nu,0)$
and the shift in the target is by the pair
\begin{equation*}
(\pr_2^{*}q(\theta)+\tilde q_u(\nu,f),b_{\kappa}(\pr_2^{*}\theta\wedge p_{*}\tilde q_u(\nu,f)))=(m^{*}q(\theta),b_{\kappa}(\pr_2^{*}\theta\wedge \Ad_{\pr_2}^{-1}(\pr_1^{*}\theta)))=(m^{*}q(\theta),\rho_{\Gamma})\text{.}
\end{equation*}
Thus, the shifted 1-isomorphism is
\begin{equation}
\label{second-variation-of-T}
\alxydim{@C=5em}{i_{*}^{(\pr_1^{*}q(\theta),0)}(\pr_2^{*}\mathcal{G}_{\Gamma})\ar[r]^-{\pr_2^{*}\mathcal{T}^{\pr_1^{*}q(\theta)}} & i_{*}^{(m^{*}q(\theta),\rho_{\Gamma})}(\mathcal{I}_0)\text{.} }
\end{equation} 
The third variation applies again the trivialization trick of \cref{versuchs-iso}, now to the trivialization $m^{*}\mathcal{T}_{\Gamma}$, getting a 1-isomorphism
\begin{equation}
\label{third-variation-of-T}
\alxydim{@C=5em}{i_{*}(m^{*}\mathcal{G}_{\Gamma} \otimes \mathcal{I}_{\rho_{\Gamma}}) \ar[r]^-{m^{*}\mathcal{T}_{\Gamma} \otimes \id_{\mathcal{I}_{\rho_{\Gamma}}}} & i_{*}^{(m^{*}q(\theta),0)}(\mathcal{I}_{\rho_{\Gamma}})=i_{*}^{(m^{*}q(\theta),\rho_{\Gamma})}(\mathcal{I}_0)\text{.}}
\end{equation}
Together with the multiplicative structure, the variations collected above form the 1-morphisms in the following diagram.

\begin{proposition}
\label{multiplicativity-of-trivialization}
The trivialization $\mathcal{T}_{\Gamma}$ is compatible with the multiplicative structure $\mathcal{M}_{\Gamma}$ on $\mathcal{G}_{\Gamma}$ in the sense that there exists a canonical 2-morphism
\begin{equation}
\label{multiplicativity-of-trivialization-diag}
\alxydim{@C=8em@R=4em}{i_{*}(\pr_1^{*}\mathcal{G}_{\Gamma} \otimes \pr_2^{*}\mathcal{G}_{\Gamma}) \ar[d]_{i_{*}(\mathcal{M}_{\Gamma})} \ar[r]^-{\pr_1^{*}\mathcal{T} \otimes \id_{\pr_2^{*}\mathcal{G}_{\Gamma}}}  &  i_{*}^{(\pr_1^{*}q(\theta),0)}(\pr_2^{*}\mathcal{G}_{\Gamma}) \ar@{=>}[dl]|*+{\sigma_{\Gamma}} \ar[d]^{\pr_2^{*}\mathcal{T}^{\pr_1^{*}q(\theta)}} \\ i_{*}(m^{*}\mathcal{G}_{\Gamma} \otimes \mathcal{I}_{\rho_{\Gamma}} ) \ar[r]_{m^{*}\mathcal{T}_{\Gamma} \otimes \id_{\mathcal{I}_{\rho_{\Gamma}}}} & i_{*}^{(m^{*}q(\theta),\rho_{\Gamma})}(\mathcal{I}_{0})}
\end{equation}
in the bicategory $\grbcon{\Gamma} {G \times G}^{\ff}$.
Moreover, the 2-morphism $\sigma_{\Gamma}$ is compatible with the associator $\alpha_{\Gamma}$, in the sense that we the  equality between  2-morphisms in the bicategory $\grbcon\Gamma {G \times G \times G}^{\ff}$, depicted in \cref{figure-3}, holds. 
\begin{figure}[h]
\tikzset{
  catlab/.style={
    midway,
    sloped,
    above,
    font=\scriptsize
  },
  catlab-r/.style={
    midway,
    right,
    font=\scriptsize
  },
  catlab-l/.style={
    midway,
    left,
    font=\scriptsize
  },
  catlab-b/.style={
    midway,
    below,
    font=\scriptsize
  },
  catlab-i/.style={
    midway,
    inner sep=1pt,
    fill=white,
    font=\scriptsize
  },
  morph/.style={
    ->,
  },
  2morph/.style={
    ->,
    double,
    shorten >=10pt,
    shorten <=10pt,
    bend right=10,
  }
}
\begin{flushleft}
\begin{tikzpicture}[scale=1.4]

\node (1-2-3) at (0,  0) {$i_{*}(\mathcal{G}_1 \otimes \mathcal{G}_2 \otimes \mathcal{G}_3)$};
\node (1-2-3a) at (5,  0) {$i_{*}^{(q(\theta_1),0)}(\mathcal{G}_2 \otimes \mathcal{G}_3)$};
\node (12-3) at (1.7,  -1.2) {$i_{*}^{(0,\rho_{1,2})}(\mathcal{G}_{12} \otimes \mathcal{G}_3)$};
\node (1-23) at (0,  -2.3) {$i_{*}^{(0,\rho_{2,3})}(\mathcal{G}_{1} \otimes \mathcal{G}_{23} )$};
\node (123a) at (0,  -4.5) {$i_{*}^{(0,\rho_{1,23}+\rho_{2,3})}(\mathcal{G}_{123} )$};
\node (123b) at (1.7,  -3) {$i_{*}^{(0,\rho_{12,3}+\rho_{1,2})}(\mathcal{G}_{123} )$};

\node (t3) at (5,  -1.5) {$i_{*}^{(q(\theta_{12}),\rho_{1,2})}(\mathcal{G}_3$)};

\node (t1-23) at (5,  -4.5) {$i_{*}^{(q(\theta_{123}),\rho_{1,23}+\rho_{2,3})}(\mathcal{I}_0)$};
\node  (t12-3) at (5,  -3) {$i_{*}^{(q(\theta_{123}),\rho_{12,3}+\rho_{1,2})}(\mathcal{I}_0)$};

\draw[morph] (1-2-3) -- (12-3) node[catlab-i] {$i_{*}(\mathcal{M}_{1,2} \otimes \id)$};
\draw[morph] (1-2-3) -- (1-23) node[catlab-l] {$i_{*}(\id \otimes \mathcal{M}_{2,3})$};
\draw[morph] (12-3) -- (123b) node[catlab-i] {$i_{*}(\mathcal{M}_{12,3})$};
\draw[morph] (1-23) -- (123a) node[catlab-l] {$i_{*}(\mathcal{M}_{1,23})$};

\draw[morph] (12-3) -- (t3) node[catlab] {$\mathcal{T}_{12} \otimes \id $};

\draw[morph] (t3) -- (t12-3) node[catlab-r] {$\mathcal{T}_3^{q(\theta_{12})} $};

\draw[morph] (123a) -- (t1-23) node[catlab-b] {$\mathcal{T}_{123}$};
\draw[morph] (123b) -- (t12-3) node[catlab-b] {$\mathcal{T}_{123}$};

\draw[morph] (1-2-3) -- (1-2-3a) node[catlab] {$\mathcal{T}_1 \otimes \id \otimes \id $};
\draw[morph] (1-2-3a) -- (t3) node[catlab-r] {$(\mathcal{T}_2 \otimes \id)^{q(\theta_1)}$};

\draw[double] (t12-3) -- (t1-23);
\draw[double] (123a) -- (123b);

\draw[2morph] (12-3) to node[catlab] {$\alpha_{\Gamma}$} (1-23);
\draw[2morph] (1-2-3a) to node[catlab] {$\sigma_{1,2} \otimes \id$} (12-3);
\draw[2morph] (t3) to node[catlab] {$\sigma_{12,3}$} (123b);

\end{tikzpicture}
\end{flushleft}

\begin{flushright}
\begin{tikzpicture}[scale=1.2]
\node  at (-2,-2.3) {$=$};

\node  (1-2-3) at (0,  0) {$i_{*}(\mathcal{G}_1 \otimes \mathcal{G}_2 \otimes \mathcal{G}_3)$};
\node  (1-2-3a) at (5,  0) {$i_{*}^{(q(\theta_1),0)}(\mathcal{G}_2 \otimes \mathcal{G}_3)$};
\node  (t3) at (5,  -1.5) {$i_{*}^{(q(\theta_{12}),\rho_{1,2})}(\mathcal{G}_3$)};
\node  (1-23) at (0,  -1.5) {$i_{*}^{(0,\rho_{2,3})}(\mathcal{G}_{1} \otimes \mathcal{G}_{23} )$};
\node  (t12-3) at (5,  -3) {$i_{*}^{(q(\theta_{123}),\rho_{12,3}+\rho_{1,2})}(\mathcal{I}_0)$};
\node  (123a) at (0,  -4.5) {$i_{*}^{(0,\rho_{1,23}+\rho_{2,3})}(\mathcal{G}_{123} )$};
\node  (t1-23) at (5,  -4.5) {$i_{*}^{(q(\theta)_{123},\rho_{1,23}+\rho_{2,3})}(\mathcal{I}_0)$};
\node  (g23) at (2.3,  -2.8) {$i_{*}^{(q(\theta_1),\rho_{2,3})}(\mathcal{G}_{23} )$};

\draw[morph] (t3) -- (t12-3) node[catlab-r] (A) {$\mathcal{T}_3^{q(\theta_{12})} $};
\draw[morph] (1-2-3) -- (1-2-3a) node[catlab] {$\mathcal{T}_1 \otimes \id \otimes \id $} ;
\draw[morph] (1-2-3a) -- (t3) node[catlab-r] {$(\mathcal{T}_2 \otimes \id)^{q(\theta_1)}$} ;
\draw[morph] (1-2-3a) -- (g23) node[catlab] {$i_{*}^{(q(\theta_1),0)}(\mathcal{M}_{2,3})$} ;
\draw[morph] (1-2-3) -- (1-23) node[catlab-l] {$i_{*}(\id \otimes \mathcal{M}_{2,3})$};
\draw[morph] (1-23) -- (123a) node[catlab-l] {$i_{*}(\mathcal{M}_{1,23})$};
\draw[morph] (123a) -- (t1-23) node[catlab-b] {$\mathcal{T}_{123}$};
\draw[morph] (1-23) -- (g23) node[catlab-i] {$\mathcal{T}_{1} \otimes \id$};
\draw[morph] (g23) -- (t1-23) node[catlab-i] {$\mathcal{T}_{23}$};

\draw[double] (t12-3) -- (t1-23);

\draw[2morph] (A) to node[catlab] {$\sigma_{2,3}$} (g23);
\draw[2morph] (g23) to  node[catlab] {$\sigma_{1,23}$} (123a);

\end{tikzpicture}
\end{flushright}
\caption{}
\label{figure-3}
\end{figure}  
\end{proposition}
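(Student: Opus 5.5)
The plan is to reduce everything to explicit strict 1-morphisms, written in terms of trivial $\Gamma$-bundles, so that $\sigma_{\Gamma}$ becomes a single connection-preserving bundle isomorphism over $G\times G$. The four 1-morphisms in \cref{multiplicativity-of-trivialization-diag} then have explicit strict representatives.
\begin{itemize}
\item[(1)] $i_{*}(\mathcal{M}_{\Gamma})$ is the framing of the refinement $(f,(1,\tau),\nu_\Gamma)$ of \cref{multiplication-refinement}, extended along $i$.
\item[(2)] $\pr_1^{*}\mathcal{T}\otimes\id$ is the strict 1-morphism of \cref{versuchs-iso}. It is built from the refinement $\mathcal{T}_\Gamma=(p,(\inv,u(\theta)),\nu)$, so its bundle is $\pr_1^{*}\trivlin_{\inv}^{u(\theta)}$ over $G\times G$, and its intertwiner uses the half-braiding $\beta$.
\item[(3)] $\pr_2^{*}\mathcal{T}^{\pr_1^{*}q(\theta)}$ is the refinement $\pr_2^{*}\mathcal{T}_\Gamma$, with the connection on $\trivlin_{\inv\circ\pr_2}$ adjusted-shifted. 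By \cref{adjusted-shift-of-connection} this shift is $\trivlin^{u(\pr_2^{*}\theta)+\kappa(\inv\circ\pr_2,\pr_1^{*}q(\theta))}_{\inv\circ\pr_2}$.
\item[(4)] $m^{*}\mathcal{T}_\Gamma\otimes\id_{\mathcal{I}_{\rho_\Gamma}}$ equals $m^{*}\mathcal{T}_\Gamma$ by \cref{versuchs-iso:3}.
\end{itemize}
In both composites the underlying map to the target is $m$, and each composite is again a strict 1-morphism over $G\times G$. The clockwise composite has bundle $\trivlin_{\inv\circ\pr_2}\otimes\trivlin_{\inv\circ\pr_1}$, and the counter-clockwise one has bundle $m^{*}\trivlin_{\inv}\otimes\trivlin_1^{\tau}$ (the refinement of $\mathcal{M}_\Gamma$ has trivial anchor). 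The anchors agree, since $g_2^{-1}g_1^{-1}=(g_1g_2)^{-1}$. I would therefore define $\sigma_\Gamma$ as a strict 2-morphism, given by the canonical isomorphism $\gamma$ of \cref{tensor-product-of-trivial-bundles} composed with the unitor, and then push it through the framing functors \cref{framing-functors}.

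Three verifications are then needed.
\begin{itemize}
\item[(a)] $\gamma$ is connection-preserving. By \cref{tensor-product-of-trivial-bundles-with-connections} this is the identity of $\mathfrak{h}$-valued 1-forms
\begin{equation*}
u(\pr_2^{*}\theta)+\kappa(\inv\circ\pr_2,\pr_1^{*}q(\theta))+(\alpha_{\inv\circ\pr_2})_{*}u(\pr_1^{*}\theta)=m^{*}u(\theta)+\tau,
\end{equation*}
with $\tau=j\kappa(\inv\circ\pr_2,\pr_1^{*}\theta)$.
\item[(b)] The strict 2-morphism condition \cref{condition-for-2-morphisms-strict} holds, i.e.\ $\gamma$ intertwines the two composed $\nu$'s.
\item[(c)] The compatibility with the associator in \cref{figure-3} holds.
\end{itemize}

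For (a), I would expand $m^{*}\theta=\Ad^{-1}_{\pr_2}\pr_1^{*}\theta+\pr_2^{*}\theta$. Using \cref{condition-for-adjustments-3} and the decomposition $\id_{\mathfrak{h}}=i_*j_u+u t_*$, one gets $(\alpha_{g^{-1}})_{*}u(X)=u(X)+\kappa(g^{-1},t_*u X)$. Splitting $\pr_1^{*}\theta=t_*u(\pr_1^{*}\theta)+q(p_*\pr_1^{*}\theta)$ and using linearity of $\kappa$ in $\mathfrak{g}$, the left side should regroup into $u(m^{*}\theta)+\kappa(\inv\circ\pr_2,\pr_1^{*}\theta)$. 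For this to match the right side I need $\kappa(g^{-1},X)$ to equal its $\mathfrak{a}$-part plus $u(\Ad_{g^{-1}}X-X)$. I plan to take this from adaptedness \cref{adapted-to-u} together with the decomposition $i_*j_u+ut_*=\id$. This is close in spirit to the computation proving \cref{multiplicativity-of-omega}, and I expect it to close after bookkeeping.

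Step (b) reduces pointwise to an identity in $H$. Chasing elements as in the proof of \cref{multiplicativity-of-tau}, the counter-clockwise side involves $\nu_\Gamma$, i.e.\ the product $h_1\alpha(g_1,h_2)$. The clockwise side involves the half-braiding $\beta$ and the maps $\gamma$. The diagram in \cref{multiplicativity-of-tau} is exactly this comparison, so (b) should follow from it together with the associativity of $\gamma$.

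The main obstacle is (c). All 2-morphisms in \cref{figure-3} are built from the maps $\gamma$, the half-braidings $\beta$, the unitors, and the strict associativity of $\mathcal{M}_\Gamma$ (where $\alpha_\Gamma$ is trivial). Once everything is represented on the common refinement $G^3$, both sides become bundle isomorphisms between the same trivial bundle $\trivlin_{(g_1g_2g_3)^{-1}}$. Such isomorphisms are functions $G^3\to H$. I would check on points that both sides give $1$: by associativity of $\gamma$, the hexagon axiom for $\beta$ from \cref{reduction-to-abelian}, \cref{shift-of-1-morphisms:b} (so that the shifts $(\mathcal{T}_2\otimes\id)^{q(\theta_1)}$ compose correctly), and naturality \cref{versuchs-iso:2}. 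No differential-form check is needed here, since connection preservation is already established for each piece. The danger is bookkeeping: identifying the composite shifts such as $\tilde q_u(\inv\circ\pr_2,\pr_1^{*}q(\theta))$ so that the objects at the corners agree on the nose, which uses $\rho_{1,2}+\rho_{12,3}=\rho_{2,3}+\rho_{1,23}$, the cocycle property of $\rho_\Gamma$.
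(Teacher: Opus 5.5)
Your proposal is correct and follows the paper's proof essentially step by step. The paper likewise uses strict representatives of the four 1-morphisms and assembles $\sigma_\Gamma$ from $\gamma$ and $\gamma^{-1}_{g\circ m,1}$; it proves connection-preservation via the 1-form identity you derive, where your sign $+i_*\tau$ is the one consistent with \cref{tensor-product-of-trivial-bundles-with-connections} while the paper writes $-i_*\tau$. It obtains \cref{condition-for-2-morphisms-strict} from the diagram of \cref{multiplicativity-of-tau} together with associativity of $\gamma$, and reduces the associator compatibility to associativity of $\gamma$ because $\alpha_\Gamma$ is trivial. One harmless slip: the common underlying map of both composites is $p\times p\colon G^2\to F^2$, not $m$.
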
 

\begin{proof}
All occurring 1-morphisms in \cref{multiplicativity-of-trivialization-diag} are refinements, and we compute them separately, first, displaying them through their associated strict 1-morphisms. We use the data $\mathcal{T}_{\Gamma}=(p,(g,\varphi),\nu)$ of the trivialization $\mathcal{T}_{\Gamma}$ collected in the proof of \cref{trivialization-of-g-gamma}. To begin in the clockwise direction:
\begin{itemize}
\item 
 $\pr_1^{*}\mathcal{T} \otimes \id_{\pr_2^{*}\mathcal{G}_{\Gamma}}$ is the strict 1-morphism consisting of the map $p \times \id: G^2 \to F^2$, of the principal $\Gamma\!$-bundle $\pr_1^{*}\trivlin_g^{\varphi}$
over $G^2$, and of the following bundle isomorphism, over a point $(g_1,g_2),(g_1',g_2') \in G^2 \times_{F^2} G^2$ given by
\begin{align*}
\alxydim{@C=3em}{(\trivlin_{g}^{\varphi})_{g_1'} \otimes i_{*}(\Gamma_1)_{g_1,g_1'} \otimes i_{*}(\Gamma_1)_{g_2,g_2'} \ar[r]^-{\nu_{g_1,g_1'} \otimes \id} & (\trivlin_g^{\varphi})_{g_1} \otimes i_{*}(\Gamma_1)_{g_2,g_2'} \ar[r]^-{\beta} & i_{*}^{A}(\Gamma_1)_{g_2,g_2'} \otimes (\trivlin_g^{\varphi})_{g_1}\text{.} }
\end{align*}

\item
 $(\pr_2^{*}\mathcal{T}_{\Gamma})^{\pr_1^{*}p^{*}A}$ is the strict 1-morphism consisting of the map $\id \times p: F \times G \to F^2$, of the principal $\Gamma\!$-bundle $(\pr_2^{*}\trivlin_g^{\varphi})^{\pr_1^{*}p^{*}A}$,
and of the following bundle isomorphism, over a point $(f_1,g_2),(f_1,g_2') \in (F \times G) \times_{F^2} (F \times G)$ given by
\begin{equation*}
\nu_{g_2,g_2'}:(\trivlin_{g}^{\varphi})_{g_2'}^{A_{f_1}} \otimes i_{*}^{A_{f_1}}(\Gamma_1)_{g_2,g_2'} \to  (\trivlin_g^{\varphi})^{A_{f_1}}_{g_2} \text{.}
\end{equation*}
\end{itemize}
The clockwise composite is thus the strict 1-morphism consisting of the map $p ^2: G^2 \to F^2$, of the principal $\Gamma\!$-bundle $Q^{cw}:=(\pr_2^{*}\trivlin_g^{\varphi})^{\pr_1^{*}p^{*}A} \otimes \pr_1^{*}\trivlin_g^{\varphi}$ over $G^2$, and of the following bundle morphism $\nu^{cw}$, over a point $(g_1,g_2),(g_1',g_2') \in G^2 \times_{F^2} G^2$ given by
\begin{align*}
\alxydim{@C=4em}{(\trivlin_{g}^{\varphi})_{g_2'}^{A_{f_1}} \otimes(\trivlin_{g}^{\varphi})_{g_1'} \otimes i_{*}(\Gamma_1)_{g_1,g_1'} \otimes i_{*}(\Gamma_1)_{g_2,g_2'} \ar[d]^-{\id \otimes \nu_{g_1,g_1'} \otimes \id} \\ (\trivlin_{g}^{\varphi})_{g_2'}^{A_{f_1}} \otimes(\trivlin_g^{\varphi})_{g_1} \otimes i_{*}(\Gamma_1)_{g_2,g_2'} \ar[r]^-{\id \otimes \beta} & (\trivlin_{g}^{\varphi})_{g_2'}^{A_{f_1}} \otimes i_{*}^{A}(\Gamma_1)_{g_2,g_2'} \otimes (\trivlin_g^{\varphi})_{g_1} 
\ar[d]^{\nu_{g_2,g_2'} \otimes \id } 
\\ &(\trivlin_g^{\varphi})^{A_{f_1}}_{g_2}\otimes (\trivlin_g^{\varphi})_{g_1} \text{.} }
\end{align*}
Next we look at the counter-clockwise direction:
\begin{itemize}

\item 
 $i_{*}(\mathcal{M})$ is the strict 1-morphism consisting, following \cref{multiplication-refinement}, of the map $f: G^2 \to F^2 \ttimes mp G$, the principal $\Gamma\!$-bundle $\trivlin_1^{i_{*}\tau}$, and of  the following bundle isomorphism, over a point $((g_1,g_2),(g_1',g_2'))\in G^2 \times_{F^2} G^2$ given by
\begin{equation*}
\alxydim{@C=8em}{(\trivlin_1^{i_{*}\tau})_{g_1,g_2} \otimes i_{*}(\Gamma_1)_{g_1,g_1'} \otimes i_{*}(\Gamma_1)_{g_2,g_2'} \ar[r]^-{(\nu_\Gamma)_{(g_1,g_2),(g_1',g_2')}} &  i_{*}(\Gamma_1)_{g_1g_2,g_1'g_2'} \otimes (\trivlin_1^{i_{*}\tau})_{g_1',g_2'}\text{.}}
\end{equation*} 

\item
$m^{*}\mathcal{T}_{\Gamma} \otimes \id_{\mathcal{I}_{\rho_{\Gamma}}}$ is the strict 1-morphism consisting (\cref{versuchs-iso:3}) of the map $\pr_{12}:F^2 \ttimes mp G \to F^2$, of the principal $\Gamma\!$-bundle $\pr_3^{*}\trivlin_g^{\varphi}$ over $F^2 \ttimes mp G$, 
and of the following bundle isomorphism, over a point $(f_1,f_2,g),(f_1,f_2,g')$ given by
\begin{align*}
\alxydim{@C=4em}{(\trivlin_{g}^{\varphi})_{g'} \otimes i_{*}(\Gamma_1)_{g,g'}  \ar[r]^-{\nu_{g,g'}} & (\trivlin_g^{\varphi})_{g} \text{.} }
\end{align*}

\end{itemize}
The counter-clockwise composite is thus the strict 1-morphism consisting of the map $p^2: G^2 \to F^2$, of the principal $\Gamma\!$-bundle $Q^{cc} := m^{*}\trivlin_g^{\varphi} \otimes \trivlin_1^{i_{*}\tau}$ over $G^2$, 
and of the following bundle morphism $\nu^{cc}$, over a point $(g_1,g_2),(g_1',g_2') \in G^2 \times_{F^2} G^2$ given by
\begin{align*}
\alxydim{@C=4em}{(\trivlin_{g}^{\varphi})_{g_1'g_2'} \otimes(\trivlin_1^\tau)_{g_1,g_2} \otimes i_{*}(\Gamma_1)_{g_1,g_1'} \otimes i_{*}(\Gamma_1)_{g_2,g_2'}  \ar[d]^-{\id \otimes (\nu_{\Gamma})_{(g_1,g_2),(g_1',g_2')} } \\ 
(\trivlin_{g}^{\varphi})_{g_1'g_2'} \otimes i_{*}(\Gamma_1)_{g_1g_2,g_1'g_2'} \otimes (\trivlin_1^{\tau})_{g_1',g_2'}
\ar[r]^-{\nu_{g_1g_1',g_2g_2'} \otimes \id} & (\trivlin_g^{\varphi})_{g_1g_2} \otimes (\trivlin_1^{\tau})_{g_1',g_2'} 
 \text{.} }
\end{align*}

Now we are in position to construct the 2-isomorphism $\sigma_{\Gamma}$ in the bicategory $\grbcon\Gamma M^{strict}$.   Its data is 
 a connection-preserving bundle isomorphism $\sigma_{\Gamma}: Q^{cw} \to Q^{cc}$, which we construct using the isomorphisms $\gamma$ from \cref{tensor-product-of-trivial-bundles-with-connections} and the fact that they are connection-preserving (\cref{tensor-product-of-trivial-bundles-with-connections}):
\begin{align*}
\hspace{8em}\alxydim{@C=0.2em}{\hspace{-12em}Q^{cw}=  \pr_2^{*}(\trivlin_{g}^{\varphi})^{\pr_1^{*}p^{*}A}  \otimes \pr_1^{*}\trivlin_{g}^{\varphi} \eqcref{adjusted-shift-of-connection} \trivlin_{g \circ \pr_2}^{\pr_2^{*}\varphi+\kappa(g \circ \pr_2,\pr_1^{*}p^{*}A)}  \otimes \trivlin_{g \circ \pr_1}^{\pr_1^{*}\varphi}  
\ar[d]^-{\gamma_{g\circ \pr_1,g \circ \pr_2}} 
\\  \trivlin_{(g \circ \pr_2)\cdot (g \circ \pr_1)}^{\pr_2^{*}\varphi+\kappa(g \circ \pr_2,\pr_1^{*}p^{*}A)+(\alpha_{g \circ \pr_2})_{*}(\pr_1^{*}\varphi)} \ar@{=}[r] & \trivlin_{g \circ m}^{m^{*}\varphi-i_{*}\tau} \ar[d]^-{\gamma^{-1}_{g \circ m,1}} \\ & m^{*}\trivlin_{g}^{\varphi} \otimes \trivlin_1^{i_{*}\tau}=Q^{cc}\hspace{-3em}}
\end{align*}
The equality in the middle is -- after invoking the definition $g=\inv$ --  due to the equalities
\begin{align*}
(g_1g_2)^{-1}&=g_2^{-1}g_1^{-1}
\\
\varphi_{g_2} +(\alpha_{g_2^{-1}})_{*}(\varphi_{g_1})+\kappa(g_2^{-1},A_{p(g_1)}) &=\varphi_{g_1g_2}-i_{*}\tau_{g_1,g_2}\text{,}
\end{align*}
of which the second is a simple calculation  using the definitions $\varphi=u(\theta)$, and $A=q(\theta)$,
and the one of $\tau$ in \cref{definition-of-tau}. 
It remains to check  the commutative diagram \cref{condition-for-2-morphisms-strict} for 2-morphisms between strict 1-morphisms. Here, it takes the form depicted in \cref{figure-1}.
\begin{figure}[h]
\begin{equation*}
\footnotesize
\hspace{-5em}\alxydim{@C=-7em@R=4em}{
Q^{cw}_{g_1',g_2'} \otimes_{} i_{*}(\Gamma_1)_{g_1,g_1'} \otimes i_{*}(\Gamma_1)_{g_2,g_2'} \ar[dr]|{\id \otimes \tau_{g_1,g_1'} \otimes \id} \ar[dd]|>>>>>>>>{\nu_{g_1,g_1'} \otimes \id} \ar@/^4pc/[rrrr]^{(\sigma_{\Gamma})_{g_1',g_2'}\otimes \id} \ar[rr] \ar@/_5pc/[ddddd]|{\nu^{cw}_{(g_1,g_2),(g_1',g_2')}} && \trivlin_{g_2^{\prime-1}g_1^{\prime-1}} \otimes_{} i_{*}(\Gamma_1)_{g_1,g_1'} \otimes i_{*}(\Gamma_1)_{g_2,g_2'}\ar@{}[d]|*+[o][F-]{A} \ar[rr] 
&& Q^{cc}_{g_1',g_2'} \otimes_{} i_{*}(\Gamma_1)_{g_1,g_1'} \otimes i_{*}(\Gamma_1)_{g_2,g_2'}\ar[dl]|{\id \otimes \tau_{g_1,g_1'} \otimes \id} 
\ar[dd]|>>>>>>>{\id \otimes (\nu_{\Gamma})_{(g_1,g_2),(g_1',g_2')}} \ar@/^4pc/[ddddd]|{\nu^{cc}_{(g_1,g_2),(g_1',g_2')}} \\
&\trivlin_{g_{2}^{\prime-1}} \otimes \trivlin_{g_{1}^{\prime-1}} \otimes \trivlin_{g_1'g_1^{-1}}\otimes i_{*}(\Gamma_1)_{g_2,g_2'}\ar@{}[dr]|*+[o][F-]{B} \ar[rr] \ar[dl]&&\trivlin_{g_2^{\prime-1}g_1^{\prime-1}}\otimes \trivlin_{g_1'g_1^{-1}} \otimes i_{*}(\Gamma_1)_{g_2,g_2'} \ar[dl]  \\
\trivlin_{g_2'}^{-1} \otimes \trivlin_{g_1}^{-1} \otimes i_{*}(\Gamma_1)_{g_2,g_2'}\ar@{}[drr]|*+[o][F-]{D}\ar[rr]  \ar[d]^{\id \otimes \beta}
& 
&\trivlin_{g_2^{\prime-1}g_1^{\prime-1}}\otimes \trivlin_{g_1'} \otimes  \trivlin_{g_1^{-1}} \otimes i_{*}(\Gamma_1)_{g_2,g_2'}\hspace{4em}\ar@{}[rr]|*+[o][F-]{C} \ar[d]|{\id \otimes \id \otimes \beta}
&&\hspace{-3em}\trivlin_{g_2^{\prime-1}g_1^{\prime-1}} \otimes i_{*}(\Gamma_1)_{g_1g_2,g_1'g_2'}  \ar[dl]|{\id \otimes \tau_{g_1g_2,g_1'g_2'}} \ar[ddd]|{\nu_{g_1g_1',g_2g_2'} } \\  
\trivlin_{g_2'}^{-1} \otimes i_{*}(\Gamma_1)_{g_2,g_2'} \otimes \trivlin_{g_1}^{-1} \ar[dr]|{\id \otimes \tau_{g_2,g_2'} \otimes \id} \ar[dd]|>>>>>>>>>>>{\nu_{g_2,g_2'} \otimes \id } &&\hspace{-5em}\trivlin_{g_2^{\prime-1}g_1^{\prime-1}}\otimes \trivlin_{g_1'}  \otimes i_{*}(\Gamma_1)_{g_2,g_2'}\otimes  \trivlin_{g_1^{-1}} \ar[d]|{\id \otimes \id \otimes \tau_{g_2,g_2'} \otimes \id} &\trivlin_{g_2^{\prime-1}g_1^{\prime-1}}  \otimes \trivlin_{g_1'g_2'g_2^{-1}g_1^{-1}} \hspace{-7em}  \ar[ddr] \\
&\hspace{-7em}\trivlin_{g_2'}^{-1} \otimes \trivlin_{g_2'g_2^{-1}} \otimes \trivlin_{g_1}^{-1} \ar[r] \ar[dl]& \trivlin_{g_2^{\prime-1}g_1^{\prime-1}}\otimes \trivlin_{g_1'}  \otimes \trivlin_{g_2'g_2^{-1}}\otimes  \trivlin_{g_1^{-1}} \ar@{}[d]|*+[o][F-]{E} \hspace{-3em}\ar[ur]
\\
Q^{cw}_{g_1,g_2} \ar@/_4pc/[rrrr]_{ (\sigma_{\Gamma})_{g_1,g_2}} \ar[rr]
&& \trivlin_{g_2^{-1}g_1^{-1}} \ar[rr]
&& Q^{cc}_{g_1,g_2}
}
\end{equation*}
\caption{}
\label{figure-1}
\end{figure}
Compared to \cref{condition-for-2-morphisms-strict}, we have rotated the diagram by 90 degrees; moreover, we have stripped all connections from the notation, and every unlabelled arrow is an occurrence of the bundle isomorphism $\gamma$. We claim that all subdiagrams commute; hence, the outer shape is a commutative diagram:
\begin{itemize}

\item 
The four outer, lens-shaped diagrams represent
the given formulas for $(\sigma_{\Gamma})$, $\nu^{cw}$ and $\nu^{cc}$, respectively. 

\item
The three triangular diagrams represent the definition of the bundle isomorphism $\nu$ from \cref{trivialization-of-g-gamma}. 

\item
Diagrams B and E commute due to the associativity of the bundle isomorphism $\gamma$; see \cref{tensor-product-of-trivial-bundles}.

\item
Diagrams A and D commute due the functoriality of the tensor product.

\item
Diagram C is the commutative diagram of \cref{multiplicativity-of-tau}. 

\end{itemize} 
It remains to prove the compatibility with the associator $\alpha_{\Gamma}$. We have to verify that two 2-morphisms between two strict 1-morphisms are equal. The 2-morphisms are just $\Gamma\!$-bundle isomorphisms over $G^3$, coming from the associator $\alpha_{\Gamma}$ and the 2-morphism $\sigma_{\Gamma}$. But since  the multiplicative structure on $\mathcal{G}_{\Gamma}$ is strict, the associator $\alpha_{\Gamma}$ is actually the identity. And since $\sigma_{\Gamma}$ is defined only using the bundle morphism $\gamma$, all we have to check is the equality between different applications of $\gamma$. By \cref{tensor-product-of-trivial-bundles}, $\gamma$ is associative, hence, any two such combinations must be equal. 
\end{proof}

\section{Lifting theory}

\label{lifting-theory}

In this section, we prove a central result, \cref{lifting-theorem}, which recasts non-abelian bundle gerbes with adjusted and adapted connections in terms of a twisted abelian framework. The twist is represented by an abelian bundle 2-gerbe, called the lifting 2-gerbe. This generalizes the lifting theory for non-abelian bundle gerbes developed in joint work with Nikolaus in \cite{Nikolausa} to include connections. At the same time, it extends the work of Téllez-Domínguez \cite{Tellez2023} from a cocycle description to a global one, using descent theory.

Let $\Gamma$ be a smoothly separable, central crossed module with $F:=\pi_0\Gamma$, equipped with a splitting $u$ and an adapted adjustment $\kappa$, and let $M$ be a smooth manifold.

\subsection{The Chern-Simons bundle 2-gerbe}

\label{lifting-2-gerbe}

We start with the central object of interest in this section.

\begin{definition}
\label{gamma-lifts}
If $P$ is a principal $F$-bundle over $M$ with connection, then a \emph{$(\Gamma\!,u,\kappa)$-lift} of $P$ is a $\Gamma\!$-bundle gerbe $\mathcal{G}$ over $M$ with adapted connection, together with a connection-preserving bundle isomorphism $\varphi: p_{*}(\mathcal{G}) \to P$.
\end{definition}

$(\Gamma\!,u,\kappa)$-lifts form a bicategory  $\liftcon {(\Gamma\!,u,\kappa)}P{}$, whose 1-morphisms $(\mathcal{G},\varphi) \to (\mathcal{G}',\varphi')$ are 1-morphisms $\mathcal{A}:\mathcal{G} \to \mathcal{G}'$ such that $\varphi' \circ p_{*}(\mathcal{A}) = \varphi$, and whose 2-morphisms are arbitrary 2-morphisms in $\grbcon{\Gamma\!,u,\kappa} M$.
The lifting theory characterizes this bicategory, in particular whether it is non-empty (see \cref{lifting-theorem}).

\begin{remark}
By \cref{flatness-of-underlying-bundle}, $(\Gamma\!,u,\kappa)$-lifts are fake-flat if and only if   $P$ is flat.
\end{remark}

We consider a principal $F$-bundle $P$ with connection $\eta\in \Omega^1(P,\mathfrak{f})$ over  $M$.
Let $b_{\kappa}$ be the  symmetric invariant bilinear form associated to $\kappa$, and let $\mathcal{G}_{\Gamma}$ be the multiplicative $A$-bundle gerbe over $F$ with connection constructed in \cref{multiplicative-bundle-gerbe}.
The formulas for the curvature $H_{\Gamma}$ (\cref{curvature-of-curving}) and for the 2-form $\rho_{\Gamma}$ (\cref{definition-of-rho}) ensure the consistency required to construct the Chern-Simons 2-gerbe $\mathbb{CS}_{P}(\mathcal{G}_\Gamma)$ \cite[\S 3.2]{waldorf5}, see \cite{johnson1,carey4} for earlier treatments of this specific 2-gerbe, and \cite{stevenson2} for a general framework for bundle 2-gerbes.

\noindent
We recall that $\mathbb{CS}_{P}(\mathcal{G}_\Gamma)=(P,\rho,C,\mathcal{H},\mathcal{M}',\alpha')$ consists of the following ingredients:
\begin{enumerate}[(a)]

\item
The surjective submersion $\rho:P \to M$, i.e., the bundle projection, and the 3-form curving
\begin{equation*}
C:= b_{\kappa}(\eta\wedge \mathrm{d}\eta) + \tfrac{1}{3}b_{\kappa}(\eta\wedge [\eta\wedge \eta])\in \Omega^3(P)\text{,}
\end{equation*}
i.e., the classical Chern-Simons 3-form associated with the connection $\eta$ and the bilinear form $b_{\kappa}$.
We recall that $C$ satisfies the equation
\begin{equation}
\label{Chern-Simons-identity-1}
\pr_2^{*}C = \pr_1^{*}C +\mathrm{d}\omega + \delta^{*}H_{\Gamma}
\end{equation}
over $P^{[2]}$, where $\omega\in \Omega^2(P^{[2]},\mathfrak{a})$ is defined by
\begin{equation*}
\omega := b_{\kappa}(\pr_1^{*}\eta \wedge \delta^{*}\bar\theta)
\end{equation*}
and  the  map $\delta: P^{[2]} \to F$ is defined by $p_1\delta(p_1,p_2)=p_2$, for $(p_1,p_2)\in P^{[2]}$.

\item
Over $P^{[2]}$, we consider the  $A$-bundle gerbe with connection
\begin{equation*}
\mathcal{H} := \delta^{*}\mathcal{G}_{\Gamma} \otimes \mathcal{I}_{\omega}\text{.}
\end{equation*}
\Cref{Chern-Simons-identity-1,curvature-of-curving} show that \begin{align*}
\pr_2^{*}C - \pr_1^{*}C &= \mathrm{curv}(\mathcal{H})\text{.}
\end{align*}

\item
Over $P^{[3]}$,   a map $\delta_2: P^{[3]} \to F \times F$ is defined by   $(p_1,p_2)\delta_2(p_1,p_2,p_3)=(p_2,p_3)$.
Pulling back the 1-isomorphism $\mathcal{M}_{\Gamma}$ of the multiplicative structure on $\mathcal{G}_{\Gamma}$, we obtain
\begin{equation*}
\delta_2^{*}\mathcal{M}_{\Gamma} : \pr_{12}^{*}\delta^{*}\mathcal{G}_{\Gamma} \otimes \pr_{23}^{*}\delta^{*}\mathcal{G}_{\Gamma} \to \pr_{13}^{*}\delta^{*}\mathcal{G}_{\Gamma} \otimes \delta_2^{*}\mathcal{I}_{\rho_{\Gamma}}\text{,}
\end{equation*}
where $\rho_{\Gamma}$ is from \cref{definition-of-rho}.
Taking the tensor product with the trivial bundle gerbe $\pr_{12}^{*}\mathcal{I}_{\omega} \otimes \pr_{23}^{*}\mathcal{I}_{\omega}$, and using the symmetric braiding available for $A$-bundle gerbes, we obtain a 1-isomorphism
\begin{equation*}
\mathcal{M}': \pr_{12}^{*}\mathcal{H} \otimes \pr_{23}^{*}\mathcal{H} \to \pr_{13}^{*}\mathcal{H}
\end{equation*}
because
\begin{equation}
\label{delta-omega-rho}
\pr_{12}^{*}\omega +\pr_{23}^{*}\omega  =\pr_{13}^{*}\omega-\delta_2^{*}\rho_{\Gamma}\text{.}
\end{equation}

\item
Over $P^{[4]}$, a map $\delta_3: P^{[4]} \to F^3$ is defined by   $(p_1,p_2,p_3)\delta_3(p_1,p_2,p_3,p_4)=(p_2,p_3,p_4)$.
Pulling back the 2-isomorphism $\alpha_{\Gamma}$ of the multiplicative structure of $\mathcal{G}_{\Gamma}$, we obtain  a 2-isomorphism $\alpha':=\delta_3^{*}\alpha_{\Gamma}$ filling the following diagram:
\begin{equation*}
\alxydim{@=1.8cm}{\pr_{12}^{*}\mathcal{H} \otimes \pr_{23}^{*}\mathcal{H} \otimes \pr_{34}^{*}\mathcal{H} \ar[r]^-{\pr_{123}^{*}\mathcal{M}' \otimes \id} \ar[d]_{\id \otimes \pr_{234}^{*}\mathcal{M}'} & \pr_{13}^{*}\mathcal{H} \otimes \pr_{34}^{*}\mathcal{H} \ar@{=>}[dl]|*+{\alpha'} \ar[d]^{\pr_{134}^{*}\mathcal{M}'} \\ \pr_{12}^{*}\mathcal{H} \otimes \pr_{24}^{*}\mathcal{H} \ar[r]_-{\pr_{124}^{*}\mathcal{M}'} & \pr_{14}^{*}\mathcal{H}}
\end{equation*}
Finally, $\alpha'$ satisfies the pentagon axiom, inherited from the associativity constraint $\alpha_{\Gamma}$. While this coherence condition is essential for the definition of the 2-gerbe, it does not appear in the subsequent arguments.
\end{enumerate}
This completes the definition of the Chern-Simons 2-gerbe $\mathbb{CS}_{P}(\mathcal{G}_\Gamma)$.
Observe that the 4-form $\mathrm{d}C\in \Omega^4(P,\mathfrak{a})$ descends along the bundle projection $\rho: P \to M$ to the 4-form
\begin{equation*}
\Omega_{\eta} := b_{\kappa}(F_{\eta} \wedge F_{\eta}) \in \Omega^4(M,\mathfrak{a})\text{,}
\end{equation*}
the \emph{curvature} of the 2-gerbe, where $F_{\eta}=\mathrm{d}\eta + \tfrac{1}{2}[\eta\wedge \eta]$ is the curvature of $\eta$. $\Omega_{\eta}$ itself is closed and represents the characteristic class of   $\mathbb{CS}_{P}(\mathcal{G}_\Gamma)$ in de Rham cohomology.  Let us note an obvious fact:

\begin{lemma}
If the connection $\eta$ on $P$ is flat, then the Chern-Simons 2-gerbe is also flat.
\end{lemma}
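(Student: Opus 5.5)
The lemma asks us to show that $\mathbb{CS}_{P}(\mathcal{G}_\Gamma)$ is flat when $\eta$ is flat. A bundle 2-gerbe with connection is flat when its curvature 4-form vanishes. So the plan is to compute the curvature of $\mathbb{CS}_{P}(\mathcal{G}_\Gamma)$ and observe that it is a quadratic expression in $F_\eta$. By the construction recalled above, $\mathrm{d}C$ descends along $\rho:P\to M$ to
\begin{equation*}
\Omega_{\eta}=b_{\kappa}(F_{\eta}\wedge F_{\eta})\text{.}
\end{equation*}
Checking this descent is the only real computation, and we take it from the classical Chern--Simons identity. Since $b_\kappa$ is symmetric and $\mathrm{Ad}$-invariant (\cref{rule-for-adapted-kappa} and the subsequent lemma), the standard calculation gives
\begin{equation*}
\mathrm{d}\big(b_{\kappa}(\eta\wedge\mathrm{d}\eta)+\tfrac{1}{3}b_{\kappa}(\eta\wedge[\eta\wedge\eta])\big)=b_\kappa(F_\eta\wedge F_\eta)
\end{equation*}
on $P$. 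The right-hand side is basic, so it descends to $\Omega_\eta$ on $M$.

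If $\eta$ is flat, then $F_\eta=\mathrm{d}\eta+\tfrac12[\eta\wedge\eta]=0$, hence $\Omega_\eta=0$ and the curvature of the 2-gerbe vanishes. This is the flatness claim in its weakest form, namely vanishing 4-curvature.

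If the intended notion of flatness is stronger, we would also check that the curving $C$ itself becomes exact in a compatible way. For flat $\eta$ we have $\mathrm{d}\eta=-\tfrac12[\eta\wedge\eta]$, so
\begin{equation*}
C=-\tfrac16 b_\kappa(\eta\wedge[\eta\wedge\eta])\text{.}
\end{equation*}
This matches the pattern of $H_\Gamma$ from \cref{curvature-of-curving}, and we expect it to combine with \cref{Chern-Simons-identity-1} accordingly. The main (mild) obstacle is fixing which definition of flatness for bundle 2-gerbes is meant. We would adopt vanishing of the descended 4-form $\Omega_\eta$, in which case the argument is immediate.
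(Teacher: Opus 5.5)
Your argument is correct and matches the paper's implicit reasoning. The paper states the lemma as an obvious fact right after observing that $\mathrm{d}C$ descends to the curvature 4-form $\Omega_\eta=b_\kappa(F_\eta\wedge F_\eta)$ of $\mathbb{CS}_P(\mathcal{G}_\Gamma)$, and this vanishes when $F_\eta=0$. Since the paper explicitly calls $\Omega_\eta$ the curvature of the 2-gerbe, your hedging about a stronger notion of flatness, including the side remark on $C$, is unnecessary.
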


 Next we review the bicategory $\trivcon{\mathbb{CS}_P(\mathcal{G}_{\Gamma})}$ of  trivializations of $\mathbb{CS}_P(\mathcal{G}_{\Gamma})$, which specializes the general definition of this bicategory for a bundle 2-gerbe with connection; see \cite[Lem. 3.2.3]{waldorf8}.
A trivialization
   $\mathbb{T}$ of $\mathbb{CS}_P(\mathcal{G}_{\Gamma})$ consists of the following structure:
\begin{enumerate}[(a)]

\item
an $A$-bundle gerbe $\mathcal{S}$ with connection over $P$,

\item
a connection-preserving isomorphism
\begin{equation*}
\mathcal{A}: \mathcal{H} \otimes \pr_2^{*}\mathcal{S} \to \pr_1^{*}\mathcal{S}
\end{equation*}
over $P^{[2]}$, and

\item
a connection-preserving 2-isomorphism
\begin{equation*}
\alxydim{@=1.3cm}{\pr_{12}^{*}\mathcal{H} \otimes \pr_{23}^{*}\mathcal{H} \otimes \pr_3^{*}\mathcal{S} \ar[r]^-{\id \otimes \pr_{23}^{*}\mathcal{A}} \ar[d]_{\mathcal{M}' \otimes \id} & \pr_{12}^{*}\mathcal{H} \otimes \pr_{2}^{*}\mathcal{S} \ar@{=>}[dl]|*+{\sigma} \ar[d]^{\pr_{12}^{*}\mathcal{A}} \\ \pr_{13}^{*}\mathcal{H} \otimes \pr_{3}^{*}\mathcal{S} \ar[r]_-{\pr_{13}^{*}\mathcal{A}} & \pr_1^{*}\mathcal{S}}
\end{equation*}
over $P^{[3]}$.

\end{enumerate}
The 2-isomorphism $\sigma$ has to satisfy the following compatibility condition with the 2-isomorphism $\alpha'$ of the bundle 2-gerbe:
\begin{multline}
\label{coherence-for-sigma}
\small
\alxydim{@C=0em@R=4.5em}{&\hspace{-5em}\mathcal{H}_{12} \otimes \mathcal{H}_{23} \otimes \mathcal{H}_{34} \otimes \mathcal{S}_{4} \ar[rr]^-{\id \otimes \id \otimes \mathcal{A}_{34}} \ar[dr]|>>>>>>>>>{\id\otimes \mathcal{M}'_{234} \otimes \id}="1" \ar[dl]|<<<<<<<<{\mathcal{M}'_{123} \otimes\id \otimes \id} && \mathcal{H}_{12} \otimes \mathcal{H}_{23} \otimes \mathcal{S}_{3} \ar@{=>}@/_1pc/[dl]^{\id \otimes \sigma_{234}} \ar[dr]^{\id \otimes \mathcal{A}_{23}} \\ \mquad\mathcal{H}_{13} \otimes \mathcal{H}_{34} \otimes \mathcal{S}_{4}\mquad \ar[dr]|{\mathcal{M}'_{134} \otimes \id}="2" &\ar@{=>}@/_0.6pc/"1";"2"^>>>>>>{\alpha' \otimes \id} \hspace{6em}& \mquad\mathcal{H}_{12} \otimes \mathcal{H}_{24} \otimes \mathcal{S}_{4}  \ar[dl]|{\mathcal{M}'_{124} \otimes \id} \ar[rr]|-{\id \otimes \mathcal{A}_{24}}  && \mathcal{H}_{12}\otimes \mathcal{S}_{2} \ar@{=>}@/_1pc/[dlll]^{\sigma_{124}} \ar[dl]^{\mathcal{A}_{12}} \\ & \mathcal{H}_{14} \otimes \mathcal{S}_{4} \ar[rr]_-{\mathcal{A}_{14}} && \mathcal{S}_{1}}
\\\small
=\alxydim{@C=0.5em@R=4.5em}{&\hspace{-3em}\mathcal{H}_{12} \otimes \mathcal{H}_{23} \otimes \mathcal{H}_{34} \otimes \mathcal{S}_{4} \ar[rr]^-{\id \otimes \id \otimes \mathcal{A}_{34}}  \ar[dl]|{\mathcal{M}'_{123} \otimes\id \otimes \id} && \mathcal{H}_{12} \otimes \mathcal{H}_{23} \otimes \mathcal{S}_{3}\mqquad \ar[dl]|{\mathcal{M}_{123}'\otimes \id} \ar[dr]|{\id \otimes \mathcal{A}_{23}}="1" \\ \mathcal{H}_{13} \otimes \mathcal{H}_{34} \otimes \mathcal{S}_{4} \ar[rr]|-{\id \otimes \mathcal{A}_{34}} \ar[dr]|{\mathcal{M}'_{134} \otimes \id} && \mathcal{H}_{13} \otimes \mathcal{S}_{3}\mquad \ar@{=>}@/_1pc/[dl]^{\sigma_{134}}  \ar[dr]|{\mathcal{A}_{13}}="2" \ar@{=>}@/_1pc/"1";"2"^{\sigma_{123}}   &\hspace{6em}& \mquad\mathcal{H}_{12}\otimes \mathcal{S}_{2}  \ar[dl]^{\mathcal{A}_{12}} \\ & \mathcal{H}_{14} \otimes \mathcal{S}_{4} \ar[rr]_-{\mathcal{A}_{14}} && \mathcal{S}_{1}}
\end{multline}
Here, we have written $\mathcal{H}_{ij}$ as an abbreviation for $\pr_{ij}^{*}\mathcal{H}$, and similarly for $\mathcal{S}_i$, $\mathcal{A}_{ij}$, and $\sigma_{ijk}$.

\begin{remark}
Each trivialization $\mathbb{T}=(\mathcal{S},\mathcal{A},\sigma)$ has a \emph{covariant derivative} $K\in \Omega^3(M,\mathfrak{a})$, which is the unique 3-form whose pullback along the bundle projection is  $\rho^{*}K=C+\mathrm{curv}(\mathcal{S}) \in \Omega^3(P,\mathfrak{a})$. 
We have $\mathrm{d}K=\Omega_{\eta}$; see \cite[Lem. 3.2.5 \& 3.2.5]{waldorf8}. 
\end{remark}

1-morphisms and 2-morphisms between trivializations are defined in the canonical way (see \cite[Lemma 2.2.4]{waldorf8}). Concretely, a 1-morphism $(\mathcal{B},\beta):\mathbb{T}_1 \to \mathbb{T}_2$, with $\mathbb{T}_i=(\mathcal{S}_i,\mathcal{A}_i,\sigma_i)$, consists of an isomorphism $\mathcal{B}:\mathcal{S}_1 \to \mathcal{S}_2$ of bundle gerbes with connection over $P$ and a 2-isomorphism
\begin{equation*}
\alxydim{}{\mathcal{H} \otimes \pr_2^{*}\mathcal{S}_1 \ar[d]_{\id \otimes \pr_2^{*}\mathcal{B}} \ar[r]^-{\mathcal{A}_1} &  \pr_1^{*}\mathcal{S}_1 \ar@{=>}[dl]|*+{\beta} \ar[d]^{\pr_1^{*}\mathcal{B}} \\ \mathcal{H} \otimes \pr_2^{*}\mathcal{S}_2 \ar[r]_-{\mathcal{A}_2} & \pr_1^{*}\mathcal{S}_2}
\end{equation*}
of bundle gerbes with connection over $P^{[2]}$, subject to a coherence condition over $P^{[3]}$ involving $\beta$, $\sigma_1$, and $\sigma_2$. A 2-morphism $(\mathcal{B},\beta) \Rightarrow (\mathcal{B}',\beta')$ is a 2-isomorphism $\gamma: \mathcal{B}\Rightarrow \mathcal{B}'$ over $P$, subject to a coherence condition over $P^{[2]}$ involving $\beta$ and $\beta'$.

Every  bundle 2-gerbe $\mathbb{G}$ with structure group $A$ has a characteristic class $c_{\mathbb{G}}$ in the  \v Cech cohomology $\check\h^3(M,\sheaf A)$, obtained by choosing iterated sections into all submersions and bundles, under which the 2-isomorphism $\alpha'$ over $P^{[4]}$ becomes a \v Cech 3-cocycle that represents $c_{\mathbb{G}}$; see \cite{stevenson2}. In the case of the Chern-Simons 2-gerbe, this class can be expressed neatly in terms of the data of the principal $F$-bundle $P$ and the Lie 2-group $\Gamma$, under the assumption that $A$ is connected: then,  we have an exact sequence $0 \to Z \to \mathfrak{a} \to A \to 0$ with $Z$ a discrete subgroup of $\mathfrak{a}$, inducing an isomorphism $\check\h^3(M,\sheaf A) \cong \h^4(M,Z)$. Likewise, the Segal--Mitchison smooth group cohomology $\h^3_{\mathrm{SM}}(F,A)$,  which classifies multiplicative bundle gerbes such as $\mathcal{G}_{\Gamma}$, is now isomorphic to the singular cohomology $\h^4(BF,Z)$ of the classifying space of $F$ \cite{pries2}.
The corresponding class $k_{\Gamma} \in \h^4(BF,Z)$ is also known as the \emph{k-invariant} of the Lie 2-group $\Gamma$.
We note the following result.


\begin{proposition}
\label{2-gerbes}
\begin{enumerate}[(i)]

\item
\label{2-gerbes-1}
Suppose $A$ is connected,  let $k_{\Gamma} \in \h^4(BF,Z)$ be the k-invariant of the Lie 2-group $\Gamma$, and let $f:M \to BF$ be a classifying map for the principal $F$-bundle $P$. Then,
\begin{equation*}
c_{\mathbb{CS}_P(\mathcal{G}_{\Gamma})}  =f^{*}k_{\Gamma} \in \h^4(M,Z)\text{.}
\end{equation*}

\item
\label{2-gerbes-2}
$\mathbb{CS}_P(\mathcal{G}_{\Gamma})$ admits trivializations if and only if $c_{\mathbb{CS}_P(\mathcal{G}_{\Gamma})}=0$. In this case, trivializations form a torsor over the monoidal bigroupoid $\grbcon AM$ of bundle gerbes with connection over $M$.

\end{enumerate}
\end{proposition}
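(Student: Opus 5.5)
Both parts concern only the underlying bundle 2-gerbe without connection (for the class) plus the general theory of bundle 2-gerbes with connection (for trivializations), so I would treat them as largely independent of the connection data and reduce to known results.

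For (i), the plan is first to forget connections. The Chern-Simons 2-gerbe $\mathbb{CS}_P(\mathcal{G}_\Gamma)$ is obtained from the multiplicative bundle gerbe $\mathcal{G}_\Gamma$ by the standard construction: pull back along $\delta:P^{[2]}\to F$ and use the multiplicative structure. This construction is natural in $P$. I would therefore reduce to a universal case, namely $P$ equal to (a finite-dimensional approximation of) $EF\to BF$, together with pullback along a classifying map $f$. Naturality of \v Cech classes then gives $c_{\mathbb{CS}_P(\mathcal{G}_\Gamma)}=f^{*}c_{\mathbb{CS}_{EF}(\mathcal{G}_\Gamma)}$. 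The remaining step is to identify $c_{\mathbb{CS}_{EF}(\mathcal{G}_\Gamma)}$ with the image of the Segal--Mitchison class of the multiplicative gerbe $\mathcal{G}_\Gamma$ under $\h^3_{\mathrm{SM}}(F,A)\cong \h^4(BF,Z)$, and this image is the k-invariant $k_\Gamma$ by \cite{pries2}. This is the transgression statement that the Chern-Simons 2-gerbe of a multiplicative gerbe realizes the class in $\h^4(BF)$. I would cite \cite{waldorf5} (and \cite{Nikolausa} for the non-connection version) rather than redo it; the paper states this as ``we note the following''. Since $A$ is connected, the exponential sequence gives $\check\h^3(M,\sheaf A)\cong\h^4(M,Z)$, so all classes live in the same group.

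Alternatively, I would verify (i) at cocycle level. Choose local sections $s_i$ of $P$, so that the transition functions $f_{ij}$ give $\delta$, and choose local sections of $p:G\to F$ and of $\Gamma_1$; this is possible since $\Gamma$ is smoothly separable. The 3-cocycle obtained from $\alpha'=\delta_3^{*}\alpha_\Gamma$ is then the pullback along $(f_{ij})$ of the group 3-cocycle defining $\mathcal{G}_\Gamma$, and that group 3-cocycle is the standard cocycle representing $k_\Gamma$ for the extension $A\to H\to G\to F$. The main obstacle is this bookkeeping in identifying the Segal--Mitchison class with the Nikolaus--Waldorf k-invariant. I would therefore lean on the literature rather than reprove the comparison.

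For (ii), I would invoke the general theory of bundle 2-gerbes with connection \cite{waldorf8,stevenson2}: a bundle 2-gerbe admits a trivialization (without connection) if and only if its class vanishes. Connections on a trivialization then exist because $\grbcon A-$ and its morphism categories are sheaves in which connections can be glued with partitions of unity. The torsor structure comes from tensoring $\mathcal{S}$ with the pullback $\rho^{*}\mathcal{K}$ of a bundle gerbe $\mathcal{K}$ over $M$. Freeness and transitivity follow as in \cite[Lem. 3.2.3]{waldorf8}: the ``difference'' of two trivializations is a descent datum for an $A$-bundle gerbe along $\rho$, and since $\grbcon A-$ is a sheaf (\cref{th:equivcongrb}), this datum descends to $M$.
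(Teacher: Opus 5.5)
Your proposal is correct and follows the paper's route: the paper proves (i) simply by citing \cite[Thm.~3.13]{waldorf5}, and (ii) by combining \cite[Prop.~11.2]{stevenson2} with \cite[Lem.~3.2.2 and 3.2.3]{waldorf8}, which is the reduction to the literature you describe. Your cocycle-level alternative for (i) is only heuristic: since $p$ and $H\to H/A$ in general have only local sections, the Segal--Mitchison class is not represented by a single global group 3-cocycle. You defer to the literature anyway, so this does not affect the argument.
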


\begin{proof}
(i) is \cite[Thm. 3.13]{waldorf5}. (ii) is a combination of \cite[Prop. 11.2]{stevenson2} and \cite[Lem. 3.2.2 and 3.2.3]{waldorf8}.
\end{proof}

In the setting without connections, it has been proved in \cite[Thm. 4.2.7]{Nikolausa} that there is an equivalence of bicategories,
\begin{equation*}
\triv{\mathbb{CS}_P(\mathcal{G}_{\Gamma})} \cong  \lift{\Gamma} P{}
\end{equation*}
between trivializations of $\mathbb{CS}_P(\mathcal{G}_{\Gamma})$ and  $\Gamma\!$-lifts of $P$. The goal of the next section is to generalize this result to the setting with connections.

\subsection{The lifting theorem}

Let $\rho:P \to M$ be a principal $F$-bundle with connection $\eta$ over $M$, and let $\mathbb{CS}_{P}(\mathcal{G}_\Gamma)$ be the Chern-Simons bundle 2-gerbe reviewed in \cref{lifting-2-gerbe}. The main task of this section is to construct a functor
\begin{equation}
\label{functor-trivializations-to-gerbes}
\mathcal{D}: \trivcon{\mathbb{CS}_{P}(\mathcal{G}_\Gamma)} \to  \des_{\grbcon{\Gamma\!,u,\kappa}-}(\rho)\text{,}
\end{equation}
into the bicategory of descent data for the sheaf $\grbcon{\Gamma\!,u,\kappa}-$ of $\Gamma\!$-bundle gerbes with adapted connection, with respect to the surjective submersion $\rho:P \to M$.

We start with a trivialization $\mathbb{T}=(\mathcal{S},\mathcal{A},\sigma)$.
We consider the $\Gamma\!$-bundle gerbe with connection
\begin{equation*}
\widetilde{\mathcal{S}} := i_{*}^{(q(\eta),0)}(\mathcal{S})
\end{equation*}
over $P$, where $q:\mathfrak{f} \to \mathfrak{g}$ is the section of $\Gamma$ determined by the splitting $u$. By \cref{extension-and-adjusted-shift-lemma}, the connection on $\widetilde{\mathcal{S}}$ is adapted to $u$ and has fake-curvature $q(F_{\eta}) \in \Omega^2(P,\mathfrak{g})$. Hence, $\widetilde{\mathcal{S}}$ is an object in $\grbcon{\Gamma\!,u,\kappa}P$. It serves as the first component of the matching family $\mathcal{D}(\mathbb{T})$.

Next, we work over $P^{[2]}$, considering the 1-isomorphism $\mathcal{A}: \mathcal{H} \otimes \pr_2^{*}\mathcal{S} \to \pr_1^{*}\mathcal{S}$, with $\mathcal{H}=\delta^{*}\mathcal{G}_{\Gamma} \otimes \mathcal{I}_{\omega}$. We recall from \cref{trivialization-of-g-gamma} that $\mathcal{G}_{\Gamma}$ admits a trivialization $\mathcal{T}_{\Gamma}:i_{*}(\mathcal{G}_{\Gamma}) \to i_{*}^{(q(\theta),0)}(\mathcal{I}_0)$. Applying the trivialization trick from \cref{versuchs-iso}, we obtain a trivialization
\begin{equation*}
\mathcal{T}_{\mathcal{H}} := \delta^{*}\mathcal{T}_{\Gamma} \otimes \id_{\mathcal{I}_{\omega}}:i_{*}(\mathcal{H}) \to i_{*}^{(\vartheta,\omega)}(\mathcal{I}_0)
\end{equation*}
of $i_{*}(\mathcal{H})$ over $P^{[2]}$, where we set $\vartheta := \delta^{*}q(\theta)\in \Omega^1(P^{[2]},\mathfrak{g})$ for brevity. Note that $p_{*}(\mathcal{T}_{\mathcal{H}})=\inv  \circ \delta$, as maps $P^{[2]} \to F$. Applying the trivialization trick once more, we obtain a 1-isomorphism
\begin{equation*}
\mathcal{T}_{\mathcal{H}} \otimes \id_{\pr_2^{*}\mathcal{S}}:i_{*}(\mathcal{H} \otimes \pr_2^{*}\mathcal{S}) \to i_{*}^{(\vartheta,\omega)}(\pr_2^{*}\mathcal{S})\text{.}  
\end{equation*} 
Thus, we may form the composite $\mathcal{A}'$ defined by
\begin{equation*}
\alxydim{@C=4em}{i_{*}^{(\vartheta,\omega)}(\pr_2^{*}\mathcal{S}) \ar[rr]^-{(\mathcal{T}_{\mathcal{H}} \otimes \id_{ \pr_2^{*}\mathcal{S}})^{-1}} && i_{*}(\mathcal{H} \otimes \pr_2^{*}\mathcal{S}) \ar[r]^-{i_{*}(\mathcal{A})} & i_{*}(\pr_1^{*}\mathcal{S})\text{,} }
\end{equation*}
for which we observe $p_{*}(\mathcal{A}')=(p_{*}\mathcal{T}_{\mathcal{H}})^{-1}=\delta:P^{[2]} \to F$. 
Using \cref{shift-of-1-morphisms}, we shift $\mathcal{A}'$ along the 1-form $\nu:=q(\Ad_{\delta}^{-1}(\pr_1^{*}\eta))\in \Omega^1(P^{[2]},\mathfrak{g})$. The relevant formulas are:
\begin{align}
\label{shift-formulas-1}
A_1'&= \vartheta+\nu=q(\pr_2^{*}\eta) 
&\chi_1' &=\omega+b_{\kappa}(p_{*}(\vartheta)\wedge p_{*}(\nu ))= 0
\\
\label{shift-formulas-2}
A_2' &=  \tilde q_u(\delta,\nu)=q(\pr_1^{*}\eta)
& \chi_2 &= 0
\end{align}
Thus, the shift yields a 1-isomorphism
\begin{equation*}
\widetilde{\mathcal{A}} := (\mathcal{A}')^{\nu}: \pr_2^{*}\widetilde{\mathcal{S}} = i_{*}^{\pr_2^{*}q(\eta)}(\pr_2^{*}\mathcal{S}) \to i_{*}^{\pr_1^{*}q(\eta)}(\pr_1^{*}\mathcal{S})=\pr_1^{*}\widetilde{\mathcal{S}}
\end{equation*}
over $P^{[2]}$; this constitutes the second component of the matching family $\mathcal{D}(\mathbb{T})$. 

We consider the diagram of 1-morphisms in the bicategory $\grbcon \Gamma{P^{[3]}}$ depicted in \cref{figure-2}, employing the shorthand notations $(..)_i := \pr_i^{*}(..)$ and $(..)_{ij} := \pr_{ij}^{*}(..)$ for bundle gerbes, morphisms, and differential forms, as well as $\omega_{123}:= \omega_{12}+\omega_{23}$. Furthermore, we suppress the index $\Gamma$ in $\mathcal{G}_{\Gamma}$, $\mathcal{T}_{\Gamma}$, and $\rho_{\Gamma}$.
\begin{figure}[h]
\begin{equation*}
\small
\hspace{-1.5em}\alxydim{@!0@C=2.4em@R=3.5em}{
  &&&&&&&&& i_{*}^{(\vartheta_{12},\omega_{12})}(\mathcal{S}_2) \ar@/^5pc/[dddddddddrrrrrrrrr]^{\pr_{12}^{*}\mathcal{A}'} \ar[drdrdr]|{(\pr_{12}^{*}\mathcal{T}_{\mathcal{H}} \otimes \id)^{-1}} &&&&&
  \\
  \\
  &&&&&&& {i_{*}^{(\vartheta_{12},\omega_{12})}(\mathcal{H}_{23} \otimes \mathcal{S}_3)}\ar@{}[drrrrr]|*+[o][F-]{B} \ar[drdrr]|{(\pr_{12}^{*}\mathcal{T}_{\mathcal{H}} \otimes \id)^{-1}} \ar@{=}[ddd] \ar[urur]|{i_{*}(\mathcal{A}_{23})^{\vartheta_{12}}}
  \\
  &&&&&&&&&&&& *+[r]{i_{*}(\mathcal{H}_{12} \otimes \mathcal{S}_2)} \ar[drrdrrddrddr]|{i_{*}(\mathcal{A}_{12})}="5"
  \\
  &&&&&&&&&&
  i_{*}(\mathcal{H}_{12} \otimes \mathcal{H}_{23} \otimes \mathcal{S}_3)
  \ar[urr]|{i_{*}(\id \otimes \mathcal{A}_{23})} \ar@/^3pc/[drrrdddd]|>>>>>>>>{i_{*}(\mathcal{M}')}="4"  
  \\
  &&&& &&& \mquad i_{*}^{(\delta_2^{*}q(\theta)_1,\omega_{123})}(\delta_2^{*}\mathcal{G}_2 \otimes \mathcal{S}_3) \ar[dldl]|>>>>>>{(\delta_2^{*}\mathcal{T}_{2} \otimes \id)^{\delta_2^{*}q(\theta_1)}} &&&&  & 
  \\
  &&& {i_{*}^{(\vartheta_{13},\omega_{123}+\delta_2^{*}\rho)}(\mathcal{S}_3)} \ar@{}[uulu]|*+[o][F-]{C} \ar[uuruurrr]|{((\pr_{23}^{*}\mathcal{T}_{\mathcal{H}} \otimes \id)^{-1})^{\vartheta_{13}-\vartheta_{23}}} \ar@{=}[rdr] &&&&&&&\hspace{1em} i_{*}^{(0,\omega_{123})}(\delta_2^{*}(\mathcal{G}_1 \otimes \mathcal{G}_2) \otimes  \mathcal{S}_3) \ar@{=}[uu] \ar[ddl]|{i_{*}(\delta_2^{*}\mathcal{M} \otimes \id)} \ar[ulll]|{\delta_2^{*}\mathcal{T}_{1} \otimes \id} &   
  \\
  &&&&& i_{*}^{(\delta_2^{*}m^{*}q(\theta),\delta_2^{*}\rho+\omega_{123})}(\mathcal{S}_3)
\ar@{}[urrrrr]|*+[o][F-]{A}  \\
  &&&&   &&&&&i_{*}^{(0,\omega_{123})}(\delta_2^{*}(m^{*}\mathcal{G} \otimes \mathcal{I}_{\rho}) \otimes \mathcal{S}_3) \ar@{=}[drrrr] \ar[ullll]|>>>>>>>{\delta_2^{*}m^{*}\mathcal{T} \otimes \id}
  \\
  i_{*}^{\vartheta_{13},\omega_{13}}(\mathcal{S}_3) \ar@/^5pc/[uuuuuuuuurrrrrrrrr]^{(\pr_{23}^{*}\mathcal{A}')^{\vartheta_{13}-\vartheta_{23}}}="1" \ar@/_4pc/[rrrrrrrrrrrrrrrrrr]_{\pr_{13}^{*}\mathcal{A}'} \ar[rrrrrrrrrrrrr]_{(\pr_{13}^{*}\mathcal{T}_{\mathcal{H}} \otimes \id)^{-1}} \ar@{=}[uruurr]
  &&&&&&&&&&&&& i_{*}(\mathcal{H}_{13} \otimes \mathcal{S}_3) \ar[rrrrr]_-{i_{*}(\mathcal{A}_{13})} &&&&&
  i_{*}(\mathcal{S}_1) \ar@{}"4";"5"|*+[o][F-]{D}
}
\end{equation*}
\caption{}
\label{figure-2}
\end{figure}
The equality signs rely solely on definitions and identities among the differential forms arising in the adjusted extension:
\begin{itemize}
\item 
$i_{*}^{\vartheta_{13},\omega_{13}}(\mathcal{S}_3)=i_{*}^{(\vartheta_{13},\omega_{123}+\delta_2^{*}\rho)}(\mathcal{S}_3)$ because $\omega_{13}= \omega_{123}+\delta_2^{*}\rho$, which follows from \cref{delta-omega-rho}, 
\item
$i_{*}^{(\vartheta_{13},\omega_{123}+\delta_2^{*}\rho)}(\mathcal{S}_3)=i_{*}^{(\delta_2^{*}m^{*}q(\theta),\delta_2^{*}\rho+\omega_{123})}(\mathcal{S}_3)$ because $\vartheta_{13}= \delta_2^{*}m^{*}q(\theta)$, as $\pr_{13}=m\circ \delta_2$,

\item
$i_{*}^{(0,\omega_{123})}(\delta_2^{*}(m^{*}\mathcal{G} \otimes \mathcal{I}_{\rho}) \otimes \mathcal{S}_3)=i_{*}(\mathcal{H}_{13} \otimes \mathcal{S}_3)$, combining the two points above, 

\item
$i_{*}^{(\vartheta_{12},\omega_{12})}(\mathcal{H}_{23} \otimes \mathcal{S}_3)=i_{*}^{(\delta_2^{*}q(\theta)_1,\omega_{123})}(\delta_2^{*}\mathcal{G}_2 \otimes \mathcal{S}_3)$ uses $\pr_{23}=\pr_2 \circ\delta_2$ and $\pr_{12}=\pr_1 \circ \delta_2$, and

\item
$i_{*}(\mathcal{H}_{12} \otimes \mathcal{H}_{23} \otimes \mathcal{S}_3)=i_{*}^{(0,\omega_{123})}(\delta_2^{*}(\mathcal{G}_1 \otimes \mathcal{G}_2) \otimes  \mathcal{S}_3)$ follows from the preceding observations. 

\end{itemize}
We claim that all subdiagrams represent the definition of one of the bounding 1-morphisms, with three exceptions: 
\begin{itemize}
\item 
Subdiagram A is $\delta_2^{*}( - \otimes \mathcal{S}_3)$ applied to the diagram \cref{multiplicativity-of-trivialization-diag} in \cref{multiplicativity-of-trivialization}, and is thus filled by the 2-isomorphism $\delta_2^{*}(\sigma_{\Gamma} \otimes \id_{\id_{\mathcal{S}_3}})$. 

\item
Subdiagram B is filled by the 2-isomorphism from \cref{versuchs-iso:2}.

\item
Subdiagram C is commutative, since the adjusted shift $(\pr_{23}^{*}\mathcal{A}')^{\vartheta_{13}-\vartheta_{23}}$ decomposes via \cref{shift-of-1-morphisms:b} into the composite
\begin{equation*}
\alxydim{@C=6em}{i_{*}^{(\vartheta,\omega)}(\pr_2^{*}\mathcal{S}) \ar[rr]^-{((\pr_{23}^{*}\mathcal{T}_{\mathcal{H}} \otimes \id)^{-1})^{\vartheta_{13}-\vartheta_{23}}} && i_{*}(\mathcal{H} \otimes \pr_2^{*}\mathcal{S}) \ar[r]^-{i_{*}(\mathcal{A})^{\vartheta_{12}}} & i_{*}(\pr_1^{*}\mathcal{S})\text{,} }
\end{equation*}
where the shift by $\vartheta_{12}$ is computed from
\begin{equation*}
p_{*}((\pr_{23}^{*}\mathcal{T}_{\mathcal{H}} \otimes \id)^{-1})=\inv  \circ p_{*}(\pr_{23}^{*}\mathcal{T}_{\mathcal{H}})=\inv  \circ \pr_{23}^{*}p_{*}(\inv \circ \delta)=\delta \circ \pr_{23}
\end{equation*}
and 
\begin{align*}
\tilde q(p_{*}((\pr_{23}^{*}\mathcal{T}_{\mathcal{H}} \otimes \id)^{-1}),\vartheta_{13}-\vartheta_{23})
&=\tilde q(\delta \circ \pr_{23},\vartheta_{13}-\vartheta_{23})
\\&= q(\Ad_{\delta \circ \pr_{23}}(\pr_{13}^{*}\delta^{*}\theta-\pr_{23}^{*}\delta^{*}\theta))
\\&=q(\pr_{12}^{*}\delta^{*}\theta)
\\&=\vartheta_{12}\text{.}
\end{align*}

\item
Subdiagram D is filled by the 2-isomorphism $i_{*}(\sigma)$.

\end{itemize}
Consequently, the diagram defines a 2-isomorphism
\begin{equation*}
\sigma': \pr_{12}^{*}\mathcal{A}' \circ (\pr_{23}^{*}\mathcal{A}')^{\vartheta_{13}-\vartheta_{23}} \Rightarrow \pr_{13}^{*}\mathcal{A}'\text{.}
\end{equation*}
As the final step, we shift $\sigma'$ by $\pr_{13}^{*}\nu$ according to \cref{shift-of-1-morphisms}. The necessary computations yield:
\begin{align*}
(\pr_{12}^{*}\mathcal{A}')^{\tilde q(\pr_{23}^{*}\delta,\pr_{13}^{*}\nu)}&= \pr_{12}^{*}\widetilde{\mathcal{A}}
\\
((\pr_{23}^{*}\mathcal{A}')^{\vartheta_{13}-\vartheta_{23}})^{\pr_{13}^{*}\nu} &=\pr_{23}^{*}\widetilde{\mathcal{A}}
\end{align*} 
Thus, we obtain a 2-isomorphism
\begin{equation*}
\tilde\sigma: \pr_{12}^{*}\widetilde{\mathcal{A}} \circ \pr_{23}^{*}\widetilde{\mathcal{A}} \Rightarrow \pr_{13}^{*}\widetilde{\mathcal{A}}\text{.}
\end{equation*} 
The coherence condition for $\tilde\sigma$,
\begin{equation*}
\alxydim{@C=6em@R=5em}{\widetilde{\mathcal{S}_3}  \ar[r]^{\widetilde{A}_{23}} & \widetilde{\mathcal{S}_2} \ar[d]^{\widetilde{\mathcal{A}_{12}}} \\ \widetilde{\mathcal{S}_4} \ar[u]^{\widetilde{\mathcal{A}_{34}}} \ar[r]_{\widetilde{\mathcal{A}_{14}}} \ar[ur]|{\widetilde{\mathcal{A}_{24}}}="1" & \widetilde{\mathcal{S}_1} \ar@{=>}[ul];"1"|{\widetilde\sigma_{234}} \ar@{<=}"1"|{\widetilde\sigma_{123}} }
=
\alxydim{@C=6em@R=5em}{\widetilde{\mathcal{S}_3} \ar[dr]|{\widetilde{\mathcal{A}_{13}}}="1"  \ar[r]^{\widetilde{A}_{23}} & \widetilde{\mathcal{S}_2} \ar[d]^{\widetilde{\mathcal{A}_{12}}} \\ \widetilde{\mathcal{S}_4} \ar[u]^{\widetilde{\mathcal{A}_{34}}} \ar[r]_{\widetilde{\mathcal{A}_{14}}}  & \widetilde{\mathcal{S}_1}\text{,}  \ar@{=>}[u];"1"|{\widetilde\sigma_{123}} \ar@{=>}"1";[l]|{\widetilde\sigma_{134}}  }
\end{equation*}
follows from the equalities of 2-morphisms in \cref{multiplicativity-of-trivialization} and  \cref{coherence-for-sigma},
 between which the occurrences of $\alpha_{\Gamma}$ cancel.  This completes the  constriction of the matching family $\mathcal{D}(\mathbb{T}):=(\widetilde{\mathcal{S}},\widetilde{\mathcal{A}},\widetilde{\sigma})$, and thereby gives the  definition of the functor $\mathcal{D}$ of \cref{functor-trivializations-to-gerbes} on the level of objects.  

We remark that the primary challenge in constructing the matching family above was to eliminate the multiplicative bundle gerbe $\mathcal{G}_{\Gamma}$ from the trivialization data. However, 1-morphisms and 2-morphisms between trivializations do not involve $\mathcal{G}_{\Gamma}$ or its data, rendering their treatment significantly simpler. Let $(\mathcal{B},\beta): \mathbb{T}_1 \to \mathbb{T}_2$ be a 1-morphism between trivializations, with $\mathbb{T}_i=(\mathcal{S}_i,\mathcal{A}_i,\sigma_i)$. We define $\widetilde{\mathcal{B}} :=i_{*}(\mathcal{B})^{q(\eta)}:\widetilde{\mathcal{S}_1} \to \widetilde{\mathcal{S}_2}$ and consider the 2-isomorphism
\begin{equation*}
\alxydim{@C=4em}{i_{*}^{(\vartheta,\omega)}(\pr_2^{*}\mathcal{S}_1) \ar@/^2pc/[rrr]^{\mathcal{A}_1'} \ar[d]_{i_{*}(\pr_2^{*}\mathcal{B})^{\vartheta}} \ar[rr]^-{(\mathcal{T}_{\mathcal{H}} \otimes \id_{ \pr_2^{*}\mathcal{S}})^{-1}} && i_{*}(\mathcal{H} \otimes \pr_2^{*}\mathcal{S}_1) \ar@{=>}[dll]|*+{\beta_{\pr_2^{*}\mathcal{B}}} \ar[d]_{i_{*}(\id \otimes \pr_2^{*}\mathcal{B})} \ar[r]^-{i_{*}(\mathcal{A}_1)} &  i_{*}(\pr_1^{*}\mathcal{S}_1) \ar@{=>}[dl]|*+{i_{*}(\beta)} \ar[d]^{i_{*}(\pr_1^{*}\mathcal{B})} \\ i_{*}^{(\vartheta,\omega)}(\pr_2^{*}\mathcal{S}_2) \ar@/_2pc/[rrr]_{\mathcal{A}_2'} \ar[rr]_-{(\mathcal{T}_{\mathcal{H}} \otimes \id_{ \pr_2^{*}\mathcal{S}})^{-1}} &&  i_{*}(\mathcal{H} \otimes \pr_2^{*}\mathcal{S}_2 )\ar[r]_-{i_{*}(\mathcal{A}_2)} & i_{*}(\pr_1^{*}\mathcal{S}_2)}
\end{equation*}
over $P^{[2]}$, where $\beta_{\pr_2^{*}B}$ is the 2-isomorphism from \cref{versuchs-iso:2}.
Shifting by the 1-form $\nu\in \Omega^1(P^{[2]},\mathfrak{g})$ yields a 2-isomorphism 
\begin{equation*}
\widetilde{\beta}: \pr_2^{*}\widetilde{\mathcal{B}} \circ \widetilde{\mathcal{A}_2} \Rightarrow \pr_1^{*}\widetilde{\mathcal{B}} \circ \widetilde{\mathcal{A}_1}\text{,}
\end{equation*}
since $i_{*}(\pr_1^{*}\mathcal{B})^{q(p_{*}\mathcal{A}_1',\nu)} =\pr_1^{*}\widetilde{\mathcal{B}}$ and $i_{*}(\pr_2^{*}\mathcal{B})^{\vartheta+\nu}=\pr_2^{*}\widetilde{\mathcal{B}}$, by \cref{shift-formulas-1,shift-formulas-2}.
The pair $(\widetilde{\mathcal{B}},\widetilde{\beta})$ constitutes a 1-morphism between the matching families $(\widetilde{\mathcal{S}_1},\widetilde{\mathcal{A}_1},\widetilde{\sigma_1})$ and $(\widetilde{\mathcal{S}_2},\widetilde{\mathcal{A}_2},\widetilde{\sigma_2})$.
Finally, any 2-morphism $\gamma: (\mathcal{B},\beta) \Rightarrow (\mathcal{B}',\beta')$ induces, without further modification, a 2-morphism $i_{*}(\gamma):(\widetilde{\mathcal{B}},\widetilde{\beta}) \Rightarrow (\widetilde{\mathcal{B}'},\widetilde{\beta'})$ of matching families.

 This completes the construction of the functor 
\begin{equation*}
\mathcal{D}: \trivcon{\mathbb{CS}_{P}(\mathcal{G}_\Gamma)} \to  \des_{\grbcon{\Gamma\!,u,\kappa}-}(\rho)\text{,}
\end{equation*}
announced in \cref{functor-trivializations-to-gerbes}.
However, $\mathcal{D}$ is not yet an equivalence, as the codomain does not fully encode the information of the principal $F$-bundle $P$ and its connection. To rectify this, we consider the diagram
\begin{equation}
\label{homotopy-pullback-X}
\alxydim{}{&& \ast \ar[d]^{(P,\eta)}\\&& \buncon FM_{\dis} \ar[d]^{r_\rho} \\\des_{\grbcon{\Gamma\!,u,\kappa}-}(\rho)\ar[r]^-{p_{*}} & \des_{(\buncon{F}-)_{\dis}}(\rho) \ar@{=}[r] &\des_{\buncon{F}-}(\rho)_{\dis}\text{,}}
\end{equation}
of functors between bicategories. The horizontal functor arises from the covariance of descent data with respect to the sheaf morphism $p_{*}: \grbcon{\Gamma\!,u,\kappa}- \to (\buncon F-)_{\dis}$. The upper vertical functor is the inclusion of the object $(P,\eta)$ of $\buncon FM$. The lower vertical functor $r_\rho$ assigns descent data to globally defined objects (see \cref{sec:plus}). Let $\mathcal{X}$ denote the (homotopy) pullback of this diagram.

\begin{lemma}
\label{lift-to-X}
The functor $\mathcal{D}$ lifts canonically along $\mathcal{X} \to \des_{\grbcon{\Gamma\!,u,\kappa}-}(\rho)$ to a functor 
\begin{equation*}
\widetilde{\mathcal{D}}: \trivcon{\mathbb{CS}_{P}(\mathcal{G}_\Gamma)} \to \mathcal{X}\text{.}
\end{equation*}
\end{lemma}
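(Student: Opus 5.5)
To lift $\mathcal{D}$ to the homotopy pullback $\mathcal{X}$ of \cref{homotopy-pullback-X}, I need, for every trivialization $\mathbb{T}$, a 1-isomorphism in $\des_{\buncon F-}(\rho)_{\dis}$ from $p_{*}(\mathcal{D}(\mathbb{T}))$ to $r_\rho(P,\eta)$. I also need these to be natural with respect to 1- and 2-morphisms of trivializations. Since the target bicategory is discrete (only identity 2-morphisms), naturality reduces to an equality of 1-morphisms. The plan is therefore to show that $p_{*}\mathcal{D}(\mathbb{T})$ is canonically \emph{equal}, or canonically isomorphic by a choice-free isomorphism, to the descent datum $r_\rho(P,\eta)$, independently of $\mathbb{T}$.

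First, I compute $r_\rho(P,\eta)$. Over $P$, the pullback $\rho^{*}P$ has the canonical section $\mathrm{id}$, so $r_\rho(P,\eta)$ is represented by the trivial $F$-bundle with connection 1-form $\eta$ on $P$. Over $P^{[2]}$, the gluing is the map $\delta\colon P^{[2]}\to F$ with $p_1\delta=p_2$, which is a gauge transformation $\pr_2^{*}\eta\to\pr_1^{*}\eta$ up to conventions.

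Second, I compute $p_{*}\mathcal{D}(\mathbb{T})$. By \cref{remark-adjusted-extension:3}, $p_{*}(\widetilde{\mathcal{S}})=p_{*}(i_{*}^{(q(\eta),0)}(\mathcal{S}))=\trivlin_1^{p_{*}q(\eta)}=\trivlin_1^{\eta}$, which matches the first component. For $\widetilde{\mathcal{A}}=(\mathcal{A}')^{\nu}$, I will use that the adjusted shift in \cref{shift-of-1-morphisms} does not alter the underlying bundle $Q$ or its anchor, so $p_{*}\widetilde{\mathcal{A}}=p_{*}\mathcal{A}'$. As observed in the construction, $p_{*}(\mathcal{A}')=(p_{*}\mathcal{T}_{\mathcal{H}})^{-1}=\delta$, using $p_{*}(i_{*}(\mathcal{A}))=1$, \cref{versuchs-iso:1}, and \cref{trivialization-of-g-gamma}, where $p_{*}\mathcal{T}_\Gamma=\inv$. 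Thus $p_{*}\widetilde{\mathcal{A}}=\delta$ agrees with the gluing of $r_\rho(P,\eta)$. The component on $P^{[3]}$ needs no check, since 2-morphisms are identities in the discrete target.

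Third, I handle morphisms. For a 1-morphism $(\mathcal{B},\beta)$ of trivializations, $\widetilde{\mathcal{B}}=i_{*}(\mathcal{B})^{q(\eta)}$ has $p_{*}\widetilde{\mathcal{B}}=p_{*}i_{*}(\mathcal{B})=1$, so $p_{*}\mathcal{D}(\mathcal{B},\beta)$ is the identity of $r_\rho(P,\eta)$. 2-morphisms map to identities automatically. Hence $\widetilde{\mathcal{D}}(\mathbb{T}):=(\mathcal{D}(\mathbb{T}),\mathrm{id})$ defines the lift, together with compatibility with composition and units in $\mathcal{D}$.

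The main obstacle is to make sure the identifications above are genuine equalities, or at least canonical and independent of $\mathbb{T}$, under the equivalence $\grbcon{F_{\dis},0}-\cong(\buncon F-)_{\dis}$ of \cref{dis-bundle-gerbes}. This concerns both the connection 1-forms and the sign/inversion conventions for $\delta$ versus $\delta^{-1}$. I will verify the connection condition directly from \cref{Fdis-bundles-with-connection}: the relation $\Ad_{\delta}(\pr_1^{*}\eta)=\pr_2^{*}\eta+\delta^{*}\bar\theta$, or its mirror, must hold, and it does for the connection on a principal bundle.
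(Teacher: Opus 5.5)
Your proposal is correct and follows essentially the same route as the paper. Both compute $p_*\circ\mathcal{D}$ as the constant datum $(\trivlin_1^{\eta},\varphi_\delta)$, using \cref{remark-adjusted-extension:3} and $p_*(\widetilde{\mathcal{A}})=\delta$, and then use the canonical trivialization $\psi\colon\rho^*P\to\trivlin_1$ to identify this datum with $r_\rho(P,\eta)$, which yields a cone over \cref{homotopy-pullback-X}. You additionally make explicit the constancy on 1-morphisms ($p_*\widetilde{\mathcal{B}}=1$), which the paper leaves implicit; note also that the second component you write as $\mathrm{id}$ should be read as the isomorphism induced by $\psi$.
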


\begin{proof}
We invoke the universal property of the pullback. The vertical functor in \cref{homotopy-pullback-X}, $\ast \to \des_{\buncon{F}-}(\rho)$, assigns to $(P,\eta)$ its canonical descent data with respect to the projection $\rho$ of the bundle $P$, which is the matching family $(\rho^{*}P,\varphi)$, where $\varphi$ is the canonical isomorphism $\pr_1^{*}\rho^{*}P \cong \pr_2^{*}\rho^{*}P$ induced by the equality $\rho \circ \pr_1=\rho \circ \pr_2$.      

The composition of $\mathcal{D}$ with $p_{*}$ can be deduced from the constructions above. First, $p_{*}(\widetilde{\mathcal{S}})=p_{*}(i_{*}^{(q(\eta),0)}(\mathcal{S}))=\trivlin_1^{\eta}$ by \cref{remark-adjusted-extension:3}. Second, $p_{*}(\widetilde {\mathcal{A}})=\delta:P^{[2]} \to F$, as noted earlier, interpreted as the morphism $\varphi_\delta: \pr_1^{*}\trivlin_1^{\eta} \to \pr_2^{*}\trivlin_1^{\eta}$ of trivial $F$-bundles over $P^{[2]}$ given by left multiplication by $\delta$.   

Now, the canonical trivialization $\psi: \rho^{*}P \to \trivlin_1$ is an isomorphism of matching families $r_{\rho}(P)=(\rho^{*}P,\varphi) \cong (\trivlin_1^{\eta},\varphi_{\delta})$, which is easily verified. This establishes commutativity up to a (constant) natural isomorphism, thereby identifying $\trivcon{\mathbb{CS}_{P}(\mathcal{G}_\Gamma)}$ as a cone for the diagram \cref{homotopy-pullback-X}, and hence mapping it to the limiting cone $\mathcal{X}$.
\end{proof}

\begin{lemma}
\label{X-equivalence}
The lifted functor $\widetilde{\mathcal{D}}$ of \cref{lift-to-X} is an equivalence,
\begin{equation*}
\trivcon{\mathbb{CS}_{P}(\mathcal{G}_\Gamma)} \cong \mathcal{X}\text{.}
\end{equation*}
\end{lemma}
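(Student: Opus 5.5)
To prove that $\widetilde{\mathcal{D}}$ is an equivalence, I would show that it is essentially surjective on objects and an equivalence on all Hom-categories. The natural tool is \cref{gerbe-morphisms-under-i-non-flat}, and more precisely its version over $P$: an object of $\mathcal{X}$ is a matching family $(\mathcal{S}',\mathcal{A}'',\sigma'')$ of adapted $\Gamma\!$-bundle gerbes over $P$, together with an identification of its image under $p_{*}$ with the canonical descent data $(\trivlin_1^{\eta},\varphi_\delta)$ of $P$. The identification $p_{*}(\mathcal{S}')\cong\trivlin_1^{\eta}$ supplies a section of $p_{*}(\mathcal{S}')$ with covariant derivative $\eta$. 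Applying \cref{gerbe-morphisms-under-i-non-flat} with $\beta=\eta$ then gives an $A$-bundle gerbe $\mathcal{S}$ over $P$ and an isomorphism $i_{*}^{(q(\eta),0)}(\mathcal{S})\cong\mathcal{S}'$, compatible with the sections.

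Next I would transport $\mathcal{A}''$ and $\sigma''$ backwards along the construction of $\mathcal{D}$. I would first unshift $\mathcal{A}''$ by $-\nu$ using \cref{shift-of-1-morphisms:a}, obtaining a 1-morphism $i_{*}^{(\vartheta,\omega)}(\pr_2^{*}\mathcal{S})\to i_{*}(\pr_1^{*}\mathcal{S})$ with $p_{*}=\delta$. I would then precompose with $\mathcal{T}_{\mathcal{H}}\otimes\id_{\pr_2^{*}\mathcal{S}}$ from \cref{versuchs-iso}. The result is a 1-morphism $i_{*}(\mathcal{H}\otimes\pr_2^{*}\mathcal{S})\to i_{*}(\pr_1^{*}\mathcal{S})$ whose $p_{*}$ is $\delta\cdot(\inv\circ\delta)=1$, so it is compatible with the canonical flat sections. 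By \cref{gerbe-morphisms-under-i} (full faithfulness of $i_{*}^{\pflat}$) it is isomorphic to $i_{*}(\mathcal{A})$ for a unique-up-to-2-isomorphism $\mathcal{A}:\mathcal{H}\otimes\pr_2^{*}\mathcal{S}\to\pr_1^{*}\mathcal{S}$.

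For $\sigma$, I would run the diagram of \cref{figure-2} in reverse. All other cells there are invertible canonical 2-isomorphisms: $\sigma_\Gamma$ from \cref{multiplicativity-of-trivialization}, the naturality cells of \cref{versuchs-iso:2}, and identities. Consequently $\sigma''$, after unshifting, determines a 2-isomorphism filling subdiagram D. Full faithfulness on 2-morphisms then yields it as $i_{*}(\sigma)$. The coherence condition \cref{coherence-for-sigma} follows from the coherence of $\sigma''$, again because the $\alpha_\Gamma$-contributions cancel by \cref{multiplicativity-of-trivialization} and $i_{*}$ is faithful on 2-morphisms. This establishes essential surjectivity.

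For the Hom-categories, I would argue in the same way. A 1-morphism in $\mathcal{X}$ between $\widetilde{\mathcal{D}}(\mathbb{T}_1)$ and $\widetilde{\mathcal{D}}(\mathbb{T}_2)$ consists of $\widetilde{\mathcal{B}}:\widetilde{\mathcal{S}}_1\to\widetilde{\mathcal{S}}_2$ compatible with the identifications of $p_{*}$, so it preserves the sections. It is therefore $i_{*}(\mathcal{B})^{q(\eta)}$ for a unique-up-to-2-isomorphism $\mathcal{B}$, by \cref{gerbe-morphisms-under-i-non-flat}. Likewise $\widetilde\beta$ unshifts and unwinds through $\beta_{\pr_2^{*}\mathcal{B}}$ to some $i_{*}(\beta)$. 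Full faithfulness at the 2-morphism level comes directly from faithfulness of $i_{*}$ on 2-morphisms.

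I expect the main obstacle to be the bookkeeping for the homotopy pullback. The isomorphism data in $\mathcal{X}$ identify $p_{*}$ of the family with $(\trivlin_1^{\eta},\varphi_\delta)$ only up to a specified isomorphism, not on the nose. So one must first normalize, replacing a given object of $\mathcal{X}$ by an isomorphic one on which these identifications are identities, before \cref{gerbe-morphisms-under-i-non-flat} and the shift formulas \cref{shift-formulas-1,shift-formulas-2} apply literally.
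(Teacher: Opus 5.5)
Your proposal is correct and follows the paper's proof essentially step for step. You use the identification $p_{*}(\mathcal{R})\cong\trivlin_1^{\eta}$ to obtain a section with covariant derivative $\eta$ and apply the non-flat reduction theorem to get $\mathcal{S}$. You then unshift by $-\nu$ and precompose with $\mathcal{T}_{\mathcal{H}}\otimes\id$ to get a 1-morphism with trivial $p_{*}$, which comes from some $\mathcal{A}$. Finally, you unwind the large diagram down to subdiagram D and recover $\sigma$ by local full faithfulness of $i_{*}$.

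The normalization you flag is handled in the paper just as you suggest: it induces the matching family along the isomorphism $\mathcal{B}$ with $p_{*}(\mathcal{B})=\varphi$ and then equips it with the identity identification. Your explicit remarks on the coherence of $\sigma$ and on the Hom-categories supply details the paper only asserts.
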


\begin{proof}
The construction of the functor $\mathcal{D}$ employed four tools:
\begin{enumerate}

\item 
adjusted extensions of bundle gerbes,

\item
extensions of 1-morphisms and 2-morphisms along $i: BA \to \Gamma$,
\item
various compositions with 1-isomorphisms and 2-isomorphisms, and

\item
adjusted shifts of 1-morphisms.

\end{enumerate}
Tools 3 and 4 are reversible operations (see \cref{shift-of-1-morphisms:a}). Thus, the main point of the claimed equivalence is to show that all extensions can be reversed. This will be done using \cref{gerbe-morphisms-under-i-non-flat}.

We begin by proving essential surjectivity. Suppose $(\mathcal{R},\mathcal{Q},\chi)$ is descent data for $\grbcon{\Gamma\!,u,\kappa}-$ with respect to $\rho$, together with an isomorphism 
\begin{equation*}
\varphi: p_{*}(\mathcal{R},\mathcal{Q},\chi) \to (\trivlin_1^{\eta}, \varphi_\delta)
\end{equation*}
in $\des_{\buncon F-}(\rho)$ promoting it to an object in $\mathcal{X}$. This means that $\varphi$ is a connection-preserving bundle isomorphism $\varphi: p_{*}(\mathcal{R}) \to \trivlin_1^{\eta}$ over $P$ such that 
\begin{equation}
\label{condition-for-anchors}
\pr_2^{*}\varphi \circ p_{*}(\mathcal{Q}) = \varphi_\delta \circ \pr_1^{*}\varphi\text{.}
\end{equation}
Since $\trivlin_1^{\eta}$ has a canonical section with covariant derivative $\eta$, via the isomorphism $\varphi$, $p_{*}(\mathcal{R})$ also has such a section, enhancing $\mathcal{R}$ to an object in the bicategory $\grbcon{\Gamma\!,u,\kappa}M^{\pcurved\eta}$ defined before \cref{gerbe-morphisms-under-i-non-flat}. Thus, it determines an $A$-bundle gerbe $\mathcal{S}$ with connection together with an isomorphism $\mathcal{B}: \mathcal{R} \to i_{*}^{(q(\eta),0)}(\mathcal{S})$ in $\grbcon{\Gamma\!,u,\kappa}M^{\pcurved\eta}$, i.e., $p_{*}(\mathcal{B})=\varphi$. 

We may induce along $\mathcal{B}$ a modified matching family $(\widetilde{\mathcal{S}},\mathcal{Q}',\chi')$, such that $\mathcal{B}$ gives an isomorphism 
 $(\mathcal{R},\mathcal{Q},\chi) \cong (\widetilde{\mathcal{S}},\mathcal{Q}',\chi')$ in $\des_{\grbcon{\Gamma\!,u,\kappa}-}(\rho)$.
When we equip $(\widetilde{\mathcal{S}},\mathcal{Q}',\chi')$ with the identity isomorphism 
\begin{equation*}
\id: p_{*}(\widetilde{\mathcal{S}},\mathcal{Q}',\chi') \to (\trivlin_1^{\eta}, \varphi_\delta)\text{,}
\end{equation*}
it becomes an isomorphism in $\mathcal{X}$. \cref{condition-for-anchors} now means that $p_{*}(\mathcal{Q}')=\delta$. Reversing the effects of the functor $\mathcal{D}$, we shift $\mathcal{Q}'$ by $-\nu$ and pre-compose with $\mathcal{T}_{\mathcal{H}} \otimes \id_{ \pr_2^{*}\mathcal{S}}$, obtaining an isomorphism $\mathcal{A}' :i_{*}(\mathcal{H} \otimes \pr_2^{*}\mathcal{S}) \to i_{*}(\pr_1^{*}\mathcal{S})$ with $p_{*}(\mathcal{A}')=\id$. Hence, again by \cref{gerbe-morphisms-under-i-non-flat}, there exists an isomorphism $\mathcal{A}: \mathcal{H} \otimes \pr_2^{*}\mathcal{S} \to \pr_1^{*}\mathcal{S}$ in $\grbcon A {P^{[2]}}$ together with a 2-isomorphism $\mathcal{A}'\cong i_{*}(\mathcal{A})$. 

We induce again a modified matching family $ (\widetilde{\mathcal{S}},\widetilde{\mathcal{A}},\chi'')$, with an isomorphism $(\widetilde{\mathcal{S}},\mathcal{Q}',\chi') \cong (\widetilde{\mathcal{S}},\widetilde{\mathcal{A}},\chi'')$ in $\mathcal{X}$. Here, $\chi''$ is a 2-isomorphism $\chi'': \pr_{12}^{*}\widetilde{\mathcal{A}} \circ \pr_{23}^{*}\widetilde{\mathcal{A}} \Rightarrow \pr_{13}^{*}\widetilde{\mathcal{A}}$. Again undoing the effects of the functor $\mathcal{D}$, we shift by $-\pr_{13}^{*}\nu$ and reverse all compositions performed in the large diagram above, ending with the 2-isomorphism in subdiagram D; namely, 
\begin{equation*}
\chi''':i_{*}(\pr_{12}^{*}\mathcal{A}) \circ i_{*}(\id \otimes \pr_{23}^{*}\mathcal{A}) \Rightarrow i_{*}(\pr_{13}^{*}\mathcal{A}) \circ i_{*}(\mathcal{M}' \otimes \id)\text{.}
\end{equation*}
By the local fully faithfulness of the extension $i_{*}$, there exists a unique 2-isomorphism $\sigma$ such that $i_{*}(\sigma)=\chi'''$. This shows that $\widetilde{\sigma} = \chi''$, which finally implies that $(\widetilde{\mathcal{S}},\widetilde{\mathcal{A}},\chi'')=(\widetilde{\mathcal{S}},\widetilde{\mathcal{A}},\widetilde{\sigma})$ and reveals $(\mathcal{R},\mathcal{Q},\chi)$ as having the essential preimage $\mathbb{T}:=(\mathcal{S},\mathcal{A},\sigma)$. 

On the level of 1-morphisms and 2-morphisms, everything is again somewhat simpler and works exactly as above.
\end{proof}

\begin{theorem}
\label{lifting-theorem}
Let $\Gamma$ be a smoothly separable, central crossed module equipped with a splitting $u$ and an adjustment $\kappa$ that is adapted to $u$. Let $P$ be a principal $F$-bundle with connection. Then, there is an equivalence of bicategories 
\begin{equation*}
\trivcon{\mathbb{CS}_{P}(\mathcal{G}_\Gamma)} \cong\liftcon{{(\Gamma\!,u,\kappa)}} P{}\text{.} 
\end{equation*}
It is functorial in connection-preserving isomorphisms $P \to P'$.
Moreover, under this equivalence, the covariant derivative of a trivialization coincides with the adjusted curvature of the corresponding $(\Gamma\!,u,\kappa)$-lift.
\end{theorem}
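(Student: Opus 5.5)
The plan is to combine \cref{X-equivalence} with descent for the sheaf $\grbcon{\Gamma\!,u,\kappa}-$, i.e.\ the adapted version of \cref{th:equivcongrb} recorded in \cref{stackification-other-gerbes}. By \cref{X-equivalence} we already have $\trivcon{\mathbb{CS}_{P}(\mathcal{G}_\Gamma)} \cong \mathcal{X}$, where $\mathcal{X}$ is the homotopy pullback of \cref{homotopy-pullback-X}. It therefore suffices to identify $\mathcal{X}$ with $\liftcon{(\Gamma\!,u,\kappa)}P{}$.

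For this, we first note that the sheaf property says that the functor $r_\rho:\grbcon{\Gamma\!,u,\kappa}M \to \des_{\grbcon{\Gamma\!,u,\kappa}-}(\rho)$ is an equivalence of bicategories. Likewise, $r_\rho: \buncon FM_{\dis} \to \des_{\buncon F-}(\rho)_{\dis}$ is an equivalence, because principal $F$-bundles with connection form a stack. Next we check that these equivalences commute with $p_*$ up to canonical isomorphism, which holds because $p_*$ is a morphism of sheaves. It follows that $\mathcal{X}$ is equivalent to the homotopy pullback of $\grbcon{\Gamma\!,u,\kappa}M \stackrel{p_*}\to \buncon FM_{\dis} \stackrel{(P,\eta)}\leftarrow \ast$, since homotopy pullbacks are invariant under equivalences of diagrams. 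The objects of this pullback are pairs $(\mathcal{G},\varphi)$ with $\varphi:p_*(\mathcal{G})\to P$ a connection-preserving isomorphism. The 1-morphisms are $\mathcal{A}$ together with a 2-cell in $\buncon FM_{\dis}$, and since that bicategory has only identity 2-morphisms, this 2-cell is exactly the condition $\varphi'\circ p_*(\mathcal{A})=\varphi$. The 2-morphisms are arbitrary. This is precisely $\liftcon{(\Gamma\!,u,\kappa)}P{}$, so composing the two equivalences gives the claimed equivalence.

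For functoriality, note that a connection-preserving isomorphism $f:P\to P'$ induces an isomorphism of Chern-Simons 2-gerbes, since all data are pulled back along $f$ and $\delta\circ f^{[2]}=\delta$, and $f^*\eta'=\eta$. It also induces a post-composition functor $\varphi\mapsto f\circ\varphi$ on lifts. We then check that the construction of $\mathcal{D}$ is natural with respect to such $f$. This is routine, because every ingredient (the adjusted extension along $q(\eta)$, $\mathcal{T}_{\mathcal{H}}$, and the shifts by $\nu$) is defined by pullback-compatible formulas.

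For the curvature statement, let $\mathbb{T}=(\mathcal{S},\mathcal{A},\sigma)$. The descended gerbe $\mathcal{G}$ satisfies $\rho^*\mathcal{G}\cong\widetilde{\mathcal{S}}=i_*^{(q(\eta),0)}(\mathcal{S})$. Adjusted curvature is preserved by isomorphisms, by \cref{adjusted-curvature-relation} applied locally. \Cref{extension-and-adjusted-shift-lemma} with $\beta=\eta$ and $\chi=0$ then gives
\[
\rho^*\mathrm{curv}_\kappa(\mathcal{G})=i_*\bigl(\mathrm{curv}(\mathcal{S})+b_\kappa(\eta\wedge\mathrm{d}\eta)+\tfrac13 b_\kappa(\eta\wedge[\eta\wedge\eta])\bigr)=i_*(\mathrm{curv}(\mathcal{S})+C)=i_*\rho^*K .
\]
Injectivity of $\rho^*$ then yields $\mathrm{curv}_\kappa(\mathcal{G})=i_*K$.

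The main obstacle I expect is the homotopy-pullback identification. One must verify carefully that the cone structure used in \cref{lift-to-X} (the canonical trivialization $\psi:\rho^*P\to\trivlin_1$ identifying $r_\rho(P)$ with $(\trivlin_1^\eta,\varphi_\delta)$) matches, under $r_\rho$, the descent of the global isomorphism $\varphi$ to the bundle $P$. Only then does the discreteness of $\buncon FM_{\dis}$ reduce the 1-morphism data exactly to the strict condition $\varphi'\circ p_*(\mathcal{A})=\varphi$.
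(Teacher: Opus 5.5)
Your proposal is correct and follows the paper's proof essentially step for step. Like the paper, you identify the bicategory of $(\Gamma\!,u,\kappa)$-lifts as the homotopy pullback of $p_{*}$ along $(P,\eta)$ and use the sheaf property (the equivalences $r_\rho$ commuting with $p_{*}$) to identify it with $\mathcal{X}$. You then conclude with \cref{X-equivalence} and read off the curvature claim from \cref{extension-and-adjusted-shift-lemma}. You are slightly more explicit than the paper on two points it leaves implicit: the descent step $\rho^{*}\mathcal{G}\cong\widetilde{\mathcal{S}}$, together with invariance of adjusted curvature under isomorphisms, and the naturality of $\mathcal{D}$ in $P$, which the paper simply calls clear.
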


\begin{proof}
We note that $\liftcon{(\Gamma\!,u,\kappa)} P{}$, as defined in \cref{gamma-lifts}, is the (homotopy) pullback of the diagram
\begin{equation}
\label{homotopy-pullback-lifts}
\alxydim{}{& \ast \ar[d]^{(P,\eta)} \\  \grbcon{\Gamma\!,u,\kappa}M \ar[r]_{p_{*}} & (\buncon FM)_{\dis}\text{.}}
\end{equation}
We consider the commutative diagram
\begin{equation*}
\alxydim{}{\grbcon{\Gamma\!,u,\kappa}M \ar[d]_{r_\rho} \ar[r]^-{p_{*}} & (\buncon FM)_{\dis} \ar[d]^{r_\rho}
\\ \des_{\grbcon{\Gamma\!,u,\kappa}-}(\rho) \ar[r]_-{p_{*}} &  (\des_{\buncon F-}(\rho))_{\dis}}
\end{equation*}
whose vertical arrows are equivalences since $\grbcon{\Gamma\!,u,\kappa}M$ and $\buncon F-$ are sheaves (\cref{th:equivcongrb}). This shows that the diagrams \cref{homotopy-pullback-X,homotopy-pullback-lifts} are naturally isomorphic, providing an equivalence between their homotopy limits, i.e., $\mathcal{X}\cong \liftcon{(\Gamma\!,u,\kappa)} P{}$. \cref{X-equivalence} then completes the proof.

The functoriality in connection-preserving bundle isomorphisms is clear. Concerning the claim about the curvatures, we have 
\begin{align*}
\mathrm{curv}_{\kappa}(\widetilde{\mathcal{S}})
=\mathrm{curv}_{\kappa}(i_{*}^{(q(\eta),0)}(\mathcal{S}))
\eqcref{extension-and-adjusted-shift-lemma} i_{*}(\mathrm{curv}(\mathcal{S}) +C)= i_{*}(\rho^{*}K )\text{.}
\end{align*}
\end{proof}

\begin{remark}
The equivalence of \cref{lifting-theorem} is established as a cospan of bicategories,
\begin{equation*}
\alxydim{}{\trivcon{\mathbb{CS}_{P}(\mathcal{G}_\Gamma)} \ar[r]^-{\widetilde{\mathcal{D}}} & \mathcal{X}  & \liftcon{{(\Gamma\!,u,\kappa)}} P{}\text{.} \ar[l]_-{r_{\rho}}}
\end{equation*}
Concretely, a trivialization $\mathbb{T}=(\mathcal{S},\mathcal{A},\sigma)$ of $\mathbb{CS}_P(\mathcal{G}_{\Gamma})$ corresponds to a $(\Gamma\!,u,\kappa)$-lift $(\mathcal{G},\varphi)$ of $P$ if and only if their images in $\mathcal{X}$ are isomorphic: i.e., there exists an isomorphism $\mathcal{B}: (\widetilde{\mathcal{S}},\widetilde{\mathcal{A}},\widetilde{\sigma})\to r_\rho(\mathcal{G})$ in $\des_{\grbcon{\Gamma\!,u,\kappa}-}(\rho)$ such that $\id_{(\trivlin^\eta_1,\varphi_{\delta})} = \psi \circ r_{\rho}(\varphi)\circ p_{*}(\mathcal{B})$ in $\des_{\buncon{F}-}(\rho)$, where $\psi$ is the morphism $r_{\rho}(P)\to (\trivlin_1^{\eta},\varphi_{\delta})$ induced by the canonical trivialization $\psi: \rho^{*}P \to \trivlin_1$; see the proof of \cref{lift-to-X}.
\end{remark}

Regarding the application to string structures, there is nothing further to add beyond what was discussed in the introduction. We now describe further consequences of \cref{lifting-theorem}.
First, in conjunction with \cref{2-gerbes-2}, we obtain:

\begin{corollary}
$(\Gamma\!,u,\kappa)$-lifts of $P$ exist if and only if the characteristic class of the Chern-Simons 2-gerbe $\mathbb{CS}_P(\mathcal{G}_{\Gamma})$ vanishes in $\check \h^3(M,\sheaf{A})$, and they form a torsor over the monoidal bigroupoid $\grbcon AM$.
\end{corollary}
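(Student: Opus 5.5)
This is essentially a direct combination of \cref{lifting-theorem} with \cref{2-gerbes-2}, and I would prove it in two short steps.

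\textbf{Existence.} By \cref{lifting-theorem} the bicategory $\liftcon{(\Gamma\!,u,\kappa)}P{}$ is equivalent to $\trivcon{\mathbb{CS}_P(\mathcal{G}_{\Gamma})}$. An equivalence of bicategories preserves non-emptiness of the object class. So lifts exist if and only if trivializations with connection exist.

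By \cref{2-gerbes-2}, the latter holds if and only if $c_{\mathbb{CS}_P(\mathcal{G}_{\Gamma})}=0$ in $\check\h^3(M,\sheaf A)$. There is one point to check here. \cref{2-gerbes-2} is formulated for trivializations of the bundle 2-gerbe, and the statement we need is about trivializations \emph{with compatible connection}. I would therefore use the standard fact from \cite{waldorf8} that any trivialization of a bundle 2-gerbe with connection can be equipped with a compatible connection. This is available because connections on abelian bundle gerbes and on their 1- and 2-isomorphisms always exist and can be glued with partitions of unity. So the existence of topological trivializations and of trivializations with connection are equivalent, and both are controlled by the same class.

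\textbf{Torsor structure.} Here the plan is to transport the torsor structure along the equivalence. \cref{2-gerbes-2} says that $\trivcon{\mathbb{CS}_P(\mathcal{G}_{\Gamma})}$ is a torsor over $\grbcon AM$. The action is given by tensoring the bundle gerbe $\mathcal{S}$ of a trivialization with the pullback $\rho^{*}\mathcal{K}$ of an $A$-bundle gerbe $\mathcal{K}$ over $M$, and it is free and transitive up to equivalence.

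Given an equivalence $\trivcon{\mathbb{CS}_P(\mathcal{G}_{\Gamma})}\cong\liftcon{(\Gamma\!,u,\kappa)}P{}$, I would choose a quasi-inverse. Conjugating the action by it defines an action of $\grbcon AM$ on the lifts, and the torsor property transfers automatically.

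If a more intrinsic description of this action is wanted, I would make it explicit via \cref{versuchs-iso}. Under $\mathcal{D}$ the object $\mathcal{S}\otimes\rho^{*}\mathcal{K}$ maps to $i_{*}^{(q(\eta),0)}(\mathcal{S}\otimes\rho^{*}\mathcal{K})$. This can be compared with a descended version of \quot{$\mathcal{G}$ twisted by $i_{*}\mathcal{K}$}, using the central half-braiding of \cref{reduction-with-shifted-connection}. This refinement is not needed for the corollary as stated.

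\textbf{Main obstacle.} I expect the only real difficulty to be the connection point in the existence step, namely ensuring that \cref{2-gerbes-2}, together with the cited results of \cite{waldorf8}, really covers trivializations with connection. If the cited lemmas only treat the topological case, I would fill the gap with the partition-of-unity argument above. Two further points should be routine. First, the relevant coherence conditions on 2-isomorphisms are insensitive to the choice of connections. Second, the difference of two connections on a trivialization is governed by forms on $P$ that descend appropriately.
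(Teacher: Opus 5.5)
Your proposal is correct and matches the paper's argument: the corollary is the equivalence of \cref{lifting-theorem} combined with \cref{2-gerbes-2}, with existence and the torsor structure transported across the equivalence. Your extra check, that the obstruction statement covers trivializations \emph{with} connection, is appropriate. The paper covers this only implicitly, through the cited results of \cite{waldorf8}; compare its appeal to \cite[Prop. 3.3.1]{waldorf8} in the proof of \cref{existence-of-connections}.
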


Together with \cref{exact-sequence-1}, we have the following.

\begin{corollary}
The sequence
\begin{equation*}
\hat\h^2(M,A) \to \hat \h^1(M,(\Gamma\!,u,\kappa))^{\aptadj} \to \hat\h^1(M,F)\to \check\h^3(M,\sheaf A)
\end{equation*}
in differential and \v Cech cohomology, which is induced by the functors $i_{*}$, $p_{*}$, and $P \mapsto c_{\mathbb{CS}_P(\mathcal{G}_{\Gamma})}$, respectively, is an exact sequence of pointed sets.
\end{corollary}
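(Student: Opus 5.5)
The plan is to read off exactness at each of the three spots, combining the exactness in \cref{exact-sequence-1} with \cref{2-gerbes-2} and \cref{lifting-theorem}. Here $\hat\h^2(M,A)$ means isomorphism classes in $\grbcon AM$, and $\hat\h^1(M,F)$ means isomorphism classes of principal $F$-bundles with connection. The last map sends $P$ to $c_{\mathbb{CS}_P(\mathcal{G}_{\Gamma})}$, and it is well defined on isomorphism classes because the construction of the Chern-Simons 2-gerbe is functorial in connection-preserving isomorphisms of $P$. Its base point is the trivial bundle $\trivlin_1$ with the zero connection, and for that bundle $\mathbb{CS}$ is trivializable (take $\mathcal{S}=\mathcal{I}_0$ and the obvious data over $P=M\times F$), so base points map to base points.

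\emph{Exactness at $\hat\h^1(M,F)$.} A class $[P]$ lies in the image of $p_{*}$ exactly when some $\Gamma\!$-bundle gerbe $\mathcal{G}$ with adapted and adjusted connection satisfies $p_{*}(\mathcal{G})\cong P$. This is the same as saying that $\liftcon{(\Gamma\!,u,\kappa)}P{}$ is non-empty. By \cref{lifting-theorem}, that holds if and only if $\trivcon{\mathbb{CS}_P(\mathcal{G}_\Gamma)}$ is non-empty, and by \cref{2-gerbes-2} this holds if and only if $c_{\mathbb{CS}_P(\mathcal{G}_{\Gamma})}=0$. The one point to check is that \cref{2-gerbes-2} applies to the 2-gerbe with connection: a 2-gerbe whose class vanishes admits a trivialization, and that trivialization can then be equipped with a compatible connection. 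This is the content of the cited results \cite{stevenson2,waldorf8}, and it is the step I would double-check most carefully.

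\emph{Exactness at $\hat \h^1(M,(\Gamma\!,u,\kappa))^{\aptadj}$.} This is \cref{exact-sequence-1}, once the codomain there, $\hat\h^0(M,F)$, is identified with $\hat\h^1(M,F)$ in the indexing used here. The argument is as follows. First, $p_{*}\circ i_{*}$ is trivial, since $p\circ i$ is the trivial homomorphism. Conversely, suppose $p_{*}(\mathcal{G})$ is isomorphic to the trivial bundle with zero connection. Then it has a flat section, and \cref{corr-2} gives $\mathcal{G}\cong i_{*}(\mathcal{G}^{\mathrm{red}})$. The classification results \cref{classification} and \cref{stackification-other-gerbes} let us pass freely between isomorphism classes of gerbes and the differential cohomology sets.

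The main obstacle I expect is bookkeeping rather than substance: matching the conventions of the two sets, so that the image of $i_{*}$ lands in the adapted sub-bicategory (it is fake-flat, hence adapted) and the base points agree. Beyond that, the argument only combines previously established equivalences.
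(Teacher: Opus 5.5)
Your argument is essentially the paper's: exactness at the gerbe term is \cref{exact-sequence-1}, which comes from \cref{corr-2} (with the indexing shift you note). Exactness at $\hat\h^1(M,F)$ is \cref{lifting-theorem} combined with \cref{2-gerbes-2}, which is exactly the corollary stated just before this one.

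One side remark is wrong, though. $\mathcal{S}=\mathcal{I}_0$ does not trivialize $\mathbb{CS}$ of the trivial bundle. With that choice, $\mathcal{A}$ would trivialize $\mathcal{H}=\delta^{*}\mathcal{G}_{\Gamma}\otimes\mathcal{I}_{\omega}$ over $P^{[2]}=M\times F\times F$. Restricting along $f\mapsto((x,1),(x,f))$, where $\delta$ is the identity of $F$, would then trivialize $\mathcal{G}_{\Gamma}$ itself. This fails in general, e.g. for $\Gamma=\str d$.

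The conclusion you wanted still holds, and it follows from your own argument. The trivial bundle with zero connection is $p_{*}(\mathcal{I}_{0,0})$, so it lies in the image of $p_{*}$. Your exactness argument at $\hat\h^1(M,F)$ then shows its class vanishes.
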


Second, by passing from $\Gamma\!$-bundle gerbes to abelian 2-gerbes and back, one can use existence results for connections available in the abelian setting.

\begin{corollary}
\label{existence-of-connections}
Every $\Gamma\!$-bundle gerbe admits an adjusted and adapted connection. 
\end{corollary}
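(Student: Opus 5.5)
The plan is to pass through \cref{lifting-theorem}, using the known fact that trivializations of abelian bundle 2-gerbes admit connections. Let $\mathcal{G}$ be a $\Gamma\!$-bundle gerbe without connection over $M$, and write $P_0 := p_{*}(\mathcal{G})$ for the underlying principal $F$-bundle, defined as in \cref{dis-bundle-gerbes} but without connection data. First choose an arbitrary connection $\eta$ on $P_0$ and set $P:=(P_0,\eta)$. By the connection-free lifting theorem of \cite[Thm. 4.2.7]{Nikolausa}, the pair $(\mathcal{G},\id)$ is a $\Gamma\!$-lift of $P_0$. Hence it corresponds to a trivialization $\mathbb{T}_0=(\mathcal{S}_0,\mathcal{A}_0,\sigma_0)$ of the Chern--Simons 2-gerbe $\mathbb{CS}_{P_0}(\mathcal{G}_\Gamma)$ without connection. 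In particular, the class $c_{\mathbb{CS}_P(\mathcal{G}_\Gamma)}$ vanishes.

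Next, I upgrade $\mathbb{T}_0$ to a trivialization with connection of $\mathbb{CS}_P(\mathcal{G}_\Gamma)$. By \cref{2-gerbes-2}, trivializations with connection exist once the class vanishes. The more refined claim I need is that the specific $\mathbb{T}_0$ admits connections compatible with the given connection on $\mathbb{CS}_P(\mathcal{G}_\Gamma)$. This is standard abelian theory. Every $A$-bundle gerbe, such as $\mathcal{S}_0$, admits a connection. Given connections on the source and target gerbes, every 1-isomorphism $\mathcal{A}_0$ can be made connection-preserving after a shift of the curving of $\mathcal{S}_0$ by a 2-form. The 2-isomorphism $\sigma_0$ can then be made connection-preserving, because the failure is measured by a 1-form on $P^{[3]}$ that is a Čech-type cocycle and can be killed using a partition of unity. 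I expect to cite \cite[Lem. 3.2.3]{waldorf8} and the argument behind it: any trivialization with connection differs from $\mathbb{T}_0$ by a bundle gerbe over $M$, and bundle gerbes without connection over $M$ admit connections. Concretely, I would pick some trivialization with connection $\mathbb{T}'$, write $\mathbb{T}_0\cong\mathbb{T}'\otimes\mathcal{K}$ for an $A$-bundle gerbe $\mathcal{K}$ over $M$ using the torsor structure in the connection-free setting, equip $\mathcal{K}$ with a connection, and transport the resulting connection to $\mathbb{T}_0$.

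Finally, I apply \cref{lifting-theorem} to the trivialization with connection $\mathbb{T}$ obtained in this way. It yields a $(\Gamma\!,u,\kappa)$-lift $(\mathcal{G}',\varphi)$ of $P$, that is, a $\Gamma\!$-bundle gerbe $\mathcal{G}'$ with adjusted and adapted connection. The remaining task is to identify $\mathcal{G}'$, with its connection forgotten, with $\mathcal{G}$. For this I use that the construction $\mathcal{D}$ and the equivalence of \cref{lifting-theorem} are compatible with the forgetful functors to the connection-free setting of \cite{Nikolausa}, since all tools used there (extensions, compositions, shifts) reduce to their connection-free counterparts. Consequently, $\mathcal{G}'$ without connection corresponds to $\mathbb{T}_0$ without connection, and is therefore isomorphic to $\mathcal{G}$. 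Finally, \cref{connection-transport-along-1-morphism} transports the connection along an isomorphism $\mathcal{G}'\to\mathcal{G}$; since adaptedness is preserved under 1-morphisms (\cref{fake-curvature-under-isomorphisms}), $\mathcal{G}$ acquires an adjusted and adapted connection.

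The main obstacle is this compatibility of the equivalence with forgetting connections. The equivalence of \cite{Nikolausa} must agree with our $\widetilde{\mathcal{D}}$ up to natural isomorphism after forgetting connections, which requires checking that $\mathcal{T}_\Gamma$, the trivialization trick, and the shifts reduce to the connection-free constructions used there. I would try to avoid this check altogether. The alternative is to run the descent argument directly: apply \cref{X-equivalence} in reverse to the descent data of $\mathcal{G}$ along $\rho$ and reduce everything to the existence of connections on abelian objects over $P$ and $P^{[2]}$. This reverse argument still uses \cref{connection-transport-along-1-morphism} to push connections along the comparison isomorphisms.
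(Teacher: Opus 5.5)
Your proposal is correct and follows the paper's proof essentially step by step. The steps are: pass from $\mathcal{G}$ to a connection-free trivialization via \cite{Nikolausa}; equip that trivialization with a compatible connection (the paper cites \cite[Prop. 3.3.1]{waldorf8} directly, where you use the torsor argument); apply \cref{lifting-theorem}; identify the resulting lift with $\mathcal{G}$ through the connection-free equivalence; and transport the connection using \cref{connection-transport-along-1-morphism} and \cref{fake-curvature-under-isomorphisms}. The point you flag as the main obstacle is that the two lifting equivalences must agree after forgetting connections. The paper also takes this for granted, so your account is if anything more careful there.
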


\begin{proof}
Let $\mathcal{G}$ be a $\Gamma\!$-bundle gerbe without connection, and let $P:=p_{*}(\mathcal{G})$. Then, $\mathcal{G}$ is a $\Gamma\!$-lift of $P$ (without connection), and hence corresponds, under \cite[Thm. A]{Nikolausa}, to a trivialization $\mathbb{T}$ of the Chern-Simons 2-gerbe (without connection data). We choose a connection on $P$; then, as described in \cref{lifting-2-gerbe}, the bundle 2-gerbe acquires a connection. By \cite[Prop. 3.3.1]{waldorf8}, $\mathbb{T}$ admits a compatible connection. By \cref{lifting-theorem}, it then corresponds to a $(\Gamma\!,u,\kappa)$-lift $\mathcal{G}'$ of $P$ with connection. Because the lifting result is an equivalence also in the setting without connections \cite[Thm. A]{Nikolausa}, there exists an isomorphism $\mathcal{G} \cong \mathcal{G}'$ of bundle gerbes (without connection). By \cref{connection-transport-along-1-morphism}, $\mathcal{G}$ admits a connection, and by \cref{fake-curvature-under-isomorphisms} it must be adjusted. 
\end{proof}

Third, the fact that the equivalence of \cref{lifting-theorem} is compatible with connection-preserving isomorphisms between principal $F$-bundles means that it is an isomorphism 
\begin{equation*}
\trivcon{\mathbb{CS}_{-}(\mathcal{G}_\Gamma)} \cong\liftcon{(\Gamma\!,u,\kappa)}-{}
\end{equation*}
between presheaves of bicategories on the category $\buncon FM$. We may then perform the Grothendieck construction (\quot{unstraightening}) with both presheaves \cite{Bakovic,Johnson2021}. We observe that the canonical functor
\begin{equation*}
\grbcon\Gamma M \to \int_{P\in \buncon FM} \liftcon{(\Gamma\!,u,\kappa)}P{}:\mathcal{G} \mapsto (p_{*}\mathcal{G},\mathcal{G}) 
\end{equation*}
is an equivalence, with inverse $(P,\mathcal{G}) \mapsto \mathcal{G}$. It expresses the simple fact that every $\Gamma\!$-bundle gerbe lifts its own underlying principal $F$-bundle. This shows the following:

\begin{corollary}
\label{equivalence-with-abelian}
There is an equivalence of bicategories
\begin{equation*}
\grbcon\Gamma M \cong \int_{P\in \buncon FM} \trivcon{\mathbb{CS}_P(\mathcal{G}_{\Gamma})}\text{.}
\end{equation*}
\end{corollary}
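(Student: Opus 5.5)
The plan is to realize the equivalence by a Grothendieck construction over the groupoid $\buncon FM$. The key input is that \cref{lifting-theorem} is natural in connection-preserving isomorphisms. Concretely, I will show that the equivalences of \cref{lifting-theorem} assemble into a pseudonatural equivalence
\begin{equation*}
\trivcon{\mathbb{CS}_{-}(\mathcal{G}_\Gamma)} \Rightarrow \liftcon{(\Gamma\!,u,\kappa)}-{}
\end{equation*}
of pseudofunctors $\buncon FM \to$ bicategories. First I make both sides into such pseudofunctors. A connection-preserving isomorphism $f:P\to P'$ acts on lifts by post-composition, $(\mathcal{G},\varphi)\mapsto(\mathcal{G},f\circ\varphi)$. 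On trivializations it acts by pullback along the induced isomorphism $\mathbb{CS}_{P'}(\mathcal{G}_\Gamma)\cong\mathbb{CS}_P(\mathcal{G}_\Gamma)$. This isomorphism exists because all data of the Chern--Simons 2-gerbe ($C$, $\omega$, $\delta$, $\delta_2$, $\delta_3$) are built functorially from $(P,\eta)$ and are preserved by $f$.

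Next, naturality. The equivalence of \cref{lifting-theorem} is the cospan $\widetilde{\mathcal{D}}$, $r_\rho$ through $\mathcal{X}$. Every ingredient of $\mathcal{D}$ commutes with pullback along $f$ on the nose or up to canonical 2-cells: adjusted extensions, shifts, the trivialization trick of \cref{versuchs-iso}, and $\mathcal{T}_\Gamma$ pulled back along $\delta$. The homotopy pullbacks \cref{homotopy-pullback-X,homotopy-pullback-lifts} depend on $P$ only through the point $(P,\eta)$, so they are also functorial in $f$. Together these give a pseudonatural equivalence. Since unstraightening $\int$ (see \cite{Bakovic,Johnson2021}) sends pseudonatural equivalences to equivalences of total bicategories, I obtain
\begin{equation*}
\int_{P} \trivcon{\mathbb{CS}_P(\mathcal{G}_{\Gamma})}\cong \int_P \liftcon{(\Gamma\!,u,\kappa)}P{}.
\end{equation*}

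It then remains to identify $\int_P \liftcon{(\Gamma\!,u,\kappa)}P{}$ with $\grbcon\Gamma M$, interpreting $\grbcon\Gamma M$ as the adapted version $\grbconaa{\Gamma\!,u,\kappa}M$. I will use the functor $\mathcal{G}\mapsto(p_{*}\mathcal{G},(\mathcal{G},\id))$ and the inverse $(P,(\mathcal{G},\varphi))\mapsto\mathcal{G}$. Checking that these are mutually inverse:
\begin{itemize}
\item One composite is the identity.
\item For the other, $(P,(\mathcal{G},\varphi))$ is isomorphic to $(p_{*}\mathcal{G},(\mathcal{G},\id))$ via the morphism $\varphi$ in the base together with $\id_\mathcal{G}$ in the fibre.
\item On hom-categories, a morphism in $\int$ from $(P,(\mathcal{G},\varphi))$ to $(P',(\mathcal{G}',\varphi'))$ is a pair $(f,\mathcal{A})$ with $\varphi'\circ p_{*}\mathcal{A}=f\circ\varphi$. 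Here $f$ is determined by $\mathcal{A}$, so this is just a 1-morphism $\mathcal{A}$. 2-morphisms agree on both sides.
\end{itemize}

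I expect the main obstacle to be the coherence in the naturality step. The 2-cells in $\mathcal{D}$ (subdiagrams A–D of \cref{figure-2}, the 2-cells $\beta_\mathcal{A}$) must be compatible with pullback along $f$ in a way that makes the naturality data satisfy the pseudonatural-transformation axioms. The approach here is to note that $f$ covers the identity of $M$ and induces the map $f:P\to P'$ on the covers. All constructions are pullbacks along $f^{[k]}$, so pulling back strictly preserves the data, and the naturality 2-cells can be taken to be identities. The remaining verification reduces to the strict functoriality of pullback.
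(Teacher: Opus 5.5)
Your proposal is correct and follows the same route as the paper. The paper also uses naturality of the equivalence of \cref{lifting-theorem} in connection-preserving isomorphisms to get an equivalence of presheaves of bicategories on $\buncon FM$, unstraightens both sides, and identifies $\int_{P}\liftcon{(\Gamma\!,u,\kappa)}P{}$ with the bicategory of gerbes via $\mathcal{G}\mapsto(p_{*}\mathcal{G},\mathcal{G})$ with inverse $(P,\mathcal{G})\mapsto\mathcal{G}$. Reading $\grbcon\Gamma M$ as the adapted bicategory $\grbconaa{\Gamma\!,u,\kappa}M$ is the intended interpretation, and your fibrewise check (the base morphism $\varphi$ together with $\id_{\mathcal{G}}$ in the fibre) spells out what the paper states in one line.
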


The remarkable consequence of this result is that (adjusted and adapted) non-abelian categorical gauge theory can be expressed completely by \emph{abelian} 2-categorical gauge theory. This may be used, in the future, to study the parallel transport of connections on non-abelian bundle gerbes beyond the fake-flat regime.

\begin{appendix}

\setsecnumdepth{1}

\section{The plus construction for sheaves of bigroupoids}

\label{sec:plus}

The classical plus construction is an endofunctor $\mathcal{F} \mapsto \mathcal{F}^{+}$ on the category of set-valued presheaves over a site $\mathcal{S}$, with the property that it takes presheaves to separated presheaves and separated presheaves to sheaves. 
The plus construction has analogues for presheaves with values in (higher) categories; see \cite[Section 6.5.3]{Lurie2009} for a general discussion. Informally, sheafifying a presheaf of $n$-categories requires $n+2$ iterations, each increasing the level of separateness by one. In this article, we are interested in the stackification of presheaves of bicategories (in fact: bigroupoids), which requires 4 iterations. As far as we know, there is no written account of a general plus construction for presheaves of bicategories that carries a general presheaf through these four steps. Instead, we follow \cite{nikolaus2}, where the following procedure is proposed.

For concreteness, we restrict our discussion to the site $\Man$
of smooth manifolds, equipped with the Grothendieck pretopology of surjective submersions. Let $\mathcal{F}$ be a presheaf of bigroupoids on $\Man$.  
For every surjective submersion $\pi:Y \to M$ we let $\des_{\mathcal{F}}(\pi)$ be the bigroupoid of descent data, a.k.a. matching families, of $\mathcal{F}$ with respect to $\pi$, 
\begin{equation*}
\des_{\mathcal{F}}(\pi) := \lim \left ( \alxydim{}{\mathcal{F}(Y) \arr[r] & \mathcal{F}(Y^{[2]}) \arrr[r] & \mathcal{F}(Y^{[3]}) \arrrr[r] & \mathcal{F}(Y^{[4]})}  \right )\text{.}
\end{equation*}
Here, the arrows are induced under $\mathcal{F}$ from the face maps of the simplicial manifold $Y^{[\bullet]}$ of fibre products. 

We denote the \quot{restriction} 2-functor that equips globally defined objects with their canonical descent data by $r_{\pi}: \mathcal{F}(M) \to \des_{\mathcal{F}}(\pi)$. 
For any two objects $A,B\in \mathcal{F}(M)$, it induces a functor 
\begin{equation*} 
r_{\pi}(A,B): \hom_{\mathcal{F}(M)}(A,B) \to \hom_{\des_{\mathcal{F}}(\pi)}(r_{\pi}(A),r_{\pi}(B))
\end{equation*} 
on Hom-groupoids. We say that the presheaf $\mathcal{F}$ is:
\begin{enumerate}

\item
\emph{separated}, if $r_{\pi}(A,B)$ is full and faithful for all objects $A,B \in \mathcal{F}(M)$ and all $\pi$.

\item 
a \emph{pre-2-stack}, if $r_{\pi}(A,B)$ is an equivalence for all objects $A,B \in \mathcal{F}(M)$ and all $\pi$.

\item
a \emph{2-stack} or \emph{sheaf of bigroupoids}, if $r_{\pi}$ is an equivalence of bigroupoids for all $\pi$.

\end{enumerate}
Following \cite{nikolaus2}, we then apply two constructions:
\begin{enumerate}[(a)]

\item 
\emph{Hom-set closure}: it turns a separated presheaf of bigroupoids into a pre-2-stack.

\item
\emph{Plus construction}: it turns a pre-2-stack into a 2-stack.

\end{enumerate}

\label{sec:homsetclosure}

In the following we recall these two constructions. We start with a characterization of when a presheaf of bigroupoids is a pre-2-stack.
Let $\mathcal{F}$ be a presheaf of bigroupoids over a smooth manifold $M$. Consider two objects $A,B\in \mathcal{F}(M)$.
The assignment 
\begin{equation*}
\mathcal{F}_{A,B} :\pi \mapsto \hom_{\mathcal{F}(Y)}(\pi^{*}A,\pi^{*}B) 
\end{equation*}
of a groupoid to a smooth map $\pi:Y \to M$ is a presheaf of groupoids on the over-category $\Man/M$.  
We denote by $\des_{\mathcal{F}_{A,B}}(\pi)$ the groupoid of descent data with respect to a surjective submersion $\pi:Y \to M$, and by $r^{A,B}_{\pi}: \mathcal{F}_{A,B}(\id_M) \to \des_{\mathcal{F}_{A,B}}(\pi)$ the restriction functor. Following standard terminology, we call the presheaf $\mathcal{F}_{A,B}$ \emph{prestack} if $r^{A,B}_{\pi}$ is fully faithful, and \emph{stack} if $r_{\pi}^{A,B}$ is an equivalence of categories. 
Resolving the definition of $\mathcal{F}_{A,B}$, we have
\begin{equation*}
\des_{\mathcal{F}_{A,B}}(\pi)=\hom_{\des_{\mathcal{F}}(\pi)}(r_{\pi}(A),r_{\pi}(B))\text{,}
\end{equation*}
and $r_{\pi}^{A,B}$ is precisely the functor $r_{\pi}(A,B)$. 
Hence, we have the following:

\begin{lemma}
\label{pre-2-stack}
Let $\mathcal{F}$ be a presheaf of bigroupoids. Then, $\mathcal{F}$ is separated if and only if $\mathcal{F}_{A,B}$ is a prestack on $\Man/M$ for all $A,B$. Moreover, $\mathcal{F}$ is a pre-2-stack if and only if $\mathcal{F}_{A,B}$ is a stack on $\Man/M$ for all $A,B$.
\end{lemma}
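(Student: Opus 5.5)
The plan is to unwind the definitions on both sides and observe that they coincide literally, so that the lemma reduces to a bookkeeping identification. First, I will spell out the bigroupoid $\des_{\mathcal{F}}(\pi)$ as the limit over the truncated simplicial diagram. Its objects are triples consisting of an object $X\in\mathcal{F}(Y)$, a 1-morphism $\pr_1^{*}X\to\pr_2^{*}X$ over $Y^{[2]}$, and a 2-morphism over $Y^{[3]}$, subject to a coherence condition over $Y^{[4]}$. The 1-morphisms between two descent data consist of a 1-morphism over $Y$ together with a 2-morphism over $Y^{[2]}$, satisfying a condition over $Y^{[3]}$. The 2-morphisms are 2-morphisms over $Y$ satisfying a condition over $Y^{[2]}$.

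Next, for $A,B\in\mathcal{F}(M)$, I will fix the descent data $r_\pi(A)$ and $r_\pi(B)$. Their 1-morphism components over $Y^{[2]}$ are the canonical coherence isomorphisms $\pr_1^{*}\pi^{*}A\cong\pr_2^{*}\pi^{*}A$ coming from the pseudofunctoriality of pullback, and these are identities for a strict presheaf. I will then compare the Hom-groupoid $\hom_{\des_{\mathcal{F}}(\pi)}(r_\pi(A),r_\pi(B))$ with the groupoid of descent data of $\mathcal{F}_{A,B}$ along $\pi$. An object of the latter is a 1-morphism $f\colon\pi^{*}A\to\pi^{*}B$, that is, an object of $\mathcal{F}_{A,B}(\pi)$, together with an isomorphism $\pr_1^{*}f\Rightarrow\pr_2^{*}f$ in $\mathcal{F}_{A,B}(Y^{[2]})$ that satisfies the cocycle condition over $Y^{[3]}$. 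Its morphisms are 2-morphisms over $Y$ that are compatible over $Y^{[2]}$. Once the coherence isomorphisms of $r_\pi(A)$ and $r_\pi(B)$ are inserted, these data and conditions match the data and conditions of a 1-morphism, respectively 2-morphism, in $\des_{\mathcal{F}}(\pi)$ exactly.

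This identification gives $\des_{\mathcal{F}_{A,B}}(\pi)=\hom_{\des_{\mathcal{F}}(\pi)}(r_\pi(A),r_\pi(B))$. By construction it intertwines $r^{A,B}_\pi$ with $r_\pi(A,B)$. It is also natural under refinement, which matters because $\mathcal{F}_{A,B}$ is a presheaf on $\Man/M$ and the conditions quantify over all $\pi$.

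With the identification in place, both claims follow directly. Separatedness means that every $r_\pi(A,B)$ is fully faithful, and this is the definition of $\mathcal{F}_{A,B}$ being a prestack for all $A,B$. Being a pre-2-stack means that every $r_\pi(A,B)$ is an equivalence, and this is the definition of $\mathcal{F}_{A,B}$ being a stack. One subtlety needs a check: the prestack and stack conditions for $\mathcal{F}_{A,B}$ must be imposed for surjective submersions $Y'\to Y$ over an arbitrary object $Y\to M$ of $\Man/M$, not only over $\id_M$. This is covered by applying the bigroupoid condition on $\mathcal{F}$ over $Y$ to the pulled-back objects $\pi^{*}A$ and $\pi^{*}B$, and conversely.

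The main obstacle I expect is coherence. For a presheaf of bicategories that is only pseudofunctorial, the canonical comparison 1-morphisms in $r_\pi(A)$ are not identities. The matching of the cocycle conditions then requires checking that these comparison isomorphisms cancel correctly over $Y^{[3]}$. I would handle this by strictifying, which is harmless in the concrete presheaves of this paper since they are strict, or by pasting with the pseudofunctor coherence cells and invoking the coherence theorem for bicategories.
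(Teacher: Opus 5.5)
Your proposal is correct and is essentially the paper's argument. The paper simply observes that $\des_{\mathcal{F}_{A,B}}(\pi)=\hom_{\des_{\mathcal{F}}(\pi)}(r_{\pi}(A),r_{\pi}(B))$ and that $r^{A,B}_{\pi}$ coincides with $r_{\pi}(A,B)$, so both equivalences follow directly from the definitions. Your extra checks, on covers of arbitrary objects of $\Man/M$ and on the coherence cells of a merely pseudofunctorial presheaf, are sound refinements that the paper leaves implicit.
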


Now let $\mathcal{F}$ be a separated presheaf of bigroupoids on $\Man$. 
Let again $M$ be a smooth manifold, and $A,B\in \mathcal{F}(M)$ be objects. We let $\mathcal{F}_{A,B}^{+}$ be the stack on $\Man/M$ associated to the prestack $\mathcal{F}_{A,B}$ via the (classical, 1-categorical) plus construction.
Define a presheaf of bigroupoids $\prestack{\mathcal{F}}$ by letting $\prestack{\mathcal{F}}(M)$ be the bigroupoid with objects $\mathcal{F}(M)$ and
\begin{equation*}
\hom_{\prestack{\mathcal{F}}(M)}(A,B) := \mathcal{F}^{+}_{A,B}(\id_M)\text{.}
\end{equation*}
For the composition, we find for each triple $A,B,C \in \mathcal{F}(M)$ a morphism
\begin{equation*}
\mathcal{F}_{B,C} \times \mathcal{F}_{A,B} \to \mathcal{F}_{A,C}
\end{equation*}
between presheaves of groupoids on $\Man/M$ induced by the composition functor of the bigroupoid $\mathcal{F}(Y)$.
As the plus construction is functorial, it induces a composition functor 
\begin{equation*}
\mathcal{F}^{+}_{B,C}(\id_M) \times \mathcal{F}^{+}_{A,B}(\id_M) \to \mathcal{F}^{+}_{A,C}(\id_M)
\end{equation*}
for the bigroupoid $\prestack{\mathcal{F}}(M)$.

\begin{proposition}
\label{lem:pre2stackification}
If $\mathcal{F}$ is separated, then
$\prestack{\mathcal{F}}$ is a pre-2-stack.
\end{proposition}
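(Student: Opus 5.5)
By \cref{pre-2-stack}, it suffices to show that for every manifold $M$ and all objects $A,B\in\prestack{\mathcal{F}}(M)=\mathcal{F}(M)$, the presheaf of groupoids $\prestack{\mathcal{F}}_{A,B}$ on $\Man/M$ is a stack. The plan is to identify this presheaf with the stack $\mathcal{F}^{+}_{A,B}$.

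Unwinding the definition, for $\pi:Y\to M$ we have
\begin{equation*}
\prestack{\mathcal{F}}_{A,B}(\pi)=\hom_{\prestack{\mathcal{F}}(Y)}(\pi^{*}A,\pi^{*}B)=\mathcal{F}^{+}_{\pi^{*}A,\pi^{*}B}(\id_Y)\text{.}
\end{equation*}
The first step is to construct a canonical isomorphism (or at least an equivalence) of groupoids
\begin{equation*}
\mathcal{F}^{+}_{\pi^{*}A,\pi^{*}B}(\id_Y)\cong \mathcal{F}^{+}_{A,B}(\pi)\text{,}
\end{equation*}
natural in $\pi$. This should hold because the presheaf $\mathcal{F}_{\pi^{*}A,\pi^{*}B}$ on $\Man/Y$ is exactly the restriction of $\mathcal{F}_{A,B}$ along the forgetful functor $\Man/Y\to\Man/M$, $(Z\to Y)\mapsto(Z\to Y\to M)$. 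Indeed, for $\zeta:Z\to Y$ both sides are $\hom_{\mathcal{F}(Z)}(\zeta^{*}\pi^{*}A,\zeta^{*}\pi^{*}B)$, using the pullback coherence of the presheaf $\mathcal{F}$. Moreover, covers of $Y$ in $\Man/Y$ are surjective submersions $Z\to Y$, and the plus construction evaluated at $\pi$ only involves such covers of $Y$ and their fibre products over $Y$. Hence the classical plus construction commutes with this restriction to the slice, and both sides are the colimit over surjective submersions $Z\to Y$ of the same descent groupoids. Next I will check that the identification is compatible with the pullback functors of $\prestack{\mathcal{F}}$ along maps $f:Y'\to Y$ over $M$. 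These pullbacks are defined via functoriality of the plus construction applied to $f^{*}$, so compatibility reduces to naturality of the restriction isomorphisms.

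Given this, $\prestack{\mathcal{F}}_{A,B}\cong \mathcal{F}^{+}_{A,B}$ as presheaves of groupoids on $\Man/M$. Since $\mathcal{F}$ is separated, each $\mathcal{F}_{A,B}$ is a prestack by \cref{pre-2-stack}, so the classical plus construction $\mathcal{F}^{+}_{A,B}$ is a stack. Therefore $\prestack{\mathcal{F}}_{A,B}$ is a stack for all $A,B$, and \cref{pre-2-stack} gives that $\prestack{\mathcal{F}}$ is a pre-2-stack.

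I expect the main obstacle to be the coherence bookkeeping. $\mathcal{F}$ is a presheaf of bigroupoids, so pullbacks compose only up to coherent 2-isomorphism, and the identification of $\mathcal{F}_{\pi^{*}A,\pi^{*}B}$ with the restriction of $\mathcal{F}_{A,B}$ is then an equivalence rather than an equality. To handle this, I would first strictify, or assume strict pullbacks as in all concrete examples here, where pullback of forms and bundles is strictly functorial. I would then use that the plus construction preserves equivalences of prestacks. A secondary point to check is that $\prestack{\mathcal{F}}$ is a well-defined presheaf of bigroupoids, in particular that the associativity of composition descends through the plus construction.
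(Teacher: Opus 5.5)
Your proposal is correct and follows the paper's proof. The paper likewise identifies $\hom_{\prestack{\mathcal{F}}(C(\pi))}(r_\pi(A),r_\pi(B))$ with the descent groupoid $\mathcal{F}^{+}_{A,B}(C(\pi))$ and concludes because $\mathcal{F}^{+}_{A,B}$ is a stack. The only difference is presentational: by routing through \cref{pre-2-stack} and the restriction to the slice $\Man/Y$, you make explicit the identification $\mathcal{F}^{+}_{\pi^{*}A,\pi^{*}B}(\id_Y)\cong\mathcal{F}^{+}_{A,B}(\pi)$, which the paper uses tacitly when it says that $g$ is an object of $\mathcal{F}^{+}_{A,B}(\pi)$.
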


\begin{proof}
Let $A,B \in \prestack{\mathcal{F}}(M)$ be objects, i.e., objects in $\mathcal{F}(M)$. We must show that the relevant functor
\begin{equation*}
r_{\pi}(A,B):\hom_{\prestack{\mathcal{F}}(M)}(A,B) \to \hom_{\prestack{\mathcal{F}}(\checknerve\pi)}(r_{\pi}(A),r_{\pi}(B))
\end{equation*} 
is an equivalence of groupoids.
The left-hand side is by definition $\mathcal{F}^{+}_{A,B}(\id_M)$.  
On the right-hand side, an object is a pair $(g,\psi)$ consisting of a 1-morphism $g\maps \pi^{*}A \to \pi^{*}B$ in $\prestack{\mathcal{F}}(Y)$ and a 2-morphism $\psi: \pr_1^{*}g \Rightarrow \pr_2^{*}g$ in $\prestack{\mathcal{F}}(Y^{[2]})$ satisfying the cocycle condition $\pr_{23}^{*}\psi \circ \pr_{12}^{*}\psi = \pr_{13}^{*}\psi$. Thus, $g$ is an object in $\mathcal{F}^{+}_{A,B}(\pi)$, and $\psi$ is a morphism in $\mathcal{F}^{+}_{A,B}(\pi^{[2]})$ satisfying the cocycle condition. In other words, $(g,\psi)$ is an object in $\mathcal{F}_{A,B}^{+}(C(\pi))$. 
A morphism between $(g_1,\psi_1)$ and $(g_2,\psi_2)$ is a 2-morphism $\rho: g_1 \Rightarrow g_2$ in $\prestack{\mathcal{F}}(Y)$ such that $\pr_2^{*}\rho \circ \psi_1 = \psi_2 \circ \pr_1^{*}\rho$. Thus, $\rho$ is a morphism in $\mathcal{F}^{+}_{A,B}(\pi)$; in other words, $\rho$ is a morphism in $\mathcal{F}^{+}_{A,B}(C(\pi))$. This shows that 
\begin{equation*}
\hom_{\prestack{\mathcal{F}}(\checknerve\pi)}(r_{\pi}(A),r_{\pi}(B)) = \mathcal{F}^{+}_{A,B}(C(\pi))\text{.}
\end{equation*}
Since $\mathcal{F}^{+}_{A,B}$ is a stack, the two groupoids are equivalent.
\end{proof}

Now we recall the plus construction \cite[Section 3]{nikolaus2}. An object in $\mathcal{F}^{+}(M)$ is a quintuple $(Y,\pi,X,f,\alpha)$ consisting of a surjective submersion $\pi:Y \to M$, an object $X$ in $\mathcal{F}(Y)$, a 1-morphism $f:\pr_1^{*}X \to \pr_2^{*}X$ in $\mathcal{F}(Y^{[2]})$, and a 2-morphism $\alpha: \pr_{23}^{*}f \circ \pr_{12}^{*}f \Rightarrow \pr_{13}^{*}f$ in $\mathcal{F}(Y^{[3]})$ such that the diagram
\begin{equation*}
\alxydim{@C=5em@R=3em}{\pr_{34}^{*}f \circ \pr_{23}^{*}f \circ \pr_{12}^{*}f \ar@{=>}[r]^-{\pr_{234}^{*}\alpha \circ \id} \ar@{=>}[d]_{\id \circ \pr_{123}^{*}f} &\pr_{24}^{*}f \circ \pr_{12}^{*}f \ar@{=>}[d]^{\pr_{124}^{*}\alpha} \\ \pr_{34}^{*}f \circ \pr_{13}^{*}f \ar@{=>}[r]_-{\pr_{134}^{*}\alpha} & \pr_{14}^{*}f}
\end{equation*}
of morphisms in $\mathcal{F}(Y^{[4]})$ is commutative. 

\begin{remark}
\label{incompleteness-ns}
The description of 1-morphisms and 2-morphisms given in \cite{nikolaus2} is incomplete and does not work as intended. Specifically, a certain fibre product needed to construct the horizontal composition of 2-morphisms does not exist in general. We describe below a variation based on the theory of abelian bundle gerbes set up in \cite{waldorf1}.
\end{remark}

A 1-morphism $(Y_1,\pi_1,X_1,f_1,\alpha_1)\to (Y_2,\pi_2,X_2,f_2,\alpha_2)$ in $\mathcal{F}^{+}(M)$ is a quadruple $(Z,\zeta,g,\phi)$ consisting of a surjective submersion $\zeta: Z \to Y_1 \times_M Y_2$, a 1-morphism 
\begin{equation*}
g: \zeta^{*}\pr_1^{*}X_1 \to \zeta^{*}\pr_2^{*}X_2
\end{equation*}
in $\mathcal{F}(Z)$, and a 2-isomorphism
\begin{equation*}
\phi: f_2 \circ \pr_1^{*}g \Rightarrow \pr_2^{*}g\circ f_1
\end{equation*}
in $\mathcal{F}(Z\times_M Z)$, such that the diagram
\begin{align*}
\xymatrix@C=4em{\pr_{23}^{*}f_2 \circ \pr_{12}^{*}f_2 \circ \pr_1^{*}g \ar@{=>}[r]^-{\alpha_2\circ \id} \ar@{=>}[d]_{\id \circ \pr_{12}^{*}\phi} & \pr_{13}^{*}f_2 \circ \pr_1^{*}g \ar@{=>}[dd]^{\pr_{13}^{*}\phi} \\ \pr_{23}^{*}f_2 \circ \pr_2^{*}g \circ \pr_{12}^{*}f_1 \ar@{=>}[d]_{\pr_{23}^{*}\phi \circ \id} & \\ \pr_3^{*}g \circ \pr_{23}^{*}f_1 \circ \pr_{12}^{*}f_1 \ar@{=>}[r]_-{\id \circ \alpha_1} & \pr_3^{*}g \circ \pr_{13}^{*}f_1}
\end{align*}
of 2-morphisms in $\mathcal{F}(Z^{[3]})$ commutes.

Despite the incompleteness noted in \cref{incompleteness-ns}, the main theorem regarding the stackification property holds. Specifically, Nikolaus and Schweigert state in \cite[Theorem 3.3]{nikolaus2}: 
\begin{theorem}
\label{th:2stackification}
If $\mathcal{F}$ is a pre-2-stack, then $\mathcal{F}^{+}$ is a 2-stack. In particular, if $\mathcal{F}$ is a separated presheaf of bicategories, then $\prestack{\mathcal{F}}^{+}$ is a 2-stack. 
\end{theorem}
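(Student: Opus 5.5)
The plan is to prove the two assertions separately. First I show that $\mathcal{F}^{+}$ is a 2-stack whenever $\mathcal{F}$ is a pre-2-stack. The second assertion then follows at once: by \cref{lem:pre2stackification}, $\prestack{\mathcal{F}}$ is a pre-2-stack whenever $\mathcal{F}$ is separated, so the first assertion applied to $\prestack{\mathcal{F}}$ shows that $\prestack{\mathcal{F}}^{+}$ is a 2-stack.

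For the first assertion, I fix a surjective submersion $\pi:Y\to M$ and show that $r_\pi:\mathcal{F}^{+}(M)\to\des_{\mathcal{F}^{+}}(\pi)$ is an equivalence of bigroupoids. This amounts to two checks: essential surjectivity on objects, and equivalence on Hom-groupoids.

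\emph{Hom-groupoids.} I first show that $\mathcal{F}^{+}$ is again a pre-2-stack. Fix objects $\mathcal{X}_i=(Y_i,\pi_i,X_i,f_i,\alpha_i)$. A 1-morphism $(Z,\zeta,g,\phi)$ is, in effect, a descent datum of the Hom-presheaf $\mathcal{F}_{\pr_1^{*}X_1,\pr_2^{*}X_2}$ along $Z\to Y_1\times_M Y_2$, twisted by $f_1,f_2$. Since $\mathcal{F}$ is a pre-2-stack, \cref{pre-2-stack} says these Hom-presheaves are stacks. Descent of 1-morphisms along a further cover $\pi$ can therefore be performed by composing covers, $Z\times_M Y$, and the cocycle condition for a matching family of $(Z,\zeta,g,\phi)$'s over $Y^{[2]}$ lets me glue the $\phi$'s into a single 2-isomorphism over a common refinement. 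Full faithfulness on 2-morphisms follows the same way: 2-morphisms are again glued using the stack property of the Hom-presheaves. Here I would use the variant definition of 2-morphisms (triples $(W,\rho,\varphi)$ identified under common refinement, as in \cref{incompleteness-ns}), because it avoids the nonexistent fibre product.

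\emph{Objects.} A descent datum for $\mathcal{F}^{+}$ along $\pi$ consists of an object $(Y',\pi',X,f,\alpha)$ of $\mathcal{F}^{+}(Y)$, a 1-morphism $(Z,\zeta,g,\phi)$ over $Y^{[2]}$, and a 2-morphism over $Y^{[3]}$ satisfying a cocycle condition over $Y^{[4]}$. I build the glued object over $M$ with cover $Z\to Y'\times_M Y'\to M$ composed appropriately; concretely, I take the new cover $\tilde Y:=Z$ (or $Y'$ itself, after pulling $g$ back). On $\tilde Y^{[2]}$ I define the transition 1-morphism by composing $f$ with $g$, and the associator from $\alpha$, $\phi$ and the 2-morphism over $Y^{[3]}$. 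The pentagon then follows from the coherence conditions of the descent datum. Finally I exhibit an isomorphism between $r_\pi$ of this glued object and the original datum, using \cref{lem:refinementofss}-style refinement isomorphisms.

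I expect the main obstacle to be the object-level gluing and, above all, the bookkeeping of covers, because the nested surjective submersions must be collapsed into a single one. I would first reduce every 1-morphism to one with $\zeta=\id$, in the spirit of \cref{lem:canonicalrefinements}; this reduction is itself a consequence of the pre-2-stack property, and it makes the coherence verification tractable. Alternatively, one may cite \cite[Theorem 3.3]{nikolaus2}, observing that its proof uses only objects and the Hom-stack property, and hence is unaffected by \cref{incompleteness-ns}.
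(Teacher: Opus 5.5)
Your reduction of the second sentence to the first via \cref{lem:pre2stackification} is exactly what the paper does. For the first sentence the paper gives no argument of its own: it simply invokes \cite[Theorem 3.3]{nikolaus2} and asserts that the result survives the repaired definitions of 1- and 2-morphisms from \cref{incompleteness-ns}. So your fallback is the paper's proof. Your direct descent argument is a genuinely more self-contained route, and its advantage is that it would actually be carried out with the paper's modified morphisms, for which the cited proof was not written.

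For the same reason, your justification of the citation is not right as stated. No proof that $r_\pi$ is an equivalence can use ``only objects and the Hom-stack property'', because at the Hom level it must handle exactly the 1- and 2-morphisms of $\mathcal{F}^{+}$ that the paper redefines. The defensible claim is that none of the descent steps requires horizontal composition of 2-morphisms, which is where the missing fibre product occurs.

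If you pursue the direct route, the object-level gluing needs correcting.
\begin{itemize}
\item The choice $\tilde Y:=Z$ fails: at a point of $Z\times_M Z$ lying over $(y_1',y_2')$ and $(y_3',y_4')$, no 1-morphism $X_{y_1'}\to X_{y_3'}$ is available.
\item Nor can $g$ be ``pulled back'' to $Y'\times_M Y'$, since $Z$ maps to $Y'\times_M Y'$ and not conversely.
\item The correct step is the one in your last paragraph. The restriction of $\phi$ to $Z\times_{Y'\times_M Y'}Z$, combined with the trivialization of $f$ along the diagonal induced by $\alpha$, is descent data for $g$ in the stack $\mathcal{F}_{\pr_1^{*}X,\pr_2^{*}X}$. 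Hence $g$ descends to $Y'\times_M Y'$.
\item Then glue over $Y'\to M$, with this descended $g$ as the transition 1-morphism. No composition with $f$ is needed: $f$ reappears as the restriction of $g$ to $Y'\times_Y Y'$, because the 1-morphism over $Y^{[2]}$, restricted to the diagonal, is 2-isomorphic to the identity.
\item Obtain the associator by descending the $Y^{[3]}$-component, which is a 2-isomorphism over a cover of $Y'\times_M Y'\times_M Y'$. Use separatedness; its compatibility with $\phi$ supplies the descent condition.
\end{itemize}

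The same care applies at the Hom level. The intertwiner of a 1-morphism in $\mathcal{F}^{+}$ must live on $Z\times_M Z$ for the same $Z$ that carries $g$. So once you have assembled it from $\phi$ and the datum over $Y^{[2]}$ on a cover of $Z\times_M Z$, you must descend it by separatedness rather than leave it on a common refinement.
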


\cref{th:2stackification} produces the usual definitions of bundle gerbes: we denote by $\sheaf\ueins$ the sheaf of smooth $\ueins$-valued functions, and by $BB\sheaf\ueins$ its 2-fold delooping, which is a separated presheaf of bicategories. We have $\prestack{BB\sheaf\ueins}=B\bun{\ueins}-$, and $(B\bun{\ueins}-)^{+}$ is the usual 2-stack of $\ueins$-bundle gerbes.
For non-abelian bundle gerbes this procedure has been performed in \cite{Nikolaus}, and for non-abelian bundle gerbes with connections, we have applied it in \cref{sec:bundlegerbes}. In \cite{Kristel2020} it has been used to define 2-vector bundles. 

The following result may seem counterintuitive, as it states that taking the Hom-set closure is in fact unnecessary (over the site of smooth manifolds). It is yet often performed, as the resulting 2-stack has a more geometric interpretation.   

\begin{proposition}
\label{sheafification-shortcut}
If $\mathcal{F}$ is a separated presheaf of bicategories, then $\mathcal{F}^{+} \incl \overline{\mathcal{F}}^{+}$ is an isomorphism. In particular, $\mathcal{F}^{+}$ is a 2-stack.
\end{proposition}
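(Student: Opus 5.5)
The plan is to construct an explicit 2-functor $\Phi\maps \mathcal{F}^{+}\to \overline{\mathcal{F}}^{+}$, induced by the canonical inclusion $\iota\maps\mathcal{F}\to\overline{\mathcal{F}}$, and then show that it is bijective on objects, 1-morphisms and 2-morphisms up to the natural identifications.

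The inclusion $\iota$ is the identity on objects. A 1-morphism $g\maps A\to B$ in $\mathcal{F}(M)$ is sent to the class of $g$ regarded as descent data over $\id_M$ in $\mathcal{F}^{+}_{A,B}(\id_M)$. This is a strict morphism of presheaves, and functoriality of the plus construction then yields $\Phi$.

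For objects, an object of $\overline{\mathcal{F}}^{+}(M)$ is a tuple $(Y,\pi,X,f,\alpha)$ in which $f$ is now a 1-morphism in $\overline{\mathcal{F}}(Y^{[2]})$. Such an $f$ is represented by a surjective submersion $\zeta\maps W\to Y^{[2]}$ together with descent data $(g,\psi)$ in $\mathcal{F}_{\pr_1^*X,\pr_2^*X}$. The key observation is specific to the site of smooth manifolds: surjective submersions admit local sections. I therefore refine $Y$ by an open cover (a disjoint union of open sets $U_i\subset Y$) chosen fine enough that the restriction of $\zeta$ to each $U_i\times_M U_j$ admits a section. Pulling back along these sections expresses $f$ as a genuine 1-morphism in $\mathcal{F}$ over the refined double fibre product. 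The 2-morphism $\alpha$ lives in $\overline{\mathcal{F}}$, whose 2-morphisms by \cref{lem:pre2stackification} and separatedness coincide locally with those of $\mathcal{F}$; after refining it therefore also lies in $\mathcal{F}$. Combined with the refinement lemma for $\mathcal{F}^+$ (refining $Y$ does not change the isomorphism class), this gives essential surjectivity.

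The same argument applies to 1-morphisms $(Z,\zeta,g,\phi)$: refining $Z$ allows $g$ to be chosen in $\mathcal{F}$. For 2-morphisms and faithfulness, I use that $\mathcal{F}$ is separated, so the Hom-groupoids of $\mathcal{F}$ embed fully faithfully into those of $\overline{\mathcal{F}}$ locally. Two-morphisms in both plus constructions are defined via sections over further fibre products, and these agree after refinement.

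I expect the main obstacle to be proving that $\Phi$ is an honest isomorphism rather than merely an equivalence. The identification relation in the plus construction, under which triples are identified after pulling back to common refinements, has to absorb all the choices of local sections, and one must check that this is compatible with composition. My first approach would be to show fully faithfulness on Hom-groupoids and essential surjectivity using the refinements above, and then argue that the equivalence classes match on the nose. Once $\Phi$ is an equivalence, the final claim that $\mathcal{F}^{+}$ is a 2-stack follows from \cref{th:2stackification} applied to $\overline{\mathcal{F}}$, since being a 2-stack is invariant under equivalence.
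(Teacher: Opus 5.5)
Your overall architecture matches the paper's. You build the comparison 2-functor, refine objects until the gluing 1-morphism comes from $\mathcal{F}$, refine 1-morphisms, use separatedness for the 2-morphism data, and deduce the 2-stack property from \cref{th:2stackification}. The problem is the key step for objects: it fails as you state it.

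In general there is no open cover $\{U_i\}$ of $Y$ such that $\zeta$ has a section over every $U_i\times_M U_j$. Each $U_i$ must work against all $U_j$ at once, and the mere existence of local sections gives no uniform control. Here is a counterexample.
\begin{itemize}
\item Take $M=\mathrm{pt}$ and $Y=\mathbb{R}$, so $Y^{[2]}=\mathbb{R}^2$.
\item Let $\zeta\maps W\to\mathbb{R}^2$ be the disjoint union of the inclusions of the rectangles $V_{n,k}=(\tfrac{k}{2m}-\tfrac1m,\tfrac{k}{2m}+\tfrac1m)\times(n-1,n+1)$, where $n,k\in\mathbb{Z}$ and $m=\max(|n|,1)$.
\item Over a connected open set, a section exists only if that set lies inside a single $V_{n,k}$.
\item Let $I$ be the component of some $U_i$ containing $0$, and $J_N$ the component of some $U_j$ containing $N$. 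A section over $U_i\times U_j$ forces $I\times J_N\subset V_{N,k}$ for some $k$.
\item Hence $I$ has length at most $2/N$ for every $N\geq 2$, which is absurd.
\end{itemize}
In this example the refinement $\mathrm{pt}\to Y$, $\mathrm{pt}\mapsto 0$, works trivially. That is the point: you must allow refinements that do not cover $Y$.

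What is actually needed is the following. You need a surjective submersion $\pi'\maps Y'\to M$ and a map $r\maps Y'\to Y$ over $M$, not necessarily surjective onto $Y$. You also need a section $s\maps Y'^{[2]}\to Z$ of $\zeta$ along $r^{[2]}$.
\begin{itemize}
\item This is a genuine paracompactness statement. The paper imports it from Lemma 7.2.3.5 of Lurie's Higher Topos Theory.
\item One way to prove it: first pass to $\coprod_\alpha U_\alpha$ for an open cover of $M$ with local sections of $\pi$. Double fibre products then become intersections $U_\alpha\cap U_\beta\subset M$. Then a locally finite shrinking plus a star refinement on $M$ makes all pulled-back submersions over the double intersections admit sections.
\end{itemize}

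Once you have this, the rest of your object argument goes through with one correction. Before invoking separatedness, you must transport $\alpha$ along the isomorphism $s^{*}h\cong (r^{[2]})^{*}f$ furnished by $\varphi$.

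For 1-morphisms you need no sections at all. Simply compose $\zeta$ with the submersion carried by the descent datum $g$, as the paper does.

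Finally, drop the attempt to obtain an isomorphism on the nose. Objects whose gluing 1-morphism lives over a nontrivial submersion are not literally in the image. An equivalence is what the paper proves, and it is all the 2-stack conclusion needs.
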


\begin{proof}
Let $\mathcal{F}$ be a separated presheaf of bicategories. We must show that the 2-functor $\mathcal{F}^{+}(M) \to \prestack{\mathcal{F}}^{+}(M)$ is an equivalence. An object of $\prestack{\mathcal{F}}^{+}$ consists of a surjective submersion $\pi:Y \to M$, an object $X$ in $\mathcal{F}(Y)$, a 1-morphism $f:\pr_1^{*}X \to \pr_2^{*}X$ in $\prestack{\mathcal{F}}(Y^{[2]})$ and a 2-morphism $\alpha:\pr_{23}^{*}f \circ \pr_{12}^{*}f \Rightarrow \pr_{13}^{*}f$ in $\prestack{\mathcal{F}}(Y^{[3]})$. The 1-morphism $f$, in turn, is an object in $\mathcal{F}^{+}_{\pr_1^{*}X,\pr_2^{*}X}(\id_{Y^{[2]}})$, i.e., a triple $(\zeta,h,\varphi)$ consisting of a surjective submersion $\zeta:Z \to Y^{[2]}$, a 1-morphism $h: \zeta^{*}\pr_1^{*}X \to \zeta^{*}\pr_2^{*}X$ in $\mathcal{F}(Z)$, and a 2-morphism $\varphi: \pr_1^{*}h \Rightarrow \pr_2^{*}{h}$ in $\mathcal{F}(Z \times_{Y^{[2]}} Z)$ satisfying the cocycle condition in $\mathcal{F}(Z \times_{Y^{[2]}} Z \times_{Y^{[2]}} Z)$.

Because smooth manifolds are paracompact, by \cite[Lemma 7.2.3.5]{Lurie2009}, there exists a surjective submersion $\pi':Y' \to M$ with refinement maps $r:Y' \to Y$ and a section $s:Y'^{[2]} \to Z$ of $\zeta$ along $r^{[2]}:Y'^{[2]} \to Y^{[2]}$. Refining our object $(\pi,X,f,\alpha)$ along $r$, it becomes isomorphic in $\overline{\mathcal{F}}^{+}(M)$ to an object $(\pi',X',(r^{[2]})^{*}f,(r^{[3]})^{*}\alpha)$, with $X':= r^{*}X$. Next we consider the 1-morphism $f':=s^{*}h:\pr_1^{*}X' \to \pr_2^{*}X'$ in $\mathcal{F}(Y^{[2]})$, which can also be regarded as an object in $\mathcal{F}_{\pr_1^{*}X',\pr_2^{*}X'}(\id_{Y'^{[2]}})$. Then, it is clear that $f'$ and $(r^{[2]})^{*}f$ are isomorphic in $\mathcal{F}_{\pr_1^{*}X',\pr_2^{*}X'}^{+}(\id_{Y'^{[2]}})$ via $\varphi$. Hence, our object is isomorphic in $\overline{\mathcal{F}}^{+}(M)$ to $(\pi',X',f',\alpha')$, where the 2-morphism $\alpha'$ in $\overline{\mathcal{F}}(Y'^{[3]})$ corresponds to $(r^{[3]})^{*}\alpha$ under the isomorphism $f'\cong (r^{[2]})^{*}f$. Since $\mathcal{F}$ is separated, it follows that $\alpha'$ is already in $\mathcal{F}(Y'^{[3]})$. Thus, our object is one in $\mathcal{F}^{+}(M)$. 

Next suppose we have two objects $(\pi_1,X_1,f_1,\alpha_1)$ and $(\pi_2,X_2,f_2,\alpha_2)$ in $\mathcal{F}^{+}(M)$, and suppose that we have a 1-morphism between their images in $\overline{\mathcal{F}}^{+}(M)$. It consists of a surjective submersion $\zeta:Z \to Y_1\times_M Y_2$, a 1-morphism $g:\zeta^{*}\pr_1^{*}X_1 \to \zeta^{*}\pr_2^{*}X_2$ in $\overline{\mathcal{F}}(Z)$, and a 2-isomorphism $\phi$ in $\overline{\mathcal{F}}(Z^{[2]})$. On the other hand, $g$ is an object in $\mathcal{F}^{+}_{\zeta^{*}\pr_1^{*}X_1,\zeta^{*}\pr_2^{*}X_2}(\id_{Z})$, and thus a triple $g=(\rho,k,\varphi)$ consisting of a surjective submersion $\rho:W \to Z$, a 1-morphism $k: \rho^{*}\zeta^{*}\pr_1^{*}X_1 \to \rho^{*}\zeta^{*}\pr_2^{*}X_2$ in $\mathcal{F}(W)$, and a 2-isomorphism $\varphi: \pr_1^{*}k \Rightarrow \pr_2^{*}k$ in $\mathcal{F}(W \times_Z W)$. 
Now we refine the given 1-morphism $(\zeta,g,\phi)$ along $\rho$, obtaining a 2-isomorphic 1-morphism with surjective submersion $\zeta \circ \rho$, 1-morphism $\rho^{*}g$ in $\overline{\mathcal{F}}(W)$, and 2-isomorphism $(\rho^{[2]})^{*}\phi$ in $\overline{\mathcal{F}}(W^{[2]})$. Now, $\rho^{*}g$ is isomorphic to $k$ in $\mathcal{F}^{+}_{\rho^{*}\zeta^{*}\pr_1^{*}X_1 , \rho^{*}\zeta^{*}\pr_2^{*}X_2}(\id_W)$ via $\varphi$. This shows that our 1-morphism is 2-isomorphic to $(\zeta\circ \rho,k,\phi')$, where $\phi'$ corresponds to $\phi$ under the isomorphism $\rho^{*}g \cong k$. This is now a 2-isomorphism in $\overline{\mathcal{F}}(W^{[2]})$ between objects that are already contained in $\mathcal{F}(W^{[2]})$. Since $\mathcal{F}$ is separated, we have that $\phi'$ is a 2-isomorphism in $\mathcal{F}(W^{[2]})$. This shows that our given 1-morphism in $\overline{\mathcal{F}}^{+}(M)$ is 2-isomorphic to one in $\mathcal{F}^{+}(M)$.  

Finally, since $\mathcal{F}$ is separated, every 2-morphism in $\overline{\mathcal{F}}^{+}(M)$ has a representative in $\mathcal{F}^{+}(M)$. This completes the proof.
\end{proof}

\end{appendix}


\bibliographystyle{kobib}
\bibliography{kobib}

\end{document}